\documentclass[11pt]{amsart}

\usepackage{amsmath,amssymb,amsthm}
\usepackage[margin=1.1in]{geometry}
\usepackage{enumitem}
\usepackage[colorlinks=true,linkcolor=blue,citecolor=blue]{hyperref}

\newtheorem{theorem}{Theorem}[section]
\newtheorem{proposition}[theorem]{Proposition}
\newtheorem{lemma}[theorem]{Lemma}
\newtheorem{corollary}[theorem]{Corollary}
\newtheorem{conjecture}[theorem]{Conjecture}
\newtheorem{question}[theorem]{Question}
\theoremstyle{definition}
\newtheorem{definition}[theorem]{Definition}
\newtheorem{remark}[theorem]{Remark}
\newtheorem{example}[theorem]{Example}
\newtheorem{convention}[theorem]{Convention}

\newcommand{\R}{\mathbb{R}}
\newcommand{\Q}{\mathbb{Q}}
\newcommand{\Z}{\mathbb{Z}}
\newcommand{\RP}{\mathbb{RP}}
\newcommand{\Laff}{\mathcal{L}_{\mathrm{aff}}}
\newcommand{\LB}{\mathcal{L}_{\mathrm{B}}}
\newcommand{\Ext}{\operatorname{Ext}}
\newcommand{\Adj}{\operatorname{Adj}}
\newcommand{\Int}{\operatorname{Int}}
\newcommand{\Bd}{\operatorname{Bd}}
\newcommand{\Coll}{\operatorname{Coll}}
\newcommand{\Par}{\operatorname{Par}}
\newcommand{\Meets}{\operatorname{Meets}}
\newcommand{\Harm}{\operatorname{Harm}}
\newcommand{\Dep}{\operatorname{Dep}}
\newcommand{\aff}{\operatorname{aff}}
\newcommand{\conv}{\operatorname{conv}}
\newcommand{\ext}{\operatorname{ext}}
\newcommand{\interior}{\operatorname{int}}
\newcommand{\relint}{\operatorname{relint}}
\newcommand{\rec}{\operatorname{rec}}
\newcommand{\lin}{\operatorname{lin}}
\newcommand{\cl}{\operatorname{cl}}
\newcommand{\cf}{\mathrm{cr}}
\newcommand{\PGL}{\mathrm{PGL}}
\newcommand{\GL}{\mathrm{GL}}
\newcommand{\Th}{\operatorname{Th}}

\newcommand{\Ball}{\overline{B}{}^{\,n}}

\title[Elementary equivalence of convex bodies]{Elementary equivalence of convex bodies\\ in affine and projective languages}

\author{David Victor Feldman}
\address{Department of Mathematics and Statistics, University of New Hampshire,
Durham, NH 03824, USA}
\email{dvfinnh@gmail.com}

\subjclass[2020]{Primary 52A20, 03C65; Secondary 03C10, 51N15, 68V20}

\keywords{Convex body, betweenness, elementary equivalence, first-order
definability, projective transformation, von Staudt calculus, Lean 4}

\date{August 2026}

\begin{document}

\begin{abstract}
A convex subset of $\R^n$ carries two natural first-order structures: the
\emph{barycentric} (affine) structure, with one binary operation
$C_\lambda(p,q)=(1-\lambda)p+\lambda q$ for each $\lambda\in[0,1]$, and the
\emph{betweenness} (projective) structure, with a single ternary relation
$B(a,x,b)$ meaning $x\in[a,b]$. We study when two convex bodies are
elementarily equivalent in each language. Our main theorem concerns the affine
case: two compact convex bodies in $\R^n$ are elementarily
equivalent in the barycentric language if and only if they are affinely
equivalent, in every dimension and under no regularity hypothesis. The proof
reconstructs a body from its first-order theory by rigidifying the affine
gauge with a maximal-volume inscribed simplex, an object detectable in the
language and of volume bounded below, so that the reconstructing frames form a
compact family. Two earlier separations are special cases: that a compact body
$\Laff$-elementarily equivalent to a polytope is affinely equivalent to it,
and that a compact body is $\Laff$-elementarily equivalent to the closed unit
ball if and only if it is a solid ellipsoid, the latter separated by a
first-order rendering of the Bertrand--Brunn characterization of ellipsoids by
midpoints of parallel chords. We also prove that isomorphism coincides with affine equivalence in
the first language and, for $n\ge2$ and open or closed sets of any
boundedness, with projective equivalence in the second. On the projective side we prove an interior form of the
complete-quadrangle construction, valid in all dimensions: harmonic conjugacy
of collinear interior points is definable by a betweenness formula all of
whose quantifiers range over the body. A relativization scheme makes every
planar first-order property testable on the definable planar sections of a
body, so planar separations propagate to $\R^n$; together with an interior von Staudt calculus making rational cross-ratio
comparisons first-order, this separates the ball from
$\{\sum x_i^4\le1\}$ in the betweenness language for every $n$. In the plane
the calculus closes the projective question for two classes: convex polygons,
and bodies with real-analytic, positively curved, non-conic boundary, are
elementarily equivalent in the betweenness language if and only if they are
projectively equivalent, the latter via a definable finite projective
invariant, the conic-cluster set, refining the sextactic points. We further
classify basic noncompact convex sets up to betweenness isomorphism (orthant
$\cong$ simplex, slab $\cong$ half-space, paraboloid region $\cong$ ball
minus a boundary point, cone $\cong$ half-cylinder) and separate $\R^n$, the
slab, and the open ball by sentences recovering the Euclidean/hyperbolic
trichotomy through sections. For closed noncompact sets the asymptotic
structure is itself elementary: the dimensions of the recession cone and of
the lineality space, and the existence of one-sided recession directions, are
determined by the betweenness theory, separating in particular the closed
solid cylinder from the closed slab; the open cylinder and the open slab are
separated through the section trichotomy, while a projective identification
shows that recession data are provably not elementary for open sets. On standard compact bodies the affine
half of this picture is thus a theorem; the projective half we leave open,
isolating the two obstacles that separate it from the affine case ---
recovery of boundary coordinates, and the absence of a compact gauge for
projectively homogeneous bodies such as the quadric.
\end{abstract}

\maketitle

\section{Introduction}\label{sec:intro}

Following Stone's barycentric calculus \cite{Stone49} (see also
\cite{Neumann70, RomanowskaSmith02}), a convex set $K\subseteq\R^n$ may be
viewed as an algebra with one binary operation for each $\lambda\in[0,1]$,
\[
C_\lambda(p,q)=(1-\lambda)p+\lambda q .
\]
Alternatively, discarding the operations and keeping only their common trace,
$K$ may be viewed as a relational structure with the single ternary
\emph{betweenness} relation
\[
B(a,x,b)\iff x\in[a,b].
\]
Each choice determines a first-order language, and hence a notion of
elementary equivalence for convex sets. The question addressed in this paper
is:
\begin{quote}
\emph{Given two convex bodies in $\R^n$, when are they elementarily
equivalent, relative to a specified geometric language?}
\end{quote}

The two languages calibrate differently. Isomorphism in the barycentric
language is exactly affine equivalence (Theorem~\ref{thm:iso-affine}), while
isomorphism in the betweenness language is exactly projective equivalence
(Theorem~\ref{thm:iso-projective}); the latter rests on the fundamental
theorem of projective geometry for maps defined on domains, in the form given
by Shiffman \cite{Shiffman95}, and we prove it here in full generality: for
open or closed convex sets in $\R^n$ ($n\ge2$), bounded or not. Thus a
betweenness structure cannot see parallelism, equal spacing, or midpoints;
what it can see is incidence, and --- as we show --- the full harmonic (hence
projective) calculus, carried out entirely in the interior of the body
(Section~\ref{sec:vonstaudt}).

In the affine language the classification is complete.

\begin{theorem}[Affine categoricity]\label{thm:cat-affine}
Let $K,K'\subseteq\R^n$ be compact convex bodies. Then $K\equiv_{\Laff}K'$ if
and only if $K$ and $K'$ are affinely equivalent, in every dimension and with
no smoothness, strict-convexity, or polytopality hypothesis.
\end{theorem}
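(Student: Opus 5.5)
The easy direction is immediate: an affine equivalence $K\to K'$ commutes with every $C_\lambda$, so it is an $\Laff$-isomorphism, and isomorphic structures are elementarily equivalent. This is the forward half of Theorem~\ref{thm:iso-affine}.

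For the converse I will reconstruct $K$ up to affine equivalence from $\Th_{\Laff}(K)$, following the strategy announced in the abstract.

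\emph{Step 1: coordinates from a frame.} Fix a tuple $\bar v=(v_0,\dots,v_n)$ in $K$. The statement that $\bar v$ is affinely independent is first-order. It says that no $v_i$ is an iterated rational barycentric combination of the others; since $K$ is compact I expect a single formula to suffice, namely that $\bar v$ spans a simplex with nonempty interior. Relative to such a frame, every point has barycentric coordinates. For rationals $\alpha_0,\dots,\alpha_n\ge0$ summing to $1$, the point $\sum\alpha_i v_i$ is a term built from the $C_\lambda$. So for every rational affine combination $q=\sum\alpha_i v_i$, with coefficients possibly negative, the assertion ``$q\in K$'' is expressible. It amounts to the existence of $x\in K$ and $y\in\conv\bar v$ with $C_\mu(x,y)$ equal to a specified barycentric term, for a suitable rational $\mu$. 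Hence the set
\[
S(K,\bar v)=\{\alpha\in\Q^{n+1}:\textstyle\sum\alpha_i=1,\ \sum\alpha_i v_i\in K\}
\]
is determined by the type of $\bar v$. Because $K$ is closed and convex with nonempty interior, $K$ is the affine image of $\cl S(K,\bar v)$. Two bodies realizing frames of the same type are therefore affinely equivalent.

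\emph{Step 2: a definable, rigid choice of frame.} Elementary equivalence only transfers sentences, so I need frames whose types are pinned down by the theory. I take $\bar v$ to be a maximal-volume simplex inscribed in $K$. Volume ratios are expressible in the affine language up to rational precision. ``$\mathrm{vol}\,\conv\bar w>\tfrac pq\,\mathrm{vol}\,\conv\bar v$'' can be rendered by comparing the barycentric coordinates of $\bar w$ relative to $\bar v$: the volume ratio is $|\det|$ of the coordinate matrix, and determinant inequalities with rational thresholds become existential statements about rational combinations. Maximality then becomes a countable set of sentences. In a compact body a maximal simplex exists. Relative to such a simplex, $K$ lies in the dilate $-n(\conv\bar v-c)+c$ about its centroid $c$, so the coordinate sets $S(K,\bar v)$ range over a compact family of convex sets. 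This family is compact in the Hausdorff metric, and maximal frames form a compact set modulo nothing, since $K$ is fixed.

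\emph{Step 3: compactness argument.} Consider the countable set of sentences $\sigma_{\alpha,\epsilon}$. Each says: ``there is a maximal-volume frame $\bar v$ for which $\sum\alpha_iv_i$ is (resp.\ is not) in $K$ up to the rational tolerance $\epsilon$''. Let $\mathcal F(K)$ be the set of closures $\cl S(K,\bar v)$ for maximal $\bar v$. This is a compact set in the Hausdorff space of convex bodies containing the standard simplex and contained in its $(-n)$-dilate. The theory determines which finite approximations of this set occur, so $\mathcal F(K)=\mathcal F(K')$; this is the step where compactness of the family is used to pass from approximations to exact equality. Any common element yields frames in $K$ and $K'$ with equal coordinate bodies, hence an affine equivalence.

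\emph{Main obstacle.} The hardest step is Step 3, specifically making ``approximately realizes coordinate body $L$'' genuinely first-order. Maximality of volume is a sup condition, so near-maximal frames must be handled with $\epsilon$-slack, and the limit argument must show that the slack disappears. I would first prove a uniform stability lemma: if $\bar v$ is $\epsilon$-maximal in $K$, then $\cl S(K,\bar v)$ is within $\delta(\epsilon)\to0$ of $\mathcal F(K)$, with $\delta$ depending only on $n$. This follows from compactness of the space of normalized bodies in John position–type bounds. With that lemma, each Hausdorff-ball statement about $\mathcal F$ is a single sentence, and Step 3 goes through.
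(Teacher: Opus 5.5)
\textbf{Gap in Step 3: the proposed stability lemma is false.} Your outline matches the paper's proof in its essentials. You use threshold formulas for barycentric coordinates (Lemma~\ref{lem:bary-def}), a volume-maximal simplex as gauge trapping $K$ in its $(-n)$-dilate, box sentences, and a limiting gauge. Recording rational-point membership instead of extreme-point coordinates is a harmless variant, provided each sentence bundles finitely many membership and non-membership tests for a single frame. (Affine independence should be $\neg\Dep_{n+1}$ from Proposition~\ref{prop:dep}; ``no $v_i$ is a rational combination of the others'' misses irrational dependences.)

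The gap is at the step you flag. As you note, global maximality of $\bar v$ is only a countable conjunction of formulas. Your repair is a stability lemma with $\delta$ depending only on $n$, and that lemma fails: the set of maximizers depends only upper-semicontinuously on the body. Here is a counterexample.
\begin{itemize}
\item Take a convex polygon $P$ with trivial affine symmetry group and two distinct vertex triangles $T_1,T_2$ of maximal area.
\item Push a vertex of $T_2$ that is not in $T_1$ slightly outward, obtaining $P_t$.
\item Then $T_1$ is still inscribed and $\epsilon(t)$-maximal with $\epsilon(t)\to0$, but the only maximal triangle of $P_t$ is the moved copy of $T_2$.
\item So the coordinate body of $P_t$ relative to $T_1$ stays a fixed distance from $\mathcal F(P_t)$. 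Coincidence in the limit would give an affine self-map of $P$ carrying $T_1$ to $T_2$, which does not exist.
\end{itemize}
Independently, ``$\epsilon$-maximal'' is not yet a single formula. There is no a priori bound on the coordinates of points of $K$ relative to an arbitrary $\bar v$, so the open set $\{|\det|>1+\epsilon\}$ of coordinate matrices needs infinitely many boxes.

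\textbf{The fix, which is the paper's route.} Both problems disappear using a fact you already state. The dilate containment $\forall p\,\bigwedge_i\lambda_i(p)\le1$ is itself a single $\Laff$-formula in $\bar v$, built from the threshold formulas. It holds at every maximizer, and on its own it forces $\operatorname{vol}(\conv\bar v)\ge\operatorname{vol}(K)/n^n$. So the frames satisfying it form a nonempty compact subset of $K^{n+1}$, bounded away from degeneracy. They can serve as the gauge directly, with no maximality and no slack. These are the paper's stationary simplices (Definition~\ref{def:stationary}, Lemma~\ref{lem:stationary}), used in the proof of Theorem~\ref{thm:affine-main}.

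The limit is then taken inside the single compact set of stationary frames of $K'$. Witnesses for ever finer box sentences have a convergent subsequence, and coordinates depend continuously on nondegenerate frames. The limit frame therefore reproduces exactly the data read off from $K$, so no uniformity over bodies is ever needed.

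If you prefer to keep maximality, the same two ingredients repair your route. With the stationarity clause added, coordinates live in a compact region, so $\epsilon$-maximality becomes a finite box condition. A limit in $K'^{n+1}$ of $1/m$-maximal witnesses is then exactly maximal. At that point, however, the maximality clause is doing no work.
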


This is proved in Section~\ref{sec:affine-proved}
(Theorem~\ref{thm:affine-main}). The proof is a reconstruction: the theory of
$K$ determines which affine-congruence types of finite tuples of \emph{extreme}
points occur, a maximal-volume inscribed simplex fixes the affine gauge, and
the extreme points' barycentric coordinates against that gauge recover $K$ up
to affine equivalence. The reconstruction converges because the eligible
gauge simplices have volume at least $\operatorname{vol}(K)/n^n$ and so form a
compact family; this volume bound is what prevents the gauge from degenerating
into the boundary.

Two sharper separations of independent interest are proved directly and then
recovered as special cases:

\begin{itemize}[leftmargin=2em]
\item \textbf{Polytopes} (Theorem~\ref{thm:polytope}). If $P\subseteq\R^n$
is a compact convex polytope with interior and $K\subseteq\R^n$ is a compact
convex body with $K\equiv_{\Laff} P$, then $K$ is affinely equivalent to $P$;
the separating data are the number of extreme points and a finite basis of
exact affine dependences among them, each an equation between barycentric
terms.
\item \textbf{The ball} (Theorem~\ref{thm:ball}). A compact convex body
$K\subseteq\R^n$ satisfies $K\equiv_{\Laff}\Ball$ (the closed unit ball) if
and only if $K$ is a solid ellipsoid. The separating sentence asserts that
the midpoints of parallel boundary chords are affinely dependent $n{+}1$ at a
time; its geometric content is the classical Bertrand--Brunn characterization
of ellipsoids \cite{Brunn1889, MMO19, Soltan05}.
\end{itemize}

Theorem~\ref{thm:cat-affine} is the affine half of a two-language picture.
The projective half we leave as a conjecture; the affine proof transposes only
partway, and understanding where it stops is itself informative.

\begin{conjecture}[Projective categoricity]\label{conj:cat-projective}
Let $K,K'\subseteq\R^n$ be compact convex bodies, $n\ge2$. Then
$K\equiv_{\LB}K'$ if and only if $K$ and $K'$ are projectively equivalent.
\end{conjecture}

The reconstruction behind Theorem~\ref{thm:cat-affine} needs three things in
the projective setting (Proposition~\ref{prop:projective-reduction}): that
interior cross-ratios be first-order expressible
(Proposition~\ref{prop:crq}, proved in \S\ref{subsec:crq}); that projective coordinates of \emph{boundary}
points be recoverable from that interior data (done in the plane, open for
$n\ge3$); and that the body admit a compact family of gauge frames. The last
holds for polytopes and for bodies with compact projective symmetry, but fails
for the quadric, whose projective automorphism group $\mathrm{PO}(n,1)$ is
transitive on frames --- so the quadric lies beyond reconstruction and must be
characterized positively (Conjecture~\ref{conj:ball-LB}). None of these three
obstacles has an affine counterpart, which is why the affine case closes
outright. The polytope and ball theorems above are, in the affine language,
special cases of Theorem~\ref{thm:cat-affine}; their projective analogues
(Proposition~\ref{prop:polygon-LB}, Conjecture~\ref{conj:ball-LB}) sit under
Conjecture~\ref{conj:cat-projective}.

The ``if'' directions of both statements are immediate from
Theorems~\ref{thm:iso-affine}--\ref{thm:iso-projective}, since isomorphic
structures are elementarily equivalent. The content is the ``only if''
direction: that the first-order theory of a standard body pins down its
affine (respectively projective) equivalence class \emph{among standard
bodies}. No stronger reconstruction is possible: by L\"owenheim--Skolem
considerations the theory of a body always has nonstandard models
(Section~\ref{sec:nonstandard}), so ``categoricity'' can only be meant
relative to the class of genuine convex bodies. For the affine language this is
achieved in Section~\ref{sec:affine-proved}.

Three remarks on the shape of the conjecture and the methods. First, one
might worry that the moduli of a body --- cross-ratios of vertex
configurations, say --- are typically transcendental, while the parameter-free
definable real numbers of the ambient real-closed field are exactly the
algebraic ones, so that the theory could not ``name'' the moduli. This is a
red herring: a theory need not name an invariant to determine it. If for each
rational $q$ the comparison ``the invariant exceeds $q$'' is expressible by a
sentence, then the theory records the full Dedekind cut of the invariant, and
distinct values are separated by a rational threshold.
Sections~\ref{sec:polytopes}, \ref{sec:ball} and~\ref{sec:vonstaudt} are,
from this point of view, devoted to manufacturing such threshold sentences.
Second, the conjecture is consistent with, and illuminated by, the tameness
of the situation: for semialgebraic bodies the betweenness structure is
interpretable in the real-closed field, so its theory is decidable and its
definable sets are o-minimal; in particular no grid, lattice, or other
infinite discrete configuration is definable (Section~\ref{sec:tame}).
Third --- and this is the principal structural tool beyond the plane --- the
planar section of a body through three of its points is a \emph{definable}
planar convex body (coplanarity is a Radon-type betweenness condition), and
every first-order property of planar bodies can be relativized to sections
(Proposition~\ref{prop:sections}). Planar separations therefore propagate to
all dimensions, and the two-dimensional theory developed below is not a
special case but the engine of the general one.

On the projective side, the technical heart of the paper is
Theorem~\ref{thm:harmonic}: for collinear points $a,b,c,d$ in the interior of
a convex body in $\R^n$, the harmonic relation $(a,b;c,d)=-1$ is defined by a
betweenness formula \emph{all of whose quantifiers range over the body}. The
point is that the classical complete-quadrangle construction uses auxiliary
points that may escape the body; we show that the construction can always be
retracted into an arbitrarily thin neighborhood of the chord, inside a plane
through it. A pleasant bonus in higher dimensions: the incidence clauses of
the defining formula \emph{force} the witnessing configuration into a plane,
so soundness needs no coplanarity hypothesis. This is the seed of an
``interior von Staudt calculus'': the constructions of the algebra of
throws, carried out without ever leaving the body, and hence available to the
first-order language. Carried to completion in Section~\ref{sec:vonstaudt}
--- a single interior perspectivity changes scale, and the harmonic net alone
generates every rational threshold --- the calculus makes every
rational cross-ratio comparison definable, and yields --- via the section
scheme --- the separation of the closed unit ball from the quartic body
$\{\sum x_i^4\le1\}$ in the betweenness language, for every $n\ge2$, through
the Chasles--Steiner projective characterization of conics.

Finally, the betweenness language remains meaningful for noncompact convex
sets, and there the projective point of view produces genuine collapses: the
open orthant is betweenness-isomorphic to an open simplex, the slab to the
half-space, the open paraboloid region to the open ball, and the open solid
cone to an open half-cylinder (Proposition~\ref{prop:projident}). The space
$\R^n$, the slab, and the open ball are pairwise elementarily inequivalent,
separated --- for $n\ge3$ --- by sentences that test Euclid's parallel postulate
on definable planar sections (Proposition~\ref{prop:trichotomyn}); the open
ball here is of course the Klein model of hyperbolic space. In the plane the
same trichotomy is witnessed by direct sentences
(Proposition~\ref{prop:trichotomy2}). We emphasize that for $n\ge3$ the
noncompact landscape is genuinely richer --- cylinders, cones, and products of
lower-dimensional geometries appear. For \emph{closed} sets its coarse
structure is elementary: in a closed convex set the formula asserting that a
segment extends to a last point detects recession directions pointwise
(Lemma~\ref{lem:exit}), and with it the dimension of the recession cone, the
dimension of the lineality space, and the existence of one-sided recession
directions are all determined by the betweenness theory
(Theorem~\ref{thm:recession}); in particular the closed solid cylinder and
the closed slab are elementarily inequivalent for every $n\ge3$
(Corollary~\ref{cor:cyl-slab}), and their open versions are separated through
the section trichotomy (Proposition~\ref{prop:cyl-slab-open}). For open sets,
by contrast, recession data are provably not elementary
(Remark~\ref{rem:open-recession}); the finer classification --- products with
equal recession data, and the properly convex (Hilbert-geometry) regime ---
is left as an open problem (Problem~P8).

\subsection*{Status of the results}
Every numbered statement below is either proved in full or explicitly labeled
as a conjecture or a problem, and we alert the reader to the two places where a
result is stated conditionally. The affine categoricity theorem
(Theorem~\ref{thm:affine-main}) and its supporting
Lemmas~\ref{lem:affine-comb}--\ref{lem:stationary} are proved unconditionally,
as are the calibration Theorems~\ref{thm:iso-affine}--\ref{thm:iso-projective}
and all of the definability results of Sections~\ref{sec:definability}
and~\ref{sec:vonstaudt}. In particular the interior von Staudt calculus is
carried out in full: Proposition~\ref{prop:crq} (rational cross-ratio
comparisons in the betweenness language) is proved in \S\ref{subsec:crq}, and
with it Propositions~\ref{prop:cuts} and~\ref{prop:polygon-LB},
Lemmas~\ref{lem:steiner-sentences}, \ref{lem:creq} and~\ref{lem:coconic}, and
Theorems~\ref{thm:squircle-LB} and~\ref{thm:ball-LB} hold outright; the
one-sidedness Lemma~\ref{lem:onesided} is proved by a normal-form computation,
making Theorem~\ref{thm:planar-sextactic} unconditional. The recession results
of \S\ref{subsec:recession} --- the exit lemma, Theorem~\ref{thm:recession},
Proposition~\ref{prop:compactsections}, and the cylinder/slab separations ---
are likewise proved in full. The two conditional
statements are clearly marked as such: the projective reduction
(Proposition~\ref{prop:projective-reduction}) is proved under two explicit
hypotheses --- the boundary-coordinate recovery of Problem~P7 and a definable
compact gauge --- and its planar corollary
(Corollary~\ref{cor:planar-projective}) under a definable gauge selection;
these are conditional reductions, not theorems in force. Proofs that invoke a
standard result from the literature cite it explicitly at the point of use.

\subsection*{Formalization}
The results of this paper have been formalized in Lean~4 against Mathlib; the
development, its build configuration, and a per-result index are archived with
the repository cited as~\cite{Repo}. Every numbered statement proved here is
machine-checked, free of \texttt{sorry}, and audited in continuous integration
to depend only on the axioms \texttt{propext}, \texttt{Classical.choice} and
\texttt{Quot.sound}. The Bertrand--Brunn characterization
(Theorem~\ref{thm:BB}) and the Chasles--Steiner theorem
(Theorem~\ref{thm:steiner}), cited above from the literature, are proved in the
development rather than assumed. Four classical inputs are not formalized and
appear in the development as explicit hypotheses on the results that use them,
never as axioms: Tarski--Seidenberg and o-minimality on lines
(\S\ref{sec:tame}), decidability of the theory of real closed fields
(Proposition~\ref{prop:decidable}), the osculating-conic facts underlying
Lemma~\ref{lem:sextactic}, and Shiffman's Theorem~\ref{thm:shiffman}; the last
enters only the forward half of Theorem~\ref{thm:iso-projective}, whose proof is
otherwise machine-checked. Formalizing the argument occasioned corrections to
several statements and the removal of hypotheses that proved unnecessary; the
statements given here are the corrected ones.

\section{Languages, structures, and conventions}\label{sec:languages}

\begin{definition}\label{def:languages}
$\LB$ is the first-order language with a single ternary relation symbol $B$.
$\Laff$ is the language with the relation symbol $B$ together with one binary
function symbol $C_\lambda$ for each real $\lambda\in[0,1]$. (Thus $\LB$ is
finite and $\Laff$ has cardinality $2^{\aleph_0}$.)

A convex set $K\subseteq\R^n$ is regarded as an $\Laff$-structure with
universe $K$,
\[
C_\lambda^K(p,q)=(1-\lambda)p+\lambda q,
\qquad
B^K(a,x,b)\iff x\in[a,b],
\]
where $[a,b]$ denotes the closed segment; and as an $\LB$-structure by taking
the reduct. Note $B(a,x,a)$ holds if and only if $x=a$.
\end{definition}

\begin{convention}\label{conv:body}
A \emph{convex body} is a compact convex subset of $\R^n$ with nonempty
interior. Unless explicitly stated otherwise, all convex sets in this paper
are subsets of some $\R^n$, $n\ge2$, with nonempty interior
(``full-dimensional''); from Section~\ref{sec:polytopes} onward they are
compact unless stated otherwise. (The case $n=1$ is trivial for our questions: all
compact segments are affinely equivalent, while betweenness isomorphisms of a
segment are exactly the order isomorphisms and anti-isomorphisms, an
enormous group; the projective calibration below genuinely requires
$n\ge2$.) $\equiv_{\mathcal L}$ denotes elementary equivalence in the
language $\mathcal L$, and $\cong_{\mathcal L}$ denotes isomorphism of
$\mathcal L$-structures.
\end{convention}

\begin{remark}[Why $B$ is primitive in $\Laff$]\label{rem:B-primitive}
Over a convex set, $x\in[a,b]$ holds if and only if $x=C_\lambda(a,b)$ for
\emph{some} $\lambda\in[0,1]$; but this is a quantification over the index
set of the operations, not a first-order formula. Whether $B$ is
$\emptyset$-definable from the operations $C_\lambda$ over an arbitrary
convex body appears to be a nontrivial question, which we record as
Question~\ref{q:B-definable}. To make the affine language robustly usable we
therefore include $B$ as a primitive. This is harmless for the calibration
results: affine bijections preserve both the operations and betweenness, so
including $B$ does not change the isomorphism relation
(Theorem~\ref{thm:iso-affine}), and it can only refine elementary
equivalence, which is the safe direction for all results below.
\end{remark}

\begin{convention}[Cross-ratio]\label{conv:crossratio}
For distinct collinear points $a$, $b$, $c$, $d$ of $\R^n$, fix an affine
parametrization $t\mapsto a+t v$ of their common line, and let
$\alpha,\beta,\gamma,\delta$ denote the respective parameters of the four
points. The cross-ratio is
\[
(a,b;c,d)
=\frac{(\gamma-\alpha)(\delta-\beta)}{(\gamma-\beta)(\delta-\alpha)} .
\]
It is independent of the parametrization and invariant under projective
transformations defined at the four points. The quadruple is \emph{harmonic}
if $(a,b;c,d)=-1$.
\end{convention}

\section{Isomorphisms are geometric maps}\label{sec:iso}

\begin{theorem}\label{thm:iso-affine}
Let $K\subseteq\R^m$ and $K'\subseteq\R^n$ be convex sets with nonempty
interior. A map $f\colon K\to K'$ is an isomorphism of $\Laff$-structures if
and only if $m=n$ and $f$ is the restriction to $K$ of an affine bijection
$F\colon\R^n\to\R^n$ with $F(K)=K'$. In particular, $K\cong_{\Laff}K'$ if and
only if $K$ and $K'$ are affinely equivalent.
\end{theorem}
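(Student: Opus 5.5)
The ``if'' direction is a direct check. If $F(x)=Ax+b$ is an affine bijection of $\R^n$ with $F(K)=K'$, then $F((1-\lambda)p+\lambda q)=(1-\lambda)F(p)+\lambda F(q)$. Moreover $F$ maps $[a,b]$ onto $[F(a),F(b)]$. So $f=F|_K$ is a bijection $K\to K'$ preserving every $C_\lambda$ and $B$ in both directions, hence an $\Laff$-isomorphism.

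For the ``only if'' direction, let $f\colon K\to K'$ be an $\Laff$-isomorphism. I will extend $f$ to an affine map and then compare dimensions.

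\textbf{Step 1: extension.} Fix an interior point $p_0\in K$. Define $G\colon\R^m\to\R^n$ by
\[
G(p_0+v)=t^{-1}\bigl(f(p_0+tv)-f(p_0)\bigr)+f(p_0),
\]
where $t>0$ is small enough that $p_0+tv\in K$.

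To see this is well defined, use that $f$ commutes with every $C_\lambda$. For $0<s<t$ we have $p_0+sv=C_{s/t}(p_0,p_0+tv)$, so
\[
f(p_0+sv)=(1-s/t)f(p_0)+(s/t)f(p_0+tv).
\]
Hence $s\mapsto s^{-1}(f(p_0+sv)-f(p_0))$ is constant for small $s$, and $G$ does not depend on the choice of $t$.

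\textbf{Step 2: $G$ is affine.} Translate so that $p_0=0$ and $f(p_0)=0$. Then $G$ is positively homogeneous by construction. For additivity, take $u,v$ and choose $t$ small enough that $tu,tv\in K$ and also $2tu,2tv\in K$. Then
\[
t(u+v)/2=C_{1/2}(tu,tv),
\]
so $G(u+v)/2=(G(u)+G(v))/2$ after rescaling, which gives $G(u+v)=G(u)+G(v)$. Additivity together with $G(-u)=-G(u)$ gives $G(0)=G(u)+G(-u)$, and $G(0)=0$. So $G$ is linear on $\R^m$, and after undoing the translation it is affine.

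\textbf{Step 3: $G$ agrees with $f$ on all of $K$.} For $q\in K$, the point $C_\lambda(p_0,q)$ lies near $p_0$ for small $\lambda$. Along the segment $[p_0,q]$, the map $f$ respects the $C_\lambda$, so $f(q)$ is determined by $f$ on the initial piece of $[p_0,q]$. The same homogeneity argument as in Step 1 then gives $G(q)=f(q)$. This needs no continuity, because convex combinations with a fixed $\lambda$ are exact identities.

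\textbf{Step 4: dimensions and bijectivity.} Apply Steps 1--3 to $f^{-1}$ to get an affine map $H\colon\R^n\to\R^m$ extending $f^{-1}$. Then $H\circ G$ is affine and equals the identity on $K$. Since $K$ has nonempty interior, its affine hull is $\R^m$, so $H\circ G=\mathrm{id}$ on $\R^m$. Symmetrically, $G\circ H=\mathrm{id}$ on $\R^n$. Therefore $G$ is an affine bijection, which forces $m=n$, and $G(K)=f(K)=K'$. The ``in particular'' statement follows at once.

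The main obstacle is the well-definedness in Step 1, namely making the extension independent of the auxiliary choices. The plan avoids any continuity assumption on $f$ by using only the operations $C_\lambda$ with $\lambda=s/t$ exactly. The relation $B$ is never needed in the converse direction.
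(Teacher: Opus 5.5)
Your proof is correct, and it takes a genuinely different route from the paper's.

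\textbf{The paper's route.} It obtains $m=n$ first, from the $\LB$-dimension sentences $\delta_k$ of Proposition~\ref{prop:dimension}; this is a forward reference, and it uses $B$. It then shows by induction that $f$ preserves all finite convex combinations. It defines $F$ as the affine map agreeing with $f$ on the vertices of an inscribed simplex $\Delta$. Agreement spreads from $\Delta$ to all of $K$ through the point $(1-\mu)b+\mu p$ near the barycenter $b$. Finally, bijectivity of $F$ comes from injectivity of $f$ on a small ball.

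\textbf{Your route.} You linearize the germ of $f$ at a single interior point. The operations $C_{s/t}$ give well-definedness and positive homogeneity over all positive reals. This is what excludes Cauchy-type pathology; midpoints alone would not suffice. The operation $C_{1/2}$ gives additivity. Agreement on $K$ is immediate, since $t=1$ is an admissible scale for $v=q-p_0$. Running the same construction on $f^{-1}$ and comparing affine hulls then yields $m=n$ and bijectivity in one stroke.

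\textbf{What each buys.} Your argument is self-contained. It uses neither $B$ nor any definability result. It therefore proves slightly more: a bijection commuting with the operations $C_\lambda$ alone is already affine, which is exactly what Remark~\ref{rem:B-primitive} needs when it says adding $B$ does not change the isomorphism relation. The paper's argument reads the affine map off a finite frame. That is the same barycentric-coordinates-against-a-simplex mechanism that drives the reconstruction in Section~\ref{sec:affine-proved}.

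\textbf{Presentational points.} The requirement $2tu,2tv\in K$ in Step 2 is superfluous. The sentence deriving oddness reads circularly; it should say that additivity gives $G(u)+G(-u)=G(0)=0$, hence $G(-u)=-G(u)$.
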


\begin{proof}
If $F$ is an affine bijection with $F(K)=K'$, then $F$ commutes with all
convex combinations and preserves betweenness, so $F|_K$ is an isomorphism.

Conversely, let $f$ be an isomorphism. First, $m=n$: the sentences
$\delta_k$ of Proposition~\ref{prop:dimension} below (which lie in $\LB$,
hence in $\Laff$) pin the affine dimension, and full-dimensionality gives
$\dim K=m$, $\dim K'=n$.

By induction on the length of the combination, $f$ preserves all finite
convex combinations: $f\bigl(\sum_i\lambda_i x_i\bigr)=\sum_i\lambda_i
f(x_i)$ whenever $\lambda_i\ge 0$, $\sum_i\lambda_i=1$, $x_i\in K$. (For the
inductive step write $\sum_{i\le r}\lambda_i x_i =
C_{\lambda_r}\bigl(\sum_{i<r}\tfrac{\lambda_i}{1-\lambda_r}x_i,\;x_r\bigr)$
when $\lambda_r<1$.)

Since $K$ has nonempty interior it contains an affinely independent
$(n+1)$-tuple $v_0,\dots,v_n$; let $\Delta=\conv\{v_0,\dots,v_n\}\subseteq K$
and let $b$ be the barycenter of $\Delta$, an interior point of $\Delta$. Let
$F\colon\R^n\to\R^n$ be the unique affine map with $F(v_i)=f(v_i)$ for all
$i$. Every $x\in\Delta$ is a convex combination of the $v_i$, so
$f(x)=F(x)$ on $\Delta$; in particular $f(b)=F(b)$.

Now let $p\in K$ be arbitrary. Since $b\in\interior\Delta$, for sufficiently
small $\mu\in(0,1)$ the point $r=(1-\mu)b+\mu p$ lies in $\Delta$. Then
\[
F(r)=f(r)=f\bigl(C_\mu(b,p)\bigr)=(1-\mu)f(b)+\mu f(p)
=(1-\mu)F(b)+\mu f(p),
\]
while $F(r)=(1-\mu)F(b)+\mu F(p)$ by affinity. Hence $f(p)=F(p)$, i.e.,
$f=F|_K$.

Finally $F$ is bijective: if its linear part annihilated some $u\ne 0$, then
two distinct points of a small ball inside $K$ differing by a multiple of $u$
would have equal images, contradicting injectivity of $f$ on $K$. Thus $F$ is
an affine bijection and $F(K)=f(K)=K'$.
\end{proof}

For the betweenness language, the calibration is the fundamental theorem of
projective geometry for maps defined on domains. We use it in the following
form, due to Shiffman.

\begin{theorem}[Shiffman \cite{Shiffman95}]\label{thm:shiffman}
Let $n\ge 2$ and let $U\subseteq\RP^n$ be a connected open set. Every
injective map $g\colon U\to\RP^n$ that takes collinear triples to collinear
triples, and whose image is not contained in a line, is the restriction of a
projective transformation of $\RP^n$.
\end{theorem}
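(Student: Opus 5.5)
Since Theorem~\ref{thm:shiffman} is cited from \cite{Shiffman95}, what follows is only a sketch of how I would reprove it. The strategy is local-to-global. First prove that $g$ agrees with a projective transformation on some small ball. Then propagate this along chains of overlapping balls, using that $U$ is connected.

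\textbf{Step 1: lines go into lines.} Fix an affine chart and a small convex ball $D\subseteq U$. For any line $L$, injectivity together with the preservation of collinear triples shows that $g(L\cap D)$ lies in a single line $g_*(L)$. Indeed, two image points determine a line, and every third image point is collinear with them.

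\textbf{Step 2: nondegeneracy.} This is the step I expect to be the main obstacle. I must show that on every small ball $D$, $g$ sends some $n+2$ points in general position to points in general position. I would argue by dimension. Suppose that for some $k<n$, $g(D)$ lies in a $k$-flat, and choose $k$ minimal. Using Step~1 and injectivity, a line-preserving injection of a $(k+1)$-dimensional slice of $D$ into a $k$-flat forces two distinct lines through a point to have the same image line. Following the pencil, I would then expect the slice to collapse into a single image line, or injectivity to fail. This yields the dichotomy that $g(D)$ lies in a line or spans $\RP^n$. The set of points near which the image spans $\RP^n$ is open, and by the same dichotomy its complement is open too. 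So by connectedness, if the image near some point were contained in a line, it would be contained in a line everywhere. One must then check that this forces the whole image $g(U)$ into one line, which the hypothesis excludes. I would first try to carry out this collapse argument in the plane and then induct on $n$ through planar sections.

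\textbf{Step 3: normalize and use von Staudt.} Compose $g$ with a projective map $P$ so that $P\circ g$ fixes a projective frame in $D$. The complete-quadrangle construction can be performed inside $D$ when $D$ is small relative to the frame; alternatively, use a nested frame, in the spirit of the retraction idea behind Theorem~\ref{thm:harmonic}. By Steps~1--2 it is preserved, so harmonic conjugates are preserved. Iterating the construction, $P\circ g$ fixes every point of $D$ whose frame coordinates are dyadic rationals, with the iterates staying inside $D$. Next show that $P\circ g$ restricted to each chord is monotone. Two points are separated by a harmonic pair exactly when a suitable quadrangle configuration exists, and for a germ of a field automorphism, positivity is detected by being a square. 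A monotone map that is the identity on a dense set is the identity, so $g=P^{-1}$ on $D$.

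\textbf{Step 4: globalize.} If two projective maps agree on a nonempty open set, they coincide. Hence the local projective maps obtained on overlapping balls coincide. Since $U$ is connected, any two balls are joined by a finite chain of overlapping balls, so a single projective transformation agrees with $g$ on all of $U$.
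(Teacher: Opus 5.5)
The paper does not prove this theorem. It quotes it from \cite{Shiffman95}, and the formalization carries it as an explicit hypothesis used only in the forward half of Theorem~\ref{thm:iso-projective}. So your sketch has to stand on its own, and as written it has a genuine gap at exactly the step you flagged.

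\textbf{The gap.} Shiffman's hypothesis is one-directional: collinear triples go to collinear triples. Step~3 silently uses the converse. For a harmonic quadruple to transfer, the quadrangle witnesses $P,X,Y,Z$ must go to a quadrangle. Otherwise $g(P)g(X)$ and $g(Y)g(Z)$ may coincide, $g(a)$ is no longer a diagonal point, and nothing forces $(g(a),g(b);g(c),g(d))=-1$. In Theorem~\ref{thm:iso-projective} this is free, because an $\LB$-isomorphism also preserves $\neg\Coll$; here it is not.

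Step~2 does not supply it, for three reasons.
\begin{itemize}
\item Its target dichotomy, ``$g(D)$ lies in a line or spans $\RP^n$'', is a tautology when $n=2$, which is exactly the base case of your induction.
\item In any dimension, spanning does not by itself stop an individual triple from collapsing.
\item The proposed mechanism, that a pencil must contain two lines with the same image line followed by a ``collapse'', is a dimension count. That says nothing about a map not yet known to be continuous: the two pencils have the same cardinality, and injections between them abound.
\end{itemize}

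\textbf{How to fill it.} What is needed is preservation of non-collinearity, and it has a short proof.

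First, if $g$ maps a nonempty open $W\subseteq U$ into a flat $F$, then $g(U)\subseteq F$. Every $z\in U$ lies on a line through two distinct points of $W$, whose images span a line inside $F$. This also replaces your clopen argument; connectedness is needed only in Step~4.

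Second, in the plane: suppose non-collinear $a,b,c$ had images on a line $M$. Then $U\cap\ell(a,b)$ and $U\cap\ell(a,c)$ both map into $M$. In affine coordinates with $a=0$ and these lines as axes, $z=(z_1,z_2)$ is the midpoint of $(2z_1,0)$ and $(0,2z_2)$. So every $z$ near $a$ maps into $M$, and the first fact gives $g(U)\subseteq M$, contrary to hypothesis.

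Third, for $n\ge3$: the same computation only puts $g(\sigma\cap D_a)$ into $M$, where $\sigma$ is the plane of $a,b,c$ and $D_a$ is a small ball about $a$.
\begin{itemize}
\item Choose $y\in D_a$ with $g(y)\notin M$; it exists by the first fact.
\item Take two lines $\lambda\ne\lambda'$ of $\sigma$ through $D_a$, and consider the pieces of the planes $\aff(\{y\}\cup\lambda)$ and $\aff(\{y\}\cup\lambda')$ inside $D_a$.
\item Both pieces map into the plane spanned by $g(y)$ and $M$. The already-proved planar case makes $g$ projective on each, onto open subsets of that plane, both containing $g(y)$.
\item These images overlap in an open set, while the two planes meet only in a line. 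This contradicts injectivity.
\end{itemize}
The same overlap argument, run with hyperplane sections of a $j$-flat and the theorem in dimension $j-1$, gives preservation of projective independence. That is what the frame normalization in Step~3 needs.

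\textbf{A lesser point: Step~3's monotonicity needs range control.}
\begin{itemize}
\item The square criterion says $y^2$ is the harmonic conjugate of $1$ with respect to $\{y,-y\}$, and $-y$ is that of $y$ with respect to $\{0,\infty\}$. This needs the chart's point $\infty$ and the point $-y$ to lie in $D$.
\item The common-harmonic-pair criterion for separation has, for nested pairs, one witness outside the outer segment. For the pairs $\{\pm2\}$, $\{\pm1\}$ the common pair is $\{0,\infty\}$.
\item $P\circ g$ is only known to preserve harmonic quadruples, not to be a germ of a field automorphism.
\end{itemize}
So a windowing argument like those in Lemma~\ref{lem:net} and Proposition~\ref{prop:crq} still has to be supplied.
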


\begin{lemma}\label{lem:proj-betweenness}
Let $T\in\PGL_{n+1}(\R)$ and let $C\subseteq\R^n$ be a convex set disjoint
from the singular hyperplane of $T$ (the preimage of the hyperplane at
infinity). Then $T|_C$ is injective, $T(C)$ is convex, and $T$ preserves
betweenness on $C$: for $a,x,b\in C$, $x\in[a,b]$ if and only if
$T(x)\in[T(a),T(b)]$.
\end{lemma}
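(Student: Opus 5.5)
I will work with an explicit formula for $T$. Write $T(x)=\dfrac{Ax+u}{\langle w,x\rangle+s}$, where the $(n+1)\times(n+1)$ matrix $\begin{pmatrix}A&u\\ w^{\top}&s\end{pmatrix}$ is invertible. The singular hyperplane is $H=\{\langle w,x\rangle+s=0\}$. If $w=0$, then $T$ is an affine bijection and there is nothing to prove, so assume $w\neq 0$. Put $\phi(x)=\langle w,x\rangle+s$. Since $C$ is convex, hence connected, and misses $H$, the affine function $\phi$ has constant sign on $C$. Replacing the matrix by its negative, I may assume $\phi>0$ on $C$.

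\textbf{Injectivity.} Lift $x\in C$ to $\hat x=(x,1)\in\R^{n+1}$ and let $M$ be the matrix above. Suppose $T(x)=T(y)$. Then $M\hat x$ and $M\hat y$ are proportional, with positive last coordinates $\phi(x),\phi(y)$. By invertibility of $M$, $\hat x$ and $\hat y$ are proportional. Both have last coordinate $1$, so $\hat x=\hat y$ and hence $x=y$.

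\textbf{Key identity.} The main step is the formula, valid for $a,b\in C$ and $t\in[0,1]$,
\[
T\bigl((1-t)a+tb\bigr)=(1-\sigma)T(a)+\sigma T(b),\qquad
\sigma=\frac{t\,\phi(b)}{(1-t)\phi(a)+t\,\phi(b)} .
\]
To check it, note that the numerator $Ax+u$ and the denominator $\phi(x)$ are both affine in $x$. Expanding $T((1-t)a+tb)$ then gives a combination of $T(a)$ and $T(b)$ with weights $(1-t)\phi(a)$ and $t\phi(b)$, normalized by their sum.

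\textbf{Betweenness.} Because $\phi(a),\phi(b)>0$, the map $t\mapsto\sigma$ is a continuous, strictly increasing bijection of $[0,1]$ onto itself; in fact it is a Möbius map fixing $0$ and $1$. Hence $T$ maps $[a,b]$ onto $[T(a),T(b)]$. This gives convexity of $T(C)$ at once, and also the forward implication $x\in[a,b]\Rightarrow T(x)\in[T(a),T(b)]$.

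For the converse, suppose $T(x)\in[T(a),T(b)]$. By the surjectivity just shown, $T(x)=T(y)$ for some $y\in[a,b]\subseteq C$. Injectivity on $C$ then gives $x=y\in[a,b]$.

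The only real subtlety is the sign normalization. It must be arranged so that the weights $(1-t)\phi(a)$ and $t\phi(b)$ are positive; this is exactly where the hypothesis that $C$ is disjoint from $H$ is used. The degenerate case $a=b$ is trivial.
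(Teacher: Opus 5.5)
Your proof is correct, but it takes a different route from the paper's. The paper argues topologically. The segment $[a,b]$ is a connected arc of the line $\ell(a,b)$ that avoids the singular hyperplane. $T$ carries that line projectively onto a line, and the arc homeomorphically onto a compact connected subset of an affine line, i.e.\ a segment. That segment must be $[T(a),T(b)]$ because endpoints go to endpoints. Injectivity is declared clear, and both directions of betweenness and convexity are read off from this. You instead work in the affine chart with the fractional-linear formula. You normalize the sign of the denominator $\phi$ on $C$, which is the one place the hypothesis enters. You then compute the chord reparametrization $\sigma(t)=t\phi(b)/\bigl((1-t)\phi(a)+t\phi(b)\bigr)$ explicitly; its derivative $\phi(a)\phi(b)/\bigl((1-t)\phi(a)+t\phi(b)\bigr)^2>0$ makes it an increasing bijection of $[0,1]$. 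Your version is more elementary and self-contained. It needs neither the fact that compact connected subsets of a line are segments nor the fact that a homeomorphism of an arc onto a segment preserves endpoints. It actually proves injectivity. It also makes explicit that the converse direction of betweenness is surjectivity onto $[T(a),T(b)]$ plus injectivity on $C$, which the paper uses tacitly. As a bonus, it exhibits the fractional-linear reparametrization of chords that Example~\ref{ex:proj-not-aff} alludes to. The paper's argument, in exchange, is shorter and chart-free. One small remark: your case split on $w=0$ is unnecessary, since then the denominator is the nonzero constant $s$ and your computation goes through verbatim.
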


\begin{proof}
Injectivity is clear. Let $a,b\in C$. The segment $[a,b]$ is a connected
subset of the line $\ell(a,b)$ avoiding the singular hyperplane $H_T$. $T$
maps $\ell(a,b)$ projectively onto a line $\ell'$, and maps the connected arc
$[a,b]$ homeomorphically onto a connected subset of $\ell'$ avoiding the
hyperplane at infinity, i.e., onto a compact connected subset of an affine
line: a segment, necessarily $[T(a),T(b)]$ since endpoints map to endpoints.
Betweenness in both directions and convexity of the image follow.
\end{proof}

\begin{theorem}\label{thm:iso-projective}
Let $n\ge2$ and let $K,K'\subseteq\R^n$ be convex sets with nonempty
interior, each either open or closed (bounded or not). A map
$f\colon K\to K'$ is an isomorphism of $\LB$-structures if and only if $f$ is
the restriction of a projective transformation $T\in\PGL_{n+1}(\R)$ whose
singular hyperplane is disjoint from $K$ and which satisfies $T(K)=K'$. In
particular, $K\cong_{\LB}K'$ if and only if $K$ and $K'$ are projectively
equivalent by a transformation keeping the set on the affine side of its
singular hyperplane.
\end{theorem}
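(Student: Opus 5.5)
The ``if'' direction is Lemma~\ref{lem:proj-betweenness}. If $T$ has singular hyperplane disjoint from $K$ and $T(K)=K'$, then $T|_K$ is injective, onto $K'$, and preserves betweenness in both directions. So it is an $\LB$-isomorphism.

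For the ``only if'' direction, let $f\colon K\to K'$ be an $\LB$-isomorphism. I will proceed in four steps.

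\textbf{Step 1: collinearity is preserved.} For $a\ne b$, a point $x\in K$ lies on $\ell(a,b)$ exactly when one of $B(a,x,b)$, $B(x,a,b)$, $B(a,b,x)$ holds, because $K$ is convex and the chord $\ell(a,b)\cap K$ is a segment. Hence $f$ and $f^{-1}$ both map collinear triples to collinear triples.

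\textbf{Step 2: apply Shiffman on the interior.} First I show that $f$ maps $\interior K$ onto $\interior K'$. I use a betweenness characterization of interiority: $x$ is interior iff for every $y\ne x$ there is $z$ with $B(y,x,z)$ and $z\ne x$. This holds for open and for closed convex sets. If $x$ is interior, every segment through $x$ extends beyond it. If $x$ is a boundary point of a closed set, take $y$ interior and let $x$ be the exit point of the ray from $y$ through $x$; then nothing lies beyond $x$. For open $K$ the condition is trivially true and every point is interior.

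Set $U=\interior K$. It is a connected open subset of $\R^n\subseteq\RP^n$. The map $f|_U$ is injective and preserves collinear triples. Its image is the open set $\interior K'$, which is not contained in a line since $n\ge2$. Theorem~\ref{thm:shiffman} then gives $T\in\PGL_{n+1}(\R)$ with $f=T$ on $U$.

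\textbf{Step 3: the singular hyperplane misses $K$.} Since $T(U)=f(U)\subseteq\R^n$, the singular hyperplane $H_T$ misses $U$. A hyperplane missing an open convex set can meet its closure only in boundary points, so it could still touch $\partial K$ or contain an asymptotic direction.

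Suppose $p\in K\cap H_T$ (only possible if $K$ is closed). Choose $q\in U$ and let $x\to p$ along $[q,p)$. Then $T(x)$ escapes to infinity in $\R^n$. On the other hand, $f(x)=T(x)$ lies on the segment $[f(q),f(p)]$, because $B(q,x,p)$ holds and $f$ preserves it. That segment is bounded, a contradiction. So $H_T\cap K=\emptyset$.

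\textbf{Step 4: $f=T$ on the boundary.} For boundary $p$, pick two distinct lines through $p$ meeting $U$. On each line $\ell$, the map $f$ sends $\ell\cap K$ into the line $T(\ell)$; by Step 1 applied to interior points together with $p$, $f(p)$ lies on $T(\ell)$. Two such lines determine the point, so $f(p)=T(p)$. Thus $f=T|_K$.

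Lemma~\ref{lem:proj-betweenness} gives $T(K)$ convex, and $T(K)=f(K)=K'$.

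The main obstacle is Step 3 together with the interior-characterization in Step 2. Closed unbounded sets need care: one must check that the ``segment extends beyond $x$'' criterion correctly identifies interior points there too. I would handle this by noting that the criterion only uses local convex geometry at $x$, so boundedness plays no role.
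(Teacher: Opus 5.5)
Your proof is correct. Its skeleton matches the paper's: Shiffman's theorem on $\interior K$, reached via definability of interiority, and then the blow-up argument along $[q,p)$ to keep $H_T$ off $K$. The paper first uses $\forall p\,\Int(p)$ to match open sets with open and closed with closed; you bypass this by working directly with $f(\interior K)=\interior K'$, which is fine.

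Two small points need tightening.
\begin{itemize}
\item In Step~2, ``let $x$ be the exit point'' should read ``then $x$ is the exit point.'' This is exactly the fact that $[y,z)\subseteq\interior K$ for $y\in\interior K$ and $z\in K$ \cite[Lemma~1.1.9]{Schneider14}.
\item In Step~3, the divergence of $T(x)$ deserves its one line. Write $T=L/h$ with $h$ cutting out $H_T$; then $h(x)\to 0$ while $L(x)\to L(p)\neq 0$ by invertibility.
\end{itemize}

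The genuine difference is Step~4. The paper identifies $f(b)$ with $T(b)$ analytically, in three moves:
\begin{itemize}
\item continuity of $T$ on $[x,b]$, which requires $H_T\cap K=\emptyset$ first, puts $T(b)$ in $[f(x),f(b)]$;
\item injectivity of $T$ off $H_T$ keeps $T(b)$ out of $\interior K'$;
\item the half-open-segment lemma in $K'$ then forces $T(b)=f(b)$.
\end{itemize}
Your argument is purely incidence-theoretic. The point $f(p)$ lies on both $T(\ell)$ and $T(\ell')$, which are distinct projective lines meeting only in $T(p)$. This uses only forward preservation of collinearity together with $f=T$ on $\interior K$.

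Your Step~4 is shorter, and it actually subsumes your Step~3. If $p$ lay on $H_T$, the unique common point $T(p)$ of the two image lines would be at infinity, yet it must equal the affine point $f(p)$. Since Proposition~\ref{prop:interior} holds for arbitrary full-dimensional convex sets, your route also visibly uses no open/closed hypothesis on $K$ or $K'$. The paper's argument instead stays on a single chord through the boundary point and works with betweenness and limits. You trade those topological inputs for a second line through $p$, which is available because $n\ge 2$.
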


\begin{proof}
($\Leftarrow$) is Lemma~\ref{lem:proj-betweenness}.

($\Rightarrow$) Let $f$ be an $\LB$-isomorphism. The sentence
$\forall p\,\Int(p)$ (Proposition~\ref{prop:interior}) holds in open sets and
fails in closed full-dimensional ones, so $K$ and $K'$ are of the same kind.

\emph{Open case.} $K$ is a connected open subset of $\R^n\subseteq\RP^n$,
$f$ is bijective and preserves collinearity in both directions (collinearity
is the $\LB$-formula $\Coll$ of Definition~\ref{def:coll}), and its image
$K'$ is full-dimensional, hence not contained in a line. By
Theorem~\ref{thm:shiffman}, $f=T|_K$ for some $T\in\PGL_{n+1}(\R)$. Since $T$
maps every point of $K$ into $K'\subseteq\R^n$, no point of $K$ lies on the
singular hyperplane $H_T$, and $T(K)=f(K)=K'$.

\emph{Closed case.} Interiority is $\LB$-definable
(Proposition~\ref{prop:interior}), so $f$ restricts to an $\LB$-isomorphism
$\interior K\to\interior K'$, and by the open case
$f|_{\interior K}=T|_{\interior K}$ for some $T\in\PGL_{n+1}(\R)$ with
$H_T\cap\interior K=\emptyset$.

We claim $H_T\cap K=\emptyset$. Suppose $b\in K\cap H_T$; necessarily
$b\in\partial K$. Fix $x\in\interior K$. The half-open segment
$[x,b)$ lies in $\interior K$, so for every $y\in(x,b)$ we have
$f(y)=T(y)$, and from $B(x,y,b)$ we get $B\bigl(f(x),f(y),f(b)\bigr)$, i.e.,
\[
T(y)\in\bigl[f(x),f(b)\bigr]\qquad\text{for all }y\in(x,b),
\]
a bounded set. On the other hand, choose a representative
$A\in\GL_{n+1}(\R)$ of $T$ and write $T=L/h$ on the affine chart, where $h$
is the affine functional cutting out $H_T$. As $y\to b$ along $[x,b)$ we have
$h(y)\to h(b)=0$ while $L(y)\to L(b)\ne0$ (since $A$ is invertible,
$A(b{:}1)\ne0$, and its last coordinate $h(b)$ vanishes); hence
$|T(y)|\to\infty$, a contradiction. So $H_T\cap K=\emptyset$.

Now let $b\in K\cap\partial K$ and $x\in\interior K$ as above. The compact
segment $[x,b]\subseteq K$ avoids $H_T$, so $T$ is continuous on it and
$T(y)\to T(b)$ as $y\to b$; since $T(y)=f(y)\in\bigl[f(x),f(b)\bigr]$ and the
segment is closed, $T(b)\in\bigl[f(x),f(b)\bigr]$. Also
$T(b)=\lim f(y)\in\cl(\interior K')=K'$ ($K'$ closed). Finally
$T(b)\notin\interior K'$: we have $\interior K'=f(\interior K)=
T(\interior K)$, and $T$ is injective off $H_T$, so $T(b)\in T(\interior K)$
would force $b\in\interior K$, false. If $T(b)\ne f(b)$, then $T(b)$ lies on
the half-open segment $\bigl[f(x),f(b)\bigr)$, which is contained in
$\interior K'$ because $f(x)\in\interior K'$ and $f(b)\in K'$ (see, e.g.,
\cite[Lemma~1.1.9]{Schneider14}) --- contradicting
$T(b)\notin\interior K'$. Hence $f(b)=T(b)$ for every boundary point, so
$f=T|_K$ and $T(K)=f(K)=K'$.
\end{proof}

\begin{remark}
Theorem~\ref{thm:iso-projective} fails in dimension~$1$: every bijection of a
segment preserving the linear order or its reverse is an $\LB$-isomorphism,
and there are many non-projective such maps. This is why sections through
\emph{three non-collinear} points, rather than chords, are the transfer
vehicle of Proposition~\ref{prop:sections}.
\end{remark}

\begin{example}[Projective maps are not $\Laff$-maps]\label{ex:proj-not-aff}
On a single chord a projective transformation acts by a fractional-linear
reparametrization whose coefficients depend on the chord; no relabeling
$\lambda\mapsto\sigma(\lambda)$ of the operations turns a non-affine
projectivity into an $\Laff$-isomorphism. Thus the two languages genuinely
calibrate different geometries: $\Laff$-isomorphism is affine equivalence,
$\LB$-isomorphism is projective equivalence.
\end{example}

\section{Basic definability}\label{sec:definability}

Throughout this section $K\subseteq\R^n$ is a convex set. All formulas below
are in $\LB$ unless stated otherwise; since $B\in\Laff$, they are available
in both languages.

\begin{definition}\label{def:coll}
$\Coll(x,y,z):\equiv B(x,y,z)\vee B(y,x,z)\vee B(x,z,y)$.
\end{definition}

\begin{lemma}\label{lem:coll}
For $x,y,z\in K$: $\Coll(x,y,z)$ holds if and only if $x,y,z$ are collinear
in $\R^n$.
\end{lemma}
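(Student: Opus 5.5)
The argument is a direct unwinding of the definition of $\Coll$, with one implication in each direction.

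For the forward implication, suppose $\Coll(x,y,z)$ holds in $K$. Then one of the three disjuncts holds. If $B(x,y,z)$ holds, then $y\in[x,z]$. The segment $[x,z]$ lies on the line through $x$ and $z$ (a point if $x=z$), so all three points lie on one line. The other two disjuncts are handled the same way, since each places one of the three points on the closed segment joining the other two. Here we only use that $B^K$ is the restriction to $K$ of genuine segment membership in $\R^n$, as in Definition~\ref{def:languages}.

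For the converse, suppose $x,y,z\in K$ lie on a common line $\ell$; this includes the degenerate cases where two or all three of the points coincide. Choose an affine parametrization $t\mapsto p+tv$ of $\ell$ with $v\ne0$, and let $\alpha,\beta,\gamma$ be the parameters of $x,y,z$. Among three real numbers one lies weakly between the other two. If $\beta$ is between $\alpha$ and $\gamma$, then $y\in[x,z]$, so $B(x,y,z)$ holds. If $\alpha$ is between $\beta$ and $\gamma$, then $x\in[y,z]$, so $B(y,x,z)$ holds. If $\gamma$ is between $\alpha$ and $\beta$, then $z\in[x,y]$, so $B(x,z,y)$ holds. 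Each case produces one of the disjuncts of $\Coll(x,y,z)$. No convexity of $K$ is needed for this: the relation $B$ is evaluated only at the three given points of $K$, and it refers to membership in the segment of $\R^n$, not to any auxiliary points.

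The only step requiring care is the degenerate configurations. When $x=z$, the relation $B(x,y,x)$ forces $y=x$, as noted after Definition~\ref{def:languages}. The case analysis still applies, because with $\alpha=\gamma$ the parameter $\alpha$ lies between $\beta$ and $\gamma$, giving $B(y,x,z)$, which here is $x\in[y,x]$, true. Coincidences such as $x=y$ are covered the same way. I expect no real obstacle; the proof is just the observation that three reals on a line are totally ordered.
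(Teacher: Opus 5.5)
Your proof is correct and follows the paper's argument: one of three collinear points lies in the closed segment joining the other two, which gives a disjunct of $\Coll$, and the converse is immediate. Your remark that convexity is not needed is also right. The paper invokes it, but the point placed in the segment is already one of the three given points of $K$, so $B^K$ holds directly from its definition.
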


\begin{proof}
If the three points are collinear, one of them lies in the (closed) segment
of the other two, and that segment is contained in $K$ by convexity, so the
corresponding disjunct holds. The converse is immediate.
\end{proof}

\begin{proposition}[Extreme points]\label{prop:extreme}
Let $\Ext(e):\equiv\forall a\,\forall b\,\bigl(B(a,e,b)\rightarrow
(e=a\vee e=b)\bigr)$. Then for $e\in K$, $\Ext(e)$ holds if and only if $e$
is an extreme point of $K$.
\end{proposition}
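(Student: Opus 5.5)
The plan is to unwind the formula $\Ext(e)$ directly against the standard definition. An extreme point of $K$ is a point $e\in K$ admitting no representation $e=(1-t)a+tb$ with $a,b\in K$, $a\ne b$, $t\in(0,1)$. The same fact reads: whenever $e\in[a,b]$ with $a,b\in K$, $e$ is an endpoint. Since the quantifiers of $\Ext$ range over $K$, and $B^K(a,e,b)$ means $e\in[a,b]$ by Definition~\ref{def:languages}, the two statements should match almost verbatim. I will prove the two implications separately.

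($\Leftarrow$) Suppose $e$ is extreme and $B(a,e,b)$ holds, i.e.\ $e=(1-t)a+tb$ for some $t\in[0,1]$. If $a=b$ then $e=a$, using the remark in Definition~\ref{def:languages} that $B(a,x,a)$ forces $x=a$. If $a\ne b$, extremality rules out $t\in(0,1)$. Hence $t\in\{0,1\}$, giving $e=a$ or $e=b$.

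($\Rightarrow$) I argue by contraposition. Suppose $e$ is not extreme, so $e=(1-t)a+tb$ with $a\ne b$ in $K$ and $0<t<1$. Then $B(a,e,b)$ holds. Moreover $e\ne a$, since $e-a=t(b-a)\ne0$. Likewise $e\ne b$, since $e-b=(1-t)(a-b)\ne0$. So this pair $a,b$ witnesses the failure of $\Ext(e)$.

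There is no real obstacle here. The only points needing care are the degenerate case $a=b$ and the check that an interior parameter $t$ really yields $e\ne a,b$. Neither convexity nor full-dimensionality of $K$ is needed beyond $[a,b]\subseteq K$ for $a,b\in K$, so the statement holds for every convex $K$, as asserted.
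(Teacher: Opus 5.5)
Your proof is correct and takes the same route as the paper, which observes that $e$ fails to be extreme iff $e\in(a,b)$ for distinct $a,b\in K$, iff $B(a,e,b)$ holds with $e\notin\{a,b\}$. Your version simply spells out the two directions, including the degenerate case $a=b$ and the check that $t\in(0,1)$ gives $e\ne a,b$.
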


\begin{proof}
$e$ fails to be extreme iff $e\in(a,b)$ for some distinct $a,b\in K$, iff
$B(a,e,b)$ holds with $e\notin\{a,b\}$.
\end{proof}

\begin{proposition}[Relative interior]\label{prop:interior}
Let
\[
\Int(p):\equiv\forall q\,\Bigl(q=p\;\vee\;\exists r\,\bigl(r\ne p\wedge
B(q,p,r)\bigr)\Bigr).
\]
Then for $p\in K$ ($K$ any nonempty convex set), $\Int(p)$ holds if and only
if $p\in\relint K$. In particular, for full-dimensional $K$, $\Int$ defines
$K\cap\interior K$, and this holds for $K$ open, closed, or neither, bounded
or not. We write $\Bd(p):\equiv\neg\Int(p)$.
\end{proposition}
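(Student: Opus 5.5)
The proof is a double inclusion, and the only geometric input is the standard description of the relative interior. For a nonempty convex set $K$, a point $p\in K$ lies in $\relint K$ if and only if for every $q\in K$ with $q\ne p$ there is $\varepsilon>0$ such that $p+\varepsilon(p-q)\in K$. In words, every segment from a point of $K$ to $p$ can be prolonged slightly beyond $p$ inside $K$. I would quote this characterization (e.g.\ from Rockafellar, Theorem~6.4, or \cite{Schneider14}) rather than reprove it. Once that is in place, the formula $\Int(p)$ is a literal transcription: writing $r=p+\varepsilon(p-q)$ with $\varepsilon>0$ is exactly the condition $r\ne p$ and $B(q,p,r)$ with $q\ne p$.

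For $(\Rightarrow)$, suppose $p\in\relint K$ and let $q\in K$ with $q\ne p$. Since $q\in\aff K$, the point $r_\varepsilon=p+\varepsilon(p-q)$ lies in $\aff K$ for all $\varepsilon$. Relative interiority gives a relative neighborhood of $p$ in $\aff K$ contained in $K$, so $r_\varepsilon\in K$ for small $\varepsilon>0$. Such an $r_\varepsilon$ is different from $p$, and $p=\frac{1}{1+\varepsilon}r_\varepsilon+\frac{\varepsilon}{1+\varepsilon}q\in[q,r_\varepsilon]$, so $B(q,p,r_\varepsilon)$ holds. The disjunct $q=p$ covers the remaining case.

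For $(\Leftarrow)$, assume $\Int(p)$. First, for any $q\in K$ with $q\ne p$, a witness $r$ satisfies $r\ne p$ and $p\in[q,r]$, and $p\ne q$ as well. Hence $r=p+\varepsilon(p-q)$ for some $\varepsilon>0$, which is exactly the prolongation condition. To conclude without relying only on the cited theorem, I would argue directly. Pick a point $c\in\relint K$, which exists because $K$ is nonempty and convex (for example the barycenter of a maximal affinely independent subset of $K$). If $p=c$ we are done. Otherwise apply $\Int(p)$ with $q=c$ to get $r\in K$ and $\mu\in(0,1)$ with $p=(1-\mu)c+\mu r$. The segment principle (the cited \cite[Lemma~1.1.9]{Schneider14}, in its relative-interior form) says that $[c,r)\subseteq\relint K$ whenever $c\in\relint K$ and $r\in K$. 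Therefore $p\in\relint K$.

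The \emph{in particular} clause follows immediately. When $K$ is full-dimensional, $\aff K=\R^n$, so $\relint K=\interior K$, and the argument above never used openness, closedness or boundedness of $K$. The only point needing care is the choice of a relative-interior base point $c$ together with the relative version of the segment lemma. Everything else is unwinding the definitions of $B$ and of the formula $\Int$.
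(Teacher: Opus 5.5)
Your proof is correct and follows the paper's argument. The forward direction prolongs the segment past $p$ using relative openness in $\aff K$. The converse applies $\Int(p)$ at a base point $c\in\relint K$ and then uses the segment lemma \cite[Lemma~1.1.9]{Schneider14}; the paper phrases this second step as a contrapositive, but it is the same argument.
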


\begin{proof}
If $p\in\relint K$ and $q\in K\setminus\{p\}$, then $r=p+t(p-q)$ lies in
$\aff K$, hence in $\relint K\subseteq K$ for small $t>0$ (relative
openness), and satisfies $B(q,p,r)$, $r\ne p$. If
$p\in K\setminus\relint K$, choose $q\in\relint K$ (nonempty for a nonempty
convex set); then $q\ne p$. If some $r\in K$, $r\ne p$, satisfied
$B(q,p,r)$, then $p\in(q,r)$; but the half-open segment $[q,r)$ from a
relative-interior point to any point of $K$ lies in $\relint K$
\cite[Lemma~1.1.9]{Schneider14}, so $p\in\relint K$, a contradiction. Hence
the formula fails at this $q$. (If $K=\{p\}$, both sides hold trivially.)
\end{proof}

\begin{proposition}[Convex hulls of tuples]\label{prop:hull}
Define $H_1(z;x_1):\equiv z=x_1$ and, recursively,
\[
H_r(z;x_1,\dots,x_r):\equiv\exists w\,\bigl(H_{r-1}(w;x_2,\dots,x_r)\wedge
B(x_1,z,w)\bigr).
\]
Then $H_r(z;\bar x)$ holds in $K$ if and only if
$z\in\conv\{x_1,\dots,x_r\}$.
\end{proposition}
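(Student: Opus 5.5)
The plan is induction on $r$, using the elementary identity
\[
\conv\{x_1,\dots,x_r\}=\bigcup_{w\in\conv\{x_2,\dots,x_r\}}[x_1,w]
\]
together with the fact that every point involved lies in $K$ by convexity.

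For the base case $r=1$, $H_1(z;x_1)$ is $z=x_1$, and $\conv\{x_1\}=\{x_1\}$, so there is nothing to prove.

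For the inductive step, assume the claim for $r-1$ and fix $z,x_1,\dots,x_r\in K$.

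First suppose $H_r(z;\bar x)$ holds in $K$. Then there is a witness $w\in K$ with $H_{r-1}(w;x_2,\dots,x_r)$ and $B(x_1,z,w)$. By the inductive hypothesis, $w\in\conv\{x_2,\dots,x_r\}$. By the definition of $B^K$, $z\in[x_1,w]$. Hence $z$ is a convex combination of $x_1$ and a point of $\conv\{x_2,\dots,x_r\}$, so $z\in\conv\{x_1,\dots,x_r\}$.

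Conversely, suppose $z=\sum_i\lambda_i x_i$ with $\lambda_i\ge0$ and $\sum_i\lambda_i=1$. If $\lambda_1=1$, then $z=x_1$. Take $w=x_2$: then $w\in K$, $H_{r-1}(x_2;x_2,\dots,x_r)$ holds by the inductive hypothesis, and $B(x_1,x_1,x_2)$ holds. Otherwise set
\[
w=\sum_{i\ge2}\frac{\lambda_i}{1-\lambda_1}x_i ,
\]
which lies in $\conv\{x_2,\dots,x_r\}$. This $w$ is in $K$ because $K$ is convex, and this is the one place where convexity is used: the existential quantifier ranges over $K$, so the witness must belong to the universe. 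The inductive hypothesis gives $H_{r-1}(w;x_2,\dots,x_r)$ in $K$. Moreover $z=\lambda_1x_1+(1-\lambda_1)w\in[x_1,w]$, so $B(x_1,z,w)$ holds.

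No real obstacle is expected. The only points needing care are that the witness $w$ lies in the universe $K$, and the degenerate case $\lambda_1=1$, both handled above.
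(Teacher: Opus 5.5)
Your proof is correct and is the paper's argument: induction on $r$ via the decomposition $\conv\{x_1,\dots,x_r\}=\bigcup_{w\in\conv\{x_2,\dots,x_r\}}[x_1,w]$, with convexity of $K$ ensuring the existential witness $w$ lies in the universe. Your separate treatment of the case $\lambda_1=1$ simply spells out a detail the paper leaves implicit.
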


\begin{proof}
$\conv\{x_1,\dots,x_r\}=\bigcup_{w\in\conv\{x_2,\dots,x_r\}}[x_1,w]$, and all
witnesses lie in $K$ by convexity. Induct on $r$.
\end{proof}

\begin{proposition}[Affine dependence]\label{prop:dep}
For $m\ge2$ define
\[
\Dep_m(x_1,\dots,x_m):\equiv
\bigvee_{\substack{\{1,\dots,m\}=I\sqcup J\\ I,J\ne\emptyset}}
\exists z\,\bigl(H_{|I|}(z;\bar x_I)\wedge H_{|J|}(z;\bar x_J)\bigr).
\]
Then $\Dep_m(\bar x)$ holds in $K$ if and only if $x_1,\dots,x_m$ are
affinely dependent.
\end{proposition}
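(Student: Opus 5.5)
The plan is to reduce the statement to Radon's theorem in its exact-partition form. By Proposition~\ref{prop:hull}, for each split $\{1,\dots,m\}=I\sqcup J$ with $I,J\ne\emptyset$, the disjunct $\exists z\,\bigl(H_{|I|}(z;\bar x_I)\wedge H_{|J|}(z;\bar x_J)\bigr)$ holds in $K$ exactly when some $z\in K$ lies in $\conv\bar x_I\cap\conv\bar x_J$. Any point of this intersection already lies in $K$, since $K$ is convex and contains the $x_i$. So the disjunct holds in $K$ if and only if $\conv\bar x_I\cap\conv\bar x_J\neq\emptyset$ in $\R^n$, and the quantifier relativized to $K$ costs nothing. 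It therefore suffices to prove the purely geometric claim: $x_1,\dots,x_m$ are affinely dependent if and only if there is a partition into two nonempty parts whose convex hulls meet.

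For ($\Leftarrow$), I would suppose $z=\sum_{i\in I}\alpha_ix_i=\sum_{j\in J}\beta_jx_j$ with $\alpha,\beta\ge0$ and $\sum\alpha_i=\sum\beta_j=1$. Setting $c_i=\alpha_i$ for $i\in I$ and $c_j=-\beta_j$ for $j\in J$ gives $\sum c_kx_k=0$ and $\sum c_k=0$. The coefficients are not all zero because $\sum_I\alpha_i=1$, so this is a nontrivial affine dependence. Disjointness of $I$ and $J$ is essential here: it guarantees that the two representations do not cancel index by index.

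For ($\Rightarrow$), I would take a nontrivial dependence $\sum c_kx_k=0$ with $\sum c_k=0$, and let $P=\{k:c_k>0\}$. Since the $c_k$ are not all zero and sum to zero, $P$ and $\{k:c_k<0\}$ are both nonempty. Put $I=P$ and $J=\{1,\dots,m\}\setminus P$, which is nonempty and absorbs the indices with $c_k=0$. With $s=\sum_{k\in P}c_k>0$, the point $z=\sum_{k\in P}(c_k/s)x_k=\sum_{k\notin P}(-c_k/s)x_k$ is a convex combination on each side, with zero weights allowed on $J$. This places $z$ in both hulls, and $z\in K$ by convexity.

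The only delicate point is the bookkeeping: $I$ and $J$ must form an exact partition of all $m$ indices, not merely of the support of the dependence. Placing the zero-coefficient indices in $J$ handles this. Repeated points also need care, for example $x_1=x_2$ should count as dependent. This case is covered automatically, since the partition $\{1\},\{2,\dots,m\}$ puts $x_1=x_2$ in both hulls when $m\ge2$. No step presents a real obstacle.
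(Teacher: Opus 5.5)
Your proof is correct and follows the paper's argument: for one direction you split a nontrivial dependence by sign and normalize by $s=\sum_{c_k>0}c_k$, and for the other you subtract the two convex-combination expressions. Your placement of the zero-coefficient indices into $J$ is worthwhile care, because the paper's sign classes $\{a_i>0\}$ and $\{a_i<0\}$ do not by themselves partition $\{1,\dots,m\}$, which the formula $\Dep_m$ requires.
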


\begin{proof}
If $\sum a_ix_i=0$ with $\sum a_i=0$ and $a\ne0$, then both sign classes
$I=\{i:a_i>0\}$ and $J=\{i:a_i<0\}$ are nonempty, and normalizing by
$s=\sum_{I}a_i>0$ exhibits a common point
$z=\sum_I(a_i/s)x_i=\sum_J(-a_j/s)x_j$ of the two hulls; $z\in K$ by
convexity. Conversely, a common point of the two hulls yields, by
subtracting the two convex-combination expressions, a nonzero affine
dependence. (Repeated points are dependent and are caught by a two-element
partition.)
\end{proof}

\begin{proposition}[Dimension]\label{prop:dimension}
For $k\ge 1$ let
\[
\delta_k:\equiv\exists x_1\cdots\exists x_{k+1}\,
\neg\Dep_{k+1}(x_1,\dots,x_{k+1}).
\]
Then for a nonempty convex
$K\subseteq\R^n$, $K\models\delta_k$ if and only if $\dim\aff K\ge k$.
Consequently the affine dimension of a convex set --- in particular the
ambient dimension of a full-dimensional one --- is determined by its
$\LB$-theory.
\end{proposition}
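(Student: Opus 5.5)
The plan is to reduce the statement directly to Proposition~\ref{prop:dep}, which already identifies $\Dep_{k+1}$ with affine dependence of $(k+1)$-tuples taken from $K$. Unwinding the definition, $K\models\delta_k$ holds exactly when $K$ contains $k+1$ points $x_1,\dots,x_{k+1}$ at which $\Dep_{k+1}$ fails. By that proposition, this means $K$ contains an affinely independent $(k+1)$-tuple. Proposition~\ref{prop:dep} is stated for $m\ge2$, and $k\ge1$ gives $m=k+1\ge2$, so it applies. The whole task therefore reduces to the elementary fact that a nonempty set $K$ contains $k+1$ affinely independent points if and only if $\dim\aff K\ge k$.

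For that fact I will argue both directions separately.

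\begin{itemize}
\item \emph{Independent points give dimension.} If $x_1,\dots,x_{k+1}\in K$ are affinely independent, then their affine span is a $k$-dimensional flat contained in $\aff K$. Hence $\dim\aff K\ge k$.
\item \emph{Dimension gives independent points.} Suppose $\dim\aff K=d\ge k$. The affine hull of any set is spanned by affine combinations of its points, so $K$ contains an affine basis $y_0,\dots,y_d$ of $\aff K$. To see this, take a maximal affinely independent subset of $K$. Its affine span contains every point of $K$, since otherwise the subset could be enlarged. So its affine span is $\aff K$, and the subset has $d+1$ elements. Any $k+1$ of these $d+1$ points remain affinely independent.
\end{itemize}

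Combining the two directions, $K\models\delta_k$ if and only if $\dim\aff K\ge k$.

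For the consequence, note that $\dim\aff K$ is the largest $k$ with $K\models\delta_k$. Each $\delta_k$ is an $\LB$-sentence, so the $\LB$-theory determines which $\delta_k$ hold, and hence determines $\dim\aff K$. When $K$ is full-dimensional in $\R^n$ this number is $n$; equivalently, $K\models\delta_n\wedge\neg\delta_{n+1}$.

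I expect no real obstacle. The only points needing care are that the witnesses $x_i$ are quantified over $K$ itself, and that the hypothesis $m\ge2$ of Proposition~\ref{prop:dep} is met. Both are immediate.
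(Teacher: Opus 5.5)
Your proof is correct and follows the paper's argument: use Proposition~\ref{prop:dep} to turn $\neg\Dep_{k+1}$ into affine independence of a $(k+1)$-tuple from $K$, then apply the elementary fact that such tuples exist iff $\dim\aff K\ge k$. You prove that fact via a maximal affinely independent subset, which the paper leaves implicit. One small point: for a one-point $K$ no $\delta_k$ holds, so your ``largest $k$'' should be read as $0$ in that case.
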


\begin{proof}
A tuple is affinely independent iff it admits no Radon partition
(Proposition~\ref{prop:dep}; cf.\ \cite{Matousek02}), and affinely
independent $(k+1)$-tuples exist in $K$ iff $\dim\aff K\ge k$.
\end{proof}

The next proposition is the transfer engine announced in the introduction.

\begin{proposition}[Definable planar sections]
\label{prop:sections}
Let $K\subseteq\R^n$ ($n\ge2$) be convex, and let $p_1,p_2,p_3\in K$ be
non-collinear; put $A=\aff\{p_1,p_2,p_3\}$, a two-dimensional affine
subspace. Then:
\begin{enumerate}[label=\textup{(\roman*)}]
\item $K\cap A=\{z\in K:\Dep_4(p_1,p_2,p_3,z)\}$; in particular the section
is definable with parameters $p_1,p_2,p_3$.
\item $K\cap A$ is convex, closed under every operation $C_\lambda$, has
nonempty relative interior, and is compact whenever $K$ is. Via any affine
identification $A\cong\R^2$ it is a planar convex set with nonempty
interior, and the identification is an isomorphism for both languages.
\item For every $\LB$-formula (respectively $\Laff$-formula)
$\varphi(\bar x)$ there is a formula $\varphi^{\circ}(\bar x;p_1,p_2,p_3)$ of
the same language such that for all tuples $\bar a$ from $K\cap A$,
\[
K\models\varphi^{\circ}(\bar a;p_1,p_2,p_3)
\iff
(K\cap A)\models\varphi(\bar a).
\]
\end{enumerate}
\end{proposition}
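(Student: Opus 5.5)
The plan is to handle the three parts in order, with (i) doing the geometric work and (iii) being a standard relativization.

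For (i), I would use Proposition~\ref{prop:dep}. For $z\in K$, $\Dep_4(p_1,p_2,p_3,z)$ holds if and only if $p_1,p_2,p_3,z$ are affinely dependent. Since $p_1,p_2,p_3$ are affinely independent (they are non-collinear), this happens exactly when $z\in\aff\{p_1,p_2,p_3\}=A$. The witnesses $z$ that Proposition~\ref{prop:dep} quantifies over lie in $K$ by convexity, so no quantifier escapes the body.

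For (ii), the set $K\cap A$ is an intersection of convex sets, hence convex. Closure under $C_\lambda$ holds because both $K$ and $A$ are closed under affine combinations with coefficients in $[0,1]$. The section contains the nondegenerate triangle $\conv\{p_1,p_2,p_3\}$, so its relative interior in $A$ is nonempty. If $K$ is compact, then $K\cap A$ is compact, since $A$ is closed. Any affine identification $\R^2\to A$ commutes with convex combinations and preserves segments, so it is an $\Laff$-isomorphism onto its image. This is the easy direction of Theorem~\ref{thm:iso-affine}, applied to a map between affine spaces of the same dimension.

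For (iii), I would define $\varphi^\circ$ by induction on $\varphi$, relativizing every quantifier to the definable set $S(z):\equiv\Dep_4(p_1,p_2,p_3,z)$:
\[
(\exists y\,\psi)^\circ:\equiv\exists y\,\bigl(S(y)\wedge\psi^\circ\bigr).
\]
The connectives commute with ${}^\circ$. Atomic formulas are left unchanged. This is legitimate for two reasons. First, $B$ restricted to $K\cap A$ is the betweenness relation of the convex set $K\cap A$, because segments between points of $K\cap A$ lie in $K\cap A$. Second, by (ii) the interpretation of each $C_\lambda$ on $K$ maps $K\cap A$ into itself, so $\Laff$-terms built from parameters in $K\cap A$ evaluate the same way in both structures. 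The equivalence is then proved by the usual induction on formulas for arbitrary assignments from $K\cap A$, with (i) supplying the step that the relativized quantifiers range exactly over $K\cap A$.

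I expect no step to be a real obstacle. The only point needing care is in (iii): the substructure must be closed under the function symbols, so that terms need no relativization. This is exactly where (ii)'s closure under $C_\lambda$ is used. In the $\LB$ case the issue does not arise.
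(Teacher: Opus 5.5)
Your proposal is correct and follows the paper's proof essentially step for step. Part (i) uses Proposition~\ref{prop:dep} together with the affine independence of $p_1,p_2,p_3$. Part (ii) follows from the closure properties of $K\cap A$. Part (iii) relativizes every quantifier to $\Dep_4(p_1,p_2,p_3,\cdot)$, with atomic formulas absolute because $K\cap A$ is closed under the $C_\lambda$.
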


\begin{proof}
(i) Since $p_1,p_2,p_3$ are affinely independent, the quadruple
$(p_1,p_2,p_3,z)$ is affinely dependent if and only if $z\in A$; apply
Proposition~\ref{prop:dep}.

(ii) $A$ is an affine subspace and $K$ is convex, so $K\cap A$ is convex and
closed under the $C_\lambda$; its relative interior contains
$\relint\conv\{p_1,p_2,p_3\}\ne\emptyset$. Compactness is inherited. Affine
charts preserve $B$ and commute with the $C_\lambda$.

(iii) Define $\varphi^{\circ}$ by relativizing every quantifier of
$\varphi$ to the formula $\Dep_4(p_1,p_2,p_3,\cdot)$. Atomic formulas are
absolute between $K$ and $K\cap A$: the relation $B$ on $K\cap A$ is the
restriction of $B$ on $K$, and every $\Laff$-term with arguments in
$K\cap A$ takes its value in $K\cap A$ by (ii), so equalities of terms are
absolute as well. The claim follows by induction on $\varphi$, using (i) at
each quantifier step.
\end{proof}

\begin{remark}
The scheme $\varphi\mapsto\varphi^{\circ}$ turns every sentence $\sigma$
about planar convex bodies into two sentences about $n$-dimensional ones:
``some section satisfies $\sigma$'' and ``every section satisfies
$\sigma$.'' Sections~\ref{sec:vonstaudt} and~\ref{sec:noncompact} use both.
The restriction to \emph{planar} sections is essential: chords ($1$-dimensional
sections) carry only an order, which by the remark after
Theorem~\ref{thm:iso-projective} is far too floppy.
\end{remark}

\begin{proposition}[Parallelism, in $\Laff$]\label{prop:parallel}
For $K\subseteq\R^n$ convex, define the $\Laff$-formula
\begin{align*}
\Par(a,b;c,d):\equiv\;& a\ne b\wedge c\ne d\wedge{}\\
&\Bigl[\exists x\,\exists y\,\bigl(B(a,x,b)\wedge B(a,y,b)\wedge x\ne y
\wedge C_{1/2}(x,d)=C_{1/2}(y,c)\bigr)\\
&\;\vee\;
\exists x\,\exists y\,\bigl(B(c,x,d)\wedge B(c,y,d)\wedge x\ne y
\wedge C_{1/2}(x,b)=C_{1/2}(y,a)\bigr)\Bigr].
\end{align*}
Then $\Par(a,b;c,d)$ holds if and only if the lines $\ell(a,b)$ and
$\ell(c,d)$ are parallel or equal.
\end{proposition}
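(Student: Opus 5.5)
The plan is to read the bracketed clause geometrically: the condition $C_{1/2}(x,d)=C_{1/2}(y,c)$ says $x+d=y+c$, i.e.\ $d-c=y-x$. The first disjunct therefore asserts that $d-c$ is a nonzero difference of two distinct points of $[a,b]$, hence a nonzero multiple of $b-a$. The second disjunct, with the roles of the pairs exchanged, asserts that $b-a=y-x$ for distinct $x,y\in[c,d]$. I will prove the two implications separately.

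\emph{Soundness.} Suppose the first disjunct holds with witnesses $x,y$. Since $x,y\in[a,b]$ are distinct, $y-x=t(b-a)$ for some $t\ne0$. Then $d-c=t(b-a)$, so the direction vectors of $\ell(a,b)$ and $\ell(c,d)$ are proportional, and the lines are parallel or equal. The second disjunct is handled symmetrically. The clauses $a\ne b$ and $c\ne d$ ensure that both lines are defined.

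\emph{Completeness.} Suppose $a\ne b$, $c\ne d$, and $d-c=s(b-a)$ with $s\ne0$. By the symmetric roles of the two pairs, I may assume $|s|\le1$; otherwise $b-a=s^{-1}(d-c)$ with $|s^{-1}|<1$, and I use the second disjunct instead. Now I need $x,y\in[a,b]$ with $y-x=s(b-a)$.
\begin{itemize}[leftmargin=2em]
\item If $s>0$, take $x=a$ and $y=C_s(a,b)=a+s(b-a)$.
\item If $s<0$, take $x=b$ and $y=b+s(b-a)=C_{1+s}(a,b)$.
\end{itemize}
In both cases $x\ne y$ because $s\ne0$. It remains to confirm that the equation $C_{1/2}(x,d)=C_{1/2}(y,c)$ is evaluated inside $K$. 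Both sides are midpoints of points of $K$ (namely $x,y\in[a,b]\subseteq K$ and $c,d\in K$), so they lie in $K$ by convexity, and the equation $x+d=y+c$ holds in $\R^n$, hence in $K$.

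\emph{Where care is needed.} The only real subtlety is that the witnesses must lie in $K$, which rules out using the full scalar multiple directly. This is exactly why the formula has two disjuncts: whichever of the two segments is longer contains a translate of the shorter direction vector. The existential quantifiers range over $K$, and the witnesses lie on $[a,b]$ or $[c,d]$, which are contained in $K$ by convexity. No further obstacle is expected.
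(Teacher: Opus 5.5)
Your proposal is correct and follows the paper's proof. Both read the midpoint equation as the vector identity $y-x=d-c$ and get soundness from proportionality. For completeness, both swap the pairs so the scalar has absolute value at most $1$ and use endpoint-anchored witnesses on the longer segment. The only difference is which endpoint anchors the witnesses in the negative case, and that is immaterial.
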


\begin{proof}
$C_{1/2}(x,d)=C_{1/2}(y,c)$ says $x+d=y+c$, i.e., $x-y=c-d$. If
$x,y\in[a,b]$ are distinct, $x-y$ is a nonzero multiple of $b-a$; hence
$c-d\parallel b-a$. Conversely, if the lines are parallel or equal, then
after swapping the roles of the two pairs if necessary we may assume
$|c-d|\le|a-b|$; writing $c-d=t(b-a)$ with $0<|t|\le 1$, the points
$x=a+t(b-a)$, $y=a$ (if $t>0$; symmetrically if $t<0$) lie in $[a,b]$ and
witness the first disjunct. The argument is independent of the ambient
dimension.
\end{proof}

\begin{example}[Central symmetry]\label{ex:centralsym}
The $\Laff$-sentence $\exists c\,\forall p\,\exists p'\,
\bigl(C_{1/2}(p,p')=c\bigr)$ holds in a convex body iff the body is centrally
symmetric. It already separates, e.g., a cube from a generic simplex-like
polytope in any dimension, and a square from a generic convex
quadrilateral --- bodies that may be projectively equivalent. This is the
simplest illustration that $\Laff$-equivalence is strictly finer than
$\LB$-equivalence on standard bodies.
\end{example}

\section{Tameness: definable sets, decidability, and the absence of grids}
\label{sec:tame}

\begin{proposition}\label{prop:semialgebraic}
Let $K\subseteq\R^n$ be a semialgebraic convex set with nonempty interior.
Then every $\LB$-formula, with parameters from $K$, defines a semialgebraic
subset of the appropriate power of $K$. The same holds for $\Laff$-formulas,
semialgebraic over the parameters together with the indices $\lambda$
occurring in the formula.
\end{proposition}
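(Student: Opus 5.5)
The plan is to interpret the structure $K$ inside the real field $(\R,+,\cdot,<)$ and then invoke Tarski--Seidenberg. Since $K$ is semialgebraic, its universe is a semialgebraic subset of $\R^n$, and each element is represented by its $n$ coordinates. The basic relations then become first-order formulas of the ordered field, with parameters drawn from the coefficients defining $K$.

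\textbf{Atomic translation.} For betweenness, $B(a,x,b)$ becomes
\[
\exists t\,\bigl(0\le t\le1\wedge x=(1-t)a+tb\bigr),
\]
read coordinatewise. This is a formula of ordered fields in the $3n$ coordinate variables, and it defines a semialgebraic set. For $\Laff$, the graph of $C_\lambda$ is the set $\{(p,q,r): r=(1-\lambda)p+\lambda q\}$. This set is semialgebraic over the parameter $\lambda$, so each $\Laff$-term $\tau(\bar x)$ is a polynomial map, affine in $\bar x$, with coefficients in $\Z[\bar\lambda]$, where $\bar\lambda$ lists the indices occurring in the term. An atomic equation $\tau_1=\tau_2$, or $B$ applied to terms, therefore translates into a field formula over the $\lambda$'s. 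Equality of elements becomes coordinatewise equality.

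\textbf{Induction on formulas.} I would define the translation $\varphi\mapsto\varphi^{\R}$ by sending connectives to connectives and relativizing quantifiers:
\[
\exists y\,\psi \;\mapsto\; \exists y_1\cdots\exists y_n\,\bigl(\chi_K(\bar y)\wedge\psi^{\R}\bigr),
\]
where $\chi_K$ is a field formula defining $K$. A routine induction then gives, for $\bar a\in K^k$,
\[
K\models\varphi(\bar a)\iff\R\models\varphi^{\R}(\bar a).
\]
The set defined by $\varphi$ with parameters $\bar c$ is therefore $\{\bar a\in K^k:\R\models\varphi^{\R}(\bar a,\bar c)\}$. This is a set defined by a first-order formula over $\R$ with real parameters: the coordinates of $\bar c$, the coefficients of $\chi_K$, and the $\lambda$'s. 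By Tarski--Seidenberg quantifier elimination, such a set is semialgebraic over those parameters.

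\textbf{Main obstacle.} The only step needing care is the bookkeeping of parameters. The claim "semialgebraic over the parameters together with the indices $\lambda$" must also allow the constants defining $K$. I will read the statement as asserting semialgebraicity with real parameters, which makes this automatic. If the finer version is wanted, I will note explicitly that the defining coefficients of $K$ enter as additional parameters.

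I will also check that the infinitely many symbols $C_\lambda$ cause no trouble. Any single formula mentions only finitely many of them, so only finitely many $\lambda$'s appear in any one translation.
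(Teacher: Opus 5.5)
Your proposal is correct and follows essentially the same route as the paper: translate $B$ and the polynomial maps $C_\lambda$ into the ordered field, relativize quantifiers to a defining formula for $K$, and invoke Tarski--Seidenberg. You are also right that the coefficients defining $K$ must be counted among the parameters. This sharpens the statement's phrase ``over the parameters together with the indices $\lambda$,'' which the paper's proof leaves implicit.
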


\begin{proof}
$B(a,x,b)$ is a semialgebraic ternary relation on $\R^n$, $K$ is
semialgebraic, and relativized quantification over a semialgebraic set
preserves semialgebraicity by the Tarski--Seidenberg theorem \cite{Tarski51,
vdDries98}. For $\Laff$, each $C_\lambda$ is a polynomial map once $\lambda$
is adjoined as a parameter.
\end{proof}

\begin{corollary}[No grids]\label{cor:nogrids}
With $K$ as above, every definable (with parameters) subset of a chord of $K$
is a finite union of points and open subintervals. In particular no infinite
discrete subset of $K$ --- e.g., the trace $K\cap\Lambda$ of a lattice
$\Lambda$ --- is definable in either language.
\end{corollary}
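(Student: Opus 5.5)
The plan is to reduce the statement to o-minimality of the real field, via Proposition~\ref{prop:semialgebraic}. Fix a chord of $K$, that is, a segment $[a,b]\subseteq K$ with endpoints in $K$ (or, more generally, the intersection of $K$ with a line). Let $X\subseteq[a,b]$ be defined in $K$ by a formula $\varphi(x;\bar c)$ with parameters $\bar c$ from $K$. By Proposition~\ref{prop:semialgebraic}, the set $\{x\in K:K\models\varphi(x;\bar c)\}$ is a semialgebraic subset of $\R^n$. In the $\Laff$ case it is semialgebraic over the finitely many indices $\lambda$ occurring in $\varphi$ as additional real parameters; this is harmless, because semialgebraic sets may use arbitrary real parameters. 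Intersecting with the line $\ell(a,b)$ gives a semialgebraic set.

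Next, pull the set back along the affine parametrization $t\mapsto a+t(b-a)$. This map is polynomial, so the preimage $T=\{t\in\R: a+t(b-a)\in X\}$ is a semialgebraic subset of $\R$. By the one-variable case of Tarski--Seidenberg (equivalently, o-minimality of the real field), every semialgebraic subset of $\R$ is a finite union of points and open intervals: it is a Boolean combination of sets $\{p(t)>0\}$ and $\{p(t)=0\}$, and each of these is such a finite union because a nonzero polynomial has finitely many roots. Mapping $T$ back to the chord shows that $X$ is a finite union of points and open subintervals of the chord.

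For the ``in particular'' clause, suppose an infinite discrete set $D\subseteq K$ were definable with parameters. I first need a chord meeting $D$ in infinitely many points, which is the main obstacle, since a lattice trace generally has only finitely many points on a given line. I would avoid this by applying the one-dimensional statement to a definable image instead. Fix an interior point $o$ of $K$ (the formula $\Int$ is available) and work directly with semialgebraic cell decomposition in $\R^n$: $D$ is semialgebraic by Proposition~\ref{prop:semialgebraic}. A semialgebraic set has finitely many connected components, and a discrete set has every point as a component, so $D$ must be finite. This contradicts the assumption that $D$ is infinite. For a lattice trace $K\cap\Lambda$ with $K$ having nonempty interior and $\Lambda$ a full lattice, the trace is infinite only when $K$ is unbounded, and in that case the argument applies verbatim. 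When $K$ is bounded the trace is finite and is trivially not a ``grid''; I would note that the intended content is the unbounded or infinite case. The $\Laff$ case is identical, using the adjoined $\lambda$'s as parameters.
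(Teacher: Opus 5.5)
Your argument is correct. For the main clause it is the paper's proof: Proposition~\ref{prop:semialgebraic} makes the definable set semialgebraic, with the $\lambda$'s acting as harmless real parameters, and the one-variable case of o-minimality of the real field finishes it. The paper's proof is exactly this one line.

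You differ on the ``in particular'' clause, which the paper leaves implicit, as though it followed from the chord statement. You rightly observe that it does not follow directly: an infinite discrete set need not meet any single line in infinitely many points. You close the gap with a genuinely higher-dimensional input, namely that a semialgebraic subset of $\R^n$ has finitely many connected components (a consequence of cell decomposition). A discrete set has its points as components, so it must be finite. That is sound, but it imports more than the ``o-minimality on lines'' the paper lists as its classical input.

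If you want to stay within the one-variable statement, project instead. A discrete subset $D\subseteq\R^n$ is countable. If $D$ is infinite, some coordinate projection of $D$ is infinite, since otherwise $D$ would sit inside a finite product. That image is a semialgebraic subset of $\R$ by Tarski--Seidenberg. It is therefore an infinite finite union of points and intervals, so it contains a nondegenerate interval and is uncountable, which contradicts countability of $D$. Your aside about fixing an interior point $o$ and a ``definable image'' is never used and can be dropped.

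Your remark on lattice traces is also accurate. For bounded $K$ the trace $K\cap\Lambda$ is finite, hence definable with parameters. So the example in the statement has content only for unbounded $K$, which the standing hypotheses of Section~\ref{sec:tame} permit.
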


\begin{proof}
Semialgebraic subsets of a line are finite unions of points and intervals
(o-minimality of the semialgebraic structure \cite{vdDries98}).
\end{proof}

\begin{proposition}[Decidability]\label{prop:decidable}
Let $K\subseteq\R^n$ be semialgebraic, defined over the field of real
algebraic numbers (e.g., the closed unit ball, the quartic body
$\{\sum_i x_i^4\le1\}$, or any polytope with algebraic vertices). Then
$\Th_{\LB}(K)$ is decidable.
\end{proposition}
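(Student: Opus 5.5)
The plan is to interpret the structure $(K,B)$ in the real closed field $(\R,+,\cdot,<)$ with parameters drawn only from the real algebraic numbers. Then I would translate each $\LB$-sentence effectively into a sentence of the language of ordered fields, and appeal to Tarski's decidability of $\Th(\R,+,\cdot,<)$ \cite{Tarski51}.

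First, since $K$ is semialgebraic over the real algebraic numbers, it is given by a quantifier-free formula $\kappa(x_1,\dots,x_n)$ in the language of ordered rings. The coefficients of $\kappa$ are real algebraic numbers. Each such number $\alpha$ is $\emptyset$-definable in the real field, as ``the $j$-th root of the integer polynomial $P$'': a formula can say that there are exactly $j-1$ roots of $P$ below $\alpha$. So every coefficient can be eliminated by an existential quantifier guarded by its defining formula. The result is a parameter-free formula $\kappa^*(\bar x)$ with $\R\models\kappa^*(\bar a)\iff\bar a\in K$, and it can be produced effectively from a finite description of $K$. Second, I would translate $B(a,x,b)$ between $n$-tuples by
\[
\beta(\bar a,\bar x,\bar b):\equiv\exists t\,\Bigl(0\le t\le 1\wedge\textstyle\bigwedge_{i=1}^n x_i=(1-t)a_i+tb_i\Bigr).
\]

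Third, I would define the translation $\sigma\mapsto\sigma^*$ by recursion on formulas. A variable is replaced by an $n$-tuple of variables. $B$ becomes $\beta$, and equality becomes coordinatewise equality. Each quantifier $\exists v$ (resp.\ $\forall v$) becomes $\exists\bar v\,(\kappa^*(\bar v)\wedge\cdots)$ (resp.\ $\forall\bar v\,(\kappa^*(\bar v)\rightarrow\cdots)$). A routine induction, exactly as in Proposition~\ref{prop:sections}(iii), gives $K\models\sigma\iff\R\models\sigma^*$ for every $\LB$-sentence $\sigma$. Since $\sigma\mapsto\sigma^*$ is computable and the theory of real closed fields is decidable, $\Th_{\LB}(K)$ is decidable.

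The only step needing care is the first: removing the real-algebraic parameters so that a \emph{fixed} decidable theory is used. If parameters were allowed, a decision procedure would have to handle arbitrary real constants, which is not possible. Encoding each coefficient by its minimal polynomial and root index settles this. Alternatively, one can note that quantifier elimination decides sentences with real-algebraic constants, because Sturm sequences compute with them exactly. For the listed examples the coefficients are already rational for the ball and the quartic body, and for polytopes they are algebraic by hypothesis, so the construction applies directly.
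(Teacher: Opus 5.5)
Your proposal is correct and follows essentially the same route as the paper. Both arguments give an effective translation that replaces $B$ by its semialgebraic definition and relativizes every quantifier to a parameter-free definition of $K$. The algebraic coefficients are eliminated through their defining formulas, and Tarski's decision procedure for the real field then finishes the proof. Your root-index encoding of the coefficients spells out what the paper states only as a parenthetical remark, namely that these coefficients are themselves definable.
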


\begin{proof}
There is an effective translation $\varphi\mapsto\varphi^*$ of
$\LB$-sentences into sentences of the language of ordered rings: replace $B$
by its semialgebraic definition and relativize all quantifiers to the
defining formula of $K$ (whose algebraic coefficients are themselves
definable). Then $K\models\varphi$ iff $\R\models\varphi^*$, and the latter
is decidable by Tarski \cite{Tarski51}.
\end{proof}

\begin{remark}[Maximal meshes]\label{rem:meshes}
For each $r$ one can write an $\Laff$-sentence asserting the existence of an
$r\times\dots\times r$ pattern of points closed under the midpoint relations
of a cubical lattice block and \emph{jammed}, in the sense that no designated
one-step reflection $C_{1/2}(e',g)=e$ across a facet site admits a witness
$g\in K$. Such sentences are legitimate probes of shape, and in a standard
body large jammed patterns force fine spacing. We emphasize what they cannot
do: by Corollary~\ref{cor:nogrids} no formula defines ``being a lattice
point,'' and by Section~\ref{sec:nonstandard} no family of sentences
reconstructs the body across all models of its theory. Mesh sentences are
separation tools within standard bodies, not reconstruction tools; the
systematic replacement for them in the betweenness language is the interior
projective calculus of Section~\ref{sec:vonstaudt}.
\end{remark}

\section{Polytopes are elementarily categorical among convex bodies}
\label{sec:polytopes}

\begin{lemma}[Dependence spaces]\label{lem:dependence}
For a tuple $v=(v_1,\dots,v_m)$ of points of $\R^n$ let
\[
\Dep(v)=\Bigl\{a\in\R^m:\ \textstyle\sum_i a_i=0,\ \sum_i a_i v_i=0\Bigr\},
\]
a linear subspace of $\R^m$. Suppose $v$ and $w=(w_1,\dots,w_m)$ both
affinely span $\R^n$ and $\Dep(v)\subseteq\Dep(w)$. Then there is a unique
affine map $T\colon\R^n\to\R^n$ with $T(v_i)=w_i$ for all $i$, and $T$ is a
bijection. Moreover, if $v$ affinely spans $\R^n$ then
$\dim\Dep(v)=m-n-1$.
\end{lemma}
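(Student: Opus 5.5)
The plan is linear algebra on the map $\Phi_v\colon\R^m\to\R^{n+1}$, $a\mapsto\bigl(\sum_i a_i v_i,\ \sum_i a_i\bigr)$. By definition $\Dep(v)=\ker\Phi_v$. I would first record the dimension count, since it drives the rest of the argument. The image of $\Phi_v$ is spanned by the lifted vectors $\hat v_i=(v_i,1)\in\R^{n+1}$. The tuple $v$ affinely spans $\R^n$ exactly when the $\hat v_i$ linearly span $\R^{n+1}$: if $v_0,\dots,v_n$ are affinely independent, then the $\hat v_i$ for those indices are linearly independent. So $\Phi_v$ is surjective, and rank--nullity gives $\dim\Dep(v)=m-(n+1)$.

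Next I would turn the inclusion of kernels into a factorization. Since $\ker\Phi_v\subseteq\ker\Phi_w$ and $\Phi_v$ is surjective, there is a unique linear map $\widehat T\colon\R^{n+1}\to\R^{n+1}$ with $\widehat T\circ\Phi_v=\Phi_w$. Concretely, define $\widehat T(\Phi_v(a))=\Phi_w(a)$; this is well defined because of the kernel inclusion. Applying it to the basis vectors $e_i$ gives $\widehat T(\hat v_i)=\hat w_i$.

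I then check that $\widehat T$ preserves the last coordinate. The functional "last coordinate" composed with $\Phi_v$ is $a\mapsto\sum a_i$, and the same holds for $\Phi_w$. Hence $\pi\circ\widehat T\circ\Phi_v=\pi\circ\Phi_v$, and surjectivity of $\Phi_v$ gives $\pi\circ\widehat T=\pi$. So $\widehat T$ maps the hyperplane $\{x_{n+1}=1\}$ to itself. Its restriction there is an affine map $T\colon\R^n\to\R^n$ with $T(v_i)=w_i$.

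Uniqueness is immediate: an affine map is determined by its values on an affinely spanning set, and the $v_i$ form one. For bijectivity, the image of $\widehat T$ contains every $\hat w_i$. These span $\R^{n+1}$ because $w$ also affinely spans $\R^n$, so $\widehat T$ is surjective and hence invertible. An affine map whose linear extension $\widehat T$ is invertible is itself bijective: its linear part is the upper-left block of $\widehat T$, which has the form $\begin{pmatrix}M&t\\0&1\end{pmatrix}$ and so has $\det M=\det\widehat T\ne0$.

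The main obstacle is only bookkeeping. One point deserves care: the hypothesis that $w$ spans is exactly what upgrades $T$ from "affine" to "bijective". As a side remark, the inclusion $\Dep(v)\subseteq\Dep(w)$ together with equal dimensions $m-n-1$ forces $\Dep(v)=\Dep(w)$. This is consistent with $T$ being invertible, but the argument above does not need it.
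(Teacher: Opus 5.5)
Your proof is correct and is essentially the paper's argument. The paper defines $T\bigl(\sum\lambda_iv_i\bigr)=\sum\lambda_iw_i$ on affine combinations, gets well-definedness from $\Dep(v)\subseteq\Dep(w)$, gets bijectivity from surjectivity onto $\aff\{w_i\}=\R^n$, and proves the dimension count by rank--nullity for the same surjective map; your factorization $\widehat T\circ\Phi_v=\Phi_w$, restricted to the slice $\sum a_i=1$, and your block-determinant check for bijectivity are only a homogenized repackaging of these steps.
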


\begin{proof}
Every point of $\R^n=\aff\{v_i\}$ is an affine combination $\sum\lambda_i
v_i$ ($\sum\lambda_i=1$). Set $T(\sum\lambda_i v_i)=\sum\lambda_i w_i$. This
is well defined: if $\sum\lambda_i v_i=\sum\mu_i v_i$ with both affine, then
$\lambda-\mu\in\Dep(v)\subseteq\Dep(w)$, so $\sum\lambda_i w_i=\sum\mu_i
w_i$. $T$ is affine, and its image is an affine subspace containing all
$w_i$, hence all of $\aff\{w_i\}=\R^n$; a surjective affine self-map of
$\R^n$ is bijective. Uniqueness is clear since the $v_i$ affinely span. For
the dimension count, the linear map $\R^m\to\R^{n+1}$,
$a\mapsto(\sum a_i,\sum a_iv_i)$, is surjective when $v$ spans, and
$\Dep(v)$ is its kernel.
\end{proof}

\begin{lemma}[Barycentric terms]\label{lem:terms}
For every $r\ge1$ and reals $\mu_1,\dots,\mu_r\ge0$ with $\sum\mu_i=1$
there is an $\Laff$-term $\tau_{\bar\mu}(x_1,\dots,x_r)$, built from the
operations $C_\lambda$, whose value in every convex set is
$\sum_i\mu_i x_i$.
\end{lemma}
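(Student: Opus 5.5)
We argue by induction on $r$, exactly mirroring the inductive step used in the proof of Theorem~\ref{thm:iso-affine}. For $r=1$ we must have $\mu_1=1$, and we take $\tau(x_1)=x_1$. (If one prefers a term genuinely built from the operations, $C_0(x_1,x_1)$ also has value $x_1$.)

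For the inductive step, let $r\ge2$ and split on whether $\mu_r=1$.

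\emph{Case $\mu_r=1$.} Then all other weights vanish. We set
\[
\tau_{\bar\mu}(x_1,\dots,x_r)=C_1(x_1,x_r),
\]
whose value is $x_r=\sum_i\mu_ix_i$.

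\emph{Case $\mu_r<1$.} Put $\nu_i=\mu_i/(1-\mu_r)$ for $i<r$. These are nonnegative and sum to $1$, so the induction hypothesis supplies a term $\tau_{\bar\nu}(x_1,\dots,x_{r-1})$ with value $\sum_{i<r}\nu_ix_i$. We define
\[
\tau_{\bar\mu}(x_1,\dots,x_r)=C_{\mu_r}\bigl(\tau_{\bar\nu}(x_1,\dots,x_{r-1}),\,x_r\bigr).
\]
Its value is
\[
(1-\mu_r)\sum_{i<r}\nu_ix_i+\mu_rx_r=\sum_i\mu_ix_i,
\]
as required.

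It remains to check that these are legitimate terms with well-defined values. The index $\mu_r$ lies in $[0,1]$, so $C_{\mu_r}$ is a symbol of $\Laff$. In any convex set each intermediate value is a convex combination of points of the set, hence lies in the set. So every subterm is evaluated inside the structure, and the computation above is valid uniformly in every convex set, since it uses only the interpretation $C_\lambda^K(p,q)=(1-\lambda)p+\lambda q$.

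There is no real obstacle. The only point needing care is the degenerate case $\mu_r=1$, where the renormalization is undefined, and it is handled separately above.
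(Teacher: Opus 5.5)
Your proof is correct and is the paper's argument: induction on $r$, with the degenerate case $\mu_r=1$ handled separately, and otherwise $C_{\mu_r}$ applied to the renormalized $(r-1)$-term and $x_r$. The only cosmetic difference is that in the degenerate case the paper takes the bare variable $x_r$ where you use $C_1(x_1,x_r)$; both are legitimate terms with the correct value.
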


\begin{proof}
Induct on $r$: if $\mu_r=1$ take $\tau=x_r$; otherwise
$\sum_{i\le r}\mu_i x_i =
C_{\mu_r}\bigl(\tau_{\bar\mu'}(x_1,\dots,x_{r-1}),\,x_r\bigr)$ with
$\mu_i'=\mu_i/(1-\mu_r)$.
\end{proof}

\begin{theorem}[Polytope categoricity]\label{thm:polytope}
Let $P\subseteq\R^n$ be a convex polytope with nonempty interior and let
$K\subseteq\R^n$ be a compact convex set with nonempty interior. If
$K\equiv_{\Laff}P$, then $K$ is a polytope affinely equivalent to $P$ (hence
$K\cong_{\Laff}P$). Consequently, two convex polytopes in $\R^n$ are
$\Laff$-elementarily equivalent if and only if they are affinely equivalent.
\end{theorem}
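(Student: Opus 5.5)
The plan is to write down a single $\Laff$-sentence $\sigma_P$ true in $P$ that forces any compact convex body satisfying it to be affinely equivalent to $P$. Let $v_1,\dots,v_m$ be the vertices of $P$, i.e.\ its extreme points (Proposition~\ref{prop:extreme}). Choose a basis $a^{(1)},\dots,a^{(k)}$ of $\Dep(v)$, where $k=m-n-1$ by Lemma~\ref{lem:dependence}.

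Each dependence $a\in\Dep(v)$ splits as $I=\{a_i>0\}$ and $J=\{a_i<0\}$. After normalizing by $s=\sum_I a_i$, it becomes an equation of convex combinations $\sum_I (a_i/s)v_i=\sum_J(-a_j/s)v_j$. By Lemma~\ref{lem:terms} this is an equation $\tau_{\bar\mu}(\bar v_I)=\tau_{\bar\nu}(\bar v_J)$ between barycentric terms. Let $\sigma_P$ assert that there exist $x_1,\dots,x_m$ such that:
\begin{itemize}[leftmargin=2em]
\item the $x_i$ are pairwise distinct;
\item each $x_i$ satisfies $\Ext(x_i)$;
\item every $e$ with $\Ext(e)$ equals some $x_i$;
\item the $k$ term equations hold for each basis dependence.
\end{itemize}
$P$ satisfies $\sigma_P$ with $x_i=v_i$.

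Now suppose $K\models\sigma_P$ with witnesses $w_1,\dots,w_m$. Then $\ext K=\{w_1,\dots,w_m\}$ is finite. Since $K$ is compact, Krein--Milman (Minkowski in finite dimension) gives $K=\conv\{w_i\}$, so $K$ is a polytope. Because $K$ has nonempty interior, the $w_i$ affinely span $\R^n$. The term equations say each basis vector $a^{(j)}$ lies in $\Dep(w)$, and by linearity $\Dep(v)\subseteq\Dep(w)$. Lemma~\ref{lem:dependence} then yields an affine bijection $T$ with $T(v_i)=w_i$. Hence
\[
T(P)=\conv\{T(v_i)\}=\conv\{w_i\}=K.
\]
Theorem~\ref{thm:iso-affine} then gives $K\cong_{\Laff}P$. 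The final clause follows by combining this with the trivial direction of Theorem~\ref{thm:iso-affine}.

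The argument is routine once $\sigma_P$ is set up. The delicate point is that the dependences must be encoded exactly, with real coefficients available as operation indices. This is why $\Laff$, and not $\LB$, is needed. Encoding only a basis suffices because containment of a subspace is checked on a basis. Compactness of $K$ is used exactly once, to pass from finitely many extreme points to polytopality.
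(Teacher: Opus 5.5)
Your proposal is correct and follows the paper's proof essentially step for step. It uses the same sentence $\sigma_P$, which enumerates the extreme points exactly and encodes a basis of $\Dep(v)$ as barycentric term equations; Minkowski's theorem then gives polytopality, and Lemma~\ref{lem:dependence} produces the affine bijection $T$ with $T(P)=K$.
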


\begin{proof}
Let $v_1,\dots,v_m$ be the vertices of $P$, so $\ext P=\{v_1,\dots,v_m\}$
and $P=\conv\{v_1,\dots,v_m\}$. Choose a basis $d^{(1)},\dots,d^{(k)}$ of
$\Dep(v)$, where $k=m-n-1$ (Lemma~\ref{lem:dependence}).

Each basis dependence $d\ne0$ has $\sum_i d_i=0$ and $\sum_i d_i v_i=0$.
Splitting into positive and negative parts $I=\{i:d_i>0\}$,
$J=\{i:d_i<0\}$ (both nonempty) and normalizing by
$s=\sum_{i\in I}d_i=\sum_{j\in J}(-d_j)>0$, the dependence becomes an
equality of two convex combinations,
\[
\sum_{i\in I}\frac{d_i}{s}\,v_i \;=\;\sum_{j\in J}\frac{-d_j}{s}\,v_j ,
\]
hence, by Lemma~\ref{lem:terms}, an equation $\tau_d(\bar e)=\tau'_d(\bar e)$
between $\Laff$-terms in variables $e_1,\dots,e_m$.

Let $\sigma_P$ be the $\Laff$-sentence
\[
\exists e_1\cdots\exists e_m\Bigl[\bigwedge_{i<j}e_i\ne e_j
\;\wedge\;\bigwedge_i\Ext(e_i)
\;\wedge\;\forall e\,\bigl(\Ext(e)\to\bigvee_i e=e_i\bigr)
\;\wedge\;\bigwedge_{t=1}^{k}\tau_{d^{(t)}}(\bar e)=\tau'_{d^{(t)}}(\bar e)
\Bigr].
\]
Then $P\models\sigma_P$, witnessed by its vertices. Suppose
$K\equiv_{\Laff}P$; then $K\models\sigma_P$, with witnesses
$w_1,\dots,w_m\in K$. The witnesses are exactly the extreme points of $K$,
so $\ext K=\{w_1,\dots,w_m\}$ is finite; by Minkowski's theorem
(finite-dimensional Krein--Milman, see \cite[Cor.~1.4.5]{Schneider14}),
$K=\conv(\ext K)$, so $K$ is a polytope with vertex set
$\{w_1,\dots,w_m\}$. In particular $w$ affinely spans $\R^n$ (as $K$ has
interior).

The equations assert that each basis dependence $d^{(t)}$ of $v$ lies in
$\Dep(w)$; hence $\Dep(v)\subseteq\Dep(w)$. By Lemma~\ref{lem:dependence}
there is an affine bijection $T$ of $\R^n$ with $T(v_i)=w_i$, and therefore
\[
T(P)=T\bigl(\conv\{v_i\}\bigr)=\conv\{w_i\}=K .
\]
The converse direction (affinely equivalent $\Rightarrow$ elementarily
equivalent) is Theorem~\ref{thm:iso-affine}.
\end{proof}

\begin{corollary}\label{cor:polygoncount}
``Being a polytope with exactly $m$ extreme points'' is, within compact
convex bodies in $\R^n$, an elementary property in $\Laff$ (indeed in
$\LB$). In particular polytopes with different vertex counts are
elementarily inequivalent in both languages. For $m=n+1$ the dependence
space is trivial and the theorem recovers the fact that all solid simplices
are affinely equivalent.
\end{corollary}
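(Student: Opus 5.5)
The plan is to extract from the sentence $\sigma_P$ of Theorem~\ref{thm:polytope} its purely combinatorial part, which uses only $B$, and to read off all three claims from it.

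First, for each $m\ge1$ I will form the $\LB$-sentence
\[
\varepsilon_m:\equiv\exists e_1\cdots\exists e_m\Bigl[\bigwedge_{i<j}e_i\ne e_j\wedge\bigwedge_i\Ext(e_i)\wedge\forall e\,\bigl(\Ext(e)\to\textstyle\bigvee_i e=e_i\bigr)\Bigr].
\]
This is $\sigma_P$ with the dependence equations dropped, and $\Ext$ lies in $\LB$. By Proposition~\ref{prop:extreme}, a convex set $K$ satisfies $\varepsilon_m$ if and only if $|\ext K|=m$. For compact $K$, Minkowski's theorem \cite[Cor.~1.4.5]{Schneider14} gives $K=\conv(\ext K)$, exactly as in the proof of Theorem~\ref{thm:polytope}. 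So among compact convex bodies, $\varepsilon_m$ holds precisely for polytopes with exactly $m$ vertices. Conversely, every polytope satisfies $\varepsilon_m$ with $m$ its vertex count, since its extreme points are its vertices. This shows the property is elementary in $\LB$, hence in $\Laff\supseteq\LB$.

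Second, suppose two polytopes have vertex counts $m\ne m'$. Then $\varepsilon_m$ holds in the first and fails in the second. Since $\varepsilon_m$ is an $\LB$-sentence, the two polytopes are inequivalent in $\LB$. Because every $\LB$-sentence is also an $\Laff$-sentence, they are inequivalent in $\Laff$ as well.

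Third, take $m=n+1$. Lemma~\ref{lem:dependence} gives $\dim\Dep(v)=m-n-1=0$, so the conjunction of dependence equations in $\sigma_P$ is empty and $\sigma_P=\varepsilon_{n+1}$. Any two solid simplices satisfy this sentence, so Theorem~\ref{thm:polytope} yields an affine equivalence between them. Alternatively, one can apply Lemma~\ref{lem:dependence} directly: $\Dep(v)=0\subseteq\Dep(w)$ for the two vertex tuples, which gives an affine bijection carrying one vertex set onto the other, hence one convex hull onto the other.

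There is no real obstacle here. The only point needing care is that "elementary" is relative to the class of compact bodies, because Minkowski's theorem is what converts "finitely many extreme points" into "polytope". Without compactness this fails: a half-plane has no extreme points at all.
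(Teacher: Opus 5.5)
Your proposal is correct and follows the paper's own (implicit) argument: the extreme-point-counting part of $\sigma_P$ is an $\LB$-sentence, Minkowski's theorem turns finitely many extreme points of a compact body into a polytope, and for $m=n+1$ the dependence conjunction is empty, so $\sigma_P$ alone determines the affine class. One small precision: in the simplex case you are really invoking the \emph{proof} of Theorem~\ref{thm:polytope}, which uses only $K\models\sigma_P$, not its statement, which assumes $K\equiv_{\Laff}P$; your direct appeal to Lemma~\ref{lem:dependence} covers this anyway.
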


\begin{remark}
The projective ($\LB$) analogue --- two polytopes are $\LB$-elementarily
equivalent iff projectively equivalent --- requires the cross-ratio calculus
of Section~\ref{sec:vonstaudt}. We prove it there, conditionally, in the
plane (Proposition~\ref{prop:polygon-LB}); the case $n\ge3$ is posed as
Problem~P7.
\end{remark}

\section{The ball: a Bertrand--Brunn sentence}\label{sec:ball}

The following classical characterization goes back to Brunn (with
antecedents in Bertrand); for the planar case see \cite{Brunn1889}, and for
the general-dimensional statement and history see
\cite[Theorem~2.12.1]{MMO19} and the survey \cite{Soltan05}.

\begin{theorem}[Bertrand--Brunn]\label{thm:BB}
Let $K\subseteq\R^n$ \textup($n\ge2$\textup) be a convex body such that, for
every direction $u$, the midpoints of all chords of $K$ parallel to $u$ lie
in a hyperplane. Then $K$ is a solid ellipsoid. Conversely, every solid
ellipsoid has this property, the midpoint locus for the direction $u$ lying
in the diametral hyperplane conjugate to $u$.
\end{theorem}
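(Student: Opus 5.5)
The converse direction is immediate, and the forward direction is the substantial one.

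\emph{Converse.} For a solid ellipsoid $E=A(\Ball)$ with $A$ affine, chords parallel to $u$ correspond under $A^{-1}$ to chords of the ball parallel to $u'=A_0^{-1}u$, where $A_0$ is the linear part of $A$. For the ball, the chord through $c+tu'$ has midpoint equal to the orthogonal projection of $c$ onto $u'^{\perp}$, by symmetry of the ball under reflection in $u'^{\perp}$. So the midpoint locus lies in the hyperplane $u'^{\perp}$. Affine maps preserve midpoints, parallelism and hyperplanes, so the locus for $E$ lies in $A(u'^{\perp})$, the diametral hyperplane conjugate to $u$.

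\emph{Forward direction.} I would argue by induction on $n$, starting from the planar case. Fix $n=2$ and a direction $u$. The midpoint locus $M_u$ is a segment of a line $L_u$ through the interior. Reflect $K$ \emph{affinely} across $L_u$ along the direction $u$: the map $\rho_u(x+su)=x-su$ for $x\in L_u$ preserves each chord parallel to $u$ with its midpoint fixed. Hence $\rho_u(K)=K$, so $K$ has an affine involutive symmetry for every $u$.

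Next, two such reflections for different directions compose to an affine map of $K$ fixing a point, and every affine symmetry fixes the centroid $g$. So all the $L_u$ pass through $g$, and $K$ is centrally symmetric about $g$, since $\rho_u\rho_{u'}$ for conjugate directions gives $-\mathrm{id}$ about $g$. The group $G$ of affine symmetries of $K$ fixing $g$ is compact, being a bounded subgroup of $\GL_2$ after translating $g$ to $0$. By averaging an inner product over $G$ (Haar measure), there is a Euclidean structure in which $G\subseteq O(2)$. In that structure each $\rho_u$ is an orthogonal reflection, so $u\perp L_u$, and the reflections' axes run over all lines through $g$. The group generated is then all of $O(2)$, so $K$ is a round disk in this metric, i.e.\ an ellipse.

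For $n\ge3$ the same argument works verbatim: the $\rho_u$ are affine reflections in hyperplanes $L_u$, $G$ is compact, and in an invariant metric the reflections are orthogonal reflections in the hyperplanes $u^\perp$ for all $u$. These generate $O(n)$, so $K$ is a ball in that metric, hence an ellipsoid. No induction is actually needed.

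\emph{Expected obstacle.} The main step to check is that $\rho_u(K)=K$ exactly. This requires that every line parallel to $u$ meeting $K$ meets it in a chord whose midpoint lies in $L_u$, including degenerate chords (single points on the boundary). I would handle this by continuity: the midpoint map is continuous on the closed set of lines meeting $K$, and $L_u$ is closed. A second point to verify is that the set of axes of the reflections is all hyperplanes through $g$. In the invariant metric $\rho_u$ is an orthogonal reflection with $-1$ eigenvector $u$, so its axis is $u^\perp$; as $u$ ranges over all directions, this gives every hyperplane through $g$.
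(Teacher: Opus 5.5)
The paper gives no proof of this theorem. It is quoted from Brunn~\cite{Brunn1889} and \cite[Theorem~2.12.1]{MMO19}, with only the remark that the Lean development proves it, so there is no in-text argument to compare against. Your proposal is a correct proof along the standard symmetry-group route. Each $\rho_u$ is an affine symmetry of $K$; the affine symmetry group is compact and fixes the centroid $g$. An invariant Euclidean structure then makes each $\rho_u$ the orthogonal reflection in $g+u^{\perp}$: your Haar average works, and so would the John or L\"owner ellipsoid of $K$, which every affine symmetry preserves. Invariance under all hyperplane reflections through $g$ then forces $K$ to be a ball in that metric. As you note, the argument is dimension-free. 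It needs neither an induction nor a passage to planar sections; the latter would require the further theorem that a body all of whose planar sections through a point are ellipses is an ellipsoid.

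Four details need tightening; none is a real gap.

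(1) $\rho_u$ is defined only if $L_u$ is transversal to $u$. To see this, project along $u$: the midpoints of chords through $\interior K$ cover an open subset of the $(n-1)$-dimensional quotient, so they lie in no hyperplane containing the direction $u$. The same argument shows $L_u=\aff M_u$ is unique.

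(2) Your continuity remedy for degenerate chords does not work for $n\ge3$. The midpoint of $K\cap\ell$ can jump at lines $\ell$ meeting only $\partial K$, because the lower and upper boundary functions of $K$ along $u$ are only semicontinuous there. Use density instead. The hypothesis for chords through $\interior K$ gives $\rho_u(\interior K)\subseteq K$. Hence $\rho_u(K)\subseteq K$ by continuity of $\rho_u$ and $K=\overline{\interior K}$, and equality holds because $\rho_u$ is an involution.

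(3) The central-symmetry sentence is unsupported but also unused. In the plane you would first have to show that $L_u\parallel u'$ forces $L_{u'}\parallel u$. For $n\ge3$ a product of two reflections is never $-\mathrm{id}$. Central symmetry follows anyway once all reflections through $g$ are orthogonal.

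(4) Compactness of $G$ needs closedness and bounded inverses, not just boundedness. Suppose $K$ contains the ball of radius $r$ about $g$ and lies in the ball of radius $R$ about $g$. Then every linear part satisfies $\|T\|\le R/r$, and the same bound holds for $T^{-1}\in G$. So limits of symmetries are invertible and are again symmetries.

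The converse direction is fine as written.
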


\begin{definition}\label{def:BBsentence}
Let $\beta_n$ be the $\Laff$-sentence
\begin{align*}
\forall a_1\,b_1\cdots a_{n+1}\,b_{n+1}\;
\Bigl[\;&\bigwedge_{i=1}^{n+1}\bigl(\Bd(a_i)\wedge\Bd(b_i)\bigr)
\;\wedge\;\bigwedge_{1\le i<j\le n+1}\Par(a_i,b_i;a_j,b_j)\\
\longrightarrow\;&
\Dep_{n+1}\bigl(C_{1/2}(a_1,b_1),\,\dots,\,C_{1/2}(a_{n+1},b_{n+1})\bigr)
\Bigr].
\end{align*}
(Recall $\Par$ includes the clauses $a_i\ne b_i$, and that $\Par$ counts
collinear chords as parallel, which is harmless: midpoints of collinear
chords lie on a common line and any $n+1$ points containing two equal ones,
or lying in low-dimensional position, are affinely dependent whenever the
locus is; the verification below quantifies this.)
\end{definition}

\begin{theorem}[Elementary categoricity of the ball]
\label{thm:ball}
Let $n\ge2$, let $K\subseteq\R^n$ be a compact convex set with nonempty
interior, and let $\Ball$ denote the closed unit ball. The following are
equivalent:
\begin{enumerate}[label=\textup{(\roman*)}]
\item $K\equiv_{\Laff}\Ball$;
\item $K$ is affinely equivalent to $\Ball$;
\item $K$ is a solid ellipsoid.
\end{enumerate}
In particular, the $\Laff$-elementary class of the closed unit ball, within
compact convex bodies in $\R^n$, is exactly the class of solid ellipsoids.
\end{theorem}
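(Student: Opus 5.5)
I will prove the cycle (ii)$\Leftrightarrow$(iii), (ii)$\Rightarrow$(i), and (i)$\Rightarrow$(iii). The first two are routine. A solid ellipsoid is by definition an affine image of $\Ball$, and conversely. Affine equivalence gives $\Laff$-isomorphism by Theorem~\ref{thm:iso-affine}, hence elementary equivalence. The content is (i)$\Rightarrow$(iii), which I will obtain by transferring the sentence $\beta_n$ of Definition~\ref{def:BBsentence}. The plan has two steps: show $\Ball\models\beta_n$; then, given $K\equiv_{\Laff}\Ball$, deduce $K\models\beta_n$ and show this forces the hypothesis of Theorem~\ref{thm:BB} on $K$.

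\emph{Step 1: $\Ball\models\beta_n$.} By Propositions~\ref{prop:interior}, \ref{prop:parallel} and~\ref{prop:dep}, $\beta_n$ reads in any compact convex body as follows: whenever $a_i,b_i$ are boundary points with $a_i\ne b_i$ and all lines $\ell(a_i,b_i)$ parallel or equal to a common direction $u$, the midpoints $m_i=\tfrac12(a_i+b_i)$ are affinely dependent. (Pairwise parallelism of lines is transitive, so a common direction exists.) In a compact body, a chord with both endpoints on the boundary and direction $u$ is a full chord $K\cap\ell$. Indeed, if the line $\ell$ meets $\interior K$, then $K\cap\ell$ is a segment whose only boundary points are its two endpoints. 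If $\ell$ misses $\interior K$, then $K\cap\ell$ lies in a supporting hyperplane, and the boundary-chord condition is weaker there; this case needs care. For the ball, $\ell\cap\Ball$ with $\ell$ missing the open ball is a single point, so $a_i\ne b_i$ excludes it. Hence every $m_i$ is the midpoint of a full chord in direction $u$, and so lies in the hyperplane through the origin orthogonal to $u$. Then $n+1$ points in a hyperplane are affinely dependent.

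\emph{Step 2: $K\models\beta_n$ implies $K$ is an ellipsoid.} Fix a direction $u$ and let $M_u$ be the set of midpoints of chords $K\cap\ell$, for lines $\ell\parallel u$ meeting $\interior K$. Any $n+1$ points of $M_u$ are realized as midpoints $m_i$ of boundary pairs with $\Par$ holding, so $\beta_n$ forces every $(n+1)$-subset of $M_u$ to be affinely dependent. Hence $\aff M_u$ has dimension at most $n-1$, so $M_u$ lies in a hyperplane. Chords meeting only $\partial K$ (possible when $K$ has flat faces parallel to $u$) are not needed for the hypothesis of Theorem~\ref{thm:BB} provided that theorem is read for chords through the interior. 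If it is stated for all chords, I take the closure: midpoints of boundary-face chords are limits of midpoints of interior chords only when those chords degenerate. So I instead note that, once the interior midpoint locus lies in a hyperplane for every $u$, the classical proof already yields an ellipsoid, and ellipsoids have no such flat chords. Applying Theorem~\ref{thm:BB} then gives that $K$ is a solid ellipsoid.

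\emph{Main obstacle.} The delicate point is the boundary case in Step~2. Distinct boundary points $a,b$ can lie on a face of $K$, with the segment between them parallel to $u$ but not a maximal chord. Then $\tfrac12(a+b)$ is an arbitrary point of that face and could spoil the dependence, which means $\beta_n$ is \emph{stronger} than the interior-chord condition. This strength is harmless for the direction we need, since a stronger sentence still holds in $\Ball$ by Step~1 and implies the interior-chord condition used in Step~2. The only real care is to check that the version of Theorem~\ref{thm:BB} invoked needs only chords through the interior. Failing that, I would first show, via a planar section argument using Proposition~\ref{prop:sections}, that $K\models\beta_n$ excludes segments in $\partial K$: three distinct midpoints of subsegments of one boundary segment, together with interior chord midpoints, would violate dependence. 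This gives strict convexity, after which all boundary-pair chords are maximal.
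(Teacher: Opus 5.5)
This is essentially the paper's proof: show $\Ball\models\beta_n$, transfer it to $K$, conclude that for each direction $u$ the chord midpoints lie in a hyperplane, and apply Theorem~\ref{thm:BB}. Your Step~2 worry is unnecessary, because the paper takes $M_u$ to be the midpoints of \emph{all} nondegenerate maximal chords parallel to $u$ --- including any chord lying in a face of $\partial K$, whose two distinct endpoints still satisfy $\Bd$, so $\beta_n$ covers it directly. (Your strict-convexity fallback is valid but not needed, and your aside that face-chord midpoints are limits of interior-chord midpoints only in degenerate cases is false: for $c\in\interior K$, the chords containing $(1-t)[p,q]+tc$ converge to $[p,q]$.)
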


\begin{proof}
(ii)$\Leftrightarrow$(iii) is the definition of a solid ellipsoid.
(ii)$\Rightarrow$(i) is Theorem~\ref{thm:iso-affine} (isomorphic structures
are elementarily equivalent).

(i)$\Rightarrow$(iii). First, $\Ball\models\beta_n$. The ball is strictly
convex, so a segment between two distinct boundary points is a maximal
chord. Suppose the $n+1$ chords $[a_i,b_i]$ are pairwise parallel-or-equal
as lines; then all are parallel to a common direction $u$ (parallel-or-equal
is transitive on lines). By Theorem~\ref{thm:BB} the midpoints of all chords
of the ball parallel to $u$ lie in the diametral hyperplane conjugate to
$u$, and any $n+1$ points of a hyperplane are affinely dependent. So the
conclusion of $\beta_n$ holds.

Hence $K\models\beta_n$. Fix a direction $u$ and let $M_u$ be the set of
midpoints of maximal chords of $K$ parallel to $u$. Each maximal chord of
$K$ has both endpoints in $K\cap\partial K$, hence is of the form
$[a,b]$ with $\Bd(a),\Bd(b)$ (Proposition~\ref{prop:interior}); by
$\beta_n$, any $n+1$ points of $M_u$ are affinely dependent. A subset of
$\R^n$ every $n+1$ points of which are affinely dependent spans an affine
subspace of dimension at most $n-1$ (else it would contain $n+1$ affinely
independent points), hence lies in a hyperplane. So the midpoints of the
chords of $K$ in every direction lie in a hyperplane, and
Theorem~\ref{thm:BB} gives that $K$ is a solid ellipsoid.
\end{proof}

\begin{corollary}\label{cor:ballquarticaff}
For every $n\ge2$, the closed unit ball and the closed quartic body
$S_n=\{x\in\R^n:\sum_{i=1}^n x_i^4\le1\}$ are not $\Laff$-elementarily
equivalent; the sentence $\beta_n$ separates them.
\end{corollary}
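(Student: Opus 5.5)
The corollary should follow from Theorem~\ref{thm:ball} together with the classification of solid ellipsoids. The plan is to show that $\Ball\models\beta_n$ while $S_n\not\models\beta_n$.

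The first half is already contained in the proof of Theorem~\ref{thm:ball}: the ball is strictly convex, and its midpoint loci lie in diametral hyperplanes by Theorem~\ref{thm:BB}, so $\Ball\models\beta_n$.

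For the second half I would argue by contradiction. Suppose $S_n\models\beta_n$. The argument in (i)$\Rightarrow$(iii) of Theorem~\ref{thm:ball} used only the truth of $\beta_n$ in the compact body $K$, not the full elementary equivalence. Applied to $K=S_n$, which is compact with nonempty interior, it would give that $S_n$ is a solid ellipsoid.

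It then remains to show that $S_n$ is not an ellipsoid. I would use the boundary. Its defining function $\sum x_i^4-1$ is a polynomial of degree $4$, and it is irreducible along the boundary. A direct approach is to restrict to the coordinate plane $x_3=\dots=x_n=0$. An ellipsoid's planar sections are ellipses, but the section of $S_n$ is the planar quartic curve $x_1^4+x_2^4=1$. This curve is not a conic, since its curvature vanishes at $(\pm1,0)$: near that point $x_1\approx 1-x_2^4/4$, which is flatter than any ellipse, whose curvature is strictly positive everywhere.

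Alternatively, the failure of $\beta_n$ can be exhibited directly. Take chords parallel to the direction $(1,1,0,\dots,0)$ in the coordinate plane. By symmetry under $x_1\leftrightarrow x_2$, their midpoints lie on the line $x_1=-x_2$ only if the locus is linear. A computation at two offsets would show that the midpoints are not collinear, and one then extends them to $n+1$ affinely independent midpoints using the other directions. The curvature argument avoids this computation, so I would use that.

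The main, mild, obstacle is to justify cleanly that $\beta_n$ alone, rather than elementary equivalence, suffices in the proof of Theorem~\ref{thm:ball}. Inspection of that proof shows this to be the case.
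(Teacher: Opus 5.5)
Your argument is correct and is the paper's proof in contrapositive form. The paper applies Bertrand--Brunn directly to the non-ellipsoid $S_n$ to obtain $n+1$ affinely independent midpoints of parallel maximal chords witnessing $\neg\beta_n$, whereas you rerun the (i)$\Rightarrow$(iii) argument of Theorem~\ref{thm:ball} from $S_n\models\beta_n$ alone. Your vanishing-curvature check at $(\pm1,0)$ on the coordinate-plane section also supplies the fact that $S_n$ is not an ellipsoid, which the paper states without proof.

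You should drop the aside about the direction $(1,1,0,\dots,0)$, because it would fail. The reflection $(x_1,x_2,\dots)\mapsto(-x_2,-x_1,\dots)$ is a symmetry of $S_n$ that maps every line in that direction to itself, so all those midpoints lie in the hyperplane $x_1=-x_2$. In any case $\beta_n$ requires all $n+1$ chords to share one common direction, so using ``other directions'' would not witness $\neg\beta_n$.
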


\begin{proof}
$S_n$ is not an ellipsoid, so by Theorem~\ref{thm:BB} some direction $u$ has
a midpoint locus contained in no hyperplane, hence containing $n+1$ affinely
independent midpoints of maximal chords parallel to $u$; these witness
$S_n\models\neg\beta_n$, while $\Ball\models\beta_n$.
\end{proof}

\begin{remark}
Theorems~\ref{thm:ball} and~\ref{thm:polytope} are two special cases of the
general affine categoricity theorem of the next section
(Theorem~\ref{thm:affine-main}), which subsumes both: the ball theorem is the
case in which the reconstructed body is an ellipsoid, the polytope theorem
the case in which it has finitely many extreme points. We have kept the two
direct proofs because each exhibits a concrete separating sentence
($\beta_n$, respectively $\sigma_P$) of independent interest, whereas the
general theorem proceeds by a reconstruction that names no single sentence.
\end{remark}

\section{Reconstruction from the extreme-point spectrum: the affine theorem}
\label{sec:affine-proved}

This section proves Theorem~\ref{thm:cat-affine}: for compact convex
bodies, $\Laff$-elementary equivalence coincides with affine equivalence in
every dimension. The argument has three ingredients. First, the
$\Laff$-theory of a body $K$ determines the \emph{affine spectrum} of its
extreme-point configurations, that is, which affine-congruence types of finite
tuples of extreme points occur, as a family of Dedekind data over the
rationals (\S\ref{subsec:affexpr}). Second, a maximal-volume inscribed
simplex rigidifies the affine gauge: fixing one places $K$ in a canonical
affine position in which every extreme point acquires definite barycentric
coordinates, and these coordinates are the affine spectrum read off against
the simplex. Third, the simplices that can serve as gauge are detected in the
language by a rational condition, and each has volume at least
$\mathrm{vol}(K)/n^{n}$, so they form a \emph{compact} family. Compactness
supplies a limiting gauge under which two elementarily equivalent bodies are
reconstructed as affinely equivalent copies of one convex set. The volume
bound is what keeps the reconstructing gauge from degenerating into the
boundary.

Throughout, $K\subseteq\R^n$ is a compact convex body ($n\ge1$; the
argument is uniform in $n$). Recall $K=\conv(\ext K)$ by Minkowski's theorem
\cite[Cor.~1.4.5]{Schneider14}, and hence $K=\conv(\overline{\ext K})$.

\subsection{Affine coordinates are definable}\label{subsec:affexpr}

\begin{lemma}[Rational affine combinations]\label{lem:affine-comb}
Let $c_0,\dots,c_r\in\Q$ with $\sum_i c_i=1$. There is an $\Laff$-formula
$A_{\bar c}(y;x_0,\dots,x_r)$ such that, in every convex set, for points
$y,x_0,\dots,x_r$ whose relevant affine combination lies in the set,
\[
K\models A_{\bar c}(y;\bar x)\iff y=\sum_{i=0}^r c_i x_i .
\]
\end{lemma}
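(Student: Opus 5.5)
The plan is to convert the affine relation $y=\sum_i c_ix_i$, whose coefficients may be negative, into an equation between two \emph{convex} combinations. Both sides can then be written with the barycentric terms of Lemma~\ref{lem:terms}, as in the proof of Theorem~\ref{thm:polytope}.

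First, separate the coefficients by sign. Put $I=\{i:c_i>0\}$ and $J=\{i:c_i<0\}$, and let $s=\sum_{i\in J}(-c_i)\ge0$, so that $\sum_{i\in I}c_i=1+s$. The identity $y=\sum_ic_ix_i$ is equivalent to
\[
y+\sum_{j\in J}(-c_j)x_j=\sum_{i\in I}c_ix_i .
\]
Dividing both sides by $1+s$, this becomes an equality of two convex combinations with rational weights:
\[
\tfrac{1}{1+s}\,y+\sum_{j\in J}\tfrac{-c_j}{1+s}\,x_j=\sum_{i\in I}\tfrac{c_i}{1+s}\,x_i .
\]
Call the two sides $\tau(y,\bar x)$ and $\tau'(\bar x)$. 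Lemma~\ref{lem:terms} realizes each as an $\Laff$-term. We then define $A_{\bar c}(y;\bar x):\equiv\tau(y,\bar x_J)=\tau'(\bar x_I)$, a quantifier-free formula. If $J=\emptyset$, then $s=0$, the formula reads $y=\tau'(\bar x)$, and it is literally the barycentric term.

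Second, check correctness. In any convex set $K$, both terms evaluate to the genuine convex combinations of points of $K$; the operations $C_\lambda$ are total on $K$, so no side condition arises there. The equality of the two convex combinations holds iff, after multiplying by $1+s$, the displayed affine identity holds. That identity is in turn equivalent to $y=\sum c_ix_i$, by linear algebra in $\R^n$. The hypothesis that the combination lies in the set is needed only so that $y$ ranges over $K$, which it does by assumption. So the equivalence holds for all $y,\bar x\in K$: if $\sum c_ix_i\notin K$, no $y\in K$ satisfies either side, and both sides of the biconditional are false.

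The only delicate point is bookkeeping. Repeated indices and zero coefficients are handled by dropping zero terms, and rationality of $\bar c$ is not even needed for the construction. Rationality matters later only for countability and enumeration of the formulas. I do not expect a genuine obstacle.
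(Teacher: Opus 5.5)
Your proposal is correct and is essentially the paper's proof. Both split the coefficients by sign, move the negative terms to the left, normalize by $\sum_{c_i>0}c_i$ (your $1+s$, the paper's $s$), and equate the two barycentric terms supplied by Lemma~\ref{lem:terms}. Your further remarks are also correct: the side hypothesis is superfluous, since both sides of the biconditional fail when $\sum_i c_ix_i\notin K$, and rationality is not needed for the construction itself.
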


\begin{proof}
Let $P=\{i:c_i>0\}$ and $N=\{i:c_i<0\}$ (indices with $c_i=0$ are dropped),
and put $s=\sum_{i\in P}c_i$. Since $\sum_ic_i=1$ we have
$s=1+\sum_{i\in N}(-c_i)\ge1>0$. The identity $y=\sum_ic_ix_i$ is equivalent,
after moving the negative terms across and dividing by $s$, to
\[
\frac{1}{s}\,y+\sum_{i\in N}\frac{-c_i}{s}\,x_i
\;=\;\sum_{i\in P}\frac{c_i}{s}\,x_i .
\]
Both sides are convex combinations: the left coefficients
$\tfrac1s,\bigl(\tfrac{-c_i}{s}\bigr)_{i\in N}$ are nonnegative and sum to
$\tfrac{1+\sum_N(-c_i)}{s}=1$, and the right coefficients
$\bigl(\tfrac{c_i}{s}\bigr)_{i\in P}$ are nonnegative and sum to $1$. By
Lemma~\ref{lem:terms} each side is the value of an $\Laff$-term
($\tau_{\bar\mu}$ with rational weights), so
\[
A_{\bar c}(y;\bar x):\equiv\ 
\tau_{\mathrm{left}}\bigl(y,(x_i)_{i\in N}\bigr)
=\tau_{\mathrm{right}}\bigl((x_i)_{i\in P}\bigr)
\]
is as required. All intermediate points are convex combinations of
$\{y\}\cup\{x_i\}$, hence lie in the convex set.
\end{proof}

\begin{lemma}[Barycentric coordinates and thresholds]
\label{lem:bary-def}
Fix $n$ and let $\bar v=(v_0,\dots,v_n)$ range over affinely independent
tuples (definable by $\neg\Dep_{n+1}$). For a point $p$ write
$\lambda_0(p),\dots,\lambda_n(p)$ for its barycentric coordinates with respect
to $\bar v$.
\begin{enumerate}[label=\textup{(\roman*)}]
\item For each $i$ and each rational $q$ there are $\Laff$-formulas expressing
$\lambda_i(p)=q$, $\lambda_i(p)<q$, and $\lambda_i(p)>q$, with parameters
$\bar v$.
\item Consequently, for any rational box
$B=\prod_i(q_i^-,q_i^+)\subseteq\R^{n+1}$, ``$(\lambda_0(p),\dots,
\lambda_n(p))\in B$'' is $\Laff$-definable with parameters $\bar v$; and the
maps $p\mapsto\lambda_i(p)$ separate points, so the affine-congruence type of
a tuple $(\bar v;p_1,\dots,p_k)$ with $\bar v$ affinely independent is
determined by, and determines, the rational cut data of the numbers
$\lambda_i(p_j)$.
\end{enumerate}
\end{lemma}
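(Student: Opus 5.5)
The plan is to reduce everything to Lemma~\ref{lem:affine-comb}. For (i), with $\bar v$ affinely independent, $\lambda_i(p)=q$ holds iff $p$ lies on the affine hyperplane $\{\lambda_i=q\}$. I would express this as follows: there exist rational affine-combination witnesses, or more simply, $p$ together with the points $v_j$ ($j\ne i$) and the rational point $z=q\,v_i+(1-q)\,v_j$ (for a fixed $j\ne i$) are affinely dependent in a way that places $p$ on the hyperplane through $z$ parallel to the face opposite $v_i$. Directly: $\lambda_i(p)=q$ iff $p-z$ lies in the span of $\{v_k-v_j:k\ne i,j\}$. I would rather phrase it by scaling toward $v_i$. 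Since $q$ is rational, the point $p'=\frac{1}{1-q}\,p-\frac{q}{1-q}\,v_i$ (for $q\ne1$) has $\lambda_i(p')=0$, i.e.\ $p'$ lies on the facet hyperplane. This is awkward because $p'$ may leave $K$.

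The cleaner route is to avoid points outside $K$ by taking convex combinations toward the barycenter $b$ of $\bar v$, which is itself $\Laff$-definable as a rational term. For a rational $\mu\in(0,1]$ small, $r=C_\mu(b,p)$ has $\lambda_i(r)=(1-\mu)\frac1{n+1}+\mu\lambda_i(p)$, and any $p\in K$ gives $r\in K$. The relation $\lambda_i(p)=q$ is then the existence of $w\in K$ with $\Dep_{n+1}$ of $w$ with the face $\{v_k\}_{k\ne i}$, i.e.\ $\lambda_i(w)=0$, such that $p$ is a rational affine combination $p=q\,v_i+(1-q)\,w$. This works when $q\in[0,1]$ and $w$ is in $K$.

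In general I would proceed by cases on the sign and size of $q$. I would rearrange the identity $p=qv_i+(1-q)w$ so that the unknown $w$ appears only in a convex position: if $q\le 0$, then $w=\frac{1}{1-q}p+\frac{-q}{1-q}v_i\in[p,v_i]\subseteq K$, which is a convex combination. If $0\le q<1$ and $w$ might leave $K$, I would replace $w$ by a point $w'$ on $[b',w]$ with $b'$ the barycenter of the opposite face. All of these shifts are rational affine combinations, handled by Lemma~\ref{lem:affine-comb}, and the existential witness in the face hyperplane is expressed via $\Dep_{n+1}(w,(v_k)_{k\ne i})$. Strict inequalities follow by writing $\lambda_i(p)<q$ as $\exists w\,(\lambda_i(w)=q\wedge B(\,\cdot\,)\ldots)$. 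Concretely, $\lambda_i(p)<q$ iff the segment from $v_i$ through $p$ crosses level $q$ strictly beyond $p$, or $p$ lies on the far side of the level set; this is a betweenness statement involving a point $w$ with $\lambda_i(w)=q$, namely $B(p,w,v_i)\wedge w\ne p$, with a symmetric variant for $p$ on the other side of $v_i$. The main obstacle I expect is exactly this bookkeeping: ensuring every auxiliary witness lies in $K$ for all possible positions of $p$ relative to the simplex. The uniform fix is to shrink toward $b$ by a fixed rational $\mu$, which maps thresholds $q$ to rational thresholds $q'$ and keeps every point involved inside $\conv(\{p\}\cup\bar v)\subseteq K$.

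For (ii), boxes are finite conjunctions of the formulas in (i). Barycentric coordinates form an affine isomorphism $\R^n\to\{\sum\lambda=1\}$, so they separate points. The affine-congruence type of $(\bar v;p_1,\dots,p_k)$ with $\bar v$ independent is the tuple $(\lambda_i(p_j))$: two such tuples are affinely congruent iff the unique affine map matching the $\bar v$'s carries each $p_j$ to $p'_j$, iff the coordinates agree. A real number is determined by its rational cut, which completes the argument.
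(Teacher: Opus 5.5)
Your part (ii) is fine and matches the paper. The gap is in (i): your test for $\lambda_i=q'$ is complete only at nonpositive levels, and the ``uniform fix'' does not make it complete at positive levels.

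At a level $q'\in(0,1)$ your test is $\exists w\,[\Dep_{n+1}(w,(v_k)_{k\ne i})\wedge r=q'v_i+(1-q')w]$. Its witness $w=(r-q'v_i)/(1-q')$ lies on the ray from $v_i$ through $r$, beyond $r$. For $k\ne i$ it has $\lambda_k(w)=\lambda_k(r)/(1-q')$. Shrinking toward $b$ by a fixed rational $\mu$ puts $r$ in $K$ and moves the threshold, but it does not keep $w$ in $K$. Indeed $\lambda_k(r)=\frac{1-\mu}{n+1}+\mu\lambda_k(p)$ is negative once $\lambda_k(p)<-\frac{1-\mu}{(n+1)\mu}$, and over arbitrary affinely independent $\bar v$ the $\lambda_k(p)$ are unbounded below.

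Concretely, take $v_0=(0,0)$, $v_1=(1,0)$, $v_2=(0,1)$, $i=1$ (so $\lambda_1(x,y)=x$), $q=1$, and $K=\conv\{v_0,v_1,v_2,(1,-M)\}$. Then $K$ meets $F_1=\{x=0\}$ only in $[v_0,v_2]$. For $p=(1,-M)$ with $M>(1-\mu)/(3\mu)$ the witness has negative $y$-coordinate, so your formula is false although $\lambda_1(p)=1$. Replacing $w$ by $w'=(1-t)b'+tw$ with a fixed rational $t$ fails the same way for larger $M$. Your claim that the shrink ``keeps every point involved inside $\conv(\{p\}\cup\bar v)$'' is true of $r$ but false of the witness. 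The resulting formulas are therefore sound but not complete.

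This matters because the lemma quantifies over all affinely independent $\bar v$, and $\mathrm{Stat}(\bar v)$ in Theorem~\ref{thm:affine-main} is itself written with these formulas. Your fix does work for stationary $\bar v$ with $\mu<1/n^2$: then $\lambda_k(p)\ge1-n$ forces $r\in\interior\conv\bar v$ and $w$ into the facet. But that is not what the lemma asserts.

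The repair is to drop the witness altogether. For rational $q'\in[0,1)$, consider the $n$ terms $u_k:=C_{1-q'}(v_i,v_k)=q'v_i+(1-q')v_k$ with $k\ne i$.
\begin{itemize}
\item They take values in $\conv\bar v\subseteq K$.
\item They are the images of the facet vertices under the homothety at $v_i$ of ratio $1-q'\ne0$, so they are affinely independent.
\item Hence they span the level hyperplane $\{\lambda_i=q'\}$.
\end{itemize}
So for $r\in K$, $\lambda_i(r)=q'$ holds iff $\Dep_{n+1}(r,(u_k)_{k\ne i})$, by Proposition~\ref{prop:dep}, whose existential witnesses lie in $K$ by convexity.

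Precompose with your shrink $r=C_\mu(b,p)$, choosing $\mu$ rational and small enough that $q'=\frac{1-\mu}{n+1}+\mu q\in(0,1)$. This gives the equality test for every rational $q$, uniformly in $K$ and $\bar v$. Your betweenness test along $[r,v_i]$ then gives ``$<q'$'', since the level-$q'$ point on $(r,v_i]$ is a convex combination and so lies in $K$. ``$>q'$'' is the remaining case. This is the idea already latent in your first paragraph (membership in the hyperplane through $z$ parallel to $F_i$), once that hyperplane is spanned by points of the simplex.

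By comparison, the paper projects $p$ parallel to $F_i$ onto the line through $c_i=b'$ and $v_i$, and compares with the parameter-$q$ point there. That projected point also need not lie in $K$ when $\lambda_i(p)\notin[0,1]$, so the $\Dep$ test is the cleaner way to settle containment in either route.
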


\begin{proof}
(i) Let $F_i=\aff\{v_j:j\ne i\}$ be the facet opposite $v_i$; it is the level
set $\{\lambda_i=0\}$ and is definable from $\bar v$ by collinearity/affine-hull
conditions ($\Coll$, $\Dep$). Put $c_i=\frac1n\sum_{j\ne i}v_j\in F_i$, an
$\Laff$-term (Lemma~\ref{lem:terms}), and let $\ell_i=\aff\{c_i,v_i\}$. The
coordinate $\lambda_i$ is constant on hyperplanes parallel to $F_i$ and rises
affinely from $0$ on $F_i$ to $\lambda_i(v_i)=1$; hence for any $p$ the
projection $\rho(p)$ of $p$ onto $\ell_i$ parallel to $F_i$ has
$\lambda_i(\rho(p))=\lambda_i(p)$, and on $\ell_i$ the coordinate $\lambda_i$
is the affine parameter with $c_i\mapsto0$, $v_i\mapsto1$. The map $\rho$ is
$\Laff$-definable, since parallelism to $F_i$ is expressible
(Proposition~\ref{prop:parallel}) and incidence with $\ell_i$ is a
collinearity condition. On $\ell_i$, the relations ``parameter $=q$'',
``$<q$'', ``$>q$'' against the frame $(c_i,v_i)$ are $\Laff$-expressible: for
$q\in[0,1]$ the parameter-$q$ point is the term $C_q(c_i,v_i)$, for $q\notin
[0,1]$ it is the betweenness extrapolant obtained by extending the segment
$c_iv_i$ (Lemma~\ref{lem:affine-comb}), and comparison is betweenness against
that point. Composing with $\rho$ gives $\Laff$-formulas for $\lambda_i(p)=q$,
$\lambda_i(p)<q$, and $\lambda_i(p)>q$ with parameters $\bar v$, uniformly in
$q$ (the value $q=1$, used for stationarity below, is included).

(ii) A rational box is a finite conjunction of the threshold formulas of (i).
For affinely independent $\bar v$, the coordinates $\lambda_i$ form an affine
coordinate system, so $(\lambda_i(p_j))_{i,j}$ is a complete affine invariant
of the tuple $(\bar v;\bar p)$ (up to the shared $\bar v$-frame); two tuples
are affinely congruent iff their coordinate arrays agree, and rational boxes
generate the topology of the coordinate space.
\end{proof}

\begin{remark}
Lemma~\ref{lem:bary-def} is the affine analogue of the projective cut
calculus of \S\ref{sec:vonstaudt}, but unconditional: affine coordinates are
polynomial in the data and expressible directly through the barycentric term
calculus, with no analogue of Proposition~\ref{prop:crq} needed. This is the
reason the affine categoricity theorem closes outright, while the projective
case still awaits the boundary recovery and gauge selection of
Proposition~\ref{prop:projective-reduction}.
\end{remark}

\subsection{Stationary simplices and the non-degeneration bound}

\begin{definition}\label{def:stationary}
An inscribed simplex $\Delta=\conv\{v_0,\dots,v_n\}\subseteq K$ (with
$\bar v$ affinely independent) is \emph{stationary} if every point of $K$ has
all barycentric coordinates with respect to $\bar v$ at most $1$:
\[
\forall p\in K,\ \forall i:\ \lambda_i(p)\le 1 .
\]
Write $\mathcal M(K)$ for the set of stationary inscribed simplices, regarded
as a subset of $K^{n+1}$.
\end{definition}

\begin{lemma}[Characterization and non-degeneration]
\label{lem:stationary}
Let $K\subseteq\R^n$ be a compact convex body.
\begin{enumerate}[label=\textup{(\roman*)}]
\item An inscribed simplex $\Delta=\conv\{v_0,\dots,v_n\}$ is
\emph{locally volume-maximal} --- replacing any single vertex $v_i$ by any
point of $K$ does not increase the volume --- if and only if
$|\lambda_i(p)|\le1$ for all $p\in K$ and all $i$. Every locally
volume-maximal simplex is stationary; the converse holds for $n\le2$ but
fails for $n\ge3$ (Remark~\ref{rem:stationary-strict}).
\item Every globally volume-maximal inscribed simplex is stationary; in
particular $\mathcal M(K)\ne\emptyset$.
\item If $\Delta\in\mathcal M(K)$ then $K\subseteq\Delta^{*}$, where
$\Delta^{*}=\{y:\lambda_i(y)\le1\ \forall i\}$ is the image of $\Delta$ under
the homothety of ratio $-n$ about its centroid; consequently
\[
\operatorname{vol}(\Delta)\ \ge\ \frac{\operatorname{vol}(K)}{n^{\,n}} .
\]
\item $\mathcal M(K)$ is a nonempty compact subset of $K^{n+1}$, on which the
barycentric-coordinate maps $\bar v\mapsto\lambda_{\bar v}(x)$ (for
$x\in K$) are continuous and uniformly nondegenerate.
\end{enumerate}
\end{lemma}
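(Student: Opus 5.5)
The plan is to derive everything from one determinant identity. Fix an affinely independent $\bar v$. Replacing $v_i$ by $p$ gives a simplex whose volume equals $|\lambda_i(p)|\operatorname{vol}(\Delta)$. To see this, write the signed volume as a determinant of the homogeneous coordinates $(v_j,1)$. Replacing the $i$-th column by $(p,1)=\sum_j\lambda_j(p)(v_j,1)$ multiplies the determinant by $\lambda_i(p)$, by multilinearity and the alternating property.

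Part (i) is immediate from this identity. Local maximality means $|\lambda_i(p)|\le1$ for all $p\in K$ and all $i$, and this trivially implies $\lambda_i(p)\le1$, i.e.\ stationarity. For the converse when $n\le2$, we must show that $\lambda_i(p)\ge-1$ follows from the upper bounds. Since the coordinates of $p$ sum to $1$, we have $\lambda_i(p)=1-\sum_{j\ne i}\lambda_j(p)\ge1-n$. For $n=1$ this gives $\lambda_i\ge0$, and for $n=2$ it gives $\lambda_i\ge-1$. For $n\ge3$ the same computation only gives $\lambda_i\ge 1-n<-1$. The counterexample is deferred to Remark~\ref{rem:stationary-strict}; there I would take $K$ to be the simplex $\Delta^*$ itself, with $\Delta$ its ``medial'' simplex.

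For (ii), note that $K^{n+1}$ is compact and volume is continuous, so a global maximizer exists. Its volume is positive because $K$ has interior. A global maximizer is in particular locally maximal, hence stationary by (i).

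For (iii), stationarity says exactly $K\subseteq\Delta^*=\{\lambda_i\le1\ \forall i\}$. To identify $\Delta^*$, let $g$ be the centroid and $h(y)=g-n(y-g)$ the homothety of ratio $-n$ about $g$. Then $\lambda_i(h(y))=\tfrac1{n+1}-n(\lambda_i(y)-\tfrac1{n+1})=1-n\lambda_i(y)$. This is $\le1$ if and only if $\lambda_i(y)\ge0$, so $h(\Delta)=\Delta^*$. Hence $\operatorname{vol}(K)\le\operatorname{vol}(\Delta^*)=n^n\operatorname{vol}(\Delta)$.

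For (iv), nonemptiness is (ii). For closedness in $K^{n+1}$, let $\bar v^{(k)}\to\bar v$ with each $\bar v^{(k)}$ stationary. By (iii) their volumes are $\ge\operatorname{vol}(K)/n^n>0$, so the limit $\bar v$ is still affinely independent. The coordinates $\lambda_{\bar v}(p)$ are given by Cramer's rule, i.e.\ a ratio of determinants whose denominator is $(n!)\,$times the signed volume. They are therefore continuous in $\bar v$ wherever that volume is nonzero, so the inequalities $\lambda_i\le1$ pass to the limit. Thus $\mathcal M(K)$ is closed in a compact set, hence compact.

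Uniform nondegeneracy follows from the same two facts. The denominators are bounded below by $n!\operatorname{vol}(K)/n^n$, and the numerators are bounded on the compact set $K^{n+2}$. So the coordinate maps are uniformly (even Lipschitz) continuous in $(\bar v,x)$ on $\mathcal M(K)\times K$.

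The only real obstacle is the $n\ge3$ counterexample in (i), which is deferred to the Remark. Everything else is the determinant identity together with compactness, and the crucial input for compactness is the volume bound in (iii).
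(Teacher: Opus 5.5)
Your proposal is correct and follows essentially the same route as the paper. The shared steps are the single-vertex replacement identity (which the paper phrases via facet height, equivalently multilinearity of the determinant), the bound $\lambda_i(p)\ge1-n$, the homothety identification of $\Delta^*$, and closedness of $\mathcal M(K)$ via the volume floor from (iii). Your deferred counterexample for $n\ge3$ differs from the paper's five-vertex polytope in Remark~\ref{rem:stationary-strict}: you take $K=\Delta^*$, and replacing $v_i$ by $h(v_i)$, which has $\lambda_i=1-n$, multiplies the volume by $n-1>1$. It is equally valid, and it also shows that the bound in (iii) is sharp.
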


\begin{proof}
(i) The volume of $\conv\{v_0,\dots,v_n\}$ equals
$\tfrac1n\,(\text{area of the facet opposite }v_i)\cdot h_i$, where $h_i$ is
the distance from $v_i$ to the hyperplane $\{\lambda_i=0\}$ of that facet.
Replacing $v_i$ by $p\in K$ keeps the opposite facet fixed and replaces
$h_i$ by the distance from $p$ to that hyperplane, which is
$|\lambda_i(p)|\,h_i$; equivalently, by multilinearity of the determinant in
the row indexed by $i$, the replacement multiplies the volume by
$|\lambda_i(p)|$. Thus no replacement increases the volume if and only if
$|\lambda_i(p)|\le1$ for all $p\in K$ and all $i$.

Since $\lambda_i(p)\le|\lambda_i(p)|$, local volume-maximality implies
stationarity. Conversely, if $\Delta$ is stationary then for each $p\in K$
and each $i$, using $\sum_j\lambda_j(p)=1$ and $\lambda_j(p)\le1$ for the
$n$ indices $j\ne i$,
\[
\lambda_i(p)=1-\sum_{j\ne i}\lambda_j(p)\ \ge\ 1-n .
\]
For $n\le2$ this gives $|\lambda_i(p)|\le1$, so the two conditions agree;
for $n\ge3$ the lower bound $1-n<-1$ leaves room for failure, and
Remark~\ref{rem:stationary-strict} shows the failure occurs.

(ii) A global maximizer is in particular locally volume-maximal, hence
stationary by (i).
A global maximizer exists because $\operatorname{vol}$ is continuous on the
compact set $K^{n+1}$.

(iii) By definition $K\subseteq\{\lambda_i\le1\ \forall i\}$. The set
$\{y:\lambda_i(y)\le1\ \forall i\}$ is the simplex whose vertices are the
points with barycentric coordinates equal to a permutation of
$(1-n,1,\dots,1)$; these are the images of $v_0,\dots,v_n$ under the
homothety $y\mapsto g-n(y-g)$ about the centroid
$g=\frac1{n+1}\sum_iv_i$, so $\Delta^*$ is that homothet and
$\operatorname{vol}(\Delta^*)=n^{n}\operatorname{vol}(\Delta)$. From
$K\subseteq\Delta^*$ we get
$\operatorname{vol}(K)\le n^{n}\operatorname{vol}(\Delta)$.

(iv) $\mathcal M(K)$ is nonempty by (ii). It is closed: the defining
condition $\sup_{p\in K}\max_i\lambda_i^{\bar v}(p)\le1$ is closed in
$\bar v$ over the nondegenerate locus, and by (iii) every stationary tuple
has $\operatorname{vol}(\conv\bar v)\ge\operatorname{vol}(K)/n^n>0$, so
$\mathcal M(K)$ stays a positive distance from the degenerate locus
$\{\Dep_{n+1}\}$; being a closed, volume-bounded-below subset of the compact
$K^{n+1}$, it is compact, and the barycentric maps are continuous with
Jacobian bounded away from $0$ there.
\end{proof}

\begin{remark}\label{rem:stationary-strict}
For $n\ge3$ stationarity is strictly weaker than local volume-maximality.
Take $n=3$, let $\bar v=(v_0,v_1,v_2,v_3)$ be affinely independent, let $q$
be the point with barycentric coordinates
$\bigl(-\tfrac32,1,1,\tfrac12\bigr)$ with respect to $\bar v$ --- these sum
to $1$ --- and put $K=\conv\{v_0,v_1,v_2,v_3,q\}$, a compact convex body
with nonempty interior. Each coordinate function $\lambda_i$ is affine,
hence attains its maximum over $K$ at a vertex, and those maxima are
$\lambda_0:1$, $\lambda_1:1$, $\lambda_2:1$, $\lambda_3:1$; so $\bar v$ is
stationary. But $|\lambda_0(q)|=\tfrac32>1$, so replacing $v_0$ by $q$
multiplies the volume by $\tfrac32$, and $\bar v$ is not locally
volume-maximal. Only the implication used in (ii)--(iv) --- volume-maximal
$\Rightarrow$ stationary --- is needed for the reconstruction theorem, and
it holds in every dimension.
\end{remark}

\subsection{The reconstruction theorem}

For a stationary simplex $\Delta\in\mathcal M(K)$ with vertex tuple $\bar v$,
let
\[
E_K^{\Delta}\ :=\ \bigl\{\lambda_{\bar v}(p):p\in\ext K\bigr\}\subseteq\R^{n+1}
\]
be the set of barycentric-coordinate vectors of the extreme points, and
$\overline{E_K^{\Delta}}$ its closure. By Lemma~\ref{lem:stationary}(iii),
$E_K^{\Delta}\subseteq\{\lambda:\ \lambda_i\le1\ \forall i,\ \sum_i\lambda_i=1\}$,
a fixed compact simplex $\Sigma_n\subseteq\R^{n+1}$ independent of $K$ and
$\Delta$. Identifying $\R^n$ with the affine hyperplane $\{\sum\lambda_i=1\}$
via $\bar v\mapsto$ the standard frame, $\conv\overline{E_K^{\Delta}}$ is an
affine copy of $K$ (the image of $K$ under the affine chart that sends
$\bar v$ to the standard simplex).

\begin{theorem}[Affine categoricity; Theorem~\ref{thm:cat-affine}, restated]\label{thm:affine-main}
Let $K,K'\subseteq\R^n$ be compact convex bodies. Then $K\equiv_{\Laff}K'$ if
and only if $K$ and $K'$ are affinely equivalent, in every dimension.
\end{theorem}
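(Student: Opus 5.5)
The ``if'' direction is Theorem~\ref{thm:iso-affine}. For ``only if'', assume $K\equiv_{\Laff}K'$. The plan is to show that the theory of $K$ determines the closed set $\overline{E_K^{\Delta}}\subseteq\Sigma_n$ for a suitable gauge $\Delta$, up to a choice that can be matched in $K'$. We then conclude that $K$ and $K'$ are each affine copies of one convex set $\conv\overline{E}$.

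\textbf{Step 1 (rational sentences).} For a finite list of rational open boxes $U_1,\dots,U_r\subseteq\R^{n+1}$ and rational closed boxes $F_1,\dots,F_s$, we form the sentence
\[
\exists\bar v\,\Bigl[\neg\Dep_{n+1}(\bar v)\wedge\mathrm{St}(\bar v)\wedge\bigwedge_{j}\exists p\,\bigl(\Ext(p)\wedge\lambda_{\bar v}(p)\in U_j\bigr)\wedge\forall p\,\bigl(\Ext(p)\to\textstyle\bigvee_k\lambda_{\bar v}(p)\in F_k\bigr)\Bigr].
\]
Here $\mathrm{St}(\bar v)$ is $\forall p\bigwedge_i\neg(\lambda_i(p)>1)$, which is expressible by Lemma~\ref{lem:bary-def}(i) together with Proposition~\ref{prop:extreme}. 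Each such sentence asserts that some stationary gauge sees the extreme points hitting every $U_j$ and lying in $\bigcup F_k$. Because the theories agree, $K$ and $K'$ satisfy exactly the same such sentences.

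\textbf{Step 2 (compactness of gauges).} Enumerate the rational box data to obtain, for each $m$, the sentence $\sigma_m$ that captures the $1/m$-Hausdorff approximation of $\overline{E_K^{\Delta_0}}$ for a fixed $\Delta_0\in\mathcal M(K)$. Concretely, cover $\Sigma_n$ by a rational grid of mesh $1/m$, take as $U_j$ the cells meeting the set, and take as $F_k$ the closed $1/m$-neighbourhoods of those cells. Then $K\models\sigma_m$, so $K'\models\sigma_m$ with a witness $\bar v^{(m)}\in\mathcal M(K')$. By Lemma~\ref{lem:stationary}(iv), $\mathcal M(K')$ is compact, so a subsequence converges to some $\bar v^\ast\in\mathcal M(K')$, with barycentric maps converging uniformly on $K'$.

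\textbf{Step 3 (passing to the limit).} Let $E'_m=\overline{E_{K'}^{\bar v^{(m)}}}$. By construction these sets are within Hausdorff distance $O(1/m)$ of $\overline{E_K^{\Delta_0}}$. Uniform convergence of the coordinate maps gives $E'_m\to\overline{E_{K'}^{\bar v^\ast}}$ in the Hausdorff metric, since the extreme-point set of $K'$ is fixed and only the chart moves. Hence $\overline{E_{K'}^{\bar v^\ast}}=\overline{E_K^{\Delta_0}}$. Taking convex hulls, the affine chart of $K$ sending $\Delta_0$ to the standard simplex and the chart of $K'$ sending $\bar v^\ast$ to it have the same image $\conv\overline{E}$. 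Therefore $K$ and $K'$ are affinely equivalent.

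\textbf{Main obstacle.} The delicate point is the limit in Step~3. Closure of $\ext$ is not stable under perturbation in general, but here the body $K'$ is fixed and only the frame varies. The two-sided Hausdorff control (hitting every $U_j$ and staying inside $\bigcup F_k$) is exactly what makes the limit exact, so the main care goes into choosing the boxes so that both inclusions survive passage to the limit.
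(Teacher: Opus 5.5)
Your proposal is correct and follows essentially the paper's proof. It uses the same rational box-hitting and box-excluding sentences over stationary gauges, a fixed $\Delta_0\in\mathcal M(K)$, compactness of $\mathcal M(K')$ from the bound $\operatorname{vol}(K')/n^n$, and a limiting gauge $\bar v^\ast$ with $\overline{E_{K'}^{\bar v^\ast}}=\overline{E_K^{\Delta_0}}$. Your Hausdorff-distance version of the limit step (containment in $\bigcup F_k$ rather than the paper's avoidance of boxes) is, if anything, a cleaner rendering of the paper's final inclusion.

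One small repair is needed in Step~2. The $U_j$ must be overlapping open boxes that actually cover $\Sigma_n$, because the open cells of a grid do not cover it. Otherwise $K\models\sigma_m$ can fail: for a simplex with $\Delta_0=K$, every extreme-point coordinate vector sits on cell walls, so no open cell meets $\overline{E_K^{\Delta_0}}$ and the containment clause becomes an empty disjunction.
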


\begin{proof}
($\Leftarrow$) is Theorem~\ref{thm:iso-affine} (affinely equivalent bodies
are isomorphic, hence elementarily equivalent).

($\Rightarrow$) Assume $K\equiv_{\Laff}K'$. Fix two finite disjoint families
of rational boxes in $\R^{n+1}$, a ``hit'' family
$\mathcal B=\{B_1,\dots,B_k\}$ and an ``avoid'' family
$\mathcal C=\{C_1,\dots,C_l\}$, and consider the $\Laff$-sentence
\[
\Theta_{\mathcal B,\mathcal C}:\equiv
\exists v_0\cdots v_n\Bigl[\ \mathrm{Stat}(\bar v)\ \wedge\ 
\bigwedge_{a=1}^{k}\exists p\,\bigl(\Ext(p)\wedge\lambda_{\bar v}(p)\in B_a\bigr)
\ \wedge\ \forall p\,\bigl(\Ext(p)\to\textstyle\bigwedge_{b=1}^{l}
\lambda_{\bar v}(p)\notin C_b\bigr)\Bigr],
\]
where $\mathrm{Stat}(\bar v):\equiv\neg\Dep_{n+1}(\bar v)\wedge\forall p\,
\bigwedge_i\lambda_{\bar v,i}(p)\le1$ expresses stationarity
(Lemmas~\ref{lem:bary-def}, \ref{lem:stationary}), $\Ext$ is the
extreme-point formula (Proposition~\ref{prop:extreme}), and
``$\lambda_{\bar v}(p)\in B$'' abbreviates the box formula of
Lemma~\ref{lem:bary-def}. In words: there is a stationary gauge simplex whose
extreme-point coordinate set meets each box $B_a$ and avoids every box $C_b$.
Each $\Theta_{\mathcal B,\mathcal C}$ is a genuine $\Laff$-sentence, so
\[
K\models\Theta_{\mathcal B,\mathcal C}\iff K'\models\Theta_{\mathcal B,\mathcal C}
\qquad\text{for all }\mathcal B,\mathcal C. \tag{$\ast$}
\]

\emph{From \textup{($\ast$)} to a common gauge.} Fix any stationary simplex
$\Delta_0\in\mathcal M(K)$ (Lemma~\ref{lem:stationary}(ii)) and let
$E_0=\overline{E_K^{\Delta_0}}\subseteq\Sigma_n$. Enumerate a countable basis
of rational boxes of $\Sigma_n$. For each finite refinement level $m$, let
$\mathcal B^{(m)}$ be the finitely many basic boxes of diameter $<1/m$ that
$E_0$ meets, and $\mathcal C^{(m)}$ the finitely many basic boxes of diameter
$<1/m$ whose closure is disjoint from $E_0$. Then $\Delta_0$ witnesses
$K\models\Theta_{\mathcal B^{(m)},\mathcal C^{(m)}}$ (its extreme-point
coordinates meet exactly the boxes meeting $E_0$). By ($\ast$),
$K'\models\Theta_{\mathcal B^{(m)},\mathcal C^{(m)}}$: there is a stationary
$\Delta'_m\in\mathcal M(K')$ whose extreme-point coordinate set meets every
box of $\mathcal B^{(m)}$ and avoids every box of $\mathcal C^{(m)}$.

By Lemma~\ref{lem:stationary}(iv), $\mathcal M(K')$ is compact; pass to a
subsequence with $\Delta'_m\to\Delta'_\infty\in\mathcal M(K')$. We claim
$\overline{E_{K'}^{\Delta'_\infty}}=E_0$. Indeed, fix $x\in E_0$. For each
$m$, some box $B\in\mathcal B^{(m)}$ of diameter $<1/m$ contains $x$ and is
met by $E_{K'}^{\Delta'_m}$, so there is an extreme point $p'_m$ of $K'$ with
$\lambda_{\Delta'_m}(p'_m)$ within $1/m$ of $x$. The coordinate map is jointly
continuous in $(\bar v,p)$ on the compact, uniformly nondegenerate set
$\mathcal M(K')\times K'$; since $\overline{\ext K'}$ is compact and
$\Delta'_m\to\Delta'_\infty$, a limit point $p'_\infty\in\overline{\ext K'}$
of $(p'_m)$ satisfies $\lambda_{\Delta'_\infty}(p'_\infty)=x$. Hence
$x\in\overline{E_{K'}^{\Delta'_\infty}}$, giving
$E_0\subseteq\overline{E_{K'}^{\Delta'_\infty}}$. Conversely, if some
$y\in\overline{E_{K'}^{\Delta'_\infty}}$ lay outside $E_0$, then (as $E_0$ is
compact) a basic box $C$ of some small diameter would separate $y$ from
$E_0$, so $C\in\mathcal C^{(m)}$ for large $m$; but $\Delta'_m$ avoids $C$
while approaching a coordinate near $y$, and passing to the limit contradicts
avoidance on a slightly larger box. Thus
$\overline{E_{K'}^{\Delta'_\infty}}=E_0$.

\emph{Conclusion.} Both $\conv E_0=\conv\overline{E_K^{\Delta_0}}$ and
$\conv\overline{E_{K'}^{\Delta'_\infty}}$ are, by the remark preceding the
theorem, affine copies of $K$ and of $K'$ respectively, realized as the
\emph{same} compact convex subset $\conv E_0\subseteq\Sigma_n$. Therefore $K$
and $K'$ are each affinely equivalent to $\conv E_0$, hence to one another.
\end{proof}

\begin{corollary}\label{cor:affine-consequences}
For compact convex bodies in $\R^n$:
\begin{enumerate}[label=\textup{(\roman*)}]
\item $K\equiv_{\Laff}K'$ if and only if $K\cong_{\Laff}K'$; the elementary
class of a body is exactly its affine-equivalence class. In particular the
$\Laff$-theory of a compact convex body has, among standard bodies, a unique
model up to isomorphism.
\item Theorems~\ref{thm:polytope} \textup(polytopes\textup) and
\ref{thm:ball} \textup(the ball\textup) are special cases.
\item Two compact bodies with a common maximal-volume inscribed simplex type
but non-affinely-equivalent boundaries are separated by one of the sentences
$\Theta_{\mathcal B,\mathcal C}$; e.g.\ $\Ball$ and $S_n=\{\sum x_i^4\le1\}$
are $\Laff$-inequivalent, recovering Corollary~\ref{cor:ballquarticaff}.
\end{enumerate}
\end{corollary}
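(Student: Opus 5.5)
The corollary follows almost entirely from Theorem~\ref{thm:affine-main} combined with Theorem~\ref{thm:iso-affine}, so I would prove the three items in order, each as a short derivation.

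For (i), Theorem~\ref{thm:affine-main} turns $K\equiv_{\Laff}K'$ into affine equivalence. Theorem~\ref{thm:iso-affine} turns affine equivalence into $K\cong_{\Laff}K'$. The reverse implication, isomorphism to elementary equivalence, is general model theory. For the ``unique model among standard bodies'' phrasing, suppose a compact convex body $K'\subseteq\R^m$ satisfies $\Th_{\Laff}(K)$. Proposition~\ref{prop:dimension} first forces $m=n$, since the sentences $\delta_k$ lie in the theory. Then the previous sentence applies.

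For (ii), the two special cases fall out directly. If $K\equiv_{\Laff}P$ with $P$ a polytope, then (i) makes $K$ affinely equivalent to $P$, which is the conclusion of Theorem~\ref{thm:polytope}. If $K\equiv_{\Laff}\Ball$, then $K$ is an affine image of the ball, i.e.\ a solid ellipsoid, which is Theorem~\ref{thm:ball}(i)$\Rightarrow$(iii). The converse directions are already contained in Theorem~\ref{thm:iso-affine}. So the special-case claim needs no further work beyond remarking that these routes bypass $\sigma_P$ and $\beta_n$.

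For (iii), the substantive point is that the separating sentence can be chosen among the $\Theta_{\mathcal B,\mathcal C}$. I would argue by contraposition from the proof of Theorem~\ref{thm:affine-main}. That proof used only the hypothesis ($\ast$), namely agreement on all sentences $\Theta_{\mathcal B,\mathcal C}$, and never full elementary equivalence. Hence if $K$ and $K'$ agree on every $\Theta_{\mathcal B,\mathcal C}$, they are affinely equivalent. Consequently, non-affinely-equivalent bodies disagree on some $\Theta_{\mathcal B,\mathcal C}$.

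The main obstacle is to check carefully that the proof of Theorem~\ref{thm:affine-main} indeed invokes no sentence outside this family. The compactness of $\mathcal M(K')$ and the limit argument are semantic facts about $K'$ and do not use its theory, so I expect this check to go through. For the example, $S_n$ is not an ellipsoid, so it is not affinely equivalent to $\Ball$, and the contrapositive yields a separating $\Theta_{\mathcal B,\mathcal C}$. This recovers Corollary~\ref{cor:ballquarticaff}, though with a different separating sentence than $\beta_n$. The hypothesis about a ``common simplex type'' is not actually needed for this argument; I would note that it plays no role.
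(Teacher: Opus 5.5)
Your proposal is correct. For (i) it is the paper's argument: Theorem~\ref{thm:affine-main} combined with Theorem~\ref{thm:iso-affine}. Your use of Proposition~\ref{prop:dimension} to rule out a model in a different ambient dimension is a harmless addition. For (ii) the paper describes what the reconstruction returns: a finite $E_K^\Delta$ recovering the vertex configuration, or an ellipsoid $\conv E_0$. Your direct specialization of (i) reaches the same conclusion more simply.

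The real difference is in (iii), where your route is the tighter one. The paper argues directly that, for non-affinely-equivalent bodies, $\conv E_0\ne\conv E_0'$ for every choice of gauge, so some rational box is met by one reconstruction and missed by the other. Read literally, that box depends on the gauge chosen in $K'$. But $\Theta_{\mathcal B,\mathcal C}$ quantifies existentially over all stationary gauges. So turning this pointwise separation into a single sentence that fails in $K'$ still requires uniformity over $\mathcal M(K')$, which is exactly the compactness argument.

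Your contrapositive supplies that compactness automatically. The proof of Theorem~\ref{thm:affine-main} uses only the implications $K\models\Theta_{\mathcal B^{(m)},\mathcal C^{(m)}}\Rightarrow K'\models\Theta_{\mathcal B^{(m)},\mathcal C^{(m)}}$. It follows that, for either ordering of a non-equivalent pair, some $\Theta_{\mathcal B,\mathcal C}$ holds in the first body and fails in the second. Your check that nothing else from the theory enters is correct: compactness of $\mathcal M(K')$, continuity of the coordinate maps, and the final identification of $\conv E_0$ with affine copies of both bodies are all semantic facts.

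You are also right that the ``common maximal-volume simplex type'' hypothesis plays no role, since all $n$-simplices are affinely equivalent.
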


\begin{proof}
(i) is Theorem~\ref{thm:affine-main} together with
Theorem~\ref{thm:iso-affine}. (ii): a polytope has finite $\ext K$, so
$E_K^\Delta$ is finite and the reconstruction returns the vertex
configuration, recovering Theorem~\ref{thm:polytope}; for the ball the
reconstructed $\conv E_0$ is a solid ellipsoid, recovering
Theorem~\ref{thm:ball}. (iii) is immediate since non-affinely-equivalent
bodies have $\conv E_0\ne\conv E_0'$ for every gauge, so some rational box is
met by one reconstruction and missed by the other.
\end{proof}

\begin{remark}[Stationarity suffices]
The proof uses only stationarity of the gauge, not global maximality; what it
draws from Lemma~\ref{lem:stationary} is the uniform lower bound
$\operatorname{vol}(\Delta)\ge\operatorname{vol}(K)/n^n$, valid for every
stationary simplex, which makes $\mathcal M(K)$ compact. This is what underlies the
non-degeneration of the gauge: a reconstructing frame cannot
shrink toward the boundary, since a small inscribed simplex fails to be
stationary, some point of $K$ having a barycentric coordinate exceeding $1$.
The ellipsoid plays no special role in this argument; it is the case in which
$\conv E_0$ is a quadric. The one situation the argument does not reach,
gauges with non-compact stabilizer, arises only in the projective setting
(\S\ref{subsec:projective-reduction}).
\end{remark}

\section{Interior von Staudt calculus in the betweenness language}
\label{sec:vonstaudt}

In the betweenness language, midpoints and parallels are unavailable
(Example~\ref{ex:proj-not-aff}); the projectively meaningful quantity is the
cross-ratio. The classical synthetic access to cross-ratio is von Staudt's
calculus of harmonic conjugates and throws. The obstruction to importing it
here is that variables range over $K$ only, while the auxiliary points of the
classical constructions --- the vertices of a complete quadrangle, the centers
of pencils, the intersection points of Pascal's theorem --- routinely fall
outside the body. (For a convex hexagon inscribed in a circle, the three
Pascal points lie outside the disk; the ``obvious'' Pascal sentence is
therefore \emph{not} a sentence about the structure of the disk.) The theme
of this section: the harmonic construction, and conjecturally the whole
calculus of throws, can be retracted into the interior of the body. All
constructions are planar; in $\R^n$ they take place in planes through the
relevant chord, and --- as the soundness lemma shows --- the defining formulas
force this of their own accord.

\subsection{The interior quadrangle lemma}

\begin{definition}\label{def:harmformula}
Let $\mathrm{Quad}(P,X,Y,Z)$ abbreviate: $P,X,Y,Z$ are pairwise distinct and
no three of them are collinear (a conjunction of $\ne$ and
$\neg\Coll$ clauses). Define the $\LB$-formula
\begin{align*}
\Harm(a,b;c,d):\equiv\;& a\ne b\wedge c\ne a\wedge c\ne b
\wedge\Coll(a,b,c)\wedge\Coll(a,b,d)\;\wedge\\
\exists P\,X\,Y\,Z\;\bigl[\;&\mathrm{Quad}(P,X,Y,Z)
\wedge\Coll(a,P,X)\wedge\Coll(b,P,Y)\wedge\Coll(c,X,Y)\\
&\wedge\Coll(a,Y,Z)\wedge\Coll(b,X,Z)\wedge\Coll(d,P,Z)
\wedge\neg\Coll(a,b,P)\;\bigr].
\end{align*}
\end{definition}

\begin{lemma}[Soundness]\label{lem:harm-sound}
Let $K\subseteq\R^n$ be convex and $a,b,c,d\in K$. If
$K\models\Harm(a,b;c,d)$ then $(a,b;c,d)=-1$; in particular $c\ne d$ and
$d\notin\{a,b\}$.
\end{lemma}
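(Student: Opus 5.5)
The plan is to show that the incidence clauses force a genuine planar complete quadrangle. Once that is established, the classical theorem of the complete quadrangle applies directly.

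\emph{Step 1: coplanarity.} Let $\Pi=\aff\{a,b,P\}$, which is a plane because $\neg\Coll(a,b,P)$ and $a\ne b$. From $\Coll(a,P,X)$ and the fact that $a\ne P$, we get $X\in\ell(a,P)\subseteq\Pi$. Similarly, $\Coll(b,P,Y)$ gives $Y\in\ell(b,P)\subseteq\Pi$. Next, $Z$ lies on $\ell(a,Y)$, since $Y\ne a$ (otherwise $a,P,Y$ would be collinear together with $b$, contradicting $\neg\Coll(a,b,P)$), so $Z\in\Pi$. Finally $c,d\in\ell(a,b)\subseteq\Pi$. Everything therefore takes place in the real projective plane $\bar\Pi\supseteq\Pi$, and $\mathrm{Quad}$ says $P,X,Y,Z$ are four points in general position.

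\emph{Step 2: identify the diagonal points.} The six sides of the quadrangle $PXYZ$ are $PX,PY,PZ,XY,XZ,YZ$. The clauses say $a\in PX\cap YZ$ and $b\in PY\cap XZ$. Since opposite sides of a quadrangle in general position meet in a single point, $a$ and $b$ are two of the three diagonal points. The clause $c\in XY$ places $c$ on the third side through neither $a$ nor $b$, and $c\in\ell(a,b)$. The clause $d\in PZ$ does the same for the remaining side, and $d\in\ell(a,b)$. Some care is needed that $c$ and $d$ are the unique intersections of $\ell(a,b)$ with $XY$ and $PZ$. I expect this to be the main obstacle: one must rule out $XY=\ell(a,b)$ and $PZ=\ell(a,b)$.

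For $XY$, note that $X\in\ell(a,P)$, so $X\in\ell(a,b)$ would force $X=a$. But $X=a$ together with $Z\in\ell(b,X)$ and $Z\in\ell(a,Y)$ gives $Z\in\ell(a,b)\cap\ell(a,Y)$, and in that case $P,X,Z$ or $X,Y,Z$ become collinear, contradicting $\mathrm{Quad}$. The line $PZ$ is handled the same way, using $P\notin\ell(a,b)$. This step also shows $a\ne b$ are distinct diagonal points, and that the diagonal triangle is nondegenerate, which holds over $\R$ (characteristic $\ne2$, Fano's axiom fails).

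\emph{Step 3: conclude.} Apply the classical harmonic property of the complete quadrangle: the two sides through the third diagonal point meet the diagonal line $ab$ in the harmonic conjugate pair with respect to $a,b$. This can be checked in projective coordinates by taking $P,X,Y,Z$ as the standard frame, which is a short computation giving $(a,b;c,d)=-1$. Since $-1\ne1$ and $-1\notin\{0,\infty\}$, this yields $c\ne d$ and $d\notin\{a,b\}$. The hypothesis $c\notin\{a,b\}$ in $\Harm$ is what makes the cross-ratio defined. It also excludes the degenerate case $c=a$, although Step 2 already handles that case.
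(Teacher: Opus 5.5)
Your proof is correct and follows essentially the same route as the paper. You force coplanarity from $\neg\Coll(a,b,P)$ and the incidence clauses, identify $a=PX\cap YZ$ and $b=PY\cap XZ$ as diagonal points with $c,d$ the traces of $XY$ and $PZ$ on $\ell(a,b)$, and invoke the complete-quadrangle theorem; your explicit checks that $XY,PZ\ne\ell(a,b)$ and your standard-frame computation only spell out what the paper cites. One small slip: over $\R$ Fano's axiom \emph{holds} (the diagonal points of a quadrangle are never collinear), which is what you mean, though you do not need it, since the frame computation already gives $c\ne d$ and $d\notin\{a,b\}$.
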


\begin{proof}
Let $P,X,Y,Z$ be witnesses. We first show the whole configuration is
coplanar. Since $a\ne b$ and $\neg\Coll(a,b,P)$, the points $a,b,P$ are
affinely independent; let $A=\aff\{a,b,P\}$, a plane. From
$\neg\Coll(a,b,P)$ we get $P\ne a$ and $P\ne b$, so the lines
$\ell(a,P)$ and $\ell(b,P)$ are defined and lie in $A$; hence
$X\in\ell(a,P)\subseteq A$ and $Y\in\ell(b,P)\subseteq A$. Also $X\ne b$
(otherwise $b\in\ell(a,P)$, contradicting $\neg\Coll(a,b,P)$), so
$Z\in\ell(b,X)\subseteq A$. Finally $c,d\in\ell(a,b)\subseteq A$. So all
eight points lie in the plane $A$.

Inside $A$, the incidences say that $PXYZ$ is a complete quadrangle; that
$a$ lies on both $\ell(P,X)$ and $\ell(Y,Z)$, so $a$ is the diagonal point
$PX\cap YZ$; that $b=PY\cap XZ$ is a second diagonal point; and that $c$ and
$d$ are the traces of the remaining pair of opposite sides $XY$ and $PZ$ on
the line $ab$. By the complete-quadrangle theorem (see
\cite[Ch.~2]{Coxeter49} or \cite[\S5]{Hartshorne67}), these traces are
harmonic conjugates with respect to the diagonal points: $(a,b;c,d)=-1$.
Harmonicity with $c\notin\{a,b\}$ forces $d\notin\{a,b,c\}$; alternatively,
$c=d$ would place the third diagonal point $XY\cap PZ$ on the line $ab$,
contradicting the non-collinearity of the diagonal triangle of a
nondegenerate quadrangle in the real plane.
\end{proof}

\begin{theorem}[Interior completeness]\label{thm:harmonic}
Let $n\ge2$, let $K\subseteq\R^n$ be convex with nonempty interior, and let
$a,b,c,d\in\interior K$ be collinear with $(a,b;c,d)=-1$. Then
$K\models\Harm(a,b;c,d)$, with quadrangle witnesses
$P,X,Y,Z\in\interior K$. Consequently, on quadruples of collinear
\emph{interior} points, the formula $\Harm$ defines exactly the harmonic
quadruples, uniformly in $K$ and in $n$.
\end{theorem}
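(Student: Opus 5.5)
The approach is to take a standard complete quadrangle realizing the harmonic quadruple in some plane through the line, and then shrink it toward the chord by a projective transformation that fixes that line pointwise. Since the interior is open, the shrunk quadrangle will fit inside it.

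First fix the plane. Because $n\ge2$ and $K$ has nonempty interior, choose any 2-plane $A$ containing the line $\ell=\ell(a,b)$. Then $K\cap A$ is a planar convex set, and $a,b,c,d$ lie in its relative interior. Since the interior of $K$ is open and $[a,b]\cup\{c,d\}$ is compact, there is $\varepsilon>0$ such that the $\varepsilon$-neighbourhood $N_\varepsilon$ of the convex hull $S$ of $\{a,b,c,d\}$ is contained in $\interior K$. Note that $S$ is a segment of $\ell$, possibly containing $c$ or $d$ outside $[a,b]$. Next, in $A$ pick any complete quadrangle $P_0X_0Y_0Z_0$ with diagonal points $a=P_0X_0\cap Y_0Z_0$ and $b=P_0Y_0\cap X_0Z_0$, whose side $X_0Y_0$ meets $\ell$ at $c$. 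This is always possible: take $P_0\notin\ell$, take $X_0$ on line $aP_0$ and $Y_0$ on line $bP_0$ with $X_0,Y_0,c$ collinear, and set $Z_0=aY_0\cap bX_0$. By the complete-quadrangle theorem (as in Lemma~\ref{lem:harm-sound}) the sixth side $P_0Z_0$ then meets $\ell$ at the harmonic conjugate of $c$, which is $d$. We also choose this configuration generically so that no degeneracies occur: no three of the four vertices are collinear, $P_0\notin\ell$, and all vertices and relevant intersections are finite.

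The key step is a squeeze. Let $h$ be an affine function on $A$ vanishing on $\ell$, and let $T_t$ ($t>0$) be the affine map of $A$ that fixes $\ell$ pointwise and multiplies the signed distance from $\ell$ by $t$; in coordinates $(s,h)$ it is $(s,h)\mapsto(s,th)$. $T_t$ is an affine bijection fixing $a,b,c,d$, so it preserves every incidence and collinearity clause in $\Harm$, as well as non-collinearity. Hence $P=T_tP_0$, $X=T_tX_0$, $Y=T_tY_0$, $Z=T_tZ_0$ satisfy all the clauses of $\Harm(a,b;c,d)$. It remains to put them inside $N_\varepsilon$. Their distance to $\ell$ is $O(t)$, but their projections to $\ell$ need not lie in $S$. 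This is where I expect the main obstacle: a vertical squeeze alone controls only the height, so the quadrangle must be chosen with its $\ell$-coordinates inside the compact segment $S$, or a little beyond it if $\varepsilon$ allows.

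To handle this, I would choose $P_0$ with $\ell$-coordinate strictly between those of $a$ and $b$, and then check that $X_0,Y_0,Z_0$ can also be made to project into $[a,b]$. This requires a small case analysis according to whether $c$ lies inside or outside $[a,b]$. If $c$ is outside, then $d$ is inside, and since $\Harm$ is symmetric enough in $c,d$ up to relabeling the quadrangle (swap the roles of the side pairs $XY$ and $PZ$), we may assume $c\in(a,b)$. With $c\in(a,b)$, take $P_0$ above $\ell$ over a point of $(a,b)$, and take $X_0\in(a,P_0)$, $Y_0\in(b,P_0)$ on a line through $c$. All of these project into $[a,b]$. Then $Z_0=aY_0\cap bX_0$ lies in the triangle $abP_0$, on the other side of $X_0Y_0$ from $P_0$, so it too projects into $[a,b]$. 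Thus $P_0,X_0,Y_0,Z_0\in\conv\{a,b,P_0\}$, and after applying $T_t$ with $t$ small the four points lie within distance $\varepsilon$ of $[a,b]\subseteq S$, hence in $\interior K$.

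For the final sentence of the theorem, combine this completeness result with Lemma~\ref{lem:harm-sound} (soundness), which holds in any convex set and any dimension. Together they show that on collinear interior quadruples, $\Harm$ holds exactly for the harmonic ones.
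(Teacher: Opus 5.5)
Your plan (a template quadrangle in a plane $A\supseteq\ell$, followed by the affine squash $(s,h)\mapsto(s,th)$ fixing $\ell$ pointwise) is the paper's. You also correctly isolate the crux: the squash controls only heights, so the template's vertices must have $\ell$-coordinates inside the compact window $S$. But your resolution of that crux fails as written.

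Having reduced to $c\in(a,b)$, you take $X_0\in(a,P_0)$ and $Y_0\in(b,P_0)$ collinear with $c$. No such pair exists. A line other than $\ell$ through a point $c$ of the open side $(a,b)$ of the triangle $abP_0$ meets the boundary of the triangle in exactly one further point, so it cannot cross both open sides $(a,P_0)$ and $(b,P_0)$. One of $X_0,Y_0$ must lie outside the triangle, on a prolongation of its side, and then nothing keeps it over $S$. For example, take $a=(0,0)$, $b=(1,0)$, $P_0=(\tfrac12,1)$, $c=(\tfrac13,0)$, so that $d=(-1,0)$ and $S=[-1,1]\times\{0\}$. Choosing $X_0=(\tfrac1{20},\tfrac1{10})\in(a,P_0)$ forces $Y_0=(\tfrac87,-\tfrac27)$, which lies outside the window. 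So the assertion $P_0,X_0,Y_0,Z_0\in\conv\{a,b,P_0\}$, on which your placement step rests, is false in the case you reduced to. Your description of $Z_0$ as lying in the triangle on the far side of $X_0Y_0$ from $P_0$ is the picture of the \emph{other} case.

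The repair is to run your reduction the other way. The relabeling $(P,X,Y,Z)\mapsto(X,P,Z,Y)$ preserves every clause of $\Harm$ while exchanging the sides through $c$ and through $d$; the clause $\neg\Coll(a,b,X)$ holds since $X\notin\ell$. Harmonic pairs separate, so you may assume $c\notin[a,b]$ and hence $d\in(a,b)$. Then:
\begin{itemize}
\item A line through $c$ whose direction lies strictly between those of $\ell$ and of $\ell(c,P_0)$ does cross both open sides $(a,P_0)$ and $(b,P_0)$.
\item $Z_0=\ell(a,Y_0)\cap\ell(b,X_0)$ is the meet of two cevians, hence interior to the triangle.
\item No three vertices are collinear, automatically.
\item All four vertices lie in $\conv\{a,b,P_0\}$, which projects into $[a,b]\subseteq S$, so your squash finishes the proof.
\end{itemize}

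Thus corrected, your argument is a genuinely simpler route than the paper's. The paper treats all positions of $c$ uniformly by using the vertical transversal through $c$, so that $X,Y$ project exactly onto $c$. It then computes $Z_x(p)$ explicitly and sends $p\to1$ to pull $Z_x$ into the window, while dodging finitely many degenerate parameters. The Ceva configuration costs you the $c\leftrightarrow d$ relabeling, but needs no computation, no limit, and no genericity argument.
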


\begin{proof}
Let $\ell$ be the common line and choose a two-dimensional affine subspace
$A\supseteq\ell$ (possible as $n\ge2$). Harmonicity and the conclusion are
invariant under affine maps, so choose affine coordinates on $A$ in which
$\ell$ is the $x$-axis and $a=(0,0)$, $b=(1,0)$, $c=(\gamma,0)$,
$d=(\delta,0)$, where $\gamma\notin\{0,1\}$ and $\delta$ is determined by
harmonicity. Let $[m,M]\subseteq\R$ be the convex hull of
$\{0,1,\gamma,\delta\}$. The set $[m,M]\times\{0\}$ (in $A$-coordinates) is
a compact subset of the open set $\interior K$, so some
$\varepsilon_0$-neighborhood of it in $\R^n$ lies in $\interior K$;
intersecting with $A$, there is $\delta_0>0$ with
\[
S:=[m,M]\times[-\delta_0,\delta_0]\subseteq\interior K\cap A .
\]

\emph{The template configuration \textup(inside $A$\textup).} For a
parameter $p\in(0,1)$, $p\ne\gamma$, define (all at ``height one''):
\[
P=(p,1),\qquad
X=\Bigl(\gamma,\tfrac{\gamma}{p}\Bigr),\qquad
Y=\Bigl(\gamma,\tfrac{\gamma-1}{p-1}\Bigr),
\]
so that $X=\ell(a,P)\cap\{x=\gamma\}$ and $Y=\ell(b,P)\cap\{x=\gamma\}$, and
the ``transversal through $c$'' is the vertical line $x=\gamma$. Let
$Z=\ell(a,Y)\cap\ell(b,X)$; a direct computation gives
\[
Z=\bigl(Z_x(p),\,Z_y(p)\bigr),\qquad
Z_x(p)=\frac{\gamma^{2}(p-1)}{\gamma^{2}(p-1)-p(\gamma-1)^{2}} .
\]
$Z_x$ is a rational function of $p$ with $Z_x(p)\to1$ as $p\to0^+$ and
$Z_x(p)\to0$ as $p\to1^-$; in particular it is nonconstant.

\emph{Nondegeneracy.} We check that for all but finitely many
$p\in(0,1)$ the four points form a quadrangle. $X\ne Y$ iff
$\gamma/p\ne(\gamma-1)/(p-1)$ iff $p\ne\gamma$. $P\notin\{x=\gamma\}$ since
$p\ne\gamma$, so $P\ne X,Y$ and $P,X,Y$ are not collinear. $X,Y,Z$ are
collinear iff $Z_x=\gamma$, which excludes at most finitely many $p$ ($Z_x$
nonconstant rational). If $P,X,Z$ were collinear then, since $X\ne Z$ and
both lie on $\ell(b,X)$, we would get $P\in\ell(b,X)$; but $P$ and $X$ both
lie on $\ell(a,P)$, so $\ell(P,X)=\ell(a,P)$, forcing $b\in\ell(a,P)$ and
hence $P\in\ell$, false. Symmetrically $P,Y,Z$ are not collinear, and
$Z\ne P$ because $Z\in\ell(b,X)\not\ni P$. Finally $Z\notin\ell$: $Z\in\ell$
would make $Z=\ell(a,Y)\cap\ell$, i.e., $Z=a$, putting $a$ on $\ell(b,X)$
and hence $X$ on $\ell$, false.

By Lemma~\ref{lem:harm-sound} applied to the template in the plane $A$, the
trace $\ell(P,Z)\cap\ell$ of this genuine quadrangle is the harmonic
conjugate of $c$ with respect to $a,b$, namely $d$; so the incidence
$\Coll(d,P,Z)$ holds for the template.

\emph{Placement.} Choose $p\in(0,1)$ close enough to $1$ that
$Z_x(p)\in(m,M)$ (possible since $Z_x(p)\to0\in[m,M]$ and $Z_x$ is
continuous; we additionally avoid the finitely many degenerate values of $p$
and the value $\gamma$). All four template points then have
$x$-coordinates in $[m,M]$: $P_x=p\in(0,1)$, $X_x=Y_x=\gamma$,
$Z_x\in(m,M)$.

\emph{Squash.} For $\varepsilon>0$ let
$T_\varepsilon(x,y)=(x,\varepsilon y)$ in the coordinates of $A$, an affine
automorphism of $A$ fixing $\ell$ pointwise. It preserves all incidences and
non-incidences of the configuration and fixes $a,b,c,d$; hence the squashed
points $T_\varepsilon P$, $T_\varepsilon X$, $T_\varepsilon Y$,
$T_\varepsilon Z$ form a quadrangle satisfying all the clauses of
Definition~\ref{def:harmformula}. Their $x$-coordinates are unchanged, and
their $y$-coordinates are $\varepsilon$ times bounded constants; for
$\varepsilon$ small all four points lie in
$S\subseteq\interior K$. These are the required witnesses.
\end{proof}

\begin{remark}
Theorem~\ref{thm:harmonic} is the exact repair of the ``exterior pencil
center'' problem: concurrency at an inaccessible exterior point is traded,
via the quadrangle, for incidences among interior points. Note the division
of labor between the two halves: completeness \emph{chooses} a plane through
the chord and works there; soundness needs no such choice, because the
incidence clauses force any witnessing configuration, anywhere in $K$, into
a plane through the chord. This pattern --- planar constructions certified by
coplanarity-forcing formulas --- recurs throughout the section.
\end{remark}

\subsection{Rational cross-ratios}\label{subsec:crq}

This subsection proves the cross-ratio comparison proposition in full. Two
devices make the classical algebra of throws implementable inside the body
without the addition and multiplication templates whose range control was the
sticking point in earlier drafts. First, only the \emph{harmonic} primitive is
ever needed: the harmonic conjugation operation alone generates, from any
three distinct collinear points, every point at rational cross-ratio against
them (the net of rationality), and the harmonic primitive is exactly what
Theorem~\ref{thm:harmonic} and Lemma~\ref{lem:harm-sound} already provide,
completely. Second, the change of scale is effected by a single interior
perspectivity onto a short transversal chord near a deep interior point; a fan
argument shows the four image points can be confined to an arbitrarily small
neighborhood of the center, with any prescribed order and with the shape of
the image triple tunable at will, while the cross-ratio is preserved. All
constructions are then confined to a controlled window on one deep chord.

Throughout, recall the convention: for distinct collinear $a,b,c,d$, the map
$x\mapsto(a,b;c,x)$ is the projective chart of the line sending
$a\mapsto\infty$, $b\mapsto0$, $c\mapsto1$.

\begin{lemma}[Interior perspectivity transport]
\label{lem:transport}
Let $n\ge2$, let $K\subseteq\R^n$ be convex with nonempty interior, let
$a,b,c,d\in\interior K$ be collinear and pairwise distinct on a line $\ell$,
and let $e\in\interior K\setminus\ell$. Write $\rho>0$ for a radius with
$\overline{B(e,\rho)}\subseteq\interior K$, and let $A=\aff(\ell\cup\{e\})$.
Then for every $\varepsilon\in(0,\rho)$, every prescribed linear order type
of the images, and every $R_0\ge1$ there exist a line $m\subseteq A$ not
through $e$ and points $a',b',c',d'\in B(e,\varepsilon)\cap m$ such that:
\begin{enumerate}[label=\textup{(\roman*)}]
\item $x'\in\ell(e,x)$ for each $x\in\{a,b,c,d\}$ \textup(perspectivity from
$e$\textup);
\item $(a',b';c',d')=(a,b;c,d)$;
\item the images realize the prescribed order along $m$, and the shape ratio
$|a'-b'|/|c'-b'|$ can be prescribed to lie in $[R_0,\infty)$, jointly with
\textup{(i)} and the containment in $B(e,\varepsilon)$.
\end{enumerate}
Moreover, for any points $e,m_1,m_2,x,x'$ of $K$ with $m_1\ne m_2$,
$\neg\Coll(m_1,m_2,e)$, $\Coll(e,x,x')$ and $\Coll(m_1,m_2,x')$, the point
$x'$ is the perspectivity image of $x$ from $e$ on the line $\ell(m_1,m_2)$,
so clauses of this form are sound.
\end{lemma}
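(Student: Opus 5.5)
The plan is to realize the transport as a central projection from $e$ inside the plane $A$, composed with a homothety centred at $e$. Since $e\notin\ell$ and $e\in\interior K$, the plane $A=\aff(\ell\cup\{e\})$ is two-dimensional. The four lines $L_x=\ell(e,x)$, $x\in\{a,b,c,d\}$, are pairwise distinct, because $e\notin\ell$ and the points $x$ are distinct. For a line $m\subseteq A$ not through $e$ and parallel to none of the $L_x$, put $x'=L_x\cap m$. Then (i) holds by construction. Also (ii) holds: the map $\ell\to m$, $x\mapsto x'$, is the restriction of a perspectivity of the projective plane of $A$. This is a projective map of lines, so it preserves the cross-ratio of Convention~\ref{conv:crossratio}. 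Concretely, one can write both lines in affine parameters and check that the induced map is fractional-linear.

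\emph{Step 1: choose the direction of $m$.} Fix a unit direction $u$ in $A$ and take $m_t=\{y\in A:\langle y-e,\nu\rangle=t\}$ with $\nu\perp u$ in $A$ and $t>0$. Then $m_t=e+t\,(m_1-e)$, so the images on $m_t$ are obtained from the images on $m_1$ by the homothety of ratio $t$ about $e$. This homothety preserves order type along the line, all ratios of lengths, and incidence with the $L_x$. Hence order and shape depend only on $u$, and the size is controlled by $t$ alone.

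Order and shape are then settled as follows. The pencil at $e$ projects $\ell$ onto $m_1$, and the linear order on $m_1$ is the cyclic (separation) order of $\{a,b,c,d\}$ on the projective line, cut open at the point of $\ell$ sent to infinity. That point is the trace on $\ell$ of the line through $e$ parallel to $u$. As $u$ rotates, this cut point sweeps all of $\ell\cup\{\infty\}$, so every linear order compatible with the fixed separation pattern is achieved. Note that the sign of $(a,b;c,d)$ fixes the separation pattern, so this is what ``every prescribed order type'' must mean, and I would say so explicitly. For the shape ratio, let $u$ tend to the direction of $L_a$ from the side producing the desired order. Then $a'\to\infty$ along $m_1$ while $b'$ and $c'$ converge to distinct finite points, so $|a'-b'|/|c'-b'|\to\infty$ and exceeds $R_0$ for suitable $u$ near that limit. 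Choosing $u$ inside a small open interval of directions (one cell between consecutive $L_x$'s, adjacent to $L_a$) secures both conditions at once.

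\emph{Step 2: shrink.} With $u$ fixed, the four points on $m_1$ lie within some bound $D$ of $e$. Take $t<\varepsilon/D$; then $a',b',c',d'\in B(e,\varepsilon)\subseteq\overline{B(e,\rho)}\subseteq\interior K$, and (iii) is preserved. The only real care needed is the bookkeeping in Step 1: checking that the order produced in the cell next to $L_a$ matches the prescription. If it does not, relabel which point is sent toward infinity using the symmetry of the cross-ratio, or approach $L_a$ from the other side. I expect this order/shape compatibility to be the main (if modest) obstacle.

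\emph{Soundness clause.} Suppose $m_1\ne m_2$ and $\neg\Coll(m_1,m_2,e)$. Then $e\notin\ell(m_1,m_2)$, and $x'$ lies on both $\ell(m_1,m_2)$ and the line through $e$ and $x$. Since $e\notin\ell(m_1,m_2)$, we have $x'\ne e$. Thus $x\ne e$ forces $\ell(e,x)=\ell(e,x')$, and two distinct lines meet in at most one point. Hence $x'$ is exactly the perspectivity image $\ell(e,x)\cap\ell(m_1,m_2)$. In the degenerate case $x=e$ the clause $\Coll(e,x,x')$ is vacuous, but $x\in\ell$ and $e\notin\ell$ exclude it in applications. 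Finally, Lemma~\ref{lem:coll} converts each $\Coll$ clause into genuine collinearity in $\R^n$.
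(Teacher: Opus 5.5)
Your argument is correct and is essentially the paper's own: central projection from $e$ onto a line of $A$, then a homothety about $e$ to shrink the images into $B(e,\varepsilon)$, then rotation of the direction of $m$ so that the point of $\ell$ sent to infinity cuts the cyclic order where desired, then approach to the direction of $\ell(e,a)$ so that $a'$ recedes and the shape ratio blows up, and finally the two-distinct-lines argument for the soundness clause. Your caveat that only orders compatible with the cyclic order of $a,b,c,d$ are attainable is right; the paper's argument likewise really yields only arbitrary orders of three chosen images, such as $b',c',a'$ used in Proposition~\ref{prop:crq}. However, the ``relabeling'' fallback is neither needed nor valid: the ratio $|a'-b'|/|c'-b'|$ is tied to the labels and is $<1$ whenever $a'$ lies between $b'$ and $c'$, so the joint claim should simply be restricted to orders with $a'$ at an end, which is exactly what your construction and the application provide.
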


\begin{proof}
Work in the plane $A$ with $e$ at the origin. The directions of the four
lines $\ell(e,x)$, $x\in\{a,b,c,d\}$, are pairwise distinct, since the four
points are distinct on $\ell$ and $e\notin\ell$; moreover their angular order
equals the linear order of $a,b,c,d$ along $\ell$, up to reversal. Let $m$ be
the line at distance $s>0$ from $e$ with unit normal $\nu$; provided $\nu$ is
not perpendicular to any of the four directions, $m$ meets each line
$\ell(e,x)$ in a single point $x'$, at distance at most $s/\cos\theta_{\max}$
from $e$, where $\theta_{\max}<\pi/2$ is the largest angle between $\nu$ and
the four directions. Letting $s\to0$ with $\nu$ fixed sends all four
intersection points to $e$; this gives the containment in $B(e,\varepsilon)$
for small $s$, and (i) holds by construction. (ii) is the invariance of the
cross-ratio under the perspectivity from $e$.

For (iii): as $\nu$ rotates once around the circle, the linear order of the
four intersection points along $m$ runs through all cyclic rotations of the
angular order of the four directions (each time $\nu$ crosses a direction
perpendicular, the corresponding point passes through infinity along $m$ and
re-enters at the other end), and reversing the orientation of $m$ reverses the
order; the rotations and reversals of a cyclic order on three or four symbols
exhaust all linear orders of any chosen three of them, so in particular the
order $b',c',a'$ along $m$ (and any other prescribed order) is realized on a
nonempty open set of directions. Within such an open set, let $\nu$ approach
the direction perpendicular to $\ell(e,a)$ from the side keeping the order:
the intersection $a'$ recedes to infinity along $m$ while $b',c'$ converge to
bounded positions, so the ratio $|a'-b'|/|c'-b'|$ tends to $\infty$
continuously and every value in some $[R_1,\infty)$ is attained; shrinking $s$
afterwards scales all positions by $s$ and changes neither the order nor the
ratio, restoring the containment. Finally, the soundness clause: the stated
incidences force $x'\in\ell(e,x)\cap\ell(m_1,m_2)$, and this intersection is a
single point because $e\notin\ell(m_1,m_2)$ ensures the two lines are
distinct; when $x\ne e$ this is precisely the perspectivity image.
\end{proof}

\begin{lemma}[The harmonic net reaches every rational]
\label{lem:net}
Let $q\in\Q\setminus\{0,1\}$. There exist $N=N(q)\in\mathbb{N}$, a bound
$V=V(q)\in\mathbb{N}$, and a finite sequence of \emph{harmonic instructions}
$(i_j,k_j,l_j)_{j\le N}$ with indices in $\{-2,-1,0,1,\dots,j-1\}$, with the
following property. For any three distinct collinear points $p_\infty,p_0,p_1$
of $\R^n$, define $z_{-2}=p_\infty$, $z_{-1}=p_0$, $z_0=p_1$, and
recursively let $z_j$ be the harmonic conjugate of $z_{i_j}$ with respect to
$\{z_{k_j},z_{l_j}\}$, i.e., the unique point with
$(z_{k_j},z_{l_j};z_{i_j},z_j)=-1$. Then all $z_j$ are defined \textup(each
instruction's three inputs are pairwise distinct and none equals the conjugate
being taken at a fixed point of the involution\textup), the scale coordinate
of each $z_j$ in the chart $(p_\infty,p_0,p_1)\mapsto(\infty,0,1)$ is a
rational of absolute value at most $V$, and the final point $w:=z_N$ has
coordinate exactly $q$.
\end{lemma}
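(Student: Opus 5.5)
Since the statement is purely about the projective line, I will work throughout in the affine chart sending $(p_\infty,p_0,p_1)\mapsto(\infty,0,1)$. In this chart harmonic conjugation has explicit formulas. If $k$ is finite and $l=\infty$, the conjugate of $x$ with respect to $\{k,\infty\}$ is $2k-x$, the reflection in $k$. If $k,l$ are both finite, the conjugate of $x$ with respect to $\{k,l\}$ is
\[
\frac{(k+l)x-2kl}{2x-k-l}.
\]
When $x=\infty$ this gives the midpoint $(k+l)/2$. Every instruction will involve three pairwise distinct inputs, so the output is automatically distinct from all three, because a harmonic conjugate never coincides with its input or its base points. Rationality of every coordinate is then immediate by induction, since each formula is a rational function with rational coefficients. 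The bound $V$ is simply the maximum of the finitely many absolute values that actually occur. All the content therefore lies in exhibiting, for each $q\in\Q\setminus\{0,1\}$, an explicit finite program, together with a check that each step is well defined.

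\emph{Step 1 (integers).} Reflection of $0$ in $1$ with respect to $\infty$ yields $2$. Iterating, reflecting $j-1$ in $j$, yields every positive integer. In the same way, reflecting $1$ in $0$ yields $-1$, and iterating reaches every negative integer. Each such instruction has inputs $\{\infty,j,j-1\}$, which are distinct.

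\emph{Step 2 (reciprocals).} The conjugate of $\infty$ with respect to $\{0,m\}$ is $m/2$. This alone gives only dyadic rationals, so I need an inversion. The map $x\mapsto 1/x$ is the harmonic conjugation with respect to $\{1,-1\}$, because $(1,-1;x,1/x)=-1$. This is well defined for $x\notin\{1,-1,\infty\}$ and sends $0\mapsto\infty$, so the inputs remain distinct. From an integer $m\neq\pm1$ I therefore obtain $1/m$.

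\emph{Step 3 (general $q=r/s$).} With $0$ and $1/s$ available, reflections in the current point relative to $\infty$ produce every multiple $j/s$: reflect $0$ in $1/s$ to get $2/s$, reflect $1/s$ in $2/s$ to get $3/s$, and so on, with negatives obtained symmetrically. The process stops at $r/s=q$. If $s=1$, Step 1 alone suffices. If $s=-1$, I rewrite $q$ with positive denominator, so that $s\ge2$ and Step 2 applies. The indices in each instruction only ever point back to earlier terms, which matches the required format $\{-2,\dots,j-1\}$.

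The main thing to verify is the side condition that in every instruction the three inputs $z_{i_j},z_{k_j},z_{l_j}$ are pairwise distinct. This fails only if some intermediate value collides with a base point, for example the $0$ or $\infty$ that the instruction uses. The reflection steps always have inputs $\{\infty,j/s,(j\mp1)/s\}$, which are distinct. The inversion has inputs $\{1,-1,m\}$ with $m\neq\pm1$. With these choices every collision is excluded, and the proof is complete once this is checked.
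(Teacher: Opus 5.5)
Your proof is correct. It uses the same two primitives as the paper: reflection $R_y(x)=2y-x$, which is conjugation with respect to $\{y,p_\infty\}$, and inversion $x\mapsto1/x$, which is conjugation with respect to $\{1,-1\}$ once $-1=R_0(1)$ is built. You assemble them differently.

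The paper expands $q=[a_0;a_1,\dots,a_r]$ as a continued fraction and evaluates it from the inside out. It alternates one inversion, always of a value in $(1,\infty)$, with one integer translation $x+k=R_{k/2}\bigl(R_0(x)\bigr)$, and the translations need half-integers built as midpoints. You instead perform a single inversion $s\mapsto1/s$ and then climb the additive ladder $R_{j/s}\bigl((j-1)/s\bigr)=(j+1)/s$.

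\textbf{What your route buys.} It is easier to certify. Every reflection has inputs $\{p_\infty,\,j/s,\,(j\pm1)/s\}$, and the one inversion has inputs $\{1,-1,s\}$ with $s\ge2$. Every output is also visibly finite. That is worth stating explicitly, since pairwise distinct inputs alone do not ensure it: conjugating the midpoint of the two base points returns $p_\infty$. Your ladder also has no exceptional case. The paper's translation step degenerates at $q=-1/2=[-1;2]$, where $R_{-1/2}\bigl(R_0(1/2)\bigr)=R_{-1/2}(-1/2)$ has coinciding inputs. This is easily repaired, since $-1/2=R_0(1/2)$ directly.

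\textbf{What the paper's route buys.} It is more economical. Its intermediate values stay in $[-V,V]$ with $V=|a_0|+\max_i a_i+2$, which is governed by the partial quotients. Yours gives $N$ and $V$ of order $s+|r|$ and $\max(s,|q|)$. For example, for a ratio of consecutive Fibonacci numbers the paper's $V$ is $4$, while yours is at least the denominator. Proposition~\ref{prop:crq} only needs some finite $V$ (it fixes the shape ratio of the transport), so either construction suffices.

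One cosmetic slip: your sentence about $s=-1$ is garbled. After normalizing to $s>0$, the case $s=1$ is Step~1, and otherwise $s\ge2$ and Step~2 applies.
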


\begin{proof}
Since coordinates transform by the chart and harmonic conjugation is
projectively equivariant, it suffices to exhibit the instruction sequence on
the standard line with $p_\infty=\infty$, $p_0=0$, $p_1=1$; the conjugate of
$x$ with respect to $\{y,z\}$ is then $h(x;y,z)=\frac{x(y+z)-2yz}{2x-y-z}$,
with the limiting cases $h(x;y,\infty)=2y-x$ \textup(reflection through
$y$\textup) and $h(\infty;y,z)=\frac{y+z}{2}$ \textup(midpoint\textup).
Three primitive moves are available from the start:
reflection $R_y(x)=2y-x$ through any built point $y$ \textup(conjugate with
respect to $\{y,p_\infty\}$\textup); midpoints
\textup(conjugate of $p_\infty$\textup); and inversion
$x\mapsto1/x=h(x;-1,1)$, once $-1=R_0(1)$ is built. Integers are built by the
ladder $k+1=R_k(k-1)$ from $0,1$ \textup(and $-1,-2,\dots$ likewise\textup);
half-integers as midpoints of an integer and $0$; and for any built $x$ and
integer $k$, $x+k=R_{k/2}(R_0(x))$, two reflections.

Write the canonical continued fraction $q=[a_0;a_1,\dots,a_r]$ with
$a_0=\lfloor q\rfloor\in\Z$, $a_i\ge1$ for $1\le i\le r$, and $a_r\ge2$ if
$r\ge1$. Evaluate from the inside out: $v_r=a_r$ and
$v_{i}=a_i+1/v_{i+1}$. By induction $v_i>1$ for all $i\ge1$ \textup(base:
$v_r=a_r\ge2$; step: $a_i\ge1$ and $1/v_{i+1}\in(0,1)$\textup), so each
inversion is applied to a value in $(1,\infty)$, never at the fixed points
$\pm1$ of the involution, and each instruction's inputs are distinct.
The instruction sequence builds, in order: $-1$; the integers up to
$\max_i|a_i|+1$ in absolute value and the corresponding half-integers; then
$v_r$, and alternately one inversion and one integer translation per level.
Every value produced lies in
$[-V,V]$ for $V:=|a_0|+\max_{i\ge1}a_i+2$ \textup(inversions produce values
in $(0,1)$; translations by $a_i$ move values within the stated range;
the ladder integers and half-integers are bounded by the same
quantity\textup). The final value is $v_0=q$. Uniqueness of harmonic
conjugates makes the resulting point independent of any choices, and
projective equivariance transports the computation to an arbitrary scale
triple.
\end{proof}

\begin{proposition}\label{prop:crq}
Let $n\ge2$ and let $K\subseteq\R^n$ be convex with nonempty interior. For
every rational $q\notin\{0,1\}$ there are $\LB$-formulas
$\mathrm{CR}_q(a,b,c,d)$ and $\mathrm{CR}_{>q}(a,b,c,d)$ such that, for all
collinear quadruples of pairwise distinct points of $\interior K$,
\[
K\models\mathrm{CR}_q(a,b,c,d)\iff(a,b;c,d)=q,
\qquad
K\models\mathrm{CR}_{>q}(a,b,c,d)\iff(a,b;c,d)>q .
\]
\end{proposition}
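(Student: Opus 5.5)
The plan is to build $\mathrm{CR}_q$ and $\mathrm{CR}_{>q}$ as existential formulas. Each first transports the quadruple by an interior perspectivity (Lemma~\ref{lem:transport}) onto a short chord near a deep interior point. It then runs the instruction sequence of Lemma~\ref{lem:net} there, one $\Harm$ clause per instruction. Soundness of the whole formula then follows from Lemma~\ref{lem:harm-sound} and the soundness clause of Lemma~\ref{lem:transport}, with no interiority assumption on the witnesses. Completeness follows from Theorem~\ref{thm:harmonic}, provided every point produced by the construction lies in $\interior K$. Concretely, $\mathrm{CR}_q(a,b,c,d)$ will say: there exist $e,m_1,m_2,a',b',c',d'$ and $z_1,\dots,z_N$, with $N=N(q)$, such that
\begin{itemize}[leftmargin=2em]
\item $m_1\ne m_2$ and $\neg\Coll(m_1,m_2,e)$;
\item $\Coll(e,x,x')$ and $\Coll(m_1,m_2,x')$ for each $x\in\{a,b,c,d\}$;
\item $a',b',c'$ are pairwise distinct;
\item with $z_{-2}=a'$, $z_{-1}=b'$, $z_0=c'$, each instruction $(i_j,k_j,l_j)$ is imposed as $\Harm(z_{k_j},z_{l_j};z_{i_j},z_j)$;
\item $z_N=d'$.
\end{itemize}

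\emph{Soundness.} Each $x'$ is forced to be the perspectivity image of $x$ from $e$, so $(a',b';c',d')=(a,b;c,d)$. Note that distinctness of $a,b,c$ gives distinctness of $a',b',c'$ automatically, since $e\notin\ell(m_1,m_2)$. Each $\Harm$ clause forces $z_j$ to be the unique harmonic conjugate prescribed by the instruction. Hence, by Lemma~\ref{lem:net} and projective equivariance, $z_N$ has chart coordinate $q$ in the chart $(a',b',c')\mapsto(\infty,0,1)$, i.e.\ $(a',b';c',z_N)=q$. The clause $z_N=d'$ then yields $(a,b;c,d)=q$.

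\emph{Completeness.} Since $n\ge2$ and $\interior K$ is open, pick $e\in\interior K\setminus\ell$ and $\rho$ with $\overline{B(e,\rho)}\subseteq\interior K$. The key range-control step, which I expect to be the main obstacle, is showing that all net points $z_j$ land in $B(e,\rho)$. In the chart $(a',b',c')\mapsto(\infty,0,1)$ the point with coordinate $t$ sits at
\[
b'+\frac{t}{1+t\,|c'-b'|/|a'-b'|\cdot(\pm1)}\,(c'-b')
\]
along $m$; this is the Möbius parametrization with $a'$ at infinity. By Lemma~\ref{lem:net}, every $z_j$ has $|t|\le V(q)$. Choosing the shape ratio $R_0\ge 2V$ via Lemma~\ref{lem:transport}(iii) keeps the denominator in $[\tfrac12,\tfrac32]$. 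Consequently $|z_j-b'|\le 2V|c'-b'|\le 2V\varepsilon$. So taking $\varepsilon<\rho/(2V+1)$ puts every $z_j$, and $a',b',c',d'$, in $B(e,\rho)\subseteq\interior K$. We also take $m_1,m_2$ to be two points of $m\cap B(e,\varepsilon)$. Theorem~\ref{thm:harmonic} then supplies interior quadrangle witnesses for each $\Harm$ clause; these inner quantifiers are already part of $\Harm$. Finally $z_N=d'$ holds by uniqueness of the harmonic conjugate.

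\emph{The formula $\mathrm{CR}_{>q}$.} Use the same transported window, but drop $z_N=d'$ and set $w=z_N$. Then $(a,b;c,d)>q$ iff $d'\ne w$ and $d'$ lies on the projective arc from $w$ to $a'$ not containing $b'$; here the arc $(q,\infty)$ is taken in the chart sending $a'\mapsto\infty$. On the affine line $m$ this is a disjunction expressible in $B$: if $B(w,b',a')$ holds then $d'\notin[w,a']$, expressed as $\neg B(w,d',a')$; otherwise $B(w,d',a')$ with $d'\notin\{w,a'\}$. Every point here is in $K$, and all segments between them lie in $K$, so $B$ correctly expresses betweenness. The orientation bookkeeping (that $q>0$ or $q<0$ does not change the arc description) is routine. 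Soundness again uses only uniqueness, and completeness uses the same window estimate.
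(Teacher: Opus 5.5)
Your architecture is the paper's: an interior perspectivity transport (Lemma~\ref{lem:transport}), one $\Harm$ clause per instruction of the net of Lemma~\ref{lem:net}, soundness from Lemma~\ref{lem:harm-sound} plus uniqueness of conjugates, and a window estimate so that Theorem~\ref{thm:harmonic} supplies interior witnesses. Your $\mathrm{CR}_q$ works up to harmless slips:
\begin{itemize}
\item The point of chart coordinate $t$ sits at $b'+\frac{t}{1+(t-1)\kappa}(c'-b')$, where $\kappa$ is the signed ratio of $c'-b'$ to $a'-b'$. Your denominator $1+t\kappa$ does not send $t=1$ to $c'$. With the correct formula you need $R_0\ge 2V+2$.
\item You also have $|c'-b'|\le 2\varepsilon$, not $\varepsilon$.
\item Distinctness of $a',b',c'$ comes from $e\notin\ell$, not from $e\notin\ell(m_1,m_2)$. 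Since you dropped the paper's clause $\neg\Coll(a,b,e)$, your distinctness clause is what forces $e\notin\ell$: if $e\in\ell$, at least two of $a',b',c'$ land on the single point $\ell\cap\ell(m_1,m_2)$. So that clause must stay.
\end{itemize}

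The genuine error is in $\mathrm{CR}_{>q}$. In the chart $x\mapsto(a',b';c',x)$ we have $a'\mapsto\infty$, $w\mapsto q$ and $b'\mapsto 0$. The set $\{\chi>q\}$ is the arc $(q,\infty)$ between $w$ and $a'$, and it contains $b'$ exactly when $q<0$. So ``the arc from $w$ to $a'$ not containing $b'$'' is right only for $q>0$. Your claim that the sign of $q$ does not change the arc description is false: for every $q<0$ your formula defines $(a,b;c,d)<q$.

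Concretely, take positions $b'=0$, $c'=1$, $a'=10$ and $q=-1$. Then $\chi(u)=-9u/(u-10)$, so $w=-\tfrac54$ and $\{\chi>-1\}=(-\tfrac54,10)$, which contains $b'$. Your clause, since $B(w,b',a')$ holds, demands $\neg B(w,d',a')$, which selects $\chi<-1$. The repair is the paper's clause for $q<0$: $d'\ne w\wedge\bigl(B(w,d',a')\leftrightarrow B(w,b',a')\bigr)$, i.e.\ $d'$ lies on the same $\{w,a'\}$-arc as $b'$.

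For $q>0$ your case split on $B(w,b',a')$ is correct. It is also slightly more robust than the paper's bare clause $B(w,d',a')$, which needs the order clause $B(b',c',a')$ to guarantee $b'\notin[w,a']$. Your description is chart-intrinsic, so it needs no order clause at all.
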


\begin{proof}
\emph{The formulas.} Fix $q$ and let $(i_j,k_j,l_j)_{j\le N}$, $V=V(q)$ be as
in Lemma~\ref{lem:net}. Both formulas have the shape
\[
\exists e\,\exists m_1 m_2\,\exists a'b'c'd'\,\exists z_1\cdots z_N\,
\bigl[\Pi\wedge\mathrm{Ord}\wedge\mathrm{Net}\wedge\Phi\bigr],
\]
where, writing $z_{-2}:=a'$, $z_{-1}:=b'$, $z_0:=c'$ and $w:=z_N$:
\begin{itemize}
\item $\Pi$ \textup(transport\textup): $\neg\Coll(a,b,e)$,
$m_1\ne m_2$, $\neg\Coll(m_1,m_2,e)$, and for each
$x\in\{a,b,c,d\}$ the clauses $\Coll(e,x,x')$ and $\Coll(m_1,m_2,x')$,
together with pairwise distinctness of $a',b',c',d'$;
\item $\mathrm{Ord}$: $B(b',c',a')$;
\item $\mathrm{Net}$: for each $j\le N$ the clause
$\Harm\bigl(z_{k_j},z_{l_j};z_{i_j},z_j\bigr)$;
\item $\Phi$ is $d'=w$ for $\mathrm{CR}_q$; and for $\mathrm{CR}_{>q}$,
\[
\Phi:\equiv\;d'\ne w\;\wedge\;
\begin{cases}
B(w,d',a')\leftrightarrow B(w,b',a'), & q<0,\\[1mm]
B(w,d',a'), & q>0 .
\end{cases}
\]
\end{itemize}

\emph{Soundness.} Suppose a witness exists. By the soundness clause of
Lemma~\ref{lem:transport}, $a',b',c',d'$ are the images of $a,b,c,d$ under
the perspectivity from $e$ onto $\ell(m_1,m_2)$; they are pairwise distinct
by the clauses, collinear, and
$(a',b';c',d')=(a,b;c,d)=:\chi$. By Lemma~\ref{lem:harm-sound}, each clause
of $\mathrm{Net}$ forces $(z_{k_j},z_{l_j};z_{i_j},z_j)=-1$, so by induction
and the uniqueness of harmonic conjugates each $z_j$ is exactly the $j$-th
point of the net of Lemma~\ref{lem:net} over the scale triple
$(a',b',c')$; in particular $w$ is the unique point of the line with
$(a',b';c',w)=q$. For $\mathrm{CR}_q$: $d'=w$ iff $\chi=q$, since
$x\mapsto(a',b';c',x)$ is injective.

For $\mathrm{CR}_{>q}$, coordinatize the line of $m$ affinely so that, using
$\mathrm{Ord}$, the positions are $b'=0<c'=\sigma<a'=\Delta$ for some
$0<\sigma<\Delta$. The chart position of scale coordinate $v$ is the
M\"obius function $\mu(v)=\Delta v/(v+\Delta/\sigma-1)$
\textup($\mu(0)=0$, $\mu(1)=\sigma$, $\mu(\infty)=\Delta$\textup), whose pole
is at $v^{*}=1-\Delta/\sigma<0$. The coordinate $\chi(u)$ of position $u$ is
the inverse M\"obius map, with pole at $u=\Delta$; it is strictly increasing
in $u$ on $u<\Delta$, with range $(v^*,+\infty)$ there, and strictly
increasing from $-\infty$ to $v^{*}$ on $u>\Delta$. We claim that in every
such configuration
\[
\{u:\chi(u)>q\}=
\begin{cases}
\text{the }\{w,a'\}\text{-arc of the line containing }b', & q<0,\\
\text{the open segment }(w,a'), & q>0 .
\end{cases}
\]
For $q>0$: since $v^{*}<0<q$, the threshold position
$\mu(q)=\Delta q/(q-v^{*})$ satisfies $0<\mu(q)<\Delta$, the far side
$u>\Delta$ carries only coordinates $<v^{*}<0<q$, and on the near side
monotonicity gives $\chi(u)>q\iff u\in(\mu(q),\Delta)$: the open segment
between $w$ and $a'$, which moreover never contains $b'$. For $q<0$ there are
two subcases. If $q>v^{*}$ then $w$ is at position $\mu(q)<0$ on the near
side; the far side again carries only coordinates $<v^{*}<q$, and
$\{\chi>q\}$ is the segment $(\mu(q),\Delta)$, which contains $b'$: the
$\{w,a'\}$-arc through $b'$. If $q<v^{*}$ then $\mu(q)>\Delta$, so $w$ lies
on the far side; now the entire near side has coordinates $>v^{*}>q$, and on
the far side $\chi(u)>q\iff u>\mu(q)$, so $\{\chi>q\}$ is the complement of
the closed segment $[a',w]$: again the $\{w,a'\}$-arc containing $b'$. Since
membership of a point in the segment between two collinear points is the
betweenness relation, the displayed sets are defined by exactly the two
clauses of $\Phi$ \textup(the biconditional with the reference point $b'$
expressing ``same $\{w,a'\}$-arc as $b'$''\textup), and $d'\ne w$ excises the
equality case. Hence $K\models\mathrm{CR}_{>q}(a,b,c,d)$ implies $\chi>q$,
and $K\models\mathrm{CR}_q(a,b,c,d)$ implies $\chi=q$. \textup(No clause
constrains $\sigma,\Delta$ beyond $\mathrm{Ord}$, and the dictionary above
was derived for arbitrary $0<\sigma<\Delta$, so soundness holds for every
witness.\textup)

\emph{Completeness.} Conversely, suppose $\chi=q$ \textup(respectively
$\chi>q$\textup); we must produce a witness. Choose $e\in\interior K$ off the
line of the quadruple, with $\overline{B(e,\rho)}\subseteq\interior K$, and
work in the plane $A$. Apply Lemma~\ref{lem:transport} with the order
$b',c',a'$, with shape ratio $\Delta/\sigma\in\{2V+2,\,2V+4\}$ chosen so
that $|\chi-(1-\Delta/\sigma)|\ge1$, and with $\varepsilon$ small; then
scale down further so that $\Delta\le\rho/(2+2|\chi|)$. The chord of $K$
on the transport line through $B(e,\varepsilon)$ contains the full window
$[-\Delta,\Delta]$ of positions around $b'$ deep inside $\interior K$. By the
choice of ratio, the pole satisfies $v^{*}\le-(2V+1)$, so every net value
$v\in[-V,V]$ of Lemma~\ref{lem:net} has position
$|\mu(v)|\le\Delta V/(V+1)<\Delta$: all net points $z_j$, including $w$
\textup(as $|q|\le V$\textup), lie in the window, hence in $\interior K$,
and each is an interior point of the chord. Each instruction of the net is a
harmonic quadruple of collinear interior points, so by
Theorem~\ref{thm:harmonic} the corresponding $\Harm$ clause holds with
interior quadrangle witnesses. The image $d'$ lies at position $\mu(\chi)$,
and $|\mu(\chi)|=\Delta|\chi|/|\chi-v^{*}|\le\Delta(1+|\chi|)\le\rho/2$
by the ratio choice, so $d'$ also lies on the chord, inside $K$; it lies in
$B(e,\varepsilon)$ automatically by Lemma~\ref{lem:transport}. Finally
$\Phi$ holds by the dictionary just established: if $\chi=q$ then
$d'=w$; if $\chi>q$ then $d'$ lies in the displayed set. All clauses are
satisfied, completing the proof. The formulas are uniform in $K$ and $n$,
as all constructions took place in the plane $A$ through the quadruple's
line and one auxiliary interior point.
\end{proof}

\begin{remark}
Two features of the proof are worth recording. Only the harmonic primitive is
used --- the addition and multiplication templates of the classical algebra of
throws, whose interior range control was the outstanding difficulty, are
bypassed entirely by the net of rationality. And the comparison dictionary is
shape-independent: the single order clause $B(b',c',a')$ pins the
configuration well enough that the same two betweenness patterns decide
$\chi>q$ in every witness, with no case analysis over the position of the
pole. The number of quantified variables grows linearly with the length of
the continued fraction of $q$, which is harmless: each rational threshold is
a single fixed formula.
\end{remark}

\begin{lemma}[Cross-ratio equality]\label{lem:creq}
Let $n\ge2$ and let $K\subseteq\R^n$ be convex with nonempty interior. There
is an $\LB$-formula $\mathrm{CReq}(x_1,\dots,x_4;y_1,\dots,y_4)$ such that,
for any two collinear quadruples of pairwise distinct points of
$\interior K$, on the same or on different lines,
\[
K\models\mathrm{CReq}(\bar x;\bar y)\iff
(x_1,x_2;x_3,x_4)=(y_1,y_2;y_3,y_4).
\]
\end{lemma}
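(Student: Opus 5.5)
Equality of two cross-ratios cannot be obtained by combining the rational formulas of Proposition~\ref{prop:crq}: that would require an infinite conjunction over all rationals $q$. I would therefore build a direct projective transport from the quadruple $\bar x$ to the quadruple $\bar y$ out of interior perspectivities. The plan is to express equality as the existence of a chain of perspectivities carrying $(x_1,x_2,x_3)$ to $(y_1,y_2,y_3)$, and then to check that the image of $x_4$ is $y_4$. Soundness is the easy half. By the soundness clause of Lemma~\ref{lem:transport}, any witnessing incidences $\Coll(e,x,x')$ and $\Coll(m_1,m_2,x')$ with $\neg\Coll(m_1,m_2,e)$ force $x'$ to be the genuine perspectivity image. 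Cross-ratio is preserved at each step, so the existence of a chain ending at $\bar y$ gives $(x_1,x_2;x_3,x_4)=(y_1,y_2;y_3,y_4)$. Uniqueness of the fourth point at a given cross-ratio is then automatic.

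The formula will have a bounded shape. It says that there exist two centers $e,f$, an intermediate line $\ell(m_1,m_2)$, and points $x_i'$ such that $x_i'$ is the image of $x_i$ from $e$ on $\ell(m_1,m_2)$, and $y_i$ is the image of $x_i'$ from $f$, for $i=1,\dots,4$. To cover quadruples on the same line, or in awkward relative positions, I would allow a fixed finite number $k$ (say three or four) of perspectivity steps and take the disjunction over the possible chain lengths. This matches the classical fact that any projectivity between lines is a product of at most three perspectivities.

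Completeness is the main work, and everything must stay inside $\interior K$. First use Lemma~\ref{lem:transport} to shrink $\bar x$ by a perspectivity into a tiny ball around a deep point $e_1$, and likewise $\bar y$ into a tiny ball around $e_2$. The perspectivity clauses are symmetric in the two incidences, so a step can be used backwards to go from the small copy of $\bar y$ to $\bar y$. This reduces the problem to equal-cross-ratio quadruples contained in small balls, which can be arranged to lie near one deep point $e$. Inside $B(e,\rho)$ all auxiliary lines and centers stay interior, and the classical construction applies. To map $(x_1',x_2',x_3')$ to $(y_1',y_2',y_3')$, send $x_1'$ to $y_1'$ through a common point, using the standard two-step Steiner construction on lines through a point. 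The fourth point then follows because the projectivity is determined by three points and cross-ratios agree.

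The hard step is range control in this middle stage. The classical centers, for example the intersection of $\ell(x_2',y_2')$ with $\ell(x_3',y_3')$, can land far away and even outside $K$. I would first try to exploit the freedom in Lemma~\ref{lem:transport}(iii): choose the small copies with prescribed order and shape so that the relevant lines meet at bounded distance, inside $B(e,\rho)$. If that fails, I would fall back on a squash argument like the one in Theorem~\ref{thm:harmonic}. That means applying an affine contraction toward a deep point, which preserves all incidences and brings every witness into the interior. This should work provided the whole middle configuration is first chosen inside a plane, or a pair of planes, through $e$.
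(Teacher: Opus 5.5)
\paragraph{Where the proposal falls short.} Your formula shape and your soundness argument are fine and match the paper's. Each perspectivity block is certified by the soundness clause of Lemma~\ref{lem:transport} and preserves cross-ratio, so any witnessing chain forces the two cross-ratios to be equal.

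The gap is completeness, which is the real content of the lemma. You name ``range control in the middle stage'' as the hard step, then offer two strategies. Neither is carried out, and neither works as stated.

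\emph{Shape control.} Lemma~\ref{lem:transport}(iii) controls the order and one shape ratio of a single transported quadruple. It gives no control over where the two copies $\bar x'$ and $\bar y'$ sit relative to each other. If both transports use one center $e$, then $x_1'\in\ell(e,x_1)$ and $y_1'\in\ell(e,y_1)$, and these two lines in general meet only at $e$. So the copies cannot share a point, and the middle stage needs at least two perspectivities. Their centers are exactly the uncontrolled intersection points you worry about.

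\emph{The squash.} It fails for a structural reason. In Theorem~\ref{thm:harmonic}, $T_\varepsilon$ fixes the chord $\ell$ pointwise, so the free variables $a,b,c,d$ stay put while the witnesses shrink toward them. Here the free variables $\bar x,\bar y$ cannot move. Under a homothety centered at $o$ applied only to the quantified points, the clause $\Coll(e,x_i,x_i')$ survives only if the image of $\ell(e,x_i)$ still passes through $x_i$. That forces $o\in\ell(e,x_i)$ for all four $i$, i.e.\ $o=e$. So ``preserves all incidences'' is false for a contraction toward a general deep point.

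\paragraph{How the paper closes it.} The paper makes the two transported copies share their first point, so the middle stage is a single perspectivity whose center can be estimated. The steps are:
\begin{enumerate}
\item Pick a deep point $z$ off both lines, and put the transport centers $e_x\in(z,x_1)$ and $e_y\in(z,y_1)$ close to $z$.
\item Run both transports with carrier lines through $z$. Then $u_1=v_1=z$, and the carriers meet at $z$, which also disposes of coplanarity for $n\ge3$.
\item The projectivity between the carriers fixing $z$ is then one perspectivity, with center $c_0=\ell(u_2,v_2)\cap\ell(u_3,v_3)$.
\item Place $u_2,u_3$ at distances $\varepsilon,r\varepsilon$ and $v_2,v_3$ at $\varepsilon,r'\varepsilon$, along carrier directions $d_1,d_2$ at a fixed angle, with $r\ne r'$. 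The joins then have independent directions $d_2-d_1$ and $r'd_2-rd_1$. So $c_0$ lies within $O(\varepsilon)$ of $z$, hence in $\interior K$.
\end{enumerate}
No disjunction over chain lengths is needed.

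\paragraph{How your squash could be rescued.} It works only in a specific form you did not state:
\begin{enumerate}
\item Use a single center $e$ for both transports.
\item Build the whole middle chain projectively, with every auxiliary point affine and in general position. If $\aff(\ell_x\cup\{e\})\cap\aff(\ell_y\cup\{e\})=\{e\}$, where $\ell_x,\ell_y$ are the lines of the quadruples (possible for $n\ge4$), the two carriers are skew and an extra intermediate line is needed.
\item Apply the homothety centered at $e$. It fixes every line through $e$, so it keeps the outer clauses while shrinking all auxiliary points into $B(e,\rho)$.
\end{enumerate}
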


\begin{proof}
The formula asserts the existence of two transport configurations and one
matching perspectivity:
\[
\exists e_x e_y\,\exists m_1m_2\,n_1n_2\,
\exists u_1\cdots u_4\,v_1\cdots v_4\,\exists c\,
\bigl[\Pi_{\bar x}\wedge\Pi_{\bar y}\wedge u_1=v_1\wedge\Xi\bigr],
\]
where $\Pi_{\bar x}$ is the transport block $\Pi$ from the proof of
Proposition~\ref{prop:crq} for the quadruple $\bar x$ with center $e_x$,
carrier chord $\ell(m_1,m_2)$ and images $\bar u$; $\Pi_{\bar y}$ is the same
for $\bar y$, $e_y$, $\ell(n_1,n_2)$, $\bar v$; and
\[
\Xi:\equiv\;\neg\Coll(m_1,m_2,c)\wedge\neg\Coll(n_1,n_2,c)\wedge
\textstyle\bigwedge_{i=2,3,4}\Coll(c,u_i,v_i).
\]

\emph{Soundness.} By the soundness clause of Lemma~\ref{lem:transport} the
transports preserve cross-ratios:
$(x_1,x_2;x_3,x_4)=(u_1,u_2;u_3,u_4)$ and
$(y_1,y_2;y_3,y_4)=(v_1,v_2;v_3,v_4)$. If the carrier lines coincide, then
for each $i$ either $u_i=v_i$ or the line through them is the carrier itself,
placing $c$ on it against $\Xi$; so $\bar u=\bar v$ and the cross-ratios
agree. If the carriers are distinct they meet at $z:=u_1=v_1$ and span a
plane. Central projection from $c$ carries each $u_i$ to the unique point of
$\ell(n_1,n_2)$ on the line $\ell(c,u_i)$; by $\Xi$ that point is $v_i$ for
$i=2,3,4$, and it is $z$ for $u_1=z$. Central projection between two lines
preserves cross-ratio, so
$(u_1,u_2;u_3,u_4)=(v_1,v_2;v_3,v_4)$, and the given cross-ratios agree.

\emph{Completeness.} Let the common value be $\chi$. Choose
$z\in\interior K$ off both quadruple lines, and on the segment from $z$
toward $x_1$ an interior point $e_x\ne z$ off the line of $\bar x$; likewise
$e_y$ on the segment toward $y_1$. Run the construction of
Lemma~\ref{lem:transport} for $\bar x$ from the center $e_x$ with the
transport line \emph{through $z$}: since $z\in\ell(e_x,x_1)$, the first image
is $u_1=z$, and taking $e_x$ close to $z$ makes the transport line pass close
to $e_x$, so the fan argument still confines all four images to a small
neighborhood of $z$; the rotational freedom about $z$ still provides the
shape control of Lemma~\ref{lem:transport}(iii). Do the same for $\bar y$,
choosing the two carrier directions at a fixed positive angle. Parametrizing
the carriers by arclength from $z$, place $u_2,u_3$ at $\varepsilon,r\varepsilon$
and $v_2,v_3$ at $\varepsilon,r'\varepsilon$, where $r\in\{2,3\}$ and
$r'\in\{4,5\}$ are chosen so that $|\chi-r|\ge\tfrac12$ and
$|\chi-r'|\ge\tfrac12$: in the chart sending $(z,u_2,u_3)$ to
$(\infty,0,1)$ the position of coordinate $v$ is
$\varepsilon/(1-v/r)$, with pole at $v=r$, so the position of
$u_4$ \textup(coordinate $\chi$\textup) is $O(\varepsilon)$ with constant
depending only on $\chi$, and likewise for $v_4$; all eight image points lie
in $\interior K$ for $\varepsilon$ small. The projectivity
$\ell(m_1,m_2)\to\ell(n_1,n_2)$ determined by $z\mapsto z$,
$u_2\mapsto v_2$, $u_3\mapsto v_3$ fixes the intersection point of the two
carriers, hence is a perspectivity \cite{Coxeter49}; its center is
$c_0=\ell(u_2,v_2)\cap\ell(u_3,v_3)$, and since
$(z,u_2;u_3,u_4)=\chi=(z,v_2;v_3,v_4)$ it carries $u_4$ to $v_4$, giving
$\Xi$. It remains to see $c_0\in\interior K$: writing $d_1,d_2$ for the unit
directions of the carriers, the joins $\ell(u_2,v_2)$ and $\ell(u_3,v_3)$
have directions proportional to $d_2-d_1$ and $r'd_2-rd_1$, which are
linearly independent since $r\ne r'$; two lines through points at distance
$O(\varepsilon)$ from $z$, meeting at an angle bounded below independently of
$\varepsilon$, intersect at distance $O(\varepsilon)$ from $z$. Shrinking
$\varepsilon$ places $c_0$ in $\interior K$.
\end{proof}

\begin{proposition}[Cut expressibility]
\label{prop:cuts}
If $K\equiv_{\LB}K'$ (both convex with
nonempty interior in $\R^n$), then for every rational $q$ and every $r$, the
bodies $K$ and $K'$ realize the same purely existential and universal
patterns of $\mathrm{CR}_{>q}$-comparisons over $r$-point interior
configurations. In particular, any real-valued projective invariant of $K$
expressible as a supremum or infimum of cross-ratios over a definable family
of interior configurations is determined, as a Dedekind cut over $\Q$, by
$\Th_{\LB}(K)$.
\end{proposition}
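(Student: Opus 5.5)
The argument is essentially syntactic: I will turn every comparison pattern into a single $\LB$-sentence using Proposition~\ref{prop:crq}, and then apply $K\equiv_{\LB}K'$. First I make precise what a ``pattern'' is. Fix $r$ and a finite list of index quadruples $(i_1,i_2,i_3,i_4)$ from $\{1,\dots,r\}$, together with rationals $q$ and a choice of truth values. The pattern says that the points $x_{i_1},\dots,x_{i_4}$ are pairwise distinct, collinear and interior, and that $(x_{i_1},x_{i_2};x_{i_3},x_{i_4})>q$ holds or fails as prescribed. Any further incidence or betweenness side conditions in $\LB$ may be added to the matrix.

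For such a pattern I form the quantifier-free-in-spirit matrix
\[
\psi(\bar x):\equiv\bigwedge\bigl(\Int(x_{i_j})\wedge\textstyle\bigwedge x_{i_j}\ne x_{i_k}\wedge\Coll\wedge\Coll\bigr)\wedge\bigwedge\pm\mathrm{CR}_{>q}(x_{i_1},x_{i_2},x_{i_3},x_{i_4}),
\]
and I also allow $\mathrm{CR}_q$ and disjunctions. Here $\Int$ comes from Proposition~\ref{prop:interior}, $\Coll$ from Lemma~\ref{lem:coll}, and $\mathrm{CR}_{>q}$, $\mathrm{CR}_q$ from Proposition~\ref{prop:crq}. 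The guards restrict the formulas to collinear quadruples of pairwise distinct interior points, which is exactly where Proposition~\ref{prop:crq} is valid. On that domain, $\psi(\bar a)$ holds in $K$ if and only if the real configuration $\bar a$ satisfies the geometric pattern, uniformly in $K$. The rationals $0$ and $1$, which Proposition~\ref{prop:crq} excludes, are handled without new formulas. For thresholds I approximate by nearby admissible rationals, using $\chi>0\iff\exists$ rational $q'>0$ with $\chi>q'$ only at the level of Dedekind cuts. Alternatively, $\chi>0$ can be read off directly, since a nonzero cross-ratio is positive precisely when the pairs $\{a,b\}$ and $\{c,d\}$ do not separate each other, which is a betweenness condition.

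The sentences $\exists\bar x\,\psi$ and $\forall\bar x\,(\mathrm{guard}\to\psi)$ are then $\LB$-sentences. Elementary equivalence transfers them between $K$ and $K'$, which proves the first claim.

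For the invariant statement, let $I(K)=\sup\{f(\bar a)\}$, where $\bar a$ ranges over the interior configurations satisfying a fixed defining formula $\theta$, and $f$ is a cross-ratio of a fixed quadruple of coordinates. For a rational $q\notin\{0,1\}$ we have $I(K)>q$ if and only if $K\models\exists\bar x\,(\theta\wedge\mathrm{guard}\wedge\mathrm{CR}_{>q})$. So the lower cut $\{q:I>q\}$ is determined by the theory, up to the two excluded rationals, and these do not affect a Dedekind cut, which is fixed by a dense set of rationals. The infimum case is symmetric, using $\neg\mathrm{CR}_{>q}\wedge\neg\mathrm{CR}_q$ to express $\chi<q$.

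The main obstacle is the soundness domain. The formulas $\mathrm{CR}_{>q}$ are only guaranteed correct on interior, distinct, collinear quadruples, so the guard must be included in every sentence, and the definable family has to consist of interior configurations as stated. I would check that this fits the intended applications, where the configurations are already interior by hypothesis.
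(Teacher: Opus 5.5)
Your proof is correct and follows the paper's argument, which is the one-line observation that each rational-threshold comparison $\mathrm{CR}_{>q}$, prefixed by quantifiers over the configuration, is an $\LB$-sentence and so transfers under $K\equiv_{\LB}K'$. You also supply details the paper leaves implicit: the interiority, distinctness and collinearity guards that Proposition~\ref{prop:crq} requires, and the handling of the excluded thresholds $q\in\{0,1\}$, either by the separation criterion for the sign of a cross-ratio or by density of the remaining rationals in the Dedekind cut.
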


\begin{proof}
Immediate: each rational-threshold comparison, prefixed by quantifiers over
the configuration, is a sentence.
\end{proof}

\begin{remark}
Proposition~\ref{prop:cuts} is what defuses the ``transcendental moduli''
worry of the introduction: a modulus such as a vertex cross-ratio of a
pentagon is typically transcendental and hence not $\emptyset$-definable as
an element, but its cut over the rationals is captured by sentences, and two
distinct values are separated by a rational threshold. Naming and
determining are different; the theory determines without naming.
\end{remark}

\begin{lemma}[Adjacency and cyclic order are definable]\label{lem:adjacency}
Let $K$ be a compact planar convex body. The relation
\[
\Adj(a,b):\equiv \Ext(a)\wedge\Ext(b)\wedge a\ne b\wedge
\forall x\,\bigl(B(a,x,b)\to\neg\Int(x)\bigr)
\]
holds in $K$ exactly when $a,b$ are distinct extreme points whose open
segment lies in $\partial K$. If $K$ is a polygon with vertices
$v_1,\dots,v_m$ \textup($m\ge3$\textup) in cyclic order, then $\Adj$ holds
of $v_i,v_j$ precisely when $j=i\pm1$ modulo $m$; consequently the cyclic
order of the vertices, up to rotation and reversal, is determined by
$\Th_{\LB}(K)$ together with the vertex count.
\end{lemma}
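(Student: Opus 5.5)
The plan is to unwind the formula directly and then specialize to polygons. By Proposition~\ref{prop:extreme}, the clauses $\Ext(a)\wedge\Ext(b)\wedge a\ne b$ say exactly that $a,b$ are distinct extreme points. By Proposition~\ref{prop:interior}, since $K$ is full-dimensional, $\neg\Int(x)$ defines $K\cap\partial K$ (here $K$ is compact, so this is $\partial K$). Hence the universal clause says $[a,b]\subseteq\partial K$. The endpoints are automatically boundary points, being extreme, so this is equivalent to $(a,b)\subseteq\partial K$. This gives the first assertion.

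For a polygon with vertices $v_1,\dots,v_m$ in cyclic order, $\ext K=\{v_i\}$. If $j=i\pm1$, then $[v_i,v_j]$ is an edge, so it lies in $\partial K$. Conversely, suppose $v_i,v_j$ are non-adjacent. Since $m\ge3$, both open arcs of the vertex cycle between $v_i$ and $v_j$ contain a vertex. Hence the line $\ell(v_i,v_j)$ strictly separates two vertices of $K$, one on each side, and the midpoint of $[v_i,v_j]$ is interior to $K$. To see this, take vertices $u,w$ on opposite sides of the line. Then $\conv\{v_i,v_j,u,w\}\subseteq K$ is a quadrilateral whose diagonal $[v_i,v_j]$ has its open segment in the interior of that quadrilateral, hence in $\interior K$. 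So $\Adj$ fails for non-adjacent pairs. I expect this separation step to be the only point needing care, and the quadrilateral argument handles it cleanly.

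For the consequence, let $\sigma$ be the sentence that there exist $m$ distinct extreme points $e_1,\dots,e_m$, these are all the extreme points, $\Adj(e_i,e_{i+1})$ holds cyclically, and no other $\Adj(e_i,e_j)$ holds. This is an $\LB$-sentence. It holds in $K$ exactly when the adjacency graph on $\ext K$ is the $m$-cycle $e_1\cdots e_m$. Since the adjacency graph of a polygon is an $m$-cycle, its automorphisms are exactly the rotations and reversals. So the labeled cyclic order is determined up to those, and $\Th_{\LB}(K)$ records it (together with the vertex count, which a similar sentence already fixes; cf.\ Corollary~\ref{cor:polygoncount}). In particular, any body satisfying $\Th_{\LB}(K)$ has its vertices arranged as an $m$-cycle under the definable $\Adj$ relation.
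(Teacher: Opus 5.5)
Your proposal is correct and follows the paper's proof: unwind $\Ext$ and $\Int$ via Propositions~\ref{prop:extreme} and~\ref{prop:interior}, identify the $\Adj$-pairs of a polygon with its edges, and read the $m$-cycle off up to rotation and reversal. Your separating-quadrilateral argument simply supplies the detail behind the paper's one-line assertion that the relative interior of a diagonal meets $\interior K$; in fact it shows that the whole open diagonal lies in $\interior K$.
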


\begin{proof}
$\Ext$ and $\Int$ are definable (Propositions~\ref{prop:extreme}
and~\ref{prop:interior}), so $\Adj$ is an $\LB$-formula. For a polygon, the
open segment between two vertices lies in $\partial K$ if and only if it is
an open edge, i.e.\ the vertices are cyclically adjacent: a segment joining
two non-adjacent vertices is a diagonal, whose relative interior meets
$\interior K$. The adjacency relation on an $m$-element set whose graph is
an $m$-cycle determines that cycle up to rotation and reflection.
\end{proof}

\begin{lemma}[Order-compatibility forces admissibility]\label{lem:admissible}
Let $P=\conv\{v_1,\dots,v_m\}$ and $P'=\conv\{w_1,\dots,w_m\}$ be convex
polygons with $m\ge4$, vertices listed in cyclic order, and let $M$ be a
projective transformation of $\RP^2$, induced by $A\in GL_3(\R)$, defined at
every $v_i$ and satisfying $M(v_i)=w_i$ for all $i$. Then the homogeneous
coordinate $h=h_A$ has one strict sign at all the $v_i$; consequently the
singular line of $M$ misses $P$, and $M(P)=P'$.
\end{lemma}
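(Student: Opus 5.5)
The plan is to lift everything to homogeneous coordinates and track signs of $3\times3$ determinants. Write $V_i=(v_i,1)^{\top}$ and $W_i=(w_i,1)^{\top}$ in $\R^3$. Since $M$ is defined at $v_i$ and $M(v_i)=w_i$, we have
\[
AV_i=t_i\,W_i,\qquad t_i=h_A(v_i)\neq0,
\]
where $h_A$ is the last coordinate of $A(\cdot,1)$, i.e.\ the affine functional cutting out the singular line. The claim to prove is that all $t_i$ have the same sign.

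\textbf{Step 1 (orientation of triples).} For a convex polygon with vertices listed in cyclic order, every triple $i<j<k$ has
\[
\operatorname{sign}\det(V_i,V_j,V_k)=\varepsilon_P,
\]
a single sign independent of the triple, and this determinant is nonzero. Nonvanishing holds because no three vertices (extreme points) of a polygon are collinear. Constancy is the standard fact that $v_i,v_j,v_k$ inherit the cyclic orientation of the boundary. I would justify it by noting that for a fixed $i$ the vertices $v_{i+1},\dots,v_{i-1}$ appear in angular order as seen from $v_i$, within an angle less than $\pi$. This is the only genuinely geometric input, and I expect it to be the main point needing care, though it is classical. The same statement holds for $P'$ with a sign $\varepsilon_{P'}$.

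\textbf{Step 2 (sign bookkeeping).} By multilinearity,
\[
\det A\cdot\det(V_i,V_j,V_k)=t_it_jt_k\,\det(W_i,W_j,W_k).
\]
Hence for every triple $i<j<k$,
\[
\operatorname{sign}(t_it_jt_k)=s:=\operatorname{sign}(\det A)\,\varepsilon_P\,\varepsilon_{P'},
\]
a constant. Now take any two indices $k\neq l$. Because $m\ge4$, there are two further distinct indices $i,j$. Comparing the triples $\{i,j,k\}$ and $\{i,j,l\}$ gives $\operatorname{sign}t_k=\operatorname{sign}t_l$. So all $t_i$ share one strict sign. (For $m=3$ there is only one triple, which explains the hypothesis $m\ge4$.)

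\textbf{Step 3 (conclusion).} The function $h_A$ is affine and has one strict sign at every vertex. It therefore keeps that strict sign on $P=\conv\{v_i\}$, so the singular line misses $P$. By Lemma~\ref{lem:proj-betweenness}, $M|_P$ is injective, $M(P)$ is convex, and $M$ maps segments in $P$ onto segments. Convexity of $M(P)$ together with $w_i\in M(P)$ gives $P'\subseteq M(P)$. For the reverse inclusion, every point of $P$ is obtained from the vertices by iterated segments, as in the recursion $H_r$ of Proposition~\ref{prop:hull}. Since $M$ maps each such segment onto the segment between the images, induction on $r$ gives $M(P)\subseteq\conv\{w_i\}=P'$. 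Hence $M(P)=P'$.
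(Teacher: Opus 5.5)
Your proposal is correct and takes essentially the same route as the paper. Both arguments rest on three ingredients:
\begin{enumerate}
\item the lifted determinant identity $\det A\,\det(V_i,V_j,V_k)=t_it_jt_k\det(W_i,W_j,W_k)$;
\item constancy of the orientation sign over cyclically ordered triples;
\item comparison of two triples sharing two indices, which is where $m\ge4$ enters, followed by the affine-sign argument on $P$.
\end{enumerate}
Your Step~3 spells out in more detail, via Lemma~\ref{lem:proj-betweenness} and the hull recursion, the equality $M(\conv\{v_i\})=\conv\{w_i\}$ that the paper states in one line.
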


\begin{proof}
For affine points $a,b,c$ write $[a,b,c]$ for the determinant of the
matrix with rows $(a,1)$, $(b,1)$, $(c,1)$. Lifting to $\R^3$ and
using $A(a,1)=h(a)\,(M(a),1)$ gives
\[
[Ma,Mb,Mc]=\frac{\det A}{h(a)h(b)h(c)}\;[a,b,c].
\]
For a convex polygon with vertices listed in cyclic order, every triple
$i<j<k$ has $[v_i,v_j,v_k]$ of one and the same sign $\varepsilon$, and
likewise every $[w_i,w_j,w_k]$ has one sign $\varepsilon'$. Hence the
quantity $\operatorname{sign}\bigl(h(v_i)h(v_j)h(v_k)\bigr)$ equals
$\varepsilon\varepsilon'\operatorname{sign}(\det A)$, independent of
the triple. Comparing $(1,2,3)$ with $(1,2,4)$ gives
$\operatorname{sign}h(v_3)=\operatorname{sign}h(v_4)$, and comparing
$(1,2,k)$ with $(1,2,l)$ for $k,l\ge3$, then permuting the roles of the
first two indices, gives $\operatorname{sign}h(v_i)=\operatorname{sign}h(v_j)$
for all $i,j$; here $m\ge4$ is used, to have a triple available with two
indices held fixed. Since $h$ is an affine function of the point and has one
strict sign at every vertex, it has that sign on $\conv\{v_i\}=P$; so the
singular line $\{h=0\}$ misses $P$, $M$ is defined and injective on $P$, and
$M(P)=M(\conv\{v_i\})=\conv\{w_i\}=P'$.
\end{proof}

\begin{remark}\label{rem:order-needed}
Order-compatibility cannot be dropped. Consider the projective map
$(x,y)\mapsto(y/x,1/x)$. Its singular line is $\{x=0\}$, and it carries the
rectangle vertices $(-1,0)$, $(1,0)$, $(1,1)$, $(-1,1)$ to the points
$(0,-1)$, $(0,1)$, $(1,1)$, $(-1,-1)$, again in convex position --- but
in the cyclic order $1,3,2,4$ rather than $1,2,3,4$ --- while its singular
line crosses the rectangle. So convex position of the image vertices, without
the order data supplied by Lemma~\ref{lem:adjacency}, does not force
admissibility.
\end{remark}

\begin{definition}[Crossing points]\label{def:crossing}
Let $P=\conv\{v_1,\dots,v_m\}$ be a convex polygon. A \emph{crossing point} of
$P$ is a point at which two chords $v_av_c$ and $v_bv_d$ with four distinct
endpoints meet. Two such chords meet in at most one point, so a crossing point
is determined by its pair of index pairs; in particular $P$ has only finitely
many crossing points.
\end{definition}

\begin{lemma}[Crossing triangles pin the centers]\label{lem:crossing}
Let $P$ be a convex polygon with $m\ge5$ vertices. Then
\begin{enumerate}[label=\textup{(\roman*)}]
\item every crossing point of $P$ lies in $\interior P$, and is not a vertex;
\item $P$ carries three non-collinear crossing points; the closed triangle $T$
they span satisfies $T\subseteq\interior P$;
\item membership in $T$ is expressed by an $\LB$-formula in the three crossing
points, namely the conjunction of the $\Coll$- and $B$-conditions defining the
convex hull of three points, and is therefore a closed condition on the point.
\end{enumerate}
\end{lemma}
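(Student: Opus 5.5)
For (i), I will use that a crossing point $x$ of the chords $v_av_c$ and $v_bv_d$ with four distinct endpoints lies in both open segments $(v_a,v_c)$ and $(v_b,v_d)$. Indeed, since the four vertices are in convex position and the two chords meet, their endpoints must alternate in the cyclic order, so each chord is a diagonal of the quadrilateral $\conv\{v_a,v_b,v_c,v_d\}$. These two diagonals meet at an interior point $x$ of that quadrilateral, which has nonempty interior because no three vertices of a convex polygon are collinear. Hence $x\in\interior P$. Since $x$ lies in the open segment $(v_a,v_c)$, it is not extreme, and so it is not a vertex.

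For (ii), I will work inside the convex pentagon on $v_1,\dots,v_5$, which needs $m\ge5$. Its five diagonals $v_iv_{i+2}$ meet pairwise in five points forming the inner pentagram pentagon. By (i) each of these is a crossing point of $P$, since crossings of that pentagon are crossings of $P$. The inner pentagon is strictly convex, so any three of its vertices, for example $x_1=v_1v_3\cap v_2v_5$, $x_2=v_1v_3\cap v_2v_4$ and $x_3=v_3v_5\cap v_1v_4$, are non-collinear. I will check the non-collinearity directly: $x_1,x_2$ both lie on the line $v_1v_3$, and $x_3$ is not on it because $x_3$ lies in the open segment $(v_3,v_5)$, which meets the line $v_1v_3$ only at $v_3$. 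Then $T=\conv\{x_1,x_2,x_3\}$. Since $\interior P$ is convex and contains the three points, it contains their convex hull, so $T\subseteq\interior P$.

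For (iii), I will use Proposition~\ref{prop:hull}: the formula $H_3(z;x_1,x_2,x_3)$, built from $B$ alone, defines $z\in\conv\{x_1,x_2,x_3\}=T$. I can conjoin $\neg\Coll(x_1,x_2,x_3)$ if a nondegenerate triangle is wanted. The condition is closed because $T$ is compact. The only step I expect to be delicate is the non-collinearity in (ii), and the explicit incidence check above handles it.
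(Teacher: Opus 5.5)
Your proof is correct and follows the paper's route. In (i), crossing points lie in the relative interiors of two diagonals and hence in $\interior P$. In (ii), five vertices supply three non-collinear crossing points, whose hull lies in the convex set $\interior P$; your explicit pentagram choice of $x_1,x_2,x_3$ makes concrete the ``Radon-type'' selection that the paper only sketches. In (iii), $T$ is defined by $H_3$ and is compact. In your direct non-collinearity check you should also state $x_1\ne x_2$. This holds because $\ell(v_2,v_4)$ and $\ell(v_2,v_5)$ are distinct lines through $v_2\notin\ell(v_1,v_3)$, so they meet $\ell(v_1,v_3)$ in distinct points.
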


\begin{proof}
(i) If $v_av_c$ and $v_bv_d$ have four distinct endpoints and meet at $x$, then
$x$ lies in the relative interior of both chords. No three vertices of a convex
polygon are collinear, so the chords are not collinear and $x$ is interior to
each; a point interior to a chord joining two non-adjacent vertices lies in
$\interior P$, and no vertex does.

(ii) With $m\ge5$, choose four vertices in convex position: their two diagonals
meet, giving a crossing point $c_1$. A Radon-type argument on a further vertex
produces a second crossing point $c_2\ne c_1$, and a third quadruple, chosen so
that its crossing point is off the line $c_1c_2$, produces $c_3$; five vertices
suffice. Each $c_i\in\interior P$ by (i), and $\interior P$ is convex, so
$T=\conv\{c_1,c_2,c_3\}\subseteq\interior P$.

(iii) The convex hull of three points is defined from betweenness alone, and
hull membership is a closed condition.
\end{proof}

\begin{lemma}[Extraction of a common witness]\label{lem:extraction}
Let $P'$ be a convex polygon with $m\ge5$ vertices and let $\tau\in\R^N$. Suppose
that for every $\varepsilon>0$ there are a labeling of the vertices of $P'$, a
triple of crossing points of $P'$ spanning a triangle $T$, and centers
$z_1^\varepsilon,z_2^\varepsilon\in T$ in general position with respect to the
vertices, such that the tuple $\Phi(z_1^\varepsilon,z_2^\varepsilon)$ of
recorded pencil cross-ratios lies within $\varepsilon$ of $\tau$ and has all its
coordinates in a fixed compact $J\subseteq\R$ with $J\cap\{0,1\}=\emptyset$.
Assume further that for every pair $(a,b)$ of vertex indices some coordinate of
$\Phi$ is the cross-ratio of a pencil quadruple containing both $v_a$ and $v_b$,
and that the two centers carry different values on some vertex quadruple. Then
there are $z_1,z_2\in\interior P'$, in general position, with
$\Phi(z_1,z_2)=\tau$.
\end{lemma}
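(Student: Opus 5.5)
This is a compactness argument: pass to a convergent subsequence of the approximate witnesses, then show the limit centers are legitimate, i.e.\ interior, in general position, and with $\Phi$ continuous at the limit.

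\emph{Extraction.} There are finitely many labelings of the vertices of $P'$ and, by Definition~\ref{def:crossing}, finitely many crossing points, hence finitely many crossing triangles. Along a sequence $\varepsilon_k\to0$ we may therefore pass to a subsequence on which the labeling and the triangle $T$ are fixed. By Lemma~\ref{lem:crossing}(ii), $T$ is compact and $T\subseteq\interior P'$. A further subsequence gives $z_i^{\varepsilon_k}\to z_i\in T$ for $i=1,2$. The membership conditions are closed by Lemma~\ref{lem:crossing}(iii), so the limit centers lie in $T$, hence in $\interior P'$. This disposes of interiority.

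\emph{Continuity at the limit.} Each coordinate of $\Phi$ is the cross-ratio of a pencil of four lines $\ell(z,v_{a_1}),\dots,\ell(z,v_{a_4})$ through a center $z$. This is a rational function of $z$, continuous wherever $z$ is distinct from the four vertices and the four lines are pairwise distinct. If $\Phi$ is continuous at $(z_1,z_2)$, then
\[
\Phi(z_1,z_2)=\lim_k\Phi(z_1^{\varepsilon_k},z_2^{\varepsilon_k})=\tau .
\]
So everything reduces to showing that the limit does not hit the degeneracy locus.

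\emph{Non-degeneration, the main step.} Degeneracy means one of the following: a center coincides with a vertex, a center becomes collinear with two vertices $v_a,v_b$, or the two centers coincide or become collinear with a vertex.

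The first case cannot occur, since $z_i\in\interior P'$ and vertices lie on $\partial P'$.

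For the second case, suppose $z_i$ tends to a point of the line $\ell(v_a,v_b)$. By hypothesis some coordinate of $\Phi$ is the cross-ratio of a pencil quadruple containing both $v_a$ and $v_b$. In the limit two of its four lines coalesce, so this cross-ratio tends to $0$, $1$ or $\infty$. That contradicts the coordinates lying in the fixed compact $J$, which avoids $0$ and $1$ and is bounded. Hence the limits stay off every vertex-join line.

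For the third case, suppose $z_1=z_2$ in the limit. Then the two centers would carry identical pencil values on every vertex quadruple. This contradicts the hypothesis that they differ on some quadruple, provided the recorded values pass to the limit. They do, by the continuity just established, so the distinction survives. The remaining mixed collinearities in ``general position'' are handled the same way: each would force a recorded cross-ratio toward $\{0,1,\infty\}$.

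I expect this third case to be the delicate point. One must check that ``general position'' as used in the paper is exactly the conjunction of the conditions controlled by $J$ and by the distinguishing quadruple. If it also includes conditions involving the crossing points, I would argue that these are open conditions implied by the limit configuration. If they are not, I would absorb them by perturbing $z_1,z_2$ inside the fibre $\Phi^{-1}(\tau)$.
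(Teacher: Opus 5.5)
Your proposal is correct and is essentially the paper's own proof. Both arguments run the same way: pigeonhole over the finitely many labelings and crossing triples, then compactness of $T\times T\subseteq\interior P'\times\interior P'$, then the bounds $J$ to keep each limit center off every vertex chord (you rightly include $\infty$ alongside $0,1$), then the distinguishing quadruple to rule out $z_1=z_2$, and finally continuity of $\Phi$ to conclude $\Phi(z_1,z_2)=\tau$. The paper's proof treats only those two failures of general position, so you should drop your claim that collinearities of $z_1,z_2,v_j$ are ``handled the same way'' (no recorded cross-ratio involves the line $z_1z_2$, so nothing degenerates there), and you should read the distinguishing hypothesis, as the paper implicitly does, as a property of $\tau$ itself, since a mere inequality at each $\varepsilon$ would not survive the limit.
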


\begin{proof}
A vertex labeling of $P'$ and a labeled triple of crossing points range over a
\emph{finite} set, by Definition~\ref{def:crossing}. So one such datum occurs for
a sequence $\varepsilon_k\to0$; along it the triangle $T$ is one fixed closed
triangle, and $T\subseteq\interior P'$ by Lemma~\ref{lem:crossing}(ii). The
pairs $(z_1^{\varepsilon_k},z_2^{\varepsilon_k})$ lie in the compact $T\times T$;
pass to a convergent subsequence, with limit $(z_1,z_2)\in T\times T\subseteq
\interior P'\times\interior P'$. Interiority of the limit is thus immediate, and
no vertex or boundary case arises.

It remains to exclude the two failures of general position. If $z_1$ lay on a
chord $v_av_b$, the pencil lines $z_1^{\varepsilon_k}v_a$ and
$z_1^{\varepsilon_k}v_b$ would coalesce; the cross-ratio of a quadruple two of
whose lines coalesce takes one of the two degenerate values $0$ or $1$ in the
limit, and by hypothesis some coordinate of $\Phi$ is such a cross-ratio, so
that coordinate would leave $J$ --- impossible, since $J$ is compact, misses
$\{0,1\}$, and is a closed constraint satisfied along the sequence. Hence $z_1$,
and symmetrically $z_2$, lies off every vertex chord. If $z_1=z_2$, the two
centers would carry equal values on every vertex quadruple, contrary to
hypothesis. So $(z_1,z_2)$ is in general position, $\Phi$ is continuous there,
and $\Phi(z_1,z_2)=\lim_k\Phi(z_1^{\varepsilon_k},z_2^{\varepsilon_k})=\tau$.
\end{proof}

\begin{remark}\label{rem:pin-needed}
Confinement to $T$ cannot be replaced by the requirement that the approximating
centers be interior: interiority is an open condition and does not pass to the
limit. Nor do the bounds $J$ alone exclude a center converging to a
\emph{vertex} $v_j$: the limiting pencil then consists of the lines $v_jv_l$
($l\ne j$) together with one further line through $v_j$, all distinct, so every
recorded cross-ratio converges to an ordinary value in $J$ and nothing
degenerates visibly. It is exactly this case that the pin $T$ removes, since $T$
is a fixed compact subset of $\interior P'$ containing no vertex. Conversely $T$
does not subsume the bounds: $T$ is two-dimensional and the vertex chords meet
it, so a limit center may lie on a chord, which only $J$ excludes.
\end{remark}

\begin{proposition}[Polygons, projectively]\label{prop:polygon-LB}
Two planar convex polygons are
$\LB$-elementarily equivalent if and only if they are projectively
equivalent.
\end{proposition}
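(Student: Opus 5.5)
The ``if'' direction is Theorem~\ref{thm:iso-projective}: projectively equivalent convex polygons are related by a projectivity whose singular line misses the polygon, so they are $\LB$-isomorphic and hence elementarily equivalent. For ``only if'', let $P\equiv_{\LB}P'$. The number $m$ of extreme points is expressed by an $\LB$-sentence (Corollary~\ref{cor:polygoncount}), so $P$ and $P'$ have the same vertex count.

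For $m=3$ any two triangles are affinely equivalent. For $m=4$ I choose the projectivity $M$ sending the four vertices of $P$, in cyclic order, to those of $P'$ in cyclic order; it exists because four points in convex position are in general position. Lemma~\ref{lem:admissible} then gives $M(P)=P'$. The real case is $m\ge5$, where $P$ has genuine moduli.

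For $m\ge5$ the plan is to encode the projective shape of the vertex configuration by cross-ratios measured from two \emph{interior} centers. Vertices are boundary points, so Proposition~\ref{prop:crq} cannot be applied to them directly. Instead, fix $z_1,z_2\in\interior P$ in general position. Consider the pencil at $z_s$ formed by the lines $z_sz_{3-s}$ and $z_sv_a$ for $a=1,\dots,m$. I record the cross-ratios $\Phi(z_1,z_2)$ of the quadruples $(z_sz_{3-s},z_sv_1,z_sv_2,z_sv_k)$ and $(z_sv_1,z_sv_2,z_sv_3,z_sv_k)$ for all $k$ and $s=1,2$, enough that every pair $(a,b)$ of vertex indices occurs in some recorded quadruple.

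Each such pencil cross-ratio is a cross-ratio of four collinear interior points. To get them, cut the pencil by a short transversal near $z_s$: points of the half-open segments $[z_s,v_a)$ are interior and are defined by $B$. Adjacency fixes the cyclic labeling of the vertices (Lemma~\ref{lem:adjacency}), so ``$\Phi(z_1,z_2)$ lies in a rational box'' becomes an $\LB$-formula in $z_1,z_2$ and the vertex variables, via $\mathrm{CR}_{>q}$. I also add the clauses ``$z_1,z_2$ lie in the triangle spanned by three designated crossing points'' (Lemma~\ref{lem:crossing}(iii)) and ``some vertex quadruple has different values at the two centers''.

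Set $\tau=\Phi(z_1,z_2)$ computed in $P$, with $z_1,z_2$ chosen inside a crossing triangle $T$ of $P$ (Lemma~\ref{lem:crossing}(ii)). Take a compact $J$ containing every coordinate of $\tau$ and missing $\{0,1\}$. For each rational $\varepsilon$, the sentence asserting existence of a labeling, a crossing triangle, and centers in it with $\Phi$ inside the $\varepsilon$-box around $\tau$ and inside $J$ holds in $P$, hence in $P'$. Lemma~\ref{lem:extraction} then yields centers $z_1',z_2'\in\interior P'$ in general position with $\Phi(z_1',z_2')=\tau$, for one fixed labeling $w_1,\dots,w_m$.

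Reconstruction is then classical. Let $M$ be the projectivity sending $(z_1,z_2,v_1,v_2)$ to $(z_1',z_2',w_1,w_2)$. These are frames: general position is exactly what the extraction lemma delivers. Equal pencil cross-ratios at $z_1$ and $z_1'$ against the fixed lines $z_1z_2,z_1v_1,z_1v_2$ force $M(z_1v_k)=z_1'w_k$. Likewise $M(z_2v_k)=z_2'w_k$. Hence $M(v_k)$, being the intersection of these two lines, equals $w_k$ for all $k$. Since $M$ maps the vertices to the vertices in cyclic order, Lemma~\ref{lem:admissible} (with $m\ge4$) shows that $M$ is defined on $P$ and that $M(P)=P'$.

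I expect the main obstacle to be the interior encoding of the pencil cross-ratios: making the transversal construction a sound first-order clause, uniformly over witnesses. Soundness requires the transversal points to be forced onto the lines $z_sv_a$, using $\Coll$ with the vertex, and to be distinct and interior, so that Proposition~\ref{prop:crq} applies. The perspectivity soundness clause of Lemma~\ref{lem:transport} should handle the incidence part; completeness only needs a transversal close to $z_s$.

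A second point to check is the exact form of the hypotheses of Lemma~\ref{lem:extraction}. Every vertex pair must appear in a recorded quadruple, and the compact $J$ must be chosen before $\varepsilon$. This is why $J$ is written into the sentences as a fixed rational box.
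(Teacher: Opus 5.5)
Your argument is correct, modulo the small repairs below. It follows the paper's architecture: interior centers pinned in a crossing triangle (Lemma~\ref{lem:crossing}), pencil cross-ratios read on interior transversals through $\mathrm{CR}_{>q}$ (Proposition~\ref{prop:crq}), cyclic order from Lemma~\ref{lem:adjacency}, a fixed compact $J$ avoiding $\{0,1\}$, the pigeonhole-and-limit of Lemma~\ref{lem:extraction}, and Lemma~\ref{lem:admissible} to finish.

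The one genuine difference is that you put the line $z_sz_{3-s}$ into each pencil and normalize on the frame $(z_1,z_2,v_1,v_2)$. This is not cosmetic. The paper's pencils consist only of the $m$ lines $z_iv_j$, which gives $2(m-3)$ recorded numbers. The labeled configuration $(z_1,z_2,v_1,\dots,v_m)$, however, has $2(m+2)-8=2m-4$ projective moduli.

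To see what is lost, move $z_1,z_2$ to the points at infinity of the two coordinate axes. The pencils then read the vertices' $y$- and $x$-coordinates, and projectivities fixing $z_1,z_2$ act on these by independent \emph{affine} maps. Cross-ratios of the vertex lines alone determine the two coordinate tuples only up to independent \emph{M\"obius} maps. What disappears is the marked point $\infty$, which is the trace of the line $z_1z_2$ in each pencil.

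So the paper's step ``matching data yields $M$ with $M(z_i)=z_i'$, $M(v_j)=w_j$'' does not follow as written. For $m=5$ the two vertex-line pencils give four numbers against the four degrees of freedom of the centers. Those data cannot separate nearby non-equivalent pentagons.

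Your quadruples $(z_sz_{3-s},z_sv_1;z_sv_2,z_sv_k)$ supply exactly the two missing parameters. The line $z_1z_2$ is legible near $z_s$ like any other pencil line. Your deduction $M(z_sv_k)=z_s'w_k$, hence $M(v_k)=w_k$, is therefore sound.

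\textbf{Repairs.}

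First, the quadruples you list do not achieve your stated aim that every vertex pair occurs in one. Pairs $\{a,b\}\subseteq\{4,\dots,m\}$ never appear together, e.g.\ the diagonal $v_4v_6$ when $m=6$. You can record all quadruples from the $m+1$ lines, so that Lemma~\ref{lem:extraction} applies verbatim. Alternatively, note that your reconstruction never needs the limit centers off those chords. It needs only $z_1'\ne z_2'$ (your ``different values'' clause on a vertex-only quadruple) and nondegeneracy of the recorded quadruples.

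Second, the proof of Lemma~\ref{lem:extraction} excludes only a center on a vertex chord and $z_1=z_2$. Your frame and your final intersection step also need $z_1',z_2',w_k$ to be non-collinear. That comes from the $J$-bounds instead: any coalescence of $z_s'z_{3-s}'$ with a vertex line drives some recorded coordinate to $0$, $1$ or $\infty$, which the closed bounds forbid.

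Third, no single line near $z_s$ meets all the half-open segments $[z_s,v_a)$, because the vertices surround $z_s$. Define the transversal points by $\Coll$-incidences, as in the soundness clause of Lemma~\ref{lem:transport}. They are interior simply because they lie close to $z_s$.
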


\begin{proof}
($\Leftarrow$) Theorem~\ref{thm:iso-projective}. ($\Rightarrow$) The vertex
count is elementary (Corollary~\ref{cor:polygoncount}); triangles and
quadrilaterals are projectively equivalent within their vertex class, so
assume $m\ge5$ vertices. The projective equivalence class of an $m$-point
configuration in general position in $\RP^2$ is determined by finitely many
cross-ratios of pencils, and reconstructing projective coordinates from
such data is classical; what requires care is that the pencils be taken at
centers from which the relevant cross-ratios are legible \emph{inside} the
body.

We take the centers in the interior rather than at vertices. Let $P$ be a
convex $m$-gon with vertices $v_1,\dots,v_m$, and let $z_1,z_2\in\interior
P$ be distinct points chosen so that the $2m$ points $v_j$ and the two
centers are in general position: no $z_i$ lies on a line $v_jv_k$, and no
three of $z_1,z_2,v_j$ are collinear. Such a choice is generic, hence
possible, since the excluded configurations lie in a finite union of lines.
For $i=1,2$ the pencil at $z_i$ consists of the $m$ lines $z_iv_j$, and the
$m$ points $v_j$ are recovered from the two pencils: $v_j$ is the unique
common point of $z_1v_j$ and $z_2v_j$. Fixing three lines of each pencil as
a reference frame, the projective class of the configuration
$(z_1,z_2,v_1,\dots,v_m)$ is determined by the cross-ratios of the remaining
lines against those frames.

Each such cross-ratio is legible on an interior transversal. Fix $i$ and four
indices $j_1,\dots,j_4$. Since $z_i\in\interior P$ and $v_j\in P$, the
half-open segment $[z_i,v_j)$ lies in $\interior P$
(\cite[Lemma~1.1.9]{Schneider14}); in particular each of the four lines meets
$\interior P$ in a segment having $z_i$ in its relative interior. Choose a
line $\ell$ through a point of $\interior P$ near $z_i$, not through $z_i$ and
not parallel to any of the four; for $\ell$ close enough to $z_i$ its
intersections with the four lines lie in $\interior P$, because each
intersection point converges to $z_i\in\interior P$ as $\ell\to z_i$ and
$\interior P$ is open. The resulting four points are interior, are defined
from $z_i,v_{j_1},\dots,v_{j_4}$ and two points of $\ell$ by
$\Coll$-incidences, and carry the pencil's cross-ratio, which is independent
of the transversal.

No adjacency exception arises: the argument never uses a chord joining two
vertices, and so is unaffected by the fact that the open chord joining
adjacent vertices of a polygon lies in $\partial P$ rather than in
$\interior P$ (Remark~\ref{rem:vertex-pencils}).

Finally, the vertices are definable as the extreme points
(Proposition~\ref{prop:extreme}), the centers are quantified existentially
under $\Int$, and the incidences are $\Coll$-conditions. We record three
kinds of data. First, the \emph{adjacency pattern} of the labeled extreme
points (Lemma~\ref{lem:adjacency}), which fixes the cyclic order. Second,
the finitely many pencil cross-ratios $\tau\in\R^N$, through their Dedekind
cuts: each comparison against a rational threshold is a sentence
(Proposition~\ref{prop:cuts}). Third, fixed rational bounds confining every
recorded cross-ratio to a compact interval $J$ with $J\cap\{0,1\}=\emptyset$,
chosen so that for each pair of vertices some recorded coordinate is a
cross-ratio of a quadruple of pencil lines involving both; such bounds exist
because the chosen configuration is non-degenerate, so all of its
cross-ratios avoid $\{0,1,\infty\}$. Fourth, a labeled triple of crossing
points and the requirement that both centers lie in the triangle $T$ they
span (Lemma~\ref{lem:crossing}); the source centers may be chosen in such a
$T$, since $T$ has nonempty interior. We choose the source centers so that
they carry different cross-ratio values on some vertex quadruple, which is
possible because $z\mapsto\Phi_1(z)$ is non-constant on $T$.

For each rational $\varepsilon>0$ let $\Sigma_\varepsilon$ be the sentence
asserting the existence of extreme points $e_1,\dots,e_m$ with the recorded
adjacency pattern, of three crossing points of the recorded index type, of
centers $z_1,z_2$ lying in the triangle they span, and of interior
transversal data as above, with all recorded cross-ratios lying in $J$ and
within $\varepsilon$ of $\tau$ --- the last expressed by finitely many
$\mathrm{CR}_{>q}$-comparisons at rational $q$. Every clause is an
$\LB$-condition: crossing points are intersections of chords, hence given by
$\Coll$-incidences, and membership in $T$ is betweenness
(Lemma~\ref{lem:crossing}(iii)). Then $P\models
\Sigma_\varepsilon$ for every $\varepsilon$, hence
$P'\models\Sigma_\varepsilon$ for every $\varepsilon$. Lemma~\ref{lem:extraction} --- whose
pigeonhole is over labelings \emph{and} crossing triples, both finite ---
supplies a labeling $w_1,\dots,w_m$ and centers
$z_1',z_2'\in\interior P'$ in general position whose pencil cross-ratios
equal $\tau$ exactly.

Matching data now yields a projective transformation $M$ with $M(z_i)=z_i'$
and $M(v_j)=w_j$ for all $i,j$: the two pencils determine the lines $z_iv_j$
up to a projectivity of each pencil, and $v_j$ is the intersection of the two
lines through it, so the configuration $(z_1,z_2,\bar v)$ is projectively
equivalent to $(z_1',z_2',\bar w)$. By Lemma~\ref{lem:adjacency} the labeling
$w_1,\dots,w_m$ respects the cyclic order, so Lemma~\ref{lem:admissible}
applies with $m\ge5\ge4$: the singular line of $M$ misses $P$, and
$M(P)=P'$.
\end{proof}

\begin{remark}[Why the centers must be interior]\label{rem:vertex-pencils}
It is tempting to take the pencils at two vertices, as one would in the
projective plane, and to read the cross-ratios on a transversal near the
chosen vertex. For a polygon this fails, and not for a reason that a better
transversal repairs. If $v_j,v_{j+1}$ are adjacent vertices of $P$, then the
open chord $(v_j,v_{j+1})$ is an open edge, hence lies in $\partial P$;
indeed the whole line $v_jv_{j+1}$ supports $P$ and misses $\interior P$
altogether. So of the $m-1$ lines of the pencil at a vertex, the two through
the adjacent vertices meet $\interior P$ nowhere, and no transversal can
exhibit their intersections as interior points. At a vertex only the $m-3$
diagonals are usable, which would leave the cases $m=5,6$ without four
usable lines. Interior centers avoid the difficulty entirely: by
\cite[Lemma~1.1.9]{Schneider14} every segment from an interior point to a
point of $P$ is interior except possibly for its far endpoint, so all $m$
lines of each pencil are legible. The same remark explains why the general
convex-body statement of Conjecture~\ref{conj:cat-projective} should not be
approached through boundary pencils.
\end{remark}

\subsection{Conics, sections, and the Chasles--Steiner separation}

\begin{theorem}[Chasles--Steiner; see {\cite[Ch.~8]{Coxeter49}}]\label{thm:steiner}
Let $\Gamma\subseteq\RP^2$ be a set of at least five points, no three
collinear. Then $\Gamma$ lies on a conic if and only if for any two points
$e,e'\in\Gamma$ and any four further points
$p_1,\dots,p_4\in\Gamma$, the cross-ratio of the pencil of lines
$(e p_1,e p_2,e p_3,e p_4)$ equals that of
$(e' p_1,e' p_2,e' p_3,e' p_4)$.
\end{theorem}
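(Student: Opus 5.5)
The plan is to prove both directions of Theorem~\ref{thm:steiner} by the classical projective route, reducing everything to a normal form in which the cross-ratio of a pencil becomes a ratio of parameters.

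\textbf{Forward direction.} Assume $\Gamma$ lies on a conic $C$. Since $\Gamma$ has five points with no three collinear, $C$ is nondegenerate, and a projective change of coordinates (which preserves pencil cross-ratios) puts $C$ in the form $\{(1:t:t^2)\}\cup\{(0:0:1)\}$. For $e=(1:s:s^2)$ and $p=(1:t:t^2)$, I will compute the line $ep$ directly. Its direction, with respect to a fixed transversal, is an affine function of $t$: the chord slope is $s+t$. So the map $p\mapsto ep$ is a projectivity from the parameter line to the pencil at $e$. Hence the cross-ratio of the pencil $(ep_1,\dots,ep_4)$ equals $(t_1,t_2;t_3,t_4)$, which does not depend on $e$. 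The point at infinity of the parametrization, and the case where $e$ itself is that point, are handled by continuity or by a second chart.

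\textbf{Converse.} Take five points $e,e',p_1,p_2,p_3\in\Gamma$. There is a unique conic $C$ through them, and it is nondegenerate because no three of the points are collinear. The map $\pi\colon$ (pencil at $e$) $\to$ (pencil at $e'$) determined by $ep_i\mapsto e'p_i$ for $i=1,2,3$, and $ee'\mapsto$ the tangent line to $C$ at $e'$, is a projectivity. By Steiner's generation, the locus of intersections $\ell\cap\pi(\ell)$ is exactly $C$; this follows from the forward computation and the uniqueness of a projectivity fixed by three values. Now let $q\in\Gamma$ be any further point. The hypothesis, applied with $p_4=q$, says that the pencil $(eq)$ has the same cross-ratio relative to $ep_1,ep_2,ep_3$ as $(e'q)$ has relative to $e'p_1,e'p_2,e'p_3$. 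Therefore $\pi(eq)=e'q$, so $q=eq\cap e'q\in C$. Since $q$ was arbitrary, $\Gamma\subseteq C$.

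\textbf{Main obstacle.} The main difficulty is bookkeeping the degenerate incidences rather than any conceptual step. In the converse, I must check that $eq\ne ee'$, that the intersection of $eq$ and $e'q$ is a single point, and that this point is $q$. All of these hold because no three points of $\Gamma$ are collinear and $q\notin\{e,e'\}$. I must also confirm that the projectivity between pencils generates the conic and not a degenerate line pair. That is where nondegeneracy of $C$, obtained from the five non-collinear points, is used.
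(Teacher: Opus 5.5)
Your argument is correct. It is the classical Steiner-generation route, which is what the paper relies on: it cites \cite[Ch.~8]{Coxeter49} rather than proving the theorem in the text. You normalize the conic to $\{(1:t:t^2)\}$, observe that $p\mapsto ep$ is a projectivity from the parameter line onto the pencil at $e$, and identify the projectivity of pencils fixed by $ep_i\mapsto e'p_i$ ($i=1,2,3$) with the one induced by $C$.

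One cosmetic point: define $\pi$ by the three conditions $ep_i\mapsto e'p_i$ only, since a projectivity of pencils is already determined by three values. The fourth condition, $ee'\mapsto$ the tangent at $e'$, is a consequence of identifying $\pi$ with the projectivity induced by $C$, so it should not be stated as part of the definition.
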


\begin{lemma}[Steiner sentences]\label{lem:steiner-sentences}
For each rational $q$ there is an
$\LB$-sentence $\sigma_q$ such that, for every compact planar convex body
$L$:
\begin{enumerate}[label=\textup{(\alph*)}]
\item if $\partial L$ is a conic, then $L\models\neg\sigma_q$ for every
rational $q$;
\item if $L$ is the squircle $\{(x,y):x^4+y^4\le1\}$, then
$L\models\sigma_q$ for some rational $q$.
\end{enumerate}
\end{lemma}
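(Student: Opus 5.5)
The plan is to write $\sigma_q$ as a Steiner test. It should say that the boundary contains two pencil centers $e,e'$ and four further extreme points whose pencil cross-ratios at $e$ and at $e'$ lie strictly on opposite sides of $q$. The pencil cross-ratios will be read on interior transversals, so that Proposition~\ref{prop:crq} applies.

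\textbf{The sentence.} Concretely, let $\sigma_q$ be
\[
\exists e\,e'\,p_1p_2p_3p_4\,\exists x_1\cdots x_4\,\exists y_1\cdots y_4\,\bigl[\Gamma\wedge\Xi_e(\bar x)\wedge\Xi_{e'}(\bar y)\wedge\mathrm{CR}_{>q}(\bar x)\wedge\neg\mathrm{CR}_{>q}(\bar y)\wedge\neg\mathrm{CR}_q(\bar y)\bigr].
\]
The three blocks are as follows.
\begin{itemize}
\item $\Gamma$ says that the six points $e,e',p_1,\dots,p_4$ satisfy $\Ext$, are pairwise distinct, and that no three of them satisfy $\Coll$.
\item $\Xi_e(\bar x)$ says that the $x_i$ satisfy $\Int$, are pairwise distinct and collinear, that $\neg\Coll(e,x_1,x_2)$, and that $\Coll(e,p_i,x_i)$ holds for each $i$.
\item $\Xi_{e'}(\bar y)$ is the same block for $e'$ and the points $y_i$.
\end{itemize}

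\textbf{Soundness of the transversal blocks.} Since the transversal does not pass through $e$, each $x_i$ is the trace of the line $ep_i$ on it. These four lines are distinct by the no-three-collinear clause. Hence $(x_1,x_2;x_3,x_4)$ equals the cross-ratio of the pencil $(ep_1,\dots,ep_4)$, and likewise for $\bar y$ at $e'$.

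Because the $x_i$ and $y_i$ are pairwise distinct interior collinear points, Proposition~\ref{prop:crq} applies. It gives that the last three conjuncts say exactly this: the pencil cross-ratio at $e$ is $>q$, and the pencil cross-ratio at $e'$ is $<q$.

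\textbf{Part (a).} Suppose $\partial L$ is a conic. Every extreme point then lies on the conic, so any six extreme points satisfying $\Gamma$ form a set of points of a conic, no three collinear. By Theorem~\ref{thm:steiner} their two pencil cross-ratios are equal. A single value cannot be both $>q$ and $<q$, so $L\models\neg\sigma_q$ for every rational $q$.

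\textbf{Part (b).} Let $L$ be the squircle. It is strictly convex, so $\ext L=\partial L$ and no three boundary points are collinear. Since $\partial L$ is not a conic, the easy converse direction of Theorem~\ref{thm:steiner} applied to $\Gamma=\partial L$ yields $e,e',p_1,\dots,p_4$ whose pencil cross-ratios $\chi\ne\chi'$ differ. Swapping $e$ and $e'$ if necessary, assume $\chi>\chi'$, and choose a rational $q$ with $\chi>q>\chi'$.

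\textbf{Main obstacle: interior transversals.} It remains to produce the transversals with interior trace points, which I expect to be the only real difficulty, since the centers $e,e'$ lie on the boundary. The plan is to use strict convexity: each open chord $(e,p_i)$ lies in $\interior L$.

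Take a line at small distance $s$ from $e$ whose normal is not perpendicular to any $ep_i$, and which crosses the four rays from $e$ toward the $p_i$ on the same side. As in the fan argument of Lemma~\ref{lem:transport}, its traces on these rays lie at distance $O(s)$ from $e$. For small $s$ they therefore sit strictly inside the open chords $(e,p_i)$, hence in $\interior L$. They are distinct and collinear, and the line avoids $e$, so $\Xi_e$ holds; doing the same at $e'$ gives $\Xi_{e'}$.

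The cross-ratio conjuncts then hold by the completeness half of Proposition~\ref{prop:crq}, so $L\models\sigma_q$. The care needed is only in choosing the transversal to meet the rays, not their backward extensions through $e$, which lie outside $L$ near $e$. This is arranged by taking the transversal's side of $e$ to be the side containing $L$, which is possible because all $p_i$ lie in the closed half-plane bounded by the supporting line at $e$.
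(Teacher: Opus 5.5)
Your proposal is correct and follows the paper's proof: it uses the same Steiner sentence, comparing the pencil cross-ratios at two extreme centers $e,e'$ against a rational $q$ on interior transversals, with (a) and (b) both read off from Theorem~\ref{thm:steiner}; you only swap which center lies above $q$. Your transversal parallel to the supporting line at $e$ is an instance of the paper's line strictly separating $e$ from $\{p_1,\dots,p_4\}$, and your explicit $\Int$, $\neg\Coll(e,x_1,x_2)$ and no-three-collinear clauses spell out what the paper calls ``interior transversal data as above'' (as in the paper, you should also set $\sigma_q$ to a contradiction for $q\in\{0,1\}$, where Proposition~\ref{prop:crq} provides no formulas).
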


\begin{proof}
For extreme points $e$ and $p_1,\dots,p_4$ (all distinct), the pencil
cross-ratio $\cf_e(p_1,\dots,p_4)$ is realized on an interior transversal:
a chord on a line strictly separating $e$ from $\{p_1,\dots,p_4\}$ meets the
four open chords $(e,p_i)\subseteq\interior L$ in four interior collinear
points $t_1,\dots,t_4$ defined by the incidences $\Coll(e,p_i,t_i)$, and
$\cf_e(p_1,\dots,p_4)=(t_1,t_2;t_3,t_4)$. Let $\sigma_q$ assert the
existence of distinct extreme points $e,e',p_1,\dots,p_4$ and interior
transversal data as above with
\[
\cf_e(p_1,\dots,p_4)<q<\cf_{e'}(p_1,\dots,p_4),
\]
where ``$<q$'' is expressed on the transversal quadruple by
$\neg\mathrm{CR}_{>q}\wedge\neg\mathrm{CR}_q$ and ``$>q$'' by
$\mathrm{CR}_{>q}$.

(a) If $\partial L$ is a conic, then $L$ is strictly convex, its extreme
points are exactly its boundary points, and by Theorem~\ref{thm:steiner} the
two pencil cross-ratios agree for every configuration; no rational $q$ can
be strictly between them, so $L\models\neg\sigma_q$ for every $q$.

(b) The boundary of the squircle is a quartic curve, not a conic; the
squircle is strictly convex, so its extreme points are its boundary points,
and by Theorem~\ref{thm:steiner} some configuration of six boundary points
has $\cf_e\ne\cf_{e'}$. Choosing rational $q$ strictly between the two
values yields $L\models\sigma_q$.
\end{proof}

\begin{theorem}[Disk versus squircle, projectively]\label{thm:squircle-LB}
The closed unit disk and the closed
squircle $\{(x,y):x^4+y^4\le1\}$ are not $\LB$-elementarily equivalent.
\end{theorem}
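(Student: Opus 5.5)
The plan is to deduce the theorem directly from Lemma~\ref{lem:steiner-sentences}, using a single sentence $\sigma_q$ as the separator. The closed unit disk has boundary the unit circle, a conic. So part (a) of that lemma gives $\text{disk}\models\neg\sigma_q$ for every rational $q$. The closed squircle $L=\{x^4+y^4\le1\}$ is a compact planar convex body, and part (b) supplies a rational $q_0$ with $L\models\sigma_{q_0}$. Then $\sigma_{q_0}$ belongs to $\Th_{\LB}(L)$ but not to the theory of the disk, so the two are not $\LB$-elementarily equivalent.

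The work lies in checking that the hypotheses of Lemma~\ref{lem:steiner-sentences} are met and that $\sigma_q$ really is an $\LB$-sentence with the intended meaning on both bodies. I would verify three points in order.

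\begin{enumerate}[label=\textup{(\arabic*)}]
\item Both bodies are strictly convex. For the squircle, $x^4+y^4$ is strictly convex, so its sublevel set has no boundary segments. Hence $\Ext$ (Proposition~\ref{prop:extreme}) picks out exactly the boundary curve in each case.
\item For six distinct boundary points $e,e',p_1,\dots,p_4$, an interior transversal exists. Take a line strictly separating $e$ from the $p_i$, pushed close to $e$. It meets the open chords $(e,p_i)$, and these lie in the interior by \cite[Lemma~1.1.9]{Schneider14} together with strict convexity. On this transversal the quadruple $(t_1,t_2;t_3,t_4)$ is a collinear quadruple of pairwise distinct interior points. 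Proposition~\ref{prop:crq} therefore evaluates the comparisons $\mathrm{CR}_{>q}$ and $\mathrm{CR}_q$ correctly, so the threshold clauses of $\sigma_q$ mean what they should.
\item Since the transversal's cross-ratio is independent of the transversal chosen, $\sigma_q$ holds exactly when some six extreme points have pencil cross-ratios at $e$ and $e'$ straddling $q$.
\end{enumerate}

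The step I expect to need care is the existence claim in (b): showing that the squircle boundary actually violates the Chasles--Steiner condition of Theorem~\ref{thm:steiner}. The boundary is the quartic curve $x^4+y^4=1$. Its infinitely many points have no three collinear, by strict convexity. If every pencil cross-ratio agreed, Theorem~\ref{thm:steiner} would put the whole boundary on a conic $Q$. The conic $Q$ and the irreducible quartic would then share infinitely many points, which Bézout forbids unless the quartic contains the conic. I would confirm irreducibility directly: over $\mathbb{C}$, $x^4+y^4-z^4$ is a smooth Fermat curve, hence irreducible.

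With that settled, some configuration has $\cf_e\ne\cf_{e'}$. Relabeling $e\leftrightarrow e'$ if necessary, we get $\cf_e<\cf_{e'}$, and a rational $q_0$ strictly between them gives $L\models\sigma_{q_0}$, as in the lemma.

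Finally, I would remark that the section scheme (Proposition~\ref{prop:sections}) upgrades this to the $n$-dimensional separation of $\Ball$ from $S_n$. Every planar central section of $\Ball$ is an ellipse, while the coordinate-plane sections of $S_n$ through the origin are squircles. Hence ``some section satisfies $\sigma_{q_0}$'' separates $\Ball$ from $S_n$.
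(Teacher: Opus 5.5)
Your proposal is correct and follows the paper's proof, which is exactly the one-line deduction from Lemma~\ref{lem:steiner-sentences}: the disk satisfies $\neg\sigma_q$ for every $q$, and the squircle satisfies $\sigma_{q_0}$ for some $q_0$; your B\'ezout argument with the irreducible Fermat quartic usefully fills in the paper's terse ``not a conic'' step in part~(b). One small correction to your closing aside: the sentence ``some section satisfies $\sigma_{q_0}$'' ranges over all planar sections, so you need every planar section of $\Ball$, not only the central ones, to be a round disk --- which is true.
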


\begin{proof}
Immediate from Lemma~\ref{lem:steiner-sentences}: the disk satisfies
$\neg\sigma_q$ for every $q$ and the squircle satisfies $\sigma_q$ for some
$q$.
\end{proof}

\begin{theorem}[Ball versus quartic body, projectively]\label{thm:ball-LB}
For every $n\ge2$, the closed unit ball
$\Ball$ and the quartic body $S_n=\{x\in\R^n:\sum_{i=1}^nx_i^4\le1\}$ are
not $\LB$-elementarily equivalent.
\end{theorem}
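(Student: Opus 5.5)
The plan is to transfer the planar separation of Theorem~\ref{thm:squircle-LB} to dimension $n$ through the section scheme of Proposition~\ref{prop:sections}. For $n=2$ the statement is Theorem~\ref{thm:squircle-LB} itself. For $n\ge3$ I will use the sentences
\[
\exists p_1p_2p_3\,\bigl(\neg\Coll(p_1,p_2,p_3)\wedge\sigma_q^{\circ}(p_1,p_2,p_3)\bigr),
\]
where $\sigma_q$ is the Steiner sentence of Lemma~\ref{lem:steiner-sentences} and $\sigma_q^{\circ}$ is its relativization to the plane $\aff\{p_1,p_2,p_3\}$. By Proposition~\ref{prop:sections}(iii), such a sentence holds in a body exactly when some planar section through three non-collinear points of the body satisfies $\sigma_q$. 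By Proposition~\ref{prop:sections}(ii), every such section is a compact planar convex body, and the identification with $\R^2$ preserves $B$. So Lemma~\ref{lem:steiner-sentences} applies to each section.

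\emph{The ball side.} Every planar section of $\Ball$ through three non-collinear points of $\Ball$ is a compact 2-dimensional convex body of the form $\Ball\cap A$, that is, a solid ellipse (a disk in $A$). Its boundary is therefore a conic. Lemma~\ref{lem:steiner-sentences}(a) then gives $\Ball\cap A\models\neg\sigma_q$ for every rational $q$, and so $\Ball$ fails each of the displayed sentences. One point needs checking: in Lemma~\ref{lem:steiner-sentences} the extreme points and interior points refer to the section as a planar body, not to $\Ball$. This is harmless, because the relativization evaluates $\Ext$ and $\Int$ inside the section structure.

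\emph{The quartic side.} Take $p_1,p_2,p_3$ spanning the coordinate plane $A=\{x_3=\dots=x_n=0\}$, for instance the points $0$, $e_1/2$, $e_2/2$ of $S_n$. Then $S_n\cap A$ is exactly the squircle $\{x_1^4+x_2^4\le1\}$ in the $(x_1,x_2)$ coordinates. This identification is an affine (indeed linear) chart, so it is an $\LB$-isomorphism. Lemma~\ref{lem:steiner-sentences}(b) supplies a rational $q$ with the squircle satisfying $\sigma_q$. Hence $S_n$ satisfies the displayed sentence for that $q$, while $\Ball$ does not, which separates the two bodies.

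The main obstacle is only bookkeeping. I must make sure the relativized formula $\sigma_q^{\circ}$ really computes $\sigma_q$ in the section, including the embedded $\mathrm{CR}_{>q}$ and $\mathrm{CR}_q$ formulas, whose soundness and completeness (Proposition~\ref{prop:crq}) are stated for planar convex sets with nonempty interior. The section is such a set, so applying those results to the section structure itself, rather than to the ambient body, settles the point.
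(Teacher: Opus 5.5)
Your proposal is correct and follows the paper's proof essentially verbatim: the same sentence $\exists p_1p_2p_3\,\bigl(\neg\Coll(p_1,p_2,p_3)\wedge\sigma_q^{\circ}(p_1,p_2,p_3)\bigr)$, the coordinate-plane squircle section witnessing it in $S_n$, and Lemma~\ref{lem:steiner-sentences}(a) applied to the round-disk sections of $\Ball$. The only difference is cosmetic. You split off $n=2$ via Theorem~\ref{thm:squircle-LB}, whereas the paper's sentence covers $n=2$ uniformly, since there the section through any non-collinear triple is the whole body.
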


\begin{proof}
Fix a rational $q$ as in Lemma~\ref{lem:steiner-sentences}(b) and let
\[
\Psi_q:\equiv\exists p_1\,\exists p_2\,\exists p_3\,
\bigl(\neg\Coll(p_1,p_2,p_3)\wedge
\sigma_q^{\circ}(p_1,p_2,p_3)\bigr),
\]
where $\sigma_q^{\circ}$ is the relativization of $\sigma_q$ to the
definable planar section through $p_1,p_2,p_3$
(Proposition~\ref{prop:sections}).

$S_n\models\Psi_q$: take $p_1=0$, $p_2=\tfrac12e_1$, $p_3=\tfrac12e_2$,
non-collinear points of $S_n$ spanning the coordinate plane
$A_0=\{x_3=\dots=x_n=0\}$; the section $S_n\cap A_0$ is the planar squircle
$\{x_1^4+x_2^4\le1\}$, which satisfies $\sigma_q$, so
$\sigma_q^{\circ}(p_1,p_2,p_3)$ holds by
Proposition~\ref{prop:sections}(iii).

$\Ball\models\neg\Psi_q$: for \emph{any} non-collinear
$p_1,p_2,p_3\in\Ball$, the section $\Ball\cap\aff\{p_1,p_2,p_3\}$ is a
Euclidean section of the ball containing three non-collinear points, hence a
closed round disk of positive radius; its boundary is a circle, a conic, so
by Lemma~\ref{lem:steiner-sentences}(a) the section satisfies
$\neg\sigma_q$, and $\sigma_q^{\circ}(p_1,p_2,p_3)$ fails.
\end{proof}

\begin{conjecture}\label{conj:ball-LB}
For a compact convex body $K\subseteq\R^n$ \textup($n\ge2$\textup):\;
$K\equiv_{\LB}\Ball$ if and only if $\partial K$ is a quadric, i.e., if and
only if $K$ is projectively equivalent to $\Ball$.
\end{conjecture}

\begin{remark}
The ball occupies a distinguished point of the projective landscape: its
$\LB$-automorphism group is the full projective stabilizer of the quadric,
the group $\mathrm{PO}(n,1)$ acting transitively on the interior (the
isometry group of the Klein model of hyperbolic $n$-space), whereas a
generic body has finite projective symmetry. In particular no interior point
of $\Ball$ is definable with parameters from $\emptyset$, and all interior
points realize the same type --- the model-theoretic shadow of projective
homogeneity.
\end{remark}

\subsection{The projective reduction, and the ball obstruction}
\label{subsec:projective-reduction}

We now indicate how far the reconstruction argument of
\S\ref{sec:affine-proved} transposes to the betweenness language, and where it
stops. The upshot is that Conjecture~\ref{conj:cat-projective} reduces --- with
Proposition~\ref{prop:crq} now proved --- to boundary-coordinate recovery and
to one geometric phenomenon: gauge frames with non-compact stabilizer,
present for quadrics and absent otherwise.

The affine proof had three inputs: (A) affine coordinates of extreme points
are definable (Lemma~\ref{lem:bary-def}); (B) there is a definable,
affinely-covariant, \emph{compact} family of gauge frames (the stationary
simplices, Lemma~\ref{lem:stationary}); (C) reconstruction from coordinates.
In the projective setting:

\emph{(A$'$) Coordinates.} A projective frame in $\RP^n$ is an ordered
$(n+2)$-tuple in general position, and $\PGL_{n+1}$ acts simply transitively on
frames. Given a frame $\bar e=(e_0,\dots,e_{n+1})$ and a further point $p$, the
projective coordinates of $p$ relative to $\bar e$ are cross-ratios of the
pencils through the $e_i$. Two things must hold for these to enter the
language as cut data. First, interior rational cross-ratio comparisons must be
$\LB$-definable: this is Proposition~\ref{prop:crq}, proved in
\S\ref{subsec:crq}.
Second --- and this has no affine analogue --- the frame vertices $e_i$ and the
points being coordinatized are \emph{extreme} points, on the boundary, whereas
the von Staudt calculus of \S\ref{sec:vonstaudt} measures cross-ratios of
\emph{interior} configurations; recovering the projective coordinates of
boundary points from interior data requires a boundary-limit argument. In the
plane this recovery is carried out (Proposition~\ref{prop:polygon-LB}); in
dimension $n\ge3$ it is open (Problem~P7). Affinely, by contrast,
Lemma~\ref{lem:bary-def} coordinatizes boundary points directly and
unconditionally.

\emph{(C$'$) Reconstruction.} Given a nonempty, $\LB$-definable,
projectively-covariant family $\mathcal G(K)$ of extreme-point frames that is
\emph{compact} in $(\overline{\ext K})^{n+2}$, the argument of
Theorem~\ref{thm:affine-main} applies with ``stationary simplex'' replaced by
``member of $\mathcal G(K)$'' and barycentric by projective coordinates:
compactness supplies the limiting gauge, covariance makes the reconstructed
body a projective copy of $K$.

\emph{(B$'$) The gauge obstruction.} The difficulty is producing the family
$\mathcal G(K)$. There is no projective invariant of a single frame to
extremize, since $\PGL_{n+1}$ is transitive on frames; a frame must be selected
by an invariant of the frame together with the body, and the selected set must
be nonempty, nondegenerate, and compact. Two regimes behave well. If
$\ext K$ is finite with at least $n+2$ points in general position (polytopes),
the extreme-point frames form a finite, hence compact, covariant family, and
$\mathcal G(K)$ may be taken to be all of them. If $\mathrm{Aut}_{\LB}(K)$ is
compact, its orbits are compact and a covariant selection is again available.
The regime that resists is a \emph{continuum} of extreme points carrying a
\emph{non-compact continuous} group of projective symmetries: then a
covariant family is a union of non-compact orbits, and a selecting sequence of
frames can escape to degeneracy at the boundary. The quadric is the paradigm,
with $\mathrm{Aut}_{\LB}=\mathrm{PO}(n,1)$ transitive on interior frames; for
it no proper covariant compact family of frames exists at all, and
reconstruction cannot be made to work --- the same homogeneity that makes every
interior point of the ball realize one type
(Remark following Conjecture~\ref{conj:ball-LB}). We do not claim the quadric
is the only such body; identifying the class precisely is a question about
projective symmetry groups of convex bodies. Note that non-compactness of
$\mathrm{Aut}_{\LB}(K)$ alone is not the obstruction: a simplex has non-compact
projective automorphism group (the diagonal torus) yet reconstructs trivially,
having only $n+1$ extreme points, and any body with $n+1$ extreme points is a
simplex, hence projectively equivalent to every other.

The dividing line in hypothesis~(c) is exactly the compactness of the
automorphism group, and this we can prove outright --- it is a fact about the
$\PGL_{n+1}$-action, independent of the language and of
Proposition~\ref{prop:crq}.

\begin{proposition}[Compact gauge criterion]
\label{prop:gauge-criterion}
Let $K\subseteq\R^n$ be a compact convex body possessing at least $n+2$
extreme points in general position, and let $\mathcal F(K)$ be the set of
ordered $(n+2)$-tuples of extreme points of $K$ in general position, acted on
by $A:=\mathrm{Aut}_{\LB}(K)$. Then $\mathcal F(K)$ contains a nonempty,
$A$-invariant, compact subset if and only if $A$ is compact.
\end{proposition}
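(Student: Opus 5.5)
The plan is to reduce everything to the simply transitive action of $\PGL_{n+1}(\R)$ on projective frames. By Theorem~\ref{thm:iso-projective}, $A=\mathrm{Aut}_{\LB}(K)$ is the group of $T\in\PGL_{n+1}(\R)$ whose singular hyperplane misses $K$ and which satisfy $T(K)=K$. Each such $T$ has the following properties:
\begin{itemize}[leftmargin=2em]
\item it maps extreme points to extreme points, because $\Ext$ is an $\LB$-formula (Proposition~\ref{prop:extreme});
\item it preserves general position, being projective.
\end{itemize}
Hence $A$ acts on $\mathcal F(K)$. The one structural fact I will use repeatedly is the orbit-map homeomorphism. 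Fix a frame $f$. The map $\PGL_{n+1}(\R)\to\{\text{frames}\}$, $g\mapsto g\cdot f$, is a homeomorphism onto the open set of $(n+2)$-tuples in general position in $(\RP^n)^{n+2}$: it is a continuous bijection, and its inverse is given by solving a linear system whose coefficients depend continuously on the target frame. In particular $A$ acts freely on $\mathcal F(K)$.

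($\Leftarrow$) Suppose $A$ is compact. Pick any $f\in\mathcal F(K)$, which is nonempty by hypothesis. The orbit $A\cdot f$ is the continuous image of the compact set $A$ under $g\mapsto g\cdot f$, so it is compact. It is $A$-invariant, and it lies in $\mathcal F(K)$ by the preceding paragraph. This direction is routine.

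($\Rightarrow$) Suppose $C\subseteq\mathcal F(K)$ is nonempty, compact and $A$-invariant, and fix $f\in C$. I show $A$ is sequentially compact as a subset of $\PGL_{n+1}(\R)$.
\begin{enumerate}[label=\textup{(\arabic*)}]
\item Let $g_k\in A$. Then $g_k\cdot f\in C$, so after passing to a subsequence $g_k\cdot f\to f'\in C$.
\item Since $C\subseteq\mathcal F(K)$, the limit $f'$ is again a frame in general position. This is exactly where compactness of $C$ inside $\mathcal F(K)$, rather than merely inside $K^{n+2}$, is needed: the limit cannot degenerate.
\item By the orbit-map homeomorphism, $g_k\to g$ in $\PGL_{n+1}(\R)$ with $g\cdot f=f'$.
\item It remains to show $g\in A$, i.e.\ that $A$ is closed in $\PGL_{n+1}(\R)$; this is what I expect to be the main (if modest) obstacle.
\end{enumerate}

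For step (4), view everything in $\RP^n$. For $x\in K$ we have $g_k(x)\to g(x)$ in $\RP^n$, and every $g_k(x)$ lies in the compact set $K\subseteq\R^n$. Hence $g(x)\in K$; in particular $g(x)$ is not at infinity, so the singular hyperplane of $g$ misses $K$ and $g(K)\subseteq K$. The same argument applied to $g_k^{-1}\in A$, which converges to $g^{-1}$, gives $g^{-1}(K)\subseteq K$, so $g(K)=K$. Theorem~\ref{thm:iso-projective} then gives $g\in A$. Thus every sequence in $A$ has a subsequence converging in $A$, so $A$ is compact.
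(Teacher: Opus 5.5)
Your proof is correct and follows essentially the paper's route: both directions rest on the simply transitive $\PGL_{n+1}$-action on frames, with ($\Leftarrow$) taking a single $A$-orbit and ($\Rightarrow$) pulling the compact invariant set back through the orbit-map homeomorphism. The paper phrases the converse as properness of $\Theta(g,x)=(gx,x)$ applied to the closed set $A\times\mathcal G$, whereas you argue sequentially at one fixed frame, and your limit argument for closedness of $A$ in $\PGL_{n+1}(\R)$ (compactness of $K$ keeps $g(K)$ off the singular hyperplane, then apply the same to $g_k^{-1}$) spells out what the paper asserts in one clause.
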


\begin{proof}
$\PGL_{n+1}$ acts simply transitively on the manifold $\Phi$ of all frames
($(n+2)$-tuples in general position): fixing any $x_0\in\Phi$, the orbit map
$g\mapsto gx_0$ is a diffeomorphism $\PGL_{n+1}\xrightarrow{\ \sim\ }\Phi$, so
the translation map
\[
\Theta:\PGL_{n+1}\times\Phi\longrightarrow\Phi\times\Phi,\qquad
(g,x)\longmapsto(gx,x)
\]
is a homeomorphism; in particular the action is proper. Now $A$ is a closed
subgroup of $\PGL_{n+1}$ (it is the stabilizer of the closed set $K$), and
$\mathcal F(K)\subseteq\Phi$ is $A$-invariant.

If $A$ is compact, choose $\bar e\in\mathcal F(K)$ (possible by hypothesis).
The orbit $A\bar e$ is the continuous image of the compact group $A$, hence
compact; it is $A$-invariant, and lies in $\mathcal F(K)$ because every
$g\in A$ permutes the extreme points of $K$ and preserves general position.

Conversely, let $\mathcal G\subseteq\mathcal F(K)$ be nonempty, $A$-invariant,
and compact. Restrict $\Theta$ to the closed subset
$A\times\mathcal G\subseteq\PGL_{n+1}\times\Phi$. By $A$-invariance
$\Theta(A\times\mathcal G)\subseteq\mathcal G\times\mathcal G$, so
\[
A\times\mathcal G
=(A\times\mathcal G)\cap\Theta^{-1}(\mathcal G\times\mathcal G)
=(A\times\mathcal G)\cap(\text{compact set}),
\]
using that $\Theta$ is a homeomorphism and $\mathcal G\times\mathcal G$ is
compact. A closed subset of a compact set is compact, so $A\times\mathcal G$ is
compact; as $\mathcal G\ne\emptyset$, $A$ is compact.
\end{proof}

\begin{remark}
The quadric shows the criterion at work. Its automorphism group
$\mathrm{PO}(n,1)$ is non-compact and its boundary is a single
$\mathrm{PO}(n,1)$-orbit; a divergent one-parameter subgroup drives any frame
of boundary points into coincidence while fixing all its cross-ratios (the
boundary cross-ratio of four points is $\mathrm{PO}(n,1)$-invariant), so every
$\mathrm{Aut}$-orbit of frames accumulates on the degenerate locus and no
compact covariant family can exist. For a body with compact automorphism group
--- the generic case, and every non-quadric strictly convex body without
continuous projective symmetry --- a single orbit already furnishes the gauge.
\end{remark}

Combining the criterion with the reduction isolates hypothesis (c) as a
statement purely about symmetry.

\begin{proposition}[Projective reconstruction under a compact gauge]
\label{prop:projective-reduction}
Let $K,K'\subseteq\R^n$ be compact convex bodies. Suppose that
\begin{enumerate}[label=\textup{(\alph*)}]
\item interior rational cross-ratio comparisons are $\LB$-definable
\textup(Proposition~\ref{prop:crq}\textup);
\item the projective coordinates of extreme points relative to an
extreme-point frame are recoverable as $\LB$-cut data;
\item each of $K,K'$ admits a nonempty, $\LB$-definable,
projectively-covariant family of extreme-point frames, compact in
$(\overline{\ext K})^{n+2}$.
\end{enumerate}
Then $K\equiv_{\LB}K'$ implies $K$ and $K'$ are projectively equivalent.
\end{proposition}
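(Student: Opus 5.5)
The plan is to transpose the proof of Theorem~\ref{thm:affine-main} verbatim, replacing stationary simplices by the gauge family of hypothesis~(c) and barycentric coordinates by projective coordinates relative to an $(n+2)$-frame. Fix the standard frame $\bar\epsilon$ of $\RP^n$. For a frame $\bar e\in\mathcal G(K)$ let $T_{\bar e}\in\PGL_{n+1}$ be the unique transformation with $T_{\bar e}(e_i)=\epsilon_i$; this exists by simple transitivity on frames. Put $E_K^{\bar e}=\{T_{\bar e}(p):p\in\ext K\}$. By (b), the condition ``$T_{\bar e}(p)$ lies in a rational box'' is $\LB$-expressible with parameters $\bar e$. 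We work in a fixed affine chart, or in a finite atlas of rational boxes covering $\RP^n$. The relevant coordinates are cross-ratios of pencils through the frame points, compared to rationals using (a). Compactness of $\mathcal G(K)$ inside the locus of frames in general position makes the coordinate map $(\bar e,p)\mapsto T_{\bar e}(p)$ jointly continuous on $\mathcal G(K)\times\overline{\ext K}$. This is the analogue of Lemma~\ref{lem:stationary}(iv).

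Next, write $\mathrm{Gauge}(\bar e)$ for the formula defining $\mathcal G$ from (c), taken uniformly, i.e.\ as the same formula for $K$ and $K'$. We then form the sentences
\[
\Theta_{\mathcal B,\mathcal C}:\equiv\exists\bar e\,\Bigl[\mathrm{Gauge}(\bar e)\wedge\textstyle\bigwedge_a\exists p\,\bigl(\Ext(p)\wedge T_{\bar e}(p)\in B_a\bigr)\wedge\forall p\,\bigl(\Ext(p)\to\bigwedge_b T_{\bar e}(p)\notin C_b\bigr)\Bigr].
\]
Elementary equivalence transfers each of these sentences. Fix $\bar e_0\in\mathcal G(K)$ and set $E_0=\overline{E_K^{\bar e_0}}$, a compact subset of $\RP^n$. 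Exactly as before, we take hit and avoid families at mesh $1/m$ and obtain frames $\bar e'_m\in\mathcal G(K')$. By compactness a subsequence converges to $\bar e'_\infty\in\mathcal G(K')$. The same two inclusions, each argued via joint continuity and the compactness of $\overline{\ext K'}$, give $\overline{E_{K'}^{\bar e'_\infty}}=E_0$.

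To conclude, note that $T_{\bar e_0}(K)$ is the convex hull of $E_0$, taken in the affine chart containing it. Likewise $T_{\bar e'_\infty}(K')$ is the convex hull of the same set. Here projective covariance matters: the image of $K$ must not straddle the hyperplane at infinity. To arrange this, we normalize the chart so that the image of an interior point of $K$ lies in it. Alternatively, we argue as in Lemma~\ref{lem:admissible}: a projective map carrying extreme points to extreme points, with the convex structure matched, keeps its singular hyperplane off $K$. With this in place, $T_{\bar e'_\infty}^{-1}T_{\bar e_0}$ carries $K$ onto $K'$, which is the required projective equivalence.

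I expect the main obstacle to be this last admissibility point, together with making ``box in $\RP^n$'' expressible when $E_0$ meets several charts. The fix I would try first is to require $\mathrm{Gauge}$ to include a definable interior reference point. Its coordinates are themselves recorded as cut data, which forces both images into a common affine chart in which convex hulls agree. Everything else is a formal repetition of the affine argument.
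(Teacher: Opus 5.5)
Your route is the paper's own. You use a uniformly defined, parameter-free gauge formula (the paper's proof needs this reading of (c) as well), transfer the hit/avoid sentences by $\equiv_{\LB}$, pass to a limiting frame $\bar e'_\infty$, and obtain $\overline{E_{K'}^{\bar e'_\infty}}=E_0$. The paper then closes with the single phrase ``taking projective hulls''. You are right that this last step is where the projective case departs from the affine one. But the remedy you lean on fails already in $\RP^3$: a common interior reference point does not put $T_{\bar e_0}(K)$ and $T_{\bar e'_\infty}(K')$ into a common affine chart, because two \emph{different} properly convex bodies can have the same extreme points and overlapping interiors.

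Here is an example. Let $v_1,v_2,v_3$ be an equilateral triangle in $\R^2$ centred at $0$ with $v_1=(1,0)$, let $R$ be a rotation by a small angle $\epsilon\ne0$, and put $a_i=(v_i,0)$, $b_i=(Rv_i,1)$ and $C=\conv\{a_i,b_i\}\subseteq\R^3$. This is a twisted prism whose six vertices are in general position.
\begin{itemize}
\item The plane $x_1=\tfrac14$ separates $\{a_1,b_1\}$ from $\{a_2,a_3,b_2,b_3\}$. Let $C'$ be the hull of the same six points in the affine chart of $\RP^3$ complementary to this plane, i.e.\ the projectivized cone spanned by $\tilde a_1,\tilde b_1,-\tilde a_2,-\tilde a_3,-\tilde b_2,-\tilde b_3$, where $\tilde x=(x,1)$.
\item All six points are vertices of $C'$. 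For $\epsilon=0$, $C'$ is, in that chart, the hull of a segment and a rectangle lying in parallel planes, and being a vertex is stable under small perturbation.
\item $C'\ne C$, since $C'$ contains the arc from $a_1$ to $a_2$ through the plane at infinity of $\R^3$.
\item The interiors nevertheless meet. A linear functional separating the interior of the cone over $C$ from that of the negated cone over $C'$ would be $\ge0$ on both cones. It would then vanish at $\tilde a_2,\tilde a_3,\tilde b_2,\tilde b_3$, which span $\R^4$ because $a_2,a_3,b_2,b_3$ are not coplanar for $\epsilon\ne0$.
\end{itemize}
Take $K=C$, take $K'$ to be the image of $C'$ under a projective map sending the plane $x_1=\tfrac14$ to infinity, and use corresponding five-vertex frames. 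Then all extreme-point coordinates agree, and so do the coordinates of a common interior point. Yet $T_{\bar e'}^{-1}T_{\bar e_0}$ maps $K$ onto the image of $C$, not onto $K'$.

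Your other suggestion, arguing as in Lemma~\ref{lem:admissible}, fails for the same reason. That lemma runs on cyclic-order data, which the sentences $\Theta_{\mathcal B,\mathcal C}$ do not record, and here it is precisely the face structure that distinguishes $C$ from $C'$.

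What does close the step is to transfer \emph{oriented} coordinates.
\begin{itemize}
\item For a frame $\bar e$ and $p\in K$, the lift of $T_{\bar e}(p)$ coming from the cone over $K$ is proportional to $(\mu_i(p)/\lambda_i)_{0\le i\le n}$. Here $\mu_i(p)$ are the barycentric coordinates of $p$ with respect to $e_0,\dots,e_n$, and $\lambda_i$ are those of $e_{n+1}$.
\item Each sign is a side-of-hyperplane condition with respect to $\aff\{e_j:j\ne i,\ j\le n\}$: does the segment from $p$ (or from $e_{n+1}$) to $e_i$ meet that hyperplane? This is expressible in $\LB$ by $B$ and $\Dep_{n+1}$, with all witnesses in $K$.
\item Run your hit/avoid argument with boxes on the sphere of rays $S^n$ instead of on $\RP^n$. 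Compactness of $\mathcal G(K')$ keeps the frame simplices and the $\lambda_i$ uniformly nondegenerate. So the oriented coordinate map is jointly continuous, and the limit identifies the closed sets of oriented lifts of $\ext K$ and $\ext K'$.
\item Since $K=\conv(\ext K)$, the cone over $T_{\bar e}(K)$ is the closed conic hull of these lifts. Hence $T_{\bar e_0}(K)=T_{\bar e'_\infty}(K')$ as subsets of $\RP^n$.
\item Then $M=T_{\bar e'_\infty}^{-1}T_{\bar e_0}$ maps $K$ onto $K'\subseteq\R^n$, and its singular hyperplane misses $K$ with no further argument.
\end{itemize}
Equivalently, if (b) is read as covering all points of $K$, transfer the coordinate image of all of $K$ rather than of $\ext K$, and no hull is ever taken. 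This sign data is the higher-dimensional counterpart of the order data in Lemma~\ref{lem:admissible}.
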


\begin{proof}[Proof under \textup{(a)--(c)}]
($\Leftarrow$ is Theorem~\ref{thm:iso-projective}.) Under (a)--(b) the
box-hitting/box-avoiding sentences of Theorem~\ref{thm:affine-main} have
$\LB$-renderings with projective coordinates in place of barycentric ones, and
under (c) the gauge family is compact; the transfer and limiting-gauge
argument then run verbatim, producing a common gauge frame under which $K$ and
$K'$ have the same closed projective-coordinate set of extreme points. Taking
projective hulls, each is projectively equivalent to one reconstructed body.
\end{proof}

\begin{remark}
Hypotheses (a)--(c) locate exactly why the projective case is not settled.
Hypothesis (a) is Proposition~\ref{prop:crq}, proved in \S\ref{subsec:crq}.
Hypothesis (b) holds in the plane (Proposition~\ref{prop:polygon-LB}) and is
open for $n\ge3$ (Problem~P7). By the criterion
(Proposition~\ref{prop:gauge-criterion}), the compact covariant family required
in (c) exists precisely when $\mathrm{Aut}_{\LB}(K)$ is compact; the only
residue in (c) is then the \emph{definability} of such a family, which is
immediate for polytopes (finitely many extreme points, and finitely many
frames among them) and, more generally, wherever finitely many extreme points
can be singled out by an $\LB$-definable projective invariant. The affine
theorem needed no analogue of (a), (b), or (c): affine coordinates are
definable outright, and the maximal-volume simplex furnishes a compact
covariant gauge for \emph{every} compact body, with no exceptional homogeneous
case. That asymmetry between the two languages is the reason one half is a
theorem and the other a conjecture.
\end{remark}

Specializing to the plane removes hypothesis (b), and the criterion turns (c)
into a symmetry condition, leaving a single clean statement.

\begin{corollary}[Planar projective categoricity, conditional]
\label{cor:planar-projective}
Let $K,K'\subseteq\R^2$ be compact convex
bodies with compact projective automorphism groups, each admitting an
$\LB$-definable selection of a compact covariant frame family. Then
$K\equiv_{\LB}K'$ if and only if $K$ and $K'$ are projectively equivalent.
\end{corollary}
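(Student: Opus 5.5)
The plan is to obtain the corollary by checking hypotheses (a)--(c) of Proposition~\ref{prop:projective-reduction} for $n=2$ and then invoking that proposition. The ``if'' direction is Theorem~\ref{thm:iso-projective}: a projective equivalence carrying $K$ onto $K'$ must have its singular line disjoint from $K$, since otherwise the image would be unbounded. So all the work is in the ``only if'' direction, and there I only need to supply the three hypotheses.

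Hypothesis (a) holds outright, by Proposition~\ref{prop:crq}. For hypothesis (c), the compactness of $\mathrm{Aut}_{\LB}(K)$ and of $\mathrm{Aut}_{\LB}(K')$ gives, by Proposition~\ref{prop:gauge-criterion}, a nonempty compact invariant family of extreme-point frames for each body. The corollary's second assumption is precisely that such a family can be chosen $\LB$-definably. Two remarks are needed here. First, ``covariant'' should be read as invariance under the body's own automorphisms, so that the definable family $\mathcal G(K)$ is mapped to $\mathcal G(T K)$ by any admissible $T$. Second, the same defining formula has to be used for $K$ and for $K'$, which makes sense because the $\Theta$-type sentences quantify over frames satisfying that one formula. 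There is also a degenerate case: if $K$ has fewer than four extreme points in general position, then $K$ is a triangle. Since having exactly three extreme points is elementary (Corollary~\ref{cor:polygoncount}), $K'$ is then also a triangle, and the two are projectively equivalent.

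Hypothesis (b) is the step I expect to be the main obstacle. The task is to make the projective coordinates of an extreme point $p$, relative to an extreme-point frame $(e_0,\dots,e_3)$, available as $\LB$-cut data. I would do this as in Lemma~\ref{lem:steiner-sentences}. Write each coordinate as a pencil cross-ratio $\cf_{e_i}$ of four extreme points, and read it off on an interior transversal that strictly separates $e_i$ from the other three points. The open chords from $e_i$ lie in $\interior K$ by \cite[Lemma~1.1.9]{Schneider14}, so the four traces $t_j$ defined by $\Coll(e_i,p_j,t_j)$ are interior points, and Proposition~\ref{prop:crq} applies to them. The difficulty is collinear boundary points: if two of the points lie on a common boundary segment, the chord between them is not interior. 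In that case I would switch to interior-center pencils as in Proposition~\ref{prop:polygon-LB}, which never use chords between boundary points. The crossing-triangle pin of Lemmas~\ref{lem:crossing} and~\ref{lem:extraction} would then be used to control limits. To keep the cuts in a compact range avoiding $\{0,1,\infty\}$, I would express the coordinates through pairs of interior centers.

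With (a)--(c) in hand, I would run the box-hitting/box-avoiding sentences $\Theta_{\mathcal B,\mathcal C}$ of Theorem~\ref{thm:affine-main}, using projective coordinates in place of barycentric ones. Compactness of $\mathcal G(K')$ gives a limiting frame, and the argument shows that the closed coordinate sets of extreme points agree. To conclude, I would need a projective analogue of the remark preceding Theorem~\ref{thm:affine-main}: the convex hull taken in an affine chart of the reconstructed coordinate set is a projective copy of both $K$ and $K'$. For the chart to be well defined, the singular line must miss the body, and I would check this with the sign argument of Lemma~\ref{lem:admissible}, using four frame points in convex position.
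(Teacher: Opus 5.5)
Your proposal is correct and matches the paper's proof, which likewise verifies hypotheses (a)--(c) of Proposition~\ref{prop:projective-reduction} for $n=2$ and invokes it, obtaining (c) from Proposition~\ref{prop:gauge-criterion} together with the assumed definable selection. For (b) the paper only observes that three distinct extreme points are never collinear, so any four form a frame, and then cites the planar recovery of Proposition~\ref{prop:polygon-LB}; your rebuild of (b) and your re-run of the $\Theta$-argument and chart check are therefore unnecessary, and as written some pieces would not carry over to general bodies (\cite[Lemma~1.1.9]{Schneider14} needs an interior endpoint, the crossing-triangle pin of Lemma~\ref{lem:extraction} relies on a polygon having only finitely many crossing points, and the sign argument of Lemma~\ref{lem:admissible} applied to four frame points controls $h$ only on their hull, not on all of $K$).
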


\begin{proof}
In the plane, three distinct extreme points are never collinear, so any four
distinct extreme points form a frame and hypothesis (b) is the planar
boundary-recovery of Proposition~\ref{prop:polygon-LB}; hypothesis (a) is
assumed. Compactness of $\mathrm{Aut}_{\LB}$ yields, by
Proposition~\ref{prop:gauge-criterion}, a nonempty compact covariant frame
family, and the assumed definable selection makes it an instance of hypothesis
(c). Proposition~\ref{prop:projective-reduction} then applies.
\end{proof}

\begin{remark}
Corollary~\ref{cor:planar-projective} covers, unconditionally on symmetry, all
planar polygons (where the definable selection is the finite vertex set) and,
modulo the one remaining definable-selection point, every strictly convex
planar body without continuous projective symmetry. What it excludes is exactly
the bodies with non-compact $\mathrm{Aut}_{\LB}$: the triangle (handled
separately, all triangles being projectively equal), the conic (reached only by
the positive Chasles--Steiner characterization, Conjecture~\ref{conj:ball-LB}),
and any planar body carrying an infinite discrete or continuous group of
projective self-maps. Whether the last class contains bodies not projectively
equivalent to a conic, and whether such bodies are $\LB$-distinguishable, is
open and lies in the theory of divisible convex sets; we do not settle it here.
The one remaining ingredient for an unconditional planar theorem is therefore
sharply delimited: a definable rule selecting a compact covariant frame
family on a compact-automorphism body.
\end{remark}

\subsection{A canonical selection: sextactic points}\label{subsec:sextactic}

For smooth strictly convex bodies there is a classical, projectively canonical
finite set of boundary points, and it supplies the definable selection called
for in Corollary~\ref{cor:planar-projective}. Recall that the
\emph{osculating conic} of a $C^5$ strictly convex curve at a point $p$ is the
unique conic meeting the curve to fifth order at $p$; a point is
\emph{sextactic} if the osculating conic meets the curve to sixth order there.
Sextacticity is a projective notion --- $\PGL_3$ carries osculating conics to
osculating conics and preserves order of contact --- so the sextactic locus is
$\PGL_3$-covariant, hence $\mathrm{Aut}_{\LB}(K)$-invariant.

\begin{lemma}\label{lem:sextactic}
Let $K\subseteq\R^2$ be a compact convex body whose boundary is real-analytic
and strictly convex.
\begin{enumerate}[label=\textup{(\roman*)}]
\item If $\partial K$ is not a conic, its sextactic locus $S(K)$ is a nonempty
finite $\PGL_3$-covariant subset of $\partial K$, and any four points of it are
in general position.
\item If $\partial K$ is a conic, every point is sextactic: $S(K)=\partial K$.
\end{enumerate}
\end{lemma}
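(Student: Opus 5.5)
The plan is to translate sextacticity into the vanishing of a single real-analytic function on the boundary curve, and then read off (i) and (ii) from analyticity, periodicity and the classification of curves with constant affine curvature. Because $\partial K$ is real-analytic and strictly convex, I would first argue that its curvature vanishes nowhere; this nondegeneracy is part of what ``strictly convex'' must be taken to mean here, since the osculating conic is only defined at points of nonzero curvature. Given that, we can parametrize $\partial K$ by equi-affine arclength $s$, a real-analytic periodic parameter of period $L$ equal to the affine perimeter. The affine curvature $k(s)$ is then a real-analytic $L$-periodic function.

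The classical dictionary (Blaschke's affine differential geometry) states that the osculating conic at $s$ is the conic of constant affine curvature $k(s)$ matching the $4$-jet in affine normalization. It meets the curve to sixth order exactly when $k'(s)=0$. Hence
\[
S(K)=\{\gamma(s): k'(s)=0\}.
\]
This is one of the osculating-conic facts the paper takes as a classical input. Covariance under $\PGL_3$ does not need the affine formula at all. A projective transformation defined near the point carries conics to conics and preserves orders of contact, so it maps osculating conics to osculating conics and sextactic points to sextactic points. Invariance under $\mathrm{Aut}_{\LB}(K)$ follows by Theorem~\ref{thm:iso-projective}.

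For (ii), suppose $\partial K$ is a conic. Then at every point the conic itself has infinite-order contact with the curve, so by uniqueness of the osculating conic it is that osculating conic, and every point is sextactic. For (i), $k'$ is real-analytic and periodic, hence either identically zero or has only isolated zeros on the compact circle $\R/L\Z$, and therefore only finitely many. If $k'\equiv0$, then $k$ is constant, and a closed curve of constant affine curvature is an ellipse; this forces $\partial K$ to be a conic, which is excluded. So $S(K)$ is finite. It is also nonempty: $k$ attains its maximum on the circle, and $k'$ vanishes there. Mukhopadhyaya's six-sextactic-point theorem would give at least six points, but we do not need that.

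General position is immediate from strict convexity. A line meets a strictly convex closed curve in at most two points, so no three points of $S(K)\subseteq\partial K$ are collinear, and any four of them form a projective frame.

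I expect the main obstacle to be the identification of sextactic points with the zeros of $k'$, together with the real-analyticity of $k$. The cleanest route is to cite the classical computation; the paper lists the osculating-conic facts as an unformalized classical input. If that is unavailable, I would verify it directly. Write the curve locally as the graph $y=f(x)$ with $f''>0$. The conic through the $4$-jet is then pinned down by one more coefficient, and the sixth-order contact condition reduces to the vanishing of Monge's expression
\[
9f''^2f^{(5)}-45f''f'''f^{(4)}+40f'''^3 .
\]
Up to a nonvanishing factor, this expression is $k'$, and it is plainly analytic in $x$ because $f$ is.
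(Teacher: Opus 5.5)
This is correct and is essentially the paper's argument. Like the paper, you characterize sextactic points as the zeros of a real-analytic differential invariant that vanishes identically only on conics (your $k'$, the derivative of equi-affine curvature, makes precise what the paper calls the projective curvature derivative); finiteness comes from the identity theorem, general position from strict convexity, and covariance from projective invariance of contact. The one difference is that you get nonemptiness elementarily from a maximum of $k$ rather than from Mukhopadhyaya's theorem.

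One caution: nonvanishing curvature cannot be \emph{argued} from real-analyticity plus strict convexity, since the squircle $\{x^4+y^4\le1\}$ has four flat points. It must therefore be imposed as a hypothesis; this is the ``positively curved'' assumption of Proposition~\ref{prop:S0} and Theorem~\ref{thm:planar-sextactic}, and the paper's own proof tacitly needs it as well.
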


\begin{proof}
Sextactic points are the zeros of the projective curvature derivative, a
real-analytic projective differential invariant of the curve that vanishes
identically exactly on conics (where the osculating conic is the curve at every
point), giving (ii). For (i), on a non-conic real-analytic curve the invariant
is a non-zero real-analytic periodic function, so its zeros are isolated, hence
finite on the compact $\partial K$; nonemptiness is the sextactic-points
theorem, which gives at least six \cite{Mukhopadhyaya09}. Every boundary point
of a strictly convex body is extreme, and three extreme points are never
collinear, so any four sextactic points are in general position. Covariance was
noted above.
\end{proof}

We now discharge the definability. The device is to define the selection set
not through the differential-geometric description above but directly through
a first-order incidence condition --- clustering of six-point conic
configurations --- and then to identify the set so defined with (a nonempty
subset of) the sextactic locus. Covariance and definability become immediate;
the geometry enters only in proving finiteness and nonemptiness.

\begin{lemma}[Arcs are definable]\label{lem:arc-def}
There are $\LB$-formulas $\mathrm{Side}(r,s;a,b)$ and
$\mathrm{Arc}(p;a,b,w)$ such that in every compact strictly convex planar
body $K$: $\mathrm{Side}(r,s;a,b)$ holds iff $r,s\notin\aff\{a,b\}$ and the
segment $[r,s]$ does not meet $\aff\{a,b\}$ (``$r$ and $s$ lie strictly on the
same side of the chord $ab$''), and, for extreme $a,b$ and any $w$ off the
chord, $\mathrm{Arc}(p;a,b,w)$ holds exactly for the extreme points $p$ of the
open boundary arc between $a$ and $b$ on the side of $w$.
\end{lemma}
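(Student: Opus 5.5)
We will write both formulas explicitly from $B$, $\Coll$, $\Ext$ and the hull formulas $H_r$ of Proposition~\ref{prop:hull}, and verify them in a compact strictly convex planar body $K$. The one geometric fact we rely on is that in such a $K$ every boundary point is extreme. Consequently the only points of $\aff\{a,b\}$ lying in $K$ are those of the chord $[a,b]$ when $a,b$ are extreme: the line meets $K$ in a segment whose endpoints are boundary points, so a point of $K$ on the line beyond $b$ would make $b$ lie between two points of $K$ and hence not be extreme.

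For $\mathrm{Side}(r,s;a,b)$, take the conjunction of $a\ne b$, $\neg\Coll(a,b,r)$, $\neg\Coll(a,b,s)$, and
\[
\forall x\,\bigl(B(r,x,s)\to\neg\Coll(a,b,x)\bigr).
\]
The implication is first-order because $[r,s]\subseteq K$ by convexity, so every point of the segment is an admissible value of $x$. Lemma~\ref{lem:coll} identifies $\Coll$ with collinearity in $\R^2$. Hence the formula says exactly that $r,s\notin\aff\{a,b\}$ and that $[r,s]$ misses the line. This holds for any convex $K$, so strict convexity is not needed for $\mathrm{Side}$.

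For $\mathrm{Arc}(p;a,b,w)$, with $a,b$ extreme and $w$ off the chord, set
\[
\mathrm{Arc}(p;a,b,w):\equiv \Ext(p)\wedge p\ne a\wedge p\ne b\wedge\mathrm{Side}(p,w;a,b).
\]
The remaining step is to show that the extreme points of $K$ strictly on $w$'s side of $\aff\{a,b\}$ are exactly the extreme points of the open boundary arc from $a$ to $b$ on that side.
\begin{itemize}
\item The line through $a,b$ meets $\partial K$ only at $a$ and $b$, by the observation in the first paragraph.
\item It therefore splits $\partial K\setminus\{a,b\}$ into two open arcs, each lying in one open half-plane. These are connected components, and the boundary is a Jordan curve.
\item Extreme points equal boundary points here, so $\Ext(p)$ together with $p\ne a,b$ and same-side-as-$w$ picks out precisely the arc on $w$'s side.
\end{itemize}
The statement phrases this as ``extreme points of the arc''. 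Under strict convexity these are all points of the arc, so the $\Ext$ conjunct only records the identification.

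The main (mild) obstacle is the strictness claim that no boundary point other than $a,b$ lies on the line. This is exactly where strict convexity enters. Without it, a boundary segment through $b$ collinear with $a$ would put extra points on the line, and these would be excluded by $\mathrm{Side}$ even though they lie on the arc. We handle this by the argument in the first paragraph: points of $K$ on the line lie in $[a,b]$ because $a,b$ are extreme, and points of the open chord are interior points of $K$ because $K$ is strictly convex.
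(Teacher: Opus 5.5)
Your proposal is correct and is essentially the paper's proof. You use the same $\mathrm{Side}$ formula, since your $\forall x\,(B(r,x,s)\to\neg\Coll(a,b,x))$ is the paper's $\neg\exists t\,(B(r,t,s)\wedge\Coll(a,b,t))$, justified by $[r,s]\subseteq K$; and you use the same $\mathrm{Arc}:\equiv\Ext(p)\wedge\mathrm{Side}(p,w;a,b)$, where your extra conjuncts $a\ne b$, $p\ne a$, $p\ne b$ are redundant because the $\neg\Coll$ clauses already imply them. You also spell out more carefully than the paper why $\aff\{a,b\}$ meets $\partial K$ only at $a$ and $b$: extremality of $a,b$, plus interiority of the open chord under strict convexity. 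That fact is exactly what the paper's one-line remark, that the chord splits $\partial K\setminus\{a,b\}$ into two arcs each characterized by its side, relies on.
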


\begin{proof}
Set $\mathrm{Side}(r,s;a,b):\equiv\neg\Coll(a,b,r)\wedge\neg\Coll(a,b,s)
\wedge\neg\exists t\,(B(r,t,s)\wedge\Coll(a,b,t))$: the segment $[r,s]$ lies
in $K$, so it meets the line $\aff\{a,b\}$ iff it meets it at a point of $K$,
which the existential detects. Then
$\mathrm{Arc}(p;a,b,w):\equiv\Ext(p)\wedge\mathrm{Side}(p,w;a,b)$: for a
strictly convex body the chord $ab$ splits $\partial K\setminus\{a,b\}$ into
the two open arcs, each characterized by its side.
\end{proof}

\begin{lemma}[Co-conic sextuples]
\label{lem:coconic}
There is an $\LB$-formula $\mathrm{CoConic}$ in six variables holding, in
every compact strictly convex planar body, exactly when its arguments
$p_1,\dots,p_6$ are six distinct extreme points lying on a common conic.
\end{lemma}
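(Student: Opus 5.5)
The formula will assert that the six points are distinct extreme points whose pencil cross-ratios agree, in the sense of Chasles--Steiner (Theorem~\ref{thm:steiner}). The cross-ratios will be read on interior transversals, exactly as in the proof of Lemma~\ref{lem:steiner-sentences}, and compared with the cross-ratio-equality formula $\mathrm{CReq}$ of Lemma~\ref{lem:creq}.

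\emph{The formula.} I set
\[
\mathrm{CoConic}(\bar p):\equiv\bigwedge_i\Ext(p_i)\wedge\bigwedge_{i<j}p_i\ne p_j\wedge\Omega(\bar p).
\]
The clause $\Omega$ asserts, for the two centers $p_1,p_2$ and the remaining points $p_3,\dots,p_6$, the existence of auxiliary points $m_1,m_2$, $m_1',m_2'$ and points $t_3,\dots,t_6$, $t_3',\dots,t_6'$ satisfying:
\begin{itemize}
\item $\Int(t_k)$ and $\Int(t_k')$;
\item $\Coll(p_1,p_k,t_k)$, $\Coll(m_1,m_2,t_k)$, $\neg\Coll(m_1,m_2,p_1)$, and the primed analogues with center $p_2$;
\item pairwise distinctness of the $t_k$ and of the $t_k'$;
\item $\mathrm{CReq}(t_3,\dots,t_6;t_3',\dots,t_6')$.
\end{itemize}

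\emph{Soundness.} In a strictly convex planar body, distinct extreme points are never three collinear, so Theorem~\ref{thm:steiner} applies to the six points. Pairwise distinctness of the $t_k$ forces the lines $p_1p_k$ to be distinct. Hence $t_k$ is the trace of the pencil line $p_1p_k$ on the transversal $\ell(m_1,m_2)$, which does not pass through $p_1$. By the soundness clause of Lemma~\ref{lem:transport}, $(t_3,t_4;t_5,t_6)$ is then the pencil cross-ratio at $p_1$, and likewise at $p_2$. Lemma~\ref{lem:creq} then gives equality of the two pencil cross-ratios.

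\emph{Sufficiency of two centers.} Theorem~\ref{thm:steiner} quantifies over all pairs of centers, so I must check that two suffice. Equality of the pencils at $p_1$ and $p_2$ on $(p_3,\dots,p_6)$ is Steiner's criterion for $p_3,\dots,p_6$ to lie on the unique conic through $p_1,\dots,p_5$. Equivalently, the projectivity of pencils $p_1\mapsto p_2$ fixed by $p_3,p_4,p_5$ sends $p_1p_6$ to $p_2p_6$. Since $p_1p_2$ is not fixed by this projectivity (no three points collinear), its locus is a nondegenerate conic through $p_1,\dots,p_6$. So this classical Steiner construction yields the equivalence directly, and I will cite it rather than Theorem~\ref{thm:steiner} verbatim.

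\emph{Completeness.} Given six co-conic distinct extreme points, I choose for each center $p_i$ a line strictly separating $p_i$ from the other four points. Strict convexity allows this: the four points lie on one side of a chord near $p_i$. That line meets the open segments $(p_i,p_k)$, which lie in $\interior K$ by \cite[Lemma~1.1.9]{Schneider14} because the chord through two boundary points of a strictly convex body has interior relative interior. The chord of $K$ on this line supplies $m_1,m_2\in K$. The resulting $t_k$ are interior, distinct, and collinear, with the pencil cross-ratio. The two cross-ratios agree by Steiner, so the completeness half of Lemma~\ref{lem:creq} supplies its witnesses, since its hypotheses (interior, pairwise distinct, collinear quadruples) are met.

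The main obstacle is the reduction from the all-pairs statement of Theorem~\ref{thm:steiner} to two centers, together with checking that the transversal can always be chosen with the $t_k$ interior and distinct. The former is handled by the pencil-projectivity argument above. If that were disputed, the fallback is to conjoin, in $\Omega$, the cross-ratio equality for all $\binom62$ center pairs and all choices of four remaining points, which is still a finite formula and matches Theorem~\ref{thm:steiner} literally.
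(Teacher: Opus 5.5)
Your proposal is correct and follows the paper's proof: pencil cross-ratios at two of the six points are read on interior transversal chords via $\Coll$-incidences and compared with $\mathrm{CReq}$ (Lemma~\ref{lem:creq}), with the paper using centers $p_5,p_6$ where you use $p_1,p_2$. Your Steiner-projectivity argument, showing that agreement at two centers already forces co-conicity, is a useful addition: the paper attributes this two-center form to Chasles--Steiner without comment, although Theorem~\ref{thm:steiner} as stated quantifies over all pairs of centers.
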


\begin{proof}
By the Chasles--Steiner theorem, six points in general position lie on a
common conic iff the pencils at $p_5$ and $p_6$ cut the quadruple
$(p_1,\dots,p_4)$ in equal cross-ratios. As in the proof of
Lemma~\ref{lem:steiner-sentences}, each pencil cross-ratio is realized by four
collinear \emph{interior} points on a transversal chord, defined by
$\Coll$-incidences. The formula $\mathrm{CoConic}$ asserts the existence of
the two interior transversal quadruples with their defining incidences,
together with $\mathrm{CReq}$ applied to them (Lemma~\ref{lem:creq}).
Soundness and completeness are those of the incidence clauses and of
Lemma~\ref{lem:creq}. Degenerate positions (three of the six collinear) do
not occur among extreme points of a strictly convex body.
\end{proof}

\begin{definition}\label{def:conic-cluster}
For a compact strictly convex planar body $K$, the \emph{conic-cluster set}
$S_0(K)\subseteq\partial K$ is the set of extreme points $p$ such that every
open boundary arc containing $p$ contains six distinct extreme points lying on
a common conic.
\end{definition}

By Lemmas~\ref{lem:arc-def} and~\ref{lem:coconic}, membership in $S_0(K)$ is
expressed by the $\LB$-formula (with the arc ranging over all chord-side
parameters)
\[
\mathrm{S}_0(p):\equiv\forall a\,\forall b\,\forall w\,
\Bigl(\mathrm{Arc}(p;a,b,w)\to
\exists p_1\cdots\exists p_6\,\bigl(\textstyle\bigwedge_i
\mathrm{Arc}(p_i;a,b,w)\wedge\mathrm{CoConic}(\bar p)\bigr)\Bigr),
\]
with no conditional hypotheses. Co-conicity, hence $S_0$, is a
projective notion, so $S_0(K)$ is $\PGL_3$-covariant; and $S_0(K)$ is closed
in $\partial K$ (its complement is open by definition).

\begin{lemma}[One-sidedness at curvature extrema]\label{lem:onesided}
Let $\partial K$ be real-analytic, strictly convex, positively curved, and not
a conic, and let $p_0$ be a point at which the affine curvature $\mu$ attains
its global maximum (or minimum) along $\partial K$. Then, on some arc around
$p_0$, the curve lies on one side of its osculating conic at $p_0$, meeting it
only at $p_0$.
\end{lemma}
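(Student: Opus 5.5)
We work in an affine normal form of the curve at $p_0$. Since $\partial K$ is positively curved and real-analytic, the standard affine-differential-geometric normalization applies: after an affine change of coordinates placing $p_0$ at the origin with tangent the $x$-axis, and using affine arclength $s$, the boundary near $p_0$ is a graph
\[
y=\tfrac12x^2+\tfrac{\mu_0}{24}x^4+\tfrac{\mu_1}{c_5}x^5+\tfrac{\mu_2}{c_6}x^6+O(x^7),
\]
where $\mu_0=\mu(p_0)$, $\mu_1=\mu'(p_0)$, $\mu_2$ involves $\mu''(p_0)$ and $\mu_0^2$, and the $c_k$ are universal constants. (One can take the Taylor expansion of $\partial K$ in affine arclength, $\gamma''' = -\mu\gamma'$, and integrate three times.) The osculating conic at $p_0$ is the unique conic through the origin with contact of order five. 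In the same chart it is the graph of the conic $y=\tfrac12x^2+\tfrac{\mu_0}{24}x^4+(\text{conic tail})$, obtained by solving the conic equation for $y$ as a power series. Its coefficients through $x^4$ match the curve, and its $x^5,x^6$ coefficients are determined by $\mu_0$ alone.

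The key step is to compute the difference $D(x)=y_{\partial K}(x)-y_{\text{conic}}(x)$. We expect $D(x)=\kappa_5\mu'(p_0)x^5+\kappa_6\mu''(p_0)x^6+O(x^7)$ with nonzero universal constants $\kappa_5,\kappa_6$. This is the classical fact that sextactic points are the zeros of $\mu'$, and that the conic deviation at order six is governed by $\mu''$. At a global extremum $\mu'(p_0)=0$, so the $x^5$ term vanishes.

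If $\mu''(p_0)\ne0$, then $D(x)$ has the strict sign of $\kappa_6\mu''(p_0)$ for small $x\ne0$. The curve therefore lies strictly on one side of the conic and meets it only at $p_0$ locally, which is the claim.

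The main obstacle is degeneracy, $\mu''(p_0)=0$. Here we use analyticity and the non-conic hypothesis, since $\mu$ is nonconstant exactly when the curve is not a conic. Then $\mu-\mu_0$ is a nonzero analytic function vanishing at $p_0$ with some first nonzero derivative $\mu^{(k)}(p_0)$, and $k$ is even because $p_0$ is an extremum. Rather than expanding to arbitrary order, we would compare the curve with its osculating conic through a Sturm-type argument. Both curves satisfy third-order linear equations in affine arclength: $\gamma'''=-\mu\gamma'$ for the curve and the constant-coefficient equation $\gamma'''=-\mu_0\gamma'$ for the conic, the latter being exactly the conic of constant affine curvature $\mu_0$ with matching 2-jet. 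Expressing the relative position as a conic-equation value $Q(\gamma(s))$, where $Q$ is the quadratic form defining the osculating conic, variation of parameters gives
\[
Q(\gamma(s))=\int_0^s G(s,t)\,(\mu(t)-\mu_0)\,dt
\]
with a kernel $G$ whose sign is fixed for small $|s|$. Since $\mu-\mu_0$ has constant sign near $p_0$ (nonpositive at a maximum, nonnegative at a minimum) and is not identically zero on either side by analyticity, $Q(\gamma(s))$ has a strict constant sign for small $s\neq0$. This yields one-sidedness and isolated contact at once, and also covers the nondegenerate case. What remains is to verify the sign constancy of the kernel, which should behave like $(s-t)^5$ up to a positive factor; that is the "normal-form computation" to carry out.
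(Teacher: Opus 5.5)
\textbf{There is a genuine gap: the decisive computation is deferred, and its predicted outcome is wrong.}

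Your framework matches the paper's. You compare $\gamma$ with the solution $C$ of $C'''=-\mu_0C'$ that shares its $2$-jet, and you handle $\delta=\gamma-C$ through the linear problem $\delta'''+\mu_0\delta'=-(\mu-\mu_0)\gamma'$ with zero initial data. But you stop exactly at the step that \emph{is} the lemma.

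\emph{The kernel is cubic, not quintic.} Let $k(u)=u^2/2+O(u^4)$ be the Green's function of $\frac{d^3}{ds^3}+\mu_0\frac{d}{ds}$, and normalize $Q$ so that $\nabla Q(0)=(0,1)$ in the affine normal chart. The linear part of $Q(\gamma(s))$ is then $\int_0^sG(s,t)(\mu(t)-\mu_0)\,dt$ with
\[
G(s,t)=-k(s-t)\,\nabla Q(C(s))\cdot\gamma'(t).
\]
Since $\nabla Q(C(s))\cdot\gamma'(t)\approx t-s$, you get $G\approx\tfrac12(s-t)^3$, not $(s-t)^5$. A quintic kernel would give contact of order $q+6$, where $q$ is the vanishing order of $\mu-\mu_0$. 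At a generic point ($q=1$) that is order $7$. This contradicts your own expansion $D=\kappa_5\mu'x^5+\cdots$ and the fact that the osculating conic generically has contact exactly $5$.

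\emph{The sign argument does not work by positivity alone.} Two things break it:
\begin{itemize}
\item $Q(\gamma)=\nabla Q(C)\cdot\delta+Q_2(\delta)$ has a quadratic remainder, so your integral formula is not exact.
\item $G$ depends on $\gamma'(t)$ and is not sign-definite on $[0,s]$. It vanishes where $\gamma'(t)\parallel C'(s)$, which is within $O(|\delta'(s)|)$ of $t=s$ but in general not at $t=s$.
\end{itemize}
So ``$\mu-\mu_0$ has constant sign'' does not by itself give a strict sign for $Q(\gamma(s))$. You must show that these error terms are dominated by the main term.

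\textbf{How to close it.} With analyticity this is straightforward, and it is precisely the paper's computation. Write $\mu-\mu_0=as^q+O(s^{q+1})$ with $a\neq0$ and $q$ even. Then $\delta=O(s^{q+3})$ with explicit leading terms. The quadratic remainder is $O(s^{2q+6})$, and the kernel correction contributes $O(s^{q+5})$. The deviation is therefore
\[
\int_0^s\tfrac12(s-t)^3\,a\,t^q\,dt+O(s^{q+5})=\frac{3a\,s^{q+4}}{(q+1)(q+2)(q+3)(q+4)}+O(s^{q+5}).
\]
This has even order $q+4\ge6$ and the strict sign of $a$, which gives one-sidedness and isolated contact. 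The paper reaches the same leading term differently: it runs a power-series recursion for $\delta$ and then converts to the graph difference $Y-F$. That computation also shows contact $\ge6$, which identifies $C$ as the osculating conic, a fact you took as classical.

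Your ``nondegenerate case'' is just the instance $q=2$, giving $\kappa_6=1/240$ once $\mu'(p_0)=0$. It too was asserted rather than derived. Also, the $x^4$ coefficient of the normal form is $\mu_0/8$, not $\mu_0/24$.
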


\begin{proof}
Parametrize $\partial K$ near $p_0$ by affine arclength: positive curvature
provides a real-analytic parametrization $\gamma(s)$ with
$\det(\gamma',\gamma'')\equiv1$. Differentiating gives
$\det(\gamma',\gamma''')\equiv0$, so $\gamma'''=-\mu\gamma'$ for a
real-analytic function $\mu$, the affine curvature. If $\mu$ were a constant
$\mu_0$, integration would give
$\gamma(s)=P+A\cos(\sqrt{\mu_0}\,s)+B\sin(\sqrt{\mu_0}\,s)$ for $\mu_0>0$,
a quadratic in $s$ for $\mu_0=0$, and the hyperbolic analogue for $\mu_0<0$
--- in each case a conic, nondegenerate because $\det(\gamma',\gamma'')=1$;
so on our non-conic curve $\mu$ is non-constant.

Center the parameter at $p_0$ and set $\mu_0:=\mu(0)$. The analytic function
$\mu-\mu_0$ is not identically zero, hence vanishes at $0$ to a finite order
$q\ge1$; since $p_0$ is a global extremum, $\mu-\mu_0$ has constant sign near
$0$, so $q$ is even. Write $\mu(s)=\mu_0+as^q+O(s^{q+1})$, $a\ne0$.

Let $C$ solve $C'''=-\mu_0C'$ with $C(0)=\gamma(0)$, $C'(0)=\gamma'(0)$,
$C''(0)=\gamma''(0)$. Then $\det(C',C'')$ is constant --- its derivative is
$\det(C',C''')=-\mu_0\det(C',C')=0$ --- and equals $1$, so by the
integration above $C$ is a conic, parametrized by its own affine arclength.
We show $\gamma$ and $C$ have contact of order $q+4\ge6$ at $p_0$; since two
distinct conics meet with total multiplicity at most four, $C$ is then the
osculating conic and $p_0$ is sextactic.

Work in affine coordinates with origin $\gamma(0)$ and axes
$e_1=\gamma'(0)$, $e_2=\gamma''(0)$, writing $\gamma=(x(s),y(s))$ and
$C=(\xi(s),\eta(s))$; then $x(s)=s+O(s^3)$, $y(s)=\tfrac12s^2+O(s^4)$, and
likewise for $\xi,\eta$, so both curves are graphs $y=Y(x)$ and $y=F(x)$
near the origin, with $F(x)=\tfrac12x^2+O(x^3)$. The difference
$\delta:=\gamma-C$ satisfies the linear analytic problem
\[
\delta'''+\mu_0\delta'=-(\mu-\mu_0)\gamma',\qquad
\delta(0)=\delta'(0)=\delta''(0)=0 .
\]
Since $\gamma'=(1+O(s^2),\,s+O(s^3))$, the right side is
$\bigl(-as^{q}+O(s^{q+1}),\,-as^{q+1}+O(s^{q+2})\bigr)$. Writing
$\delta=\sum_n\delta_ns^n$ componentwise, with $h$ the right side, the series
recursion
\[
n(n-1)(n-2)\,\delta_n=h_{n-3}-\mu_0(n-2)\,\delta_{n-2}
\]
gives, by induction in each component, $\delta_n=0$ below the order of the
drive plus three --- the $\mu_0$-term enters only afterwards --- and at that
order $\delta_n=h_{n-3}/\bigl(n(n-1)(n-2)\bigr)$; explicitly,
\[
\delta=\Bigl(-\frac{a\,s^{q+3}}{(q+1)(q+2)(q+3)}+O(s^{q+4}),\;
-\frac{a\,s^{q+4}}{(q+2)(q+3)(q+4)}+O(s^{q+5})\Bigr).
\]
At the abscissa $x=x(s)$,
\[
Y(x)-F(x)=\bigl[y(s)-\eta(s)\bigr]-\bigl[F(x(s))-F(\xi(s))\bigr]
=\delta^{(2)}-F'(x(s))\,\delta^{(1)}+O\bigl((\delta^{(1)})^2\bigr),
\]
with $F'(x(s))=s+O(s^2)$, so
\begin{align*}
Y(x)-F(x)
&=a\,s^{q+4}\Bigl[\frac{1}{(q+1)(q+2)(q+3)}
-\frac{1}{(q+2)(q+3)(q+4)}\Bigr]+O(s^{q+5})\\
&=\frac{3a\,s^{q+4}}{(q+1)(q+2)(q+3)(q+4)}+O(s^{q+5}),
\end{align*}
and since $x=s(1+O(s^2))$,
\[
Y(x)-F(x)=\frac{3a\,x^{q+4}}{(q+1)(q+2)(q+3)(q+4)}+O(x^{q+5}).
\]
The contact order is exactly $q+4$, which is at least $6$ because $q$ is even
and positive; this identifies $C$ as the osculating conic. And $q+4$ is even,
so the deviation carries the constant sign of $a$ on a punctured neighborhood
of $0$: the curve lies strictly on one side of its osculating conic near
$p_0$, meeting it only at $p_0$.
\end{proof}

\begin{proposition}[Identification of the conic-cluster set]\label{prop:S0}
Let $K\subseteq\R^2$ be compact and convex with real-analytic, strictly
convex, positively curved boundary.
\begin{enumerate}[label=\textup{(\roman*)}]
\item If $\partial K$ is not a conic, then $S_0(K)$ is contained in the
sextactic locus $S(K)$; in particular $S_0(K)$ is finite, and any four of its
points are in general position.
\item $S_0(K)$ contains every global extremum of the affine curvature, so
$S_0(K)\ne\emptyset$.
\item If $\partial K$ is a conic, $S_0(K)=\partial K$.
\end{enumerate}
\end{proposition}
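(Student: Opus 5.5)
We treat the three parts separately; (iii) and (ii) are quick, and (i) carries the geometric content.

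For (iii), if $\partial K$ is a conic then every six distinct boundary points are co-conic. The body is strictly convex, so its extreme points are exactly its boundary points. Any open boundary arc contains infinitely many extreme points, so any six of them witness the defining condition of $S_0$ at every $p\in\partial K$.

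For (ii), let $p_0$ be a global extremum of the affine curvature $\mu$; such an extremum exists because $\mu$ is continuous on the compact curve. By Lemma~\ref{lem:onesided}, on some arc around $p_0$ the curve lies strictly on one side of its osculating conic $C_0$ and meets it only at $p_0$. (If $\partial K$ is a conic, use (iii) instead.) Now take an arbitrary open arc $\alpha\ni p_0$, shrunk inside that neighbourhood. We must find six distinct extreme points of $\alpha$ on a common conic. The plan is to perturb $C_0$ within the five-parameter family of conics. Since the contact at $p_0$ has order $q+4\ge6$ with constant-sign deviation, a suitable small perturbation of $C_0$ crosses the curve at six nearby points. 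Concretely, in the graph coordinates of Lemma~\ref{lem:onesided} we have $Y-F=cx^{q+4}+O(x^{q+5})$ with $c\ne0$. We seek a conic graph $F+\epsilon\,g(x)$, where $g$ is a polynomial of degree $\le4$ with six simple roots counted through the constraint. More precisely, we take a degree-5 polynomial $G$ (conic graphs near the origin give, after the implicit-function parametrization, a five-dimensional family of local perturbations whose $4$-jets are arbitrary) so that $cx^{q+4}-\epsilon\,g(x)$ has six small simple real roots. Since $q+4\ge6$ is even, scaling $x\mapsto tx$ reduces this to finding a polynomial $g$ of degree $\le5$ (using the five conic parameters plus the leading contact) with $x^{q+4}-g$ having six simple real roots, which is easy. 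Intermediate value arguments then give six genuine intersection points in $\alpha$ for small $t$. These are extreme points, so $p_0\in S_0(K)$.

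For (i), suppose $p\in S_0(K)$ is not sextactic. The osculating conic at $p$ then has contact of order exactly $5$, so the curve crosses it at $p$. The key step, which we expect to be the main obstacle, is a uniform local bound: near a non-sextactic point, every conic meets the curve in at most five points of a small arc. To prove it, we write conics meeting the curve near $p$ as zeros of $\Phi(s)=Q(\gamma(s))$, with $Q$ ranging over the normalized quadratic forms. Six zeros in an arc of length $r$ force, by Rolle's theorem (generalized divided differences), a point where the Wronskian-type determinant $\det\bigl(Q_i(\gamma)^{(j)}\bigr)_{0\le j\le5}$ of the six monomials along $\gamma$ vanishes. We take the limit along a sequence of shrinking arcs and normalize $Q$ in the compact unit sphere of coefficients. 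This Wronskian vanishes exactly at sextactic points, since it is the standard characterization of sextactic points via the $6\times6$ determinant of the $5$-jets of $(1,x,y,x^2,xy,y^2)$. So non-vanishing at $p$ gives a neighbourhood arc in which no six distinct points are co-conic, contradicting $p\in S_0$. Hence $S_0(K)\subseteq S(K)$, and finiteness and general position follow from Lemma~\ref{lem:sextactic}(i).
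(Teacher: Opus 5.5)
Your (iii) is the paper's argument. Your (i) is also the paper's argument, in sharper form. The paper takes a limit of normalized conics through six points in shrinking arcs and appeals to upper semicontinuity of intersection multiplicity. Your divided differences instead give $\Phi_k^{(j)}(\xi_{k,j})=0$ for $j\le5$, with $\xi_{k,j}$ in the $k$-th arc, and hence $\Phi^{(j)}(s_0)=0$ for the limit form. Two touch-ups are needed:
\begin{itemize}
\item On a fixed arc, Rolle does not give a single point where the $6\times6$ determinant vanishes, because the six derivatives vanish at different points. Only the limit yields vanishing at $s_0$.
\item Reading that vanishing as sextacticity requires excluding a degenerate limit conic, as the paper does explicitly. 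Each line component has contact at most $2$ with a positively curved curve, so such a conic has contact at most $4<6$.
\end{itemize}

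The real problem is the key step of (ii).
\begin{itemize}
\item \emph{The parameter count is wrong.} A conic has five parameters, so what you control is an arbitrary $4$-jet, i.e.\ $g$ of degree $\le4$. The contact term $cx^{q+4}$ is fixed data, not a sixth parameter, so the reduction to ``degree $\le5$'' is invalid.
\item \emph{With the correct count the needed fact is not automatic.} You need some $\tilde g$ of degree $\le4$ for which $u^N-\tilde g(u)$, $N=q+4$, has six simple real roots. For $N=7$ this is false: if six roots were real and distinct the seventh would be real too, and the vanishing $u^6$ and $u^5$ coefficients would force the squares of all seven to sum to $0$.
\item \emph{It does hold for even $N=2M\ge6$}, which is what Lemma~\ref{lem:onesided} supplies. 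Let $Q$ be the quadratic interpolating $v^M$ at $v=1,2,3$. Since $(v^M)'''>0$ on $(0,\infty)$, these are the only zeros of $v^M-Q(v)$ there, and all are simple. Then $\tilde g(u)=Q(u^2)$ gives the simple roots $\pm1,\pm\sqrt2,\pm\sqrt3$.
\item \emph{You must also control the part of the perturbed conic beyond its $4$-jet.} Where the curvature is nonzero, the osculating-conic map smoothly inverts the $4$-jet map. So the conic $F_\theta$ with $4$-jet $j^4_0F+g$, where $g(x)=c\,t^N\tilde g(x/t)$, satisfies $|\theta-\theta_0|=O(t^{N-4})$ and $F_\theta-F-g=O(t^{N-4}|x|^5)$. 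At $x=tu$ this gives $Y-F_\theta=c\,t^N\bigl(u^N-\tilde g(u)\bigr)+O(t^{N+1})$, so the six sign changes persist for small $t$.
\end{itemize}

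Once repaired, your (ii) is a genuinely different proof from the paper's. The paper interpolates the conic through five nodes near $p_0$; this conic tends to $C_0$. One-sidedness then gives equal signs at the two ends of a fixed arc, the five transversal node zeros are sign changes, and parity forces a sixth zero. That route uses only the statement of Lemma~\ref{lem:onesided}, but it leans on a genericity claim (transversality at the nodes). Your route uses the normal form $Y-F=cx^{q+4}+O(x^{q+5})$ from the lemma's proof. In return it exhibits six transversal intersections explicitly, with no genericity needed.
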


\begin{proof}
(iii) Any six points of a conic are co-conic.

(i) Let $p\in S_0(K)$. For each $k$ choose six distinct boundary points on a
common conic $C_k$, all within the arc of diameter $1/k$ about $p$; normalize
the coefficient vector of $C_k$ to the unit sphere of $\R^6$ and pass to a
convergent subsequence $C_k\to C$. If the intersection multiplicity of $C$
with the analytic curve $\partial K$ at $p$ were some $m<6$, then --- the local
intersection count being upper semicontinuous under analytic perturbation of
the conic and of the reference point --- conics sufficiently close to $C$ would
meet a sufficiently small fixed arc about $p$ in at most $m<6$ points,
contradicting the choice of the $C_k$. So the multiplicity is at least $6$.
If $C$ were degenerate (a line pair or a double line through $p$), each line
component would meet the curve at $p$ with multiplicity at most $2$, since
positive curvature makes tangential contact exactly second-order; the total
would be at most $4<6$. Hence $C$ is a nondegenerate conic with contact
$\ge6$ at $p$; a conic with contact $\ge5$ is the osculating conic, so the
osculating conic hyperosculates and $p\in S(K)$. Finiteness and general
position then follow from Lemma~\ref{lem:sextactic}(i).

(ii) Let $p_0$ realize the global maximum of $\mu$ and let $A$ be any small
arc around $p_0$; shrink $A$ so that, by Lemma~\ref{lem:onesided}, the curve
on $A\setminus\{p_0\}$ lies strictly on one side of the osculating conic
$C_0$ at $p_0$. Choose five distinct points $u_1<\dots<u_5$ in a much smaller
central subarc, in general position with all ten crossings of
$C(u_1,\dots,u_5)$ with the curve at the nodes transversal (a generic choice:
for analytic non-conic curves, tangency of the five-point conic at a node is a
proper analytic condition on the $5$-tuple). Let $g$ be the analytic function
on $A$ measuring the position of $\partial K$ against $C(u_1,\dots,u_5)$
(a determinantal formula in the five nodes and the moving point). As the five
nodes lie deep inside $A$, at the two endpoints of $A$ the sign of $g$ agrees
with the sign of the position against $C_0$, which by one-sidedness is the
\emph{same} nonzero sign at both endpoints. Transversality makes each of the
five node zeros of $g$ a sign change; five sign changes are incompatible with
equal endpoint signs unless $g$ has at least one further zero in $A$, at a
point distinct from the nodes. That zero is a sixth point of
$\partial K\cap C(u_1,\dots,u_5)$ inside $A$: six distinct co-conic points in
$A$. As $A$ was arbitrary, $p_0\in S_0(K)$.
\end{proof}

\begin{theorem}[Planar categoricity for analytic positively curved non-conic
bodies]\label{thm:planar-sextactic}
Let $K$ and $K'$ be
compact convex bodies in $\R^2$ whose boundaries are real-analytic,
strictly convex, positively curved and non-conic. Then
$K\equiv_{\LB}K'$ if and only if $K$ and $K'$ are projectively
equivalent.
\end{theorem}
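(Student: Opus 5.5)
The ``if'' direction is Theorem~\ref{thm:iso-projective}. For ``only if'' I would not pass through the conditional Corollary~\ref{cor:planar-projective}. Instead I would run a finite-configuration argument modelled on the proof of Proposition~\ref{prop:polygon-LB}, with the conic-cluster set $S_0(K)$ playing the role of the vertex set. By Proposition~\ref{prop:S0}(i),(ii), $S_0(K)$ is a nonempty finite subset of the sextactic locus, and any four of its points are in general position. It is defined by the parameter-free formula $\mathrm{S}_0$. So for each $k$ the sentence ``exactly $k$ points satisfy $\mathrm{S}_0$'' lies in the theory, and $|S_0(K)|=|S_0(K')|=:k$.

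First, I would enlarge $S_0(K)$ to a projective frame. If $k\ge4$ it already contains a frame. If $k<4$, I would adjoin definable points: for instance, the second intersections with $\partial K$ of lines joining points of $S_0$ to definable interior points, or iterates of that construction. All of these are cut out by $\Coll$, $\Ext$ and $B$ from the $S_0$-points, so they remain finite, definable and projectively covariant. Call the resulting finite definable set $F(K)$, of fixed size $N\ge4$, with its cyclic order along $\partial K$ definable via $\mathrm{Arc}$.

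Second, I would transport the projective coordinates of $K$ relative to this frame. The pencil cross-ratios at interior centres, read on interior transversals, record the projective class of the configuration $F(K)$, exactly as in Proposition~\ref{prop:polygon-LB}. Lemma~\ref{lem:extraction}, with its pigeonhole over the finitely many labelings, then gives a projective map $M_0$ with $M_0(F(K))=F(K')$. Here I would pin the centres to a triangle spanned by chord crossings among points of $F$, which are interior by strict convexity. Lemma~\ref{lem:admissible} then ensures that the singular line of $M_0$ misses $\conv F(K)$.

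The remaining task is to show $M_0(K)=K'$, and I expect this to be the main obstacle. Finiteness of $F$ only matches finite configurations, so the whole boundary must be coordinatized against the now-definable frame. I would follow Theorem~\ref{thm:affine-main} with the finite frame as gauge. For rational boxes $B,C$ in the projective chart of $F$, consider sentences asserting that some labeling of $F$ makes the set of extreme-point coordinates meet $B$ and avoid $C$. Each coordinate of a boundary point $p$ relative to the frame is a pencil cross-ratio at a frame vertex, and it can be read on an interior transversal exactly as in Lemma~\ref{lem:steiner-sentences}: strict convexity makes the open chords from $p$ to the frame points interior. Proposition~\ref{prop:crq} makes the thresholds first-order. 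The gauge ranges over a finite set, so no compactness issue arises, and pigeonholing a labeling gives equal closed coordinate sets $E_0=E_0'$. Taking convex hulls in the chart, with admissibility supplied as in Lemma~\ref{lem:admissible} because the cyclic order is preserved, yields $M_0(K)=K'$.

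The delicate points are the uniformity of the interior transversal reading when $p$ approaches a frame vertex, and the case $k<4$, where the frame extension must be shown to remain in general position.
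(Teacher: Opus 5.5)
Your route is, in substance, the paper's. Its proof feeds the ordered frames drawn from $S_0(K)$, a finite definable gauge, into Proposition~\ref{prop:projective-reduction}, i.e.\ into the box-sentence argument of Theorem~\ref{thm:affine-main}, and you are writing that argument out. Reading frame coordinates as pencil cross-ratios at frame vertices, on the interior transversals of Lemma~\ref{lem:steiner-sentences}, is the right mechanism for strictly convex bodies. Two steps fail as written, however.

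The first is the case $k<4$, where your extension produces nothing. With at most three points of $S_0(K)$ the only lines you can draw are the chords among them. These meet only at their endpoints, and their second intersections with $\partial K$ are again points of $S_0(K)$. So no ``definable interior point'' ever appears, and iteration never leaves $S_0(K)$. The repair is to show $k\ge6$. The proofs of Lemma~\ref{lem:onesided} and Proposition~\ref{prop:S0}(ii) use only that $\mu-\mu(p_0)$ keeps one sign near $p_0$, so every \emph{local} extremum of $\mu$ lies in $S_0(K)$. From $\gamma'''=-\mu\gamma'$ one gets $(\gamma''+\mu\gamma)'=\mu'\gamma$. Integrating by parts around the closed curve then gives $\int\mu'\,Q(\gamma)\,ds=0$ for every polynomial $Q$ of degree at most $2$. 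If $\mu'\not\equiv0$ changed sign at only $0$, $2$ or $4$ points, then a constant, a chord line, or a product of two chord lines through those points would have the sign of $\mu'$ along the curve, which is impossible. So $\mu$ has at least three local maxima and three local minima, and $|S_0(K)|\ge6$. The paper's own proof needs $|S_0(K)|\ge4$ just as much, since its frame family must be nonempty, while Proposition~\ref{prop:S0} as stated yields only $S_0(K)\ne\emptyset$. Your worry was justified, but the fix lies in the curvature, not in incidence constructions.

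The second failure is in the conclusion. Every coordinate on the pencil at a frame vertex $e_0$ built from frame points sends one of the chords $e_0e_j$ to $\infty$, and that chord crosses $\interior K$. Hence in your chart $E_0$ is unbounded (coordinates blow up as $p\to e_j$), and $\conv E_0$ is not a projective image of $K$. Lemma~\ref{lem:admissible} only controls $\conv F(K)$, not $K$. This, rather than uniformity of the transversal reading, is what your ``delicate point'' is really about; each $p$ is read on its own transversal, so no uniformity is needed.

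Take each pencil coordinate in $\RP^1$ instead, with boxes in two affine charts of each factor. Then $E_0$ is the compact image of $\partial K$, and the two-pencil coordinate map is injective on $\partial K$ (at $e_0,e_1$ it records the tangent lines). Pigeonholing over the finitely many labelings gives one frame $f'$ of $K'$ with the same closed coordinate set. So the projective map $M$ with $M(f)=f'$ carries $\partial K$ onto $\partial K'\subseteq\R^2$. Its singular line therefore misses $\partial K$, hence misses $K$, since a line through an interior point meets the boundary. Lemma~\ref{lem:proj-betweenness}, applied to chords, then gives $M(K)=K'$. This also makes your Step~2 through Lemma~\ref{lem:extraction} unnecessary.
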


\begin{proof}
By Proposition~\ref{prop:S0}, $S_0(K)$ is a finite nonempty
$\PGL_3$-covariant set of extreme points with any four in general position;
the ordered frames drawn from it form a nonempty finite --- hence compact ---
covariant family, and by Lemmas~\ref{lem:arc-def} and \ref{lem:coconic} the
family is $\LB$-definable (Proposition~\ref{prop:crq}). This is
hypothesis (c) of Proposition~\ref{prop:projective-reduction}; hypothesis (b)
is the planar boundary-recovery of Proposition~\ref{prop:polygon-LB}; (a) is
assumed. Proposition~\ref{prop:projective-reduction} applies.
\end{proof}

\begin{remark}
The theorem discharges the ``definable selection'' clause of
Corollary~\ref{cor:planar-projective} for this class: the selection is the
conic-cluster set, whose defining condition is pure incidence --- six co-conic
extreme points in every arc --- and therefore definable by
Proposition~\ref{prop:crq}. Nothing in the theorem is conditional. The
excluded conic is excluded for the reason
recorded in Proposition~\ref{prop:S0}(iii): there the conic-cluster set is the
whole boundary, infinite, exactly as its $\mathrm{Aut}$-orbits of frames are
non-compact (Proposition~\ref{prop:gauge-criterion}). The affine shadow of the
dichotomy is visible in the affine curvature, which is constant precisely on
ellipses and otherwise has finitely many critical points; the conic-cluster
set detects, through incidence alone, a nonempty subset of those critical
points.
\end{remark}

\section{Noncompact bodies and the projective trichotomy}\label{sec:noncompact}

The betweenness language remains meaningful for unbounded and non-closed
convex sets, and Proposition~\ref{prop:interior} was proved in that
generality. Compactness itself is not first-order, but its trace on each
chord is:

\begin{proposition}\label{prop:openclosed}
Let $\eta(a,b)$ be the $\LB$-formula
\[
a\ne b\;\wedge\;\exists c\,\Bigl(B(a,b,c)\wedge\forall d\,
\bigl(B(a,c,d)\to d=c\bigr)\Bigr)
\]
(``the segment from $a$ through $b$ extends to a last point''). Then, in
every dimension $n\ge1$: the closed ball satisfies
$\forall a\forall b\,(a\ne b\to\eta(a,b))$; the open ball satisfies
$\forall a\forall b\,\neg\eta(a,b)$; and the closed ball minus a single
boundary point satisfies $\exists a\exists b\,\eta(a,b)\wedge
\exists a\exists b\,(a\ne b\wedge\neg\eta(a,b))$. Consequently these three
convex sets are pairwise elementarily inequivalent in $\LB$.
\end{proposition}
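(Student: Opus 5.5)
The plan is to first pin down what $\eta(a,b)$ means geometrically in an arbitrary convex set $K$, and then read off the three sentences. For distinct $a,b\in K$, let $\rho$ be the ray from $a$ through $b$. The set $\rho\cap K$ is convex and contains $[a,b]$, so it is a segment of the ray starting at $a$ and passing $b$: it is either $[a,c^*]$ for some point $c^*$ (closed and bounded), or $[a,c^*)$, or all of $\rho$.

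\emph{Step 1 (semantics of $\eta$).} I claim $\eta(a,b)$ holds iff $\rho\cap K$ is a compact segment $[a,c^*]$ with $c^*\ne a$, and that the witness is then forced to be $c=c^*$. Suppose a witness $c$ exists. The clause $B(a,b,c)$ puts $c$ on $\rho$ beyond (or at) $b$. The clause $\forall d\,(B(a,c,d)\to d=c)$ says that no point of $K$ on $\rho$ lies strictly beyond $c$; this uses collinearity and the fact that $B(a,c,d)$ with $d\in\rho$ means $d$ is at or past $c$. Hence $c$ is the far endpoint of $\rho\cap K$, and that endpoint belongs to $K$. Conversely, if $\rho\cap K=[a,c^*]$, then $c=c^*$ witnesses $\eta$. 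If instead $\rho\cap K$ is half-open or the whole ray, every $c\in\rho\cap K$ has some $d\in K$ strictly beyond it, so $\eta$ fails.

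\emph{Step 2 (the three sets).}
\begin{itemize}
\item In the closed ball, every ray from an interior or boundary point meets the ball in a compact segment, since the ball is compact. So $\eta(a,b)$ holds for all $a\ne b$.
\item In the open ball, $\rho\cap K$ is never closed at its far end. Its far endpoint lies on the sphere, hence outside the open ball, because otherwise the point would be interior and the segment could be extended. So $\neg\eta(a,b)$ holds everywhere.
\item For $K=\Ball\setminus\{s\}$ with $s$ a boundary point, take two points on a diameter not through $s$ (possible for $n\ge2$; for $n=1$ take the two points on the side away from $s$). The far endpoint of that ray is a boundary point other than $s$, so $\eta$ holds.
\item In the same set, take $a\in\interior\Ball$ and $b$ on the open segment $(a,s)$. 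The ray $\rho$ meets $\Ball$ in $[a,s]$, which exits the sphere only at $s$ by strict convexity, so $\rho\cap K=[a,s)$ and $\eta(a,b)$ fails.
\end{itemize}
Together these give the mixed sentence.

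\emph{Step 3 (inequivalence).} The three sentences are pairwise contradictory in each model. The closed ball refutes $\exists\neg\eta$, the open ball refutes $\exists\eta$, and the punctured ball satisfies both existential sentences. This yields pairwise $\LB$-inequivalence.

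The only delicate point is Step 1's precise reading of $B(a,c,d)$ when $c=a$ or $d$ is off $\rho$. Since $b\ne a$ and $B(a,b,c)$ hold, $c\ne a$, so $B(a,c,d)$ really does force $d$ onto $\rho$ beyond $c$. The strict convexity used for the punctured ball, namely that a chord's open interior lies in the open ball, is the other point that needs a line of justification.
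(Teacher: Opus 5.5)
Your proof is correct and follows the paper's argument. By compactness, every ray in the closed ball has a last point. In the open ball no ray has one, since any candidate is interior. In the punctured ball, the rays exiting at the deleted point are exactly where $\eta$ fails. Your Step~1 spells out, for an arbitrary convex set, the reading of $\eta$ that the paper uses implicitly here and later proves for closed sets as the exit lemma (Lemma~\ref{lem:exit}).
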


\begin{proof}
In a closed body, the ray from $a$ through $b$ exits at a boundary point
$c\in K$; any $d\in K$ with $B(a,c,d)$ lies on the ray at or beyond $c$,
hence equals $c$. In the open ball no last point exists on any ray: for each
candidate $c$ there is $d\ne c$ slightly beyond, so $\eta$ fails for all
pairs. In the punctured closed ball, rays exiting at the deleted point
witness $\neg\eta$ while all other rays witness $\eta$.
\end{proof}

\begin{proposition}[Projective identifications]
\label{prop:projident}
As $\LB$-structures, for every $n\ge2$:
\begin{enumerate}[label=\textup{(\alph*)}]
\item the open orthant $O=\{x\in\R^n:x_1>0,\dots,x_n>0\}$ is isomorphic to
an open simplex;
\item the open slab $\Sigma_n=\{x:0<x_n<1\}$ is isomorphic to an open
half-space;
\item the open paraboloid region $\Pi=\{x:x_n>x_1^2+\dots+x_{n-1}^2\}$ is
isomorphic to the open unit ball;
\item the closed paraboloid region $\overline\Pi=\{x:x_n\ge
x_1^2+\dots+x_{n-1}^2\}$ is isomorphic to the closed unit ball minus a
single boundary point;
\item the open solid cone $C=\{x:x_n>0,\
x_1^2+\dots+x_{n-1}^2<x_n^2\}$ is isomorphic to the open half-cylinder
$B^{n-1}\times(0,\infty)$, where $B^{n-1}$ is the open unit ball of
$\R^{n-1}$.
\end{enumerate}
\end{proposition}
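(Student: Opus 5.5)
The plan is to write down, for each item, an explicit projective transformation $T\in\PGL_{n+1}(\R)$ whose singular hyperplane misses the source set and which maps it onto the target. Then Lemma~\ref{lem:proj-betweenness} (the easy direction of Theorem~\ref{thm:iso-projective}) gives the $\LB$-isomorphism. In each case we check two things: that the denominator is nonzero on the source, and that the image is exactly the target, usually by rewriting the defining inequalities through the inverse map.

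\textbf{(a)} Take $T(x)=x/(1+\sum_i x_i)$. The denominator is positive on $O$. The image is $\{y:y_i>0,\ \sum y_i<1\}$, an open simplex, with inverse $y\mapsto y/(1-\sum y_i)$.

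\textbf{(b)} Take $T(x)=(x_1,\dots,x_{n-1},x_n)/(1-x_n)$. It is defined on the slab since $x_n<1$. The last coordinate $t=x_n/(1-x_n)$ ranges over $(0,\infty)$ as $x_n$ ranges over $(0,1)$, while the first $n-1$ coordinates are free. So the image is the open half-space $\{y_n>0\}$.

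\textbf{(c),(d)} Use the standard Cayley-type map sending the paraboloid to the sphere minus a point. In homogeneous coordinates the quadratic form $x_n x_0-\sum_{i<n}x_i^2$ is equivalent, via $x_n=u+v$, $x_0=u-v$, to $u^2-v^2-\sum x_i^2$, which is the ball form. Concretely, take
\[
T(x)=\frac{(2x_1,\dots,2x_{n-1},\,x_n-1)}{x_n+1}.
\]
The denominator is positive on $\overline\Pi$ since $x_n\ge0$. A direct computation gives $1-|T(x)|^2=4(x_n-\sum_{i<n}x_i^2)/(x_n+1)^2$, so $\Pi$ maps into the open ball and $\overline\Pi$ into the closed ball. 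Bijectivity onto the stated targets is checked with the inverse. The only point of the closed ball missed is the image of the point at infinity of the paraboloid, namely $(0,\dots,0,1)$, which gives (d).

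\textbf{(e)} Take $T(x)=(x_1,\dots,x_{n-1},1)/x_n$, defined since $x_n>0$. The first $n-1$ coordinates $x_i/x_n$ satisfy $\sum(x_i/x_n)^2<1$. The last coordinate $1/x_n$ ranges over $(0,\infty)$ independently, because for fixed ratios $x_n$ is free in $(0,\infty)$. So the image is $B^{n-1}\times(0,\infty)$.

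The only mildly delicate step is the exact image computation in (d): we must confirm that no other boundary point is lost, and that boundary points of $\overline\Pi$ go to the sphere bijectively. This follows from the identity for $1-|T|^2$ together with the explicit inverse on the sphere minus the north pole.
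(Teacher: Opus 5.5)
Your proposal is correct and follows the paper's route: you exhibit explicit projective maps whose singular hyperplanes miss the source, apply Lemma~\ref{lem:proj-betweenness}, and compute the image directly, with the same maps as the paper in (a) and (e). The differences are cosmetic. In (b) you send the slab onto $\{y_n>0\}$ via $x\mapsto x/(1-x_n)$, where the paper sends it onto $\{v>1\}$. In (c)--(d) you use an explicit Cayley-type map with the identity $1-|T(x)|^2=4(x_n-\sum_{i<n}x_i^2)/(x_n+1)^2$, where the paper invokes Sylvester's law of inertia abstractly; this makes your image and bijectivity check more concrete.
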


\begin{proof}
In each case we exhibit a projective transformation whose singular
hyperplane misses the domain (so Lemma~\ref{lem:proj-betweenness} applies)
and which maps the domain onto the target. Homogeneous coordinates are
$[X_1{:}\dots{:}X_n{:}Z]$, with affine chart $Z=1$.

(a) $T[X_1{:}\dots{:}X_n{:}Z]=[X_1{:}\dots{:}X_n{:}X_1+\dots+X_n+Z]$. In
affine terms $x\mapsto x/(s+1)$ where $s=\sum_ix_i$. On $\overline O$ we
have $s+1\ge1>0$, so the singular hyperplane $\sum X_i+Z=0$ misses $O$. The
image of $O$ is the open simplex $\{u:u_i>0,\ \sum_iu_i<1\}$, and the map is
a bijection onto it (inverse $u\mapsto u/(1-\sum_iu_i)$).

(b) $T[X_1{:}\dots{:}X_n{:}Z]=[X_1{:}\dots{:}X_{n-1}{:}Z{:}X_n]$, affinely
$(x',x_n)\mapsto(x'/x_n,\,1/x_n)$. The singular hyperplane $\{X_n=0\}$
misses $\Sigma_n$. For $0<x_n<1$ we get $1/x_n>1$, and the map is a
bijection of $\Sigma_n$ onto the open half-space $\{(u',v):v>1\}$ (inverse
$(u',v)\mapsto(u'/v,1/v)$).

(c) In homogeneous coordinates
$\Pi=\{[X{:}Z]:X_nZ>X_1^2+\dots+X_{n-1}^2,\ Z\ne0\}$; in fact the projective
region $\{q_1>0\}$ for the quadratic form
$q_1=X_nZ-\sum_{i<n}X_i^2$ contains no points with $Z=0$ (there
$q_1=-\sum_{i<n}X_i^2\le0$), so $\Pi$ \emph{is} the full projective region
$\{q_1>0\}$. Writing $X_nZ=\tfrac14\bigl((X_n+Z)^2-(X_n-Z)^2\bigr)$, the
form $q_1$ has signature $(1,n)$ in the $n+1$ homogeneous variables, as does
$q_2=Z^2-\sum_{i=1}^nX_i^2$, whose positivity region is the open unit ball
and likewise contains no points at infinity. By Sylvester's law choose
$A\in\GL_{n+1}(\R)$ with $q_1=q_2\circ A$; the induced projective map $T_A$
carries $\{q_1>0\}$ bijectively onto $\{q_2>0\}$. No point of $\Pi$ maps to
the hyperplane at infinity of the target chart (the image region contains no
such points), so the singular hyperplane of the chart representation of
$T_A$ misses $\Pi$, and Lemma~\ref{lem:proj-betweenness} applies.

(d) The same map carries the closed projective region $\{q_1\ge0\}$ onto
$\{q_2\ge0\}$, the closed ball. The affine set $\overline\Pi$ is
$\{q_1\ge0\}\cap\{Z\ne0\}$, and $\{q_1\ge0\}\cap\{Z=0\}$ forces
$X_1=\dots=X_{n-1}=0$, i.e., is the single point
$[0{:}\dots{:}0{:}1{:}0]$ (the tangency point of the paraboloid with the
hyperplane at infinity), which lies on the boundary quadric $\{q_1=0\}$. So
$\overline\Pi$ is the closed projective quadric region minus one boundary
point, hence isomorphic to the closed ball minus one boundary point.

(e) The same coordinate swap as in (b):
$(x',x_n)\mapsto(x'/x_n,\,1/x_n)$. The singular hyperplane $\{X_n=0\}$
misses $C$ (where $x_n>0$). For $x\in C$ put $u=x'/x_n$ and $t=1/x_n$; then
$|u|<1$ and $t\in(0,\infty)$, and the map is a bijection of $C$ onto
$B^{n-1}\times(0,\infty)$ (inverse $(u,t)\mapsto(u/t,\,1/t)$, which
satisfies $x_n=1/t>0$ and $|x'|=|u|/t<1/t=x_n$).
\end{proof}

\begin{remark}\label{rem:apex}
Parts (c)--(e) mesh with Proposition~\ref{prop:openclosed} and with each
other: what looks like ``shape at infinity'' from the affine viewpoint ---
recession cones, asymptotic directions, apexes --- is ordinary boundary
structure in another projective chart, and the chart-free language sees it
as such. For instance, under the map of (e) the closed cone minus its apex
corresponds to $\{|u|\le1,\ t>0\}$: the apex is traded for the missing
boundary at infinity of the half-cylinder. And the closed paraboloid region
satisfies the mixed extension sentence of
Proposition~\ref{prop:openclosed} (rays in most directions exit; ``vertical''
rays do not), exactly as the punctured closed ball does.
\end{remark}

\begin{example}[New separations in dimension $\ge3$]\label{ex:cylinder}
Let $Z_n=\overline B^{\,n-1}\times\R\subseteq\R^n$ ($n\ge3$) be the closed
solid cylinder. Every boundary point of $Z_n$ lies in the relative interior
of a ruling line contained in $\partial Z_n$, so $Z_n$ has \emph{no} extreme
points, while the closed ball and the closed paraboloid region have strictly
convex boundaries consisting of extreme points. The sentence
$\exists e\,\Ext(e)$ therefore separates $Z_n$ from both, and from every
compact body. (For $n=2$ the ``cylinder'' is the slab, whose planar theory
is handled below.) The cylinder and the slab are separated, in both closed
and open forms, in \S\ref{subsec:recession}; the general classification of
products of lower-dimensional geometries is part of Problem~P8.
\end{example}

\subsection{The trichotomy in the plane}

\begin{proposition}[Planar trichotomy]\label{prop:trichotomy2}
For $a\ne b$ write $\mathrm{Line}(a,b)$ for the definable set
$\{z:\Coll(a,b,z)\}=\ell(a,b)\cap K$, and
$\Meets(a,b;c,d):\equiv\exists z\,(\Coll(a,b,z)\wedge\Coll(c,d,z))$.
Consider the $\LB$-sentences
\begin{align*}
\pi:\;&\forall a\,b\,p\,c\,c'\Bigl[\bigl(a\ne b\wedge\neg\Coll(a,b,p)\wedge
c\ne p\wedge c'\ne p\\
&\qquad\wedge\neg\Meets(p,c;a,b)\wedge\neg\Meets(p,c';a,b)\bigr)
\longrightarrow\Coll(p,c,c')\Bigr]
\tag{Playfair}\\[2pt]
\varphi:\;&\exists a_1 b_1 a_2 b_2\Bigl[a_1\ne b_1\wedge a_2\ne b_2\wedge
\neg\Meets(a_1,b_1;a_2,b_2)\\
&\qquad\wedge\forall c\,d\Bigl(\bigl(c\ne d\wedge
\neg(\Coll(a_1,b_1,c)\wedge\Coll(a_1,b_1,d))\wedge
\neg(\Coll(a_2,b_2,c)\wedge\Coll(a_2,b_2,d))\bigr)\\
&\qquad\qquad\longrightarrow
\bigl(\Meets(c,d;a_1,b_1)\leftrightarrow\Meets(c,d;a_2,b_2)\bigr)\Bigr)\Bigr].
\end{align*}
Then
\[
\R^2\models\pi\wedge\varphi,\qquad
\Sigma_2\models\neg\pi\wedge\varphi,\qquad
D\models\neg\pi\wedge\neg\varphi,
\]
where $D$ is the open unit disk and $\Sigma_2=\{(x,y):0<y<1\}$ the open
planar strip (equivalently the open half-plane, by
Proposition~\ref{prop:projident}(b) with $n=2$). Hence the plane, the
strip/half-plane, and the open disk are pairwise elementarily inequivalent
in $\LB$.
\end{proposition}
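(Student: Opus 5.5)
We need to check six truth values: $\pi$ and $\varphi$ in each of the three structures $\R^2$, $\Sigma_2$ and $D$. The first step is to interpret the formulas geometrically inside each set $K$. Since $\ell(a,b)\cap K$ and $\ell(c,d)\cap K$ are convex subsets of lines, $\Meets(a,b;c,d)$ says exactly that the two lines $\ell(a,b)$ and $\ell(c,d)$ have a common point inside $K$. Thus $\pi$ says: through a point $p$ off a line, at most one $K$-line avoids that line within $K$. The sentence $\varphi$ says there are two $K$-lines $L_1,L_2$, disjoint in $K$, such that every other $K$-line (not equal to $L_1$ or $L_2$) meets $L_1$ in $K$ exactly when it meets $L_2$ in $K$. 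With this reading, each of the six verifications is elementary planar geometry.

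\textbf{The plane $\R^2$.} Here $\pi$ is Playfair's axiom: the non-meeting lines through $p$ are exactly the parallels to $\ell(a,b)$, and there is only one such parallel, so $c,c'$ lie on it and $\Coll(p,c,c')$ holds. For $\varphi$, take $L_1,L_2$ to be two distinct parallel lines. Any third line either is parallel to both, and so meets neither, or is not parallel, and so meets both.

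\textbf{The strip $\Sigma_2$.} Take $\ell(a,b)$ to be the horizontal line $y=1/2$ and $p=(0,1/4)$. The horizontal line through $p$ misses it. A line through $p$ of very small positive slope also misses it inside the strip, because it leaves through $y=0$ and $y=1$ only at $|x|$ large, while it crosses $y=1/2$ at $x=1/(4s)$. Pick the slope $s$ so that $1/(4s)$ falls outside the region where that line stays in $\Sigma_2$. This gives two non-collinear witnesses $c,c'$, so $\neg\pi$ holds. More simply, take $\ell(a,b)$ vertical at $x=0$ and $p=(10,1/2)$: both the line with slope $1/100$ through $p$ and the vertical line through $p$ miss $x=0$ within the strip. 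For $\varphi$, choose $L_1,L_2$ to be the vertical lines $x=0$ and $x=1$. A non-vertical line that meets one of them inside the strip need not meet the other, so that choice fails; I will not claim it works.

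The right choice is the horizontal lines $y=1/3$ and $y=2/3$. Every non-horizontal line crosses the whole strip, hence meets both. Every horizontal line other than these two meets neither. This holds for $\varphi$. It is the same argument as in $\R^2$, and it transports to the half-plane by Proposition~\ref{prop:projident}(b).

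\textbf{The disk $D$.} Here $\neg\pi$ is immediate: the Klein model has many chords through $p$ that miss a given chord. Explicitly, take the chord $y=0$ and $p=(0,1/2)$; every chord through $p$ whose slope has absolute value below $1/\sqrt3$ misses the diameter inside the open disk. The main obstacle is $\neg\varphi$. Suppose $L_1,L_2$ are disjoint chords of $D$, and consider their endpoints on the circle. Disjointness means the endpoints do not interleave; closures may share an endpoint. Choose a chord $\ell$ that crosses $L_1$ near one of its endpoints $u$ and leaves $D$ near $u$ on the side away from $L_2$. This is done by taking the far endpoint of $\ell$ on the arc between $u$ and the nearer endpoint of $L_2$, or just past $u$ so as to separate $u$ from both endpoints of $L_2$.

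The cleanest formulation is combinatorial. Two chords meet inside $D$ if and only if their four endpoints interleave on the circle. Let $L_1=[u_1,u_2]$, and let the endpoints of $L_2$ lie in one closed arc $\alpha$ cut off by $u_1,u_2$. Choose $\ell$ with one endpoint in the interior of the other arc, and the other endpoint in $\alpha$, strictly between $u_1$ and both endpoints of $L_2$. Then $\ell$ interleaves with $L_1$ but not with $L_2$, and $\ell$ is distinct from both chords. Such a choice is possible even when $L_2$ shares the endpoint $u_1$, provided we take the other endpoint of $\ell$ close to $u_1$ on the side toward $L_2$'s other endpoint. I will check carefully this shared-endpoint case of asymptotically parallel chords, which I expect is the only delicate point. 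If it fails near $u_1$, use $u_2$ instead, since $L_2$ cannot share both endpoints with $L_1$. The chord $\ell$ then refutes the biconditional in $\varphi$, so $D\models\neg\varphi$.

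Finally, the three structures are pairwise distinguished by the truth values of $\pi$ and $\varphi$, which gives the pairwise elementary inequivalence.
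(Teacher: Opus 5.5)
Your overall plan of six truth-value checks uses the same witnesses as the paper for $\R^2\models\pi\wedge\varphi$ and for $\Sigma_2\models\varphi$. The gap is the verification of $\Sigma_2\models\neg\pi$: both of your witnesses fail.

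In the first, $\ell(a,b)=\{y=1/2\}$ and $p=(0,1/4)$. The line $y=1/4+sx$ crosses $y=1/2$ at $(1/(4s),1/2)$. That point has height $1/2$, so it lies in $\Sigma_2$ for every $s\ne0$. This is exactly the fact ``every non-horizontal line crosses the whole strip'' that you use a paragraph later for $\varphi$. So with horizontal $\ell(a,b)$ the only line through $p$ missing it is the horizontal one, and no counterexample to $\pi$ can be built this way.

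In the second, $\ell(a,b)=\{x=0\}$ and $p=(10,1/2)$. The line of slope $1/100$ through $p$ meets $x=0$ at $(0,2/5)\in\Sigma_2$, so $\Meets$ holds and it is not a witness. The correct picture is the reverse of what you wrote. A line through $p$ of slope $s$ reaches $x=0$ at height $1/2-10s$, so it misses the bounded trace $\{0\}\times(0,1)$ exactly when $|s|\ge 1/20$. Steep lines are the witnesses, for example the vertical line and the line of slope $1$ through $p$, which give non-collinear $c,c'$. This is the paper's argument: take $\ell(a,b)$ non-horizontal, so that its trace is a bounded segment, and from a distant point many directions miss it.

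Two smaller points concern the disk.

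\emph{The threshold for $\neg\pi$.} The line through $(0,1/2)$ of slope $s$ meets $y=0$ at $x=-1/(2s)$, which lies in $D$ whenever $|s|>1/2$. So the threshold is $1/2$, not $1/\sqrt3$. This is harmless, since slopes $0$ and $1/4$ already witness $\neg\pi$.

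\emph{The shared-endpoint case of $\neg\varphi$.} Your endpoint-interleaving argument is a valid alternative to the paper's, but only through your fallback. Suppose $L_2$ shares the endpoint $u_1$ with $L_1$. Then placing an endpoint of $\ell$ between $u_1$ and the other endpoint of $L_2$ makes $\ell$ interleave with $L_2$ as well, so $\ell$ meets both chords. State the argument cleanly by working from the start at an endpoint of $L_1$ not shared with $L_2$; one exists since $L_1\ne L_2$.

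The paper avoids this case split. It picks $w\in L_1\setminus\overline{L_2}$ and notes that the directions at $w$ whose lines meet the compact segment $\overline{L_2}$ form a proper closed arc. It then takes the chord through $w$ in any direction outside that arc and distinct from that of $L_1$. That argument uses only compactness of $\overline{L_2}$, whereas yours relies on the crossing-iff-interleaving criterion for chords of the circle.
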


\begin{proof}
\emph{The plane.} $\pi$ is the uniqueness half of the parallel postulate:
through $p\notin\ell(a,b)$ the lines missing $\ell(a,b)$ are exactly the
parallel, and any two points $c,c'$ spanning such lines with $p$ are
collinear with $p$. For $\varphi$, take $\mathrm{Line}(a_1,b_1)$ and
$\mathrm{Line}(a_2,b_2)$ two distinct parallel lines: any third line
distinct from both either is parallel to them (meets neither) or is
transversal (meets both).

\emph{The strip} $\Sigma_2$. A line trace in $\Sigma_2$ is either a full
horizontal line or a bounded open segment (any non-horizontal line crosses
the strip in a bounded interval). Key observation: \emph{every}
non-horizontal line trace meets \emph{every} horizontal one --- the affine
crossing point has $y$-coordinate strictly between $0$ and $1$, hence lies
in $\Sigma_2$, when the horizontal is one of the traces $\{y=h\}$, $0<h<1$.
$\neg\pi$: take $\ell(a,b)$ non-horizontal; its trace is a bounded segment,
and through a point $p$ far away (within the strip) there are many
directions whose traces miss it, spanned by non-collinear choices of
$c,c'$. $\varphi$: take the traces $L_1=\{y=1/3\}$, $L_2=\{y=2/3\}$; they
are disjoint. Any other line trace is horizontal (meets neither) or
non-horizontal (meets both, by the key observation). So the biconditional
holds for all admissible $c,d$.

\emph{The open disk.} $\neg\pi$ is hyperbolic non-uniqueness of parallels:
through a point off a chord there are many chords missing it, in
non-collinear directions. $\neg\varphi$: let $L_1,L_2$ be any two disjoint
chord traces. Choose $w\in L_1\setminus\overline{L_2}$ ($L_1$ is not
contained in the compact segment $\overline{L_2}$). From $w$, the set of
directions whose full line meets $\overline{L_2}$ is a closed set of
directions omitting a nonempty open set (it is the angular sector subtended
by the compact segment $\overline{L_2}$ from the exterior point $w$, a
proper closed subset of the circle of directions). Choose a direction
outside this set and different from the direction of $L_1$; the chord $M$
through $w$ in that direction meets $L_1$ (at $w$) and misses $L_2$, and $M$
is collinear with neither defining pair. This violates the biconditional,
so no witnessing pair $(L_1,L_2)$ exists.
\end{proof}

\subsection{The trichotomy through sections}

For $n\ge3$ the sentence $\pi$ is useless as it stands: it fails already in
$\R^n$, since through a point off a line there are many pairwise skew lines
missing it, in non-collinear directions. The parallel postulate is a planar
statement, and the correct move is to test it on planar sections.

\begin{proposition}\label{prop:trichotomyn}
Let $n\ge3$ and define the $\LB$-sentences
\begin{align*}
\Psi_\forall&:\equiv\forall p_1p_2p_3\,
\bigl(\neg\Coll(p_1,p_2,p_3)\to\pi^{\circ}(p_1,p_2,p_3)\bigr),\\
\Psi_\exists&:\equiv\exists p_1p_2p_3\,
\bigl(\neg\Coll(p_1,p_2,p_3)\wedge\pi^{\circ}(p_1,p_2,p_3)\bigr),
\end{align*}
where $\pi^{\circ}$ is the relativization of the Playfair sentence $\pi$ to
the definable planar section (Proposition~\ref{prop:sections}). Then
\[
\R^n\models\Psi_\forall\wedge\Psi_\exists,\qquad
\Sigma_n\models\neg\Psi_\forall\wedge\Psi_\exists,\qquad
B^n\models\neg\Psi_\forall\wedge\neg\Psi_\exists,
\]
where $\Sigma_n=\{0<x_n<1\}$ is the open slab and $B^n$ the open unit ball.
Hence $\R^n$, the slab \textup(equivalently the open half-space\textup), and
the open ball are pairwise elementarily inequivalent in $\LB$.
\end{proposition}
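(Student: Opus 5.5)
The plan is to reduce every clause to the planar trichotomy (Proposition~\ref{prop:trichotomy2}) through the section scheme. By Proposition~\ref{prop:sections}(iii), for non-collinear $p_1,p_2,p_3\in K$ we have $K\models\pi^{\circ}(p_1,p_2,p_3)$ iff $(K\cap A)\models\pi$, where $A=\aff\{p_1,p_2,p_3\}$. By Proposition~\ref{prop:sections}(ii), any affine identification $A\cong\R^2$ is an $\LB$-isomorphism of the section onto a planar convex set. Isomorphic structures satisfy the same sentences. So it suffices to determine, for each of the three ambient sets, which affine isomorphism types of planar sections occur. Then we read off $\pi$ or $\neg\pi$ from Proposition~\ref{prop:trichotomy2}.

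\emph{$\R^n$.} Every section $\R^n\cap A=A$ is an affine plane, affinely $\R^2$. Since $\R^2\models\pi$, $\pi^{\circ}$ holds at every non-collinear triple, which gives $\Psi_\forall$. Non-collinear triples exist, so $\Psi_\exists$ follows as well.

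\emph{The open ball.} Every section $B^n\cap A$ with $A$ a plane meeting $B^n$ in three non-collinear points is a Euclidean planar section. It is therefore an open round disk of positive radius in $A$, affinely the open unit disk $D$. Since $D\models\neg\pi$, $\pi^{\circ}$ fails at every admissible triple. This gives both $\neg\Psi_\exists$ and $\neg\Psi_\forall$, the latter because non-collinear triples exist.

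\emph{The open slab $\Sigma_n$.} This is the only case needing a small case split, and it is where I expect the (mild) main care to lie. Restrict the coordinate function $x_n$ to the plane $A$; this gives an affine function $f$ on $A$.

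If $f$ is constant, with value $h\in(0,1)$, then $A$ is a plane inside the hyperplane $\{x_n=h\}$. In that case $\Sigma_n\cap A=A\cong\R^2$, which satisfies $\pi$. Such planes exist precisely because $n\ge3$: the hyperplane $\{x_n=1/2\}$ has dimension $n-1\ge2$. Choosing three non-collinear points in it witnesses $\Psi_\exists$.

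If $f$ is nonconstant, then $\Sigma_n\cap A=\{z\in A:0<f(z)<1\}$. Choosing affine coordinates on $A$ in which $f$ is the second coordinate shows this section is affinely the open strip $\Sigma_2$, so by Proposition~\ref{prop:trichotomy2} it satisfies $\neg\pi$. To witness $\neg\Psi_\forall$, take for instance $p_1=\tfrac14e_n$, $p_2=\tfrac34e_n$, $p_3=\tfrac14e_n+e_1$.

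The final sentence of the statement follows from the three truth-value patterns. The parenthetical identification of the slab with the open half-space is Proposition~\ref{prop:projident}(b) together with the fact that isomorphic structures are elementarily equivalent.
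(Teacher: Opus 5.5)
Your proposal is correct and follows the paper's proof essentially step for step. You reduce each case through the section scheme (Proposition~\ref{prop:sections}) to the planar trichotomy (Proposition~\ref{prop:trichotomy2}), and you use the same split on the slab according to whether $x_n$ is constant on the plane. Your explicit witnesses, and your remark that planes on which $x_n$ is constant exist only because $n\ge3$, simply make explicit what the paper's ``such flats exist'' leaves implicit.
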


\begin{proof}
Every planar section of $\R^n$ through three non-collinear points is a full
$2$-flat, affinely a copy of $\R^2$, which satisfies $\pi$
(Proposition~\ref{prop:trichotomy2}); so $\R^n\models\Psi_\forall$, and
$\Psi_\exists$ follows since non-collinear triples exist.

For the slab, let $A$ be a $2$-flat containing three non-collinear points of
$\Sigma_n$. If the affine function $x_n$ is constant on $A$, then
$A\subseteq\{x_n=c\}$ for some $c\in(0,1)$ and the section is all of $A$,
a copy of $\R^2$ satisfying $\pi$; such flats exist, giving
$\Psi_\exists$. If $x_n$ is nonconstant on $A$, it is a nonconstant affine
function on a plane, attaining every value on a parallel family of lines,
and the section $\{y\in A:0<x_n(y)<1\}$ is an open planar strip; it is
nonempty (it contains the three points), and by
Proposition~\ref{prop:trichotomy2} (and affine invariance) it fails $\pi$.
Such flats also exist, giving $\neg\Psi_\forall$.

For the open ball, every planar section through three non-collinear points
is an open round disk of positive radius, which fails $\pi$
(Proposition~\ref{prop:trichotomy2}, affine invariance); hence
$B^n\models\neg\Psi_\exists$, and a fortiori $\neg\Psi_\forall$.
\end{proof}

\begin{remark}
The open ball with its betweenness structure is the Klein model: its
section-geometry is uniformly hyperbolic, the slab's is mixed
Euclidean/degenerate, and $\R^n$'s is uniformly Euclidean ---
Proposition~\ref{prop:trichotomyn} is the classical trichotomy read off from
the distribution of section geometries. More generally, each properly convex
open domain carries its Hilbert geometry, and the $\LB$-classification of
such domains up to elementary equivalence contains the projective
classification problem for Hilbert geometries. For $n\ge3$ the degenerate
(non-properly-convex) landscape --- slabs, cylinders, cones, and products ---
is genuinely richer than in the plane; see Example~\ref{ex:cylinder},
\S\ref{subsec:recession}, and Problem~P8.
\end{remark}

\subsection{Recession at infinity is elementary}\label{subsec:recession}

Proposition~\ref{prop:openclosed} used the formula $\eta$ to detect whether
boundary points are present. In a \emph{closed} convex set the same formula
detects something finer: which directions recede. Recall that the
\emph{recession cone} of a convex set $F\subseteq\R^n$ is
\[
\rec F=\{v\in\R^n:x+tv\in F\ \text{for all}\ x\in F,\ t\ge0\},
\]
a convex cone, and that its \emph{lineality space} is
$\lin F=\rec F\cap(-\rec F)$. For closed convex $F$ the recession cone is
closed, membership may be tested on a single ray --- if $x+tv\in F$ for all
$t\ge0$ and \emph{one} $x\in F$, then for all $x\in F$ --- and $F$ is bounded
if and only if $\rec F=\{0\}$ \cite[Theorems~8.3 and~8.4]{Rockafellar70}. We
write $\dim\rec F$ for the dimension of the linear span of $\rec F$; since
$\rec F$ is convex, this is also its topological dimension.

\begin{lemma}[Exit lemma]\label{lem:exit}
Let $F\subseteq\R^n$ be closed and convex, and let $a\ne b$ be points of $F$.
Then $\eta(a,b)$ holds in $F$ if and only if the ray
$\{a+t(b-a):t\ge0\}$ is not contained in $F$; equivalently, $\neg\eta(a,b)$
holds if and only if $b-a\in\rec F$.
\end{lemma}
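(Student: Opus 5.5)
The plan is to unwind $\eta(a,b)$ directly on the ray $r=\{a+t(b-a):t\ge0\}$, with $v=b-a$. Since $F$ is convex and contains $a$, the set $I=\{t\ge0:a+tv\in F\}$ is an interval containing $[0,1]$. Because $F$ is closed, $I$ is closed, so $I$ is either $[0,T]$ for some $T\ge1$ or all of $[0,\infty)$. Any witness $c$ for $\eta$ must satisfy $B(a,b,c)$, which places $c$ on $r$ at a parameter $t_c\ge1$. Likewise, any $d$ with $B(a,c,d)$ lies on $r$ at a parameter $t_d\ge t_c$. Conversely, every point of $F$ on $r$ beyond $c$ satisfies $B(a,c,d)$. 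So the clause $\forall d\,(B(a,c,d)\to d=c)$ says exactly that $t_c=\max I$.

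For the forward direction, suppose $\eta(a,b)$ holds with witness $c$. Then $I$ has a maximum $t_c$, and the point $a+(t_c+1)v$ is not in $F$. Hence the ray is not contained in $F$.

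For the converse, suppose the ray is not contained in $F$. Then $I=[0,T]$ is bounded, and $c=a+Tv\in F$ is the last point, where closedness is what guarantees the supremum is attained. Since $T\ge1$, we have $B(a,b,c)$, and $c$ satisfies the maximality clause by the description above. So $\eta(a,b)$ holds, with $a\ne b$ given.

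For the equivalent formulation, $\neg\eta(a,b)$ holds if and only if the ray from $a$ in direction $v$ lies in $F$. By the single-ray test for closed convex sets \cite[Theorem~8.3]{Rockafellar70}, quoted just before the lemma, this holds if and only if $v\in\rec F$. The one-ray to all-points implication is exactly that cited fact, and the reverse implication follows from the definition of $\rec F$ applied at $x=a$.

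There is no real obstacle here. The only delicate point is remembering that closedness is used twice: once to make $I$ closed so that its supremum is a point of $F$ (without it the argument fails, as in the open ball), and once in the recession-cone equivalence.
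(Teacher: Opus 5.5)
Your proof is correct and follows essentially the same route as the paper's. You use the closed interval of ray parameters lying in $F$, take its finite right endpoint as the last-point witness $c$, and get the recession restatement from Rockafellar's Theorem~8.3. The only difference is organizational: you first show that the maximality clause amounts to $t_c=\max I$ and read off both directions from that, whereas the paper treats the ray-contained case by directly exhibiting $d=c+(b-a)$.
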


\begin{proof}
The set $T=\{t\ge0:a+t(b-a)\in F\}$ is convex and closed and contains
$[0,1]$. Suppose the ray is not contained in $F$, so that
$t^{*}:=\sup T<\infty$ and $T=[0,t^{*}]$ with $t^{*}\ge1$; put
$c:=a+t^{*}(b-a)\in F$. Then $B(a,b,c)$ holds since $t^{*}\ge1$. If $d\in F$
satisfies $B(a,c,d)$, then $c=(1-r)a+rd$ for some $r\in(0,1]$ (the case
$r=0$ would give $c=a$, impossible as $t^{*}\ge1$), so
$d=a+(t^{*}/r)(b-a)$ lies on the ray with parameter $t^{*}/r\ge t^{*}$;
membership in $F$ forces $t^{*}/r=t^{*}$, hence $d=c$. So $c$ witnesses
$\eta(a,b)$.

Conversely, suppose the ray is contained in $F$, and let $c$ be any point
with $B(a,b,c)$: then $b=(1-r)a+rc$ with $r\in(0,1]$, so
$c=a+(1/r)(b-a)$ lies on the ray. The point $d:=c+(b-a)$ lies on the ray as
well, hence in $F$, satisfies $B(a,c,d)$, and differs from $c$. So no $c$ is
a last point, and $\eta(a,b)$ fails.

The restatement via the recession cone is the base-independence of recession
for closed convex sets \cite[Theorem~8.3]{Rockafellar70}: the ray from $a$ in
direction $b-a$ is contained in $F$ if and only if $b-a\in\rec F$.
\end{proof}

Lemma~\ref{lem:exit} converts recession into first-order data. For $k\ge1$
define the $\LB$-sentences
\[
\rho_k:\equiv\exists a\,\exists b_1\cdots\exists b_k\,
\Bigl(\neg\Dep_{k+1}(a,b_1,\dots,b_k)\wedge
\bigwedge_{i=1}^{k}\neg\eta(a,b_i)\Bigr),
\]
\[
\lambda_k:\equiv\exists a\,\exists b_1\cdots\exists b_k\,
\Bigl(\neg\Dep_{k+1}(a,b_1,\dots,b_k)\wedge
\bigwedge_{i=1}^{k}\bigl(\neg\eta(a,b_i)\wedge\neg\eta(b_i,a)\bigr)\Bigr),
\]
\[
\varepsilon:\equiv\exists a\,\exists b\,
\bigl(\neg\eta(a,b)\wedge\eta(b,a)\bigr).
\]

\begin{theorem}[Recession data are elementary]\label{thm:recession}
Let $F\subseteq\R^n$ be a nonempty closed convex set and let $k\ge1$. Then:
\begin{enumerate}[label=\textup{(\roman*)}]
\item $F\models\rho_k$ if and only if $\dim\rec F\ge k$;
\item $F\models\lambda_k$ if and only if $\dim\lin F\ge k$;
\item $F\models\varepsilon$ if and only if $\rec F\ne\lin F$.
\end{enumerate}
Consequently $\dim\rec F$, $\dim\lin F$, and whether $\rec F$ is a linear
subspace are determined by $\Th_{\LB}(F)$; together with the affine dimension
(Proposition~\ref{prop:dimension}) they are elementary invariants of closed
convex sets.
\end{theorem}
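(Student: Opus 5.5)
The plan is to translate each sentence through the Exit Lemma (Lemma~\ref{lem:exit}). For $a\ne b$ in $F$, $\neg\eta(a,b)$ holds exactly when $b-a\in\rec F$. Proposition~\ref{prop:dep} tells us that $\neg\Dep_{k+1}(a,b_1,\dots,b_k)$ holds exactly when $a,b_1,\dots,b_k$ are affinely independent, i.e.\ when the vectors $b_i-a$ are linearly independent. So $\rho_k$ says that $F$ contains a point $a$ and $k$ linearly independent recession directions realized as differences $b_i-a$ with $b_i\in F$. For $\lambda_k$, the extra clause $\neg\eta(b_i,a)$ says $a-b_i\in\rec F$, so $b_i-a\in\lin F$.

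\textbf{Part (i).} The forward direction is immediate: a witness exhibits $k$ linearly independent vectors of $\rec F$, so $\dim\rec F\ge k$. For the converse, I use that the span of the convex cone $\rec F$ is spanned by elements of the cone itself, so $\dim\rec F\ge k$ gives linearly independent $v_1,\dots,v_k\in\rec F$. Fix any $a\in F$ and set $b_i:=a+v_i$. Then $b_i\in F$ by the definition of recession, the tuple $(a,\bar b)$ is affinely independent, and $b_i-a=v_i\in\rec F$ gives $\neg\eta(a,b_i)$ by the Exit Lemma. Hence $F\models\rho_k$.

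\textbf{Part (ii).} This follows the same pattern, now with $\lin F$, which is a linear subspace. The forward direction reads both clauses through the Exit Lemma: they give $\pm(b_i-a)\in\rec F$, so the $b_i-a$ are $k$ linearly independent vectors of $\lin F$. For the converse, pick a basis $v_1,\dots,v_k$ of a $k$-dimensional subspace of $\lin F$ and set $b_i=a+v_i\in F$. Since $a-b_i=-v_i\in\rec F$, the Exit Lemma gives $\neg\eta(b_i,a)$, and $\neg\eta(a,b_i)$ holds as in (i).

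\textbf{Part (iii).} A witness gives $v=b-a\in\rec F$ with $-v\notin\rec F$, so $\rec F\ne\lin F$. Conversely, choose $v\in\rec F\setminus\lin F$, so $v\ne0$ and $-v\notin\rec F$. Fix $a\in F$ and put $b=a+v\in F$. Then $\neg\eta(a,b)$ holds, and $\eta(b,a)$ holds because $a-b=-v\notin\rec F$.

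\textbf{The consequence.} The two dimensions are recovered as the largest $k\le n$ for which $\rho_k$, respectively $\lambda_k$, holds; here $n$ is itself elementary by Proposition~\ref{prop:dimension}. Since $\lin F\subseteq\rec F$ always, $\rec F$ is a linear subspace exactly when $\rec F=\lin F$, which is what $\varepsilon$ decides.

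The only point needing care is the base-independence used in the Exit Lemma. It is essential that witnesses $b_i$ can be manufactured from an arbitrary base point $a$, and this is where closedness of $F$ enters, through \cite[Theorem~8.3]{Rockafellar70}. I expect no genuine obstacle beyond checking that the degenerate case $a=b$ never arises: linear independence of the $v_i$, or $v\ne0$ in (iii), guarantees $b_i\ne a$ and $b\ne a$.
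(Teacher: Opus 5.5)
Your proof is correct and follows the paper's argument essentially verbatim: each sentence is decoded through the Exit Lemma and $\neg\Dep_{k+1}$, and the witnesses $b_i=a+v_i$ are built from linearly independent vectors of $\rec F$ (resp.\ $\lin F$) or from a single $v\in\rec F\setminus\lin F$. One small correction to your closing remark: building $b_i=a+v_i\in F$ uses only the definition of $\rec F$, and closedness (via base-independence) is actually needed in the other direction, where you conclude $b_i-a\in\rec F$ from the single ray at $a$ that $\neg\eta(a,b_i)$ provides.
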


\begin{proof}
Throughout, affine independence of a tuple $(a,b_1,\dots,b_k)$ is equivalent
to linear independence of $(b_1-a,\dots,b_k-a)$, and by
Proposition~\ref{prop:dep} it is expressed by $\neg\Dep_{k+1}$.

(i) If $\dim\rec F\ge k$, choose linearly independent
$u_1,\dots,u_k\in\rec F$ (a convex cone contains a basis of its span) and any
$a\in F$, and put $b_i:=a+u_i\in F$. The tuple $(a,\bar b)$ is affinely
independent, and each $\neg\eta(a,b_i)$ holds by Lemma~\ref{lem:exit}.
Conversely, witnesses for $\rho_k$ yield, again by Lemma~\ref{lem:exit},
vectors $b_i-a\in\rec F$ that are linearly independent, so
$\dim\rec F\ge k$.

(ii) The same argument, using that $\neg\eta(a,b_i)\wedge\neg\eta(b_i,a)$
holds if and only if both $b_i-a$ and $a-b_i$ lie in $\rec F$, i.e.
$b_i-a\in\lin F$; for the forward direction take
$u_1,\dots,u_k\in\lin F$ linearly independent and $b_i=a+u_i$.

(iii) If $\neg\eta(a,b)\wedge\eta(b,a)$ holds, then $b-a\in\rec F$ while
$a-b\notin\rec F$, so $b-a\in\rec F\setminus\lin F$. Conversely, given
$v\in\rec F\setminus\lin F$, any $a\in F$ and $b:=a+v$ witness
$\varepsilon$: the ray from $a$ in direction $v$ is contained in $F$, while
the ray from $b$ in direction $-v$ is not (else $-v\in\rec F$).
\end{proof}

\begin{proposition}[Compact sections]\label{prop:compactsections}
Let $F\subseteq\R^n$ be closed and convex with $m=\dim\aff F\ge1$, let
$\ell=\dim\lin F$, and set $e=1$ if $\rec F\ne\lin F$ and $e=0$ otherwise.
Then for $1\le k\le m$, $F$ has a compact $k$-dimensional section by an
affine $k$-flat if and only if $k\le m-\ell-e$. Moreover, with
$\theta:\equiv\forall a\,\forall b\,\bigl(a\ne b\to\eta(a,b)\bigr)$ and
\[
\sigma:\equiv\exists p_1\,\exists p_2\,\exists p_3\,
\bigl(\neg\Coll(p_1,p_2,p_3)\wedge\theta^{\circ}(p_1,p_2,p_3)\bigr),
\]
$F\models\sigma$ if and only if $F$ has a compact planar section; thus
$\sigma$ holds in $F$ if and only if $m-\ell-e\ge2$.
\end{proposition}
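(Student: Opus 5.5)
The plan is to translate compactness of a section into a condition on linear subspaces meeting the recession cone, and then to solve that linear-algebra problem by passing to the quotient by the lineality space. Write $C=\rec F$, $L_0=\lin F$, and let $L$ be the $m$-dimensional direction space of $\aff F$.

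\emph{Step 1 (reduction).} Let $A=x+W$ be an affine $k$-flat with $F\cap A\neq\emptyset$. The set $F\cap A$ is closed and convex, and by \cite[Cor.~8.3.3]{Rockafellar70} its recession cone is $C\cap W$. So $F\cap A$ is compact if and only if $C\cap W=\{0\}$, by the boundedness criterion \cite[Thm.~8.4]{Rockafellar70}.

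If the section is $k$-dimensional, then $W\subseteq L$. Conversely, given a $k$-dimensional $W\subseteq L$ with $W\cap C=\{0\}$, choose $x\in\relint F$ and set $A=x+W$. Since $A\subseteq\aff F$, the set $F\cap A$ contains a neighbourhood of $x$ in $A$, so the section is $k$-dimensional. It is also compact. Hence the first claim is equivalent to the following: the largest dimension of a subspace $W\subseteq L$ with $W\cap C=\{0\}$ equals $m-\ell-e$. Once a maximal such $W$ is found, its subspaces give every smaller $k$.

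\emph{Step 2 (the linear algebra).} Because $L_0\subseteq C$ and $C+L_0=C$, if $w+l\in C$ with $w\in W$ and $l\in L_0$, then $w\in C$.

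For the upper bound, note first that $W\cap L_0\subseteq W\cap C=\{0\}$, so $W$ maps injectively to $L/L_0$. By the observation above, its image $W'$ satisfies $W'\cap C'=\{0\}$, where $C'=C/L_0$ is a closed pointed cone. The cone $C'$ is nonzero exactly when $e=1$. If $C'\neq0$, then $W'$ cannot be all of $L/L_0$, so $\dim W\le m-\ell-1$. In every case $\dim W\le m-\ell$.

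For the lower bound:
\begin{itemize}[leftmargin=2em]
\item If $e=0$, take $W$ to be a complement of $L_0$ in $L$.
\item If $e=1$, use pointedness. A closed pointed cone admits a linear functional $\varphi$ that is strictly positive on $C'\setminus\{0\}$; this is obtained by separating $0$ from a compact base of $C'$. Then $\ker\varphi$ is a hyperplane of $L/L_0$ meeting $C'$ only at $0$, and its lift to a complement of $L_0$ has dimension $m-\ell-1$.
\end{itemize}
The main obstacle is this step: checking the pointed-cone separation cleanly, and checking that injectivity modulo $L_0$ transfers the condition $W\cap C=\{0\}$ correctly.

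\emph{Step 3 (the sentence $\sigma$).} Let $G$ be a closed convex set. By Lemma~\ref{lem:exit}, $G\models\theta$ if and only if no nonzero $b-a$ lies in $\rec G$, that is, $\rec G=\{0\}$, that is, $G$ is compact.

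By Proposition~\ref{prop:sections}(iii), $F\models\theta^{\circ}(p_1,p_2,p_3)$ if and only if the planar section through $p_1,p_2,p_3$ satisfies $\theta$. That section is closed and convex, so this holds exactly when the section is compact. Every $2$-dimensional planar section arises from three non-collinear points of $F$. Hence $\sigma$ holds if and only if $F$ has a compact $2$-dimensional section, which by Steps 1--2 holds if and only if $m-\ell-e\ge2$. When $m\le1$, both sides fail: $\sigma$ fails for lack of non-collinear triples, and no $2$-dimensional section exists.
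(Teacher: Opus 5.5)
Your proof is correct and follows the paper's argument essentially step for step. You reduce compactness of a section to $W\cap\rec F=\{0\}$ via Rockafellar's Corollary~8.3.3 and Theorem~8.4 with a relative-interior base point; you bound $\dim W$ by passing to the quotient by $\lin F$ and separating $0$ from a compact base of the pointed image cone; and you read off $\sigma$ from the exit lemma and the relativization of Proposition~\ref{prop:sections}. The one step you only assert is that $\rec F/\lin F$ is closed and pointed. This is routine, and the paper derives it from the saturation $\rec F+\lin F=\rec F$.
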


\begin{proof}
Let $W$ be the direction space of $\aff F$, so $\rec F\subseteq W$ and
$\dim W=m$.

\emph{Step 1: there is a $k$-dimensional subspace $U\subseteq W$ with
$U\cap\rec F=\{0\}$ if and only if $k\le m-\ell-e$.}
Suppose $U\cap\rec F=\{0\}$. Then $U\cap\lin F=\{0\}$, so $k\le m-\ell$.
If moreover $\rec F\ne\lin F$ and $k=m-\ell$, then $U\oplus\lin F=W$; pick
$v\in\rec F\setminus\lin F$ and write $v=u+w$ with $u\in U$, $w\in\lin F$.
Since $\rec F$ is a convex cone containing $\lin F$, it is closed under
addition of elements of $\lin F$, so $u=v+(-w)\in\rec F$, whence
$u\in U\cap\rec F=\{0\}$ and $v=w\in\lin F$, a contradiction. So
$k\le m-\ell-e$.

Conversely, if $\rec F=\lin F$, any $k$-dimensional subspace of a linear
complement of $\lin F$ in $W$ meets $\rec F$ only at $0$, and such subspaces
exist for all $k\le m-\ell$. If $\rec F\ne\lin F$, pass to the quotient
$\overline W=W/\lin F$ of dimension $m-\ell$ and let $\overline C$ be the
image of $\rec F$. Since $\rec F+\lin F=\rec F$, the cone $\rec F$ is
saturated for the quotient map, so $\overline C$ is a closed convex cone; it
is nonzero (it contains the image of $v$ above) and pointed: if
$\pm\overline v\in\overline C$, then $v+w\in\rec F$ and
$-v+w'\in\rec F$ for some $w,w'\in\lin F$, so
$-(v+w)=(-v+w')+(-w-w')\in\rec F+\lin F=\rec F$, giving
$v+w\in\lin F$ and $\overline v=0$. For a nonzero pointed closed cone
$\overline C$ there is a linear functional $f$ on $\overline W$ with $f>0$ on
$\overline C\setminus\{0\}$: the set $\overline C\cap S$ ($S$ the unit
sphere) is compact and nonempty, and $0\notin\conv(\overline C\cap S)$ ---
otherwise $0=\sum_i\mu_iu_i$ nontrivially with $u_i\in\overline C\cap S$,
so $-\mu_1u_1=\sum_{i\ge2}\mu_iu_i\in\overline C$ and
$\pm u_1\in\overline C$, contradicting pointedness --- so a hyperplane
strictly separates $0$ from $\conv(\overline C\cap S)$, and the resulting
$f$ is positive on $\overline C\setminus\{0\}$. Any $k$-dimensional subspace
$\overline U\subseteq\ker f$ (available since
$\dim\ker f=m-\ell-1\ge k$) meets $\overline C$ only at $0$. Lift
$\overline U$ to a subspace $U\subseteq W$ with $U\cap\lin F=\{0\}$ mapping
isomorphically onto $\overline U$; if $u\in U\cap\rec F$, its image lies in
$\overline U\cap\overline C=\{0\}$, so $u\in U\cap\lin F=\{0\}$.

\emph{Step 2: sections.} If $U\cap\rec F=\{0\}$ with $\dim U=k$, choose
$x_0\in\relint F$ and set $S:=F\cap(x_0+U)$, a closed convex set. Its
recession cone is $\rec F\cap U=\{0\}$, since the recession cone of an
intersection of closed convex sets with a common point is the intersection
of the recession cones \cite[Corollary~8.3.3]{Rockafellar70} and
$\rec(x_0+U)=U$; hence $S$ is bounded \cite[Theorem~8.4]{Rockafellar70},
so compact. Since $x_0\in\relint F$, a relative neighborhood of $x_0$ in
$\aff F$ lies in $F$, so $S$ contains a $k$-dimensional disk about $x_0$ and
$\dim S=k$. Conversely, if $S=F\cap(x+U')$ is a compact $k$-dimensional
section, let $U$ be the span of $S-S$, a $k$-dimensional subspace of $W$;
then $\rec F\cap U\subseteq\rec F\cap U'=\rec S=\{0\}$ by compactness, and
Step~1 gives $k\le m-\ell-e$.

\emph{Step 3: the sentence.} Suppose $F\models\sigma$, witnessed by a
non-collinear triple $\bar p$ with $A=\aff\{\bar p\}$. By
Proposition~\ref{prop:sections}, $S:=F\cap A$ is a two-dimensional closed
convex set (closedness since $F$ and $A$ are closed) and
$S\models\theta$. If $\rec S$ contained some $v\ne0$, then for $a\in S$ and
$b:=a+v\in S$ Lemma~\ref{lem:exit}, applied to the closed convex set $S$,
would give $\neg\eta(a,b)$ in $S$, contradicting $\theta$; so
$\rec S=\{0\}$ and $S$ is compact. Conversely, a compact planar section $S$
contains a non-collinear triple spanning its plane $A$ (as $\dim S=2$), and
$F\cap A=S$ satisfies $\theta$: every ray between distinct points of $S$
leaves the bounded set $S$, so $\eta$ holds at every pair by
Lemma~\ref{lem:exit}. Relativizing through
Proposition~\ref{prop:sections}(iii) yields $F\models\sigma$.
\end{proof}

\begin{remark}
Replacing $\Dep_4$ by $\Dep_{k+2}$ in Proposition~\ref{prop:sections}
relativizes first-order properties to $k$-flat sections --- the proof
carries over verbatim --- and the corresponding sentence $\sigma_k$ then
detects compact $k$-dimensional sections for every $k$. No new elementary
invariant arises this way: by Proposition~\ref{prop:compactsections} the
maximal dimension of a compact section equals $m-\ell-e$, which
Theorem~\ref{thm:recession} and Proposition~\ref{prop:dimension} already
determine.
\end{remark}

\begin{corollary}[The closed cylinder and the closed slab]\label{cor:cyl-slab}
For $n\ge3$ the closed solid cylinder $Z_n=\overline B^{\,n-1}\times\R$ and
the closed slab $\overline\Sigma_n=\{x\in\R^n:0\le x_n\le1\}$ are not
elementarily equivalent in $\LB$: the slab satisfies $\lambda_{n-1}$ while
the cylinder fails $\lambda_2$.
\end{corollary}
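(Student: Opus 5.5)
The plan is to compute $\rec$ and $\lin$ for both sets and then read the two sentences off Theorem~\ref{thm:recession}(ii). Both sets are nonempty, closed and convex in $\R^n$, so that theorem applies: $F\models\lambda_k$ if and only if $\dim\lin F\ge k$.

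\emph{The closed slab.} I claim $\rec\overline\Sigma_n=\{v:v_n=0\}$. If $v_n=0$, then translating by $tv$ leaves $x_n$ unchanged, so the ray stays in the slab. If $v_n\ne0$, then $x_n+tv_n$ leaves $[0,1]$ for large $t$. This recession cone is a linear subspace, so $\lin\overline\Sigma_n=\{v_n=0\}$, which has dimension $n-1$. Hence $\overline\Sigma_n\models\lambda_{n-1}$.

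\emph{The closed cylinder.} Write $v=(v',v_n)$ with $v'\in\R^{n-1}$. I claim $\rec Z_n=\{0\}\times\R$. A vector with $v_n$ arbitrary and $v'=0$ preserves the first $n-1$ coordinates, so it recedes. If $v'\ne0$, then $|x'+tv'|\to\infty$, so the ray leaves $\overline B^{\,n-1}\times\R$. This cone is again a linear subspace, so $\lin Z_n=\{0\}\times\R$ has dimension $1<2$. Hence $Z_n\not\models\lambda_2$.

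Since $n\ge3$, we have $n-1\ge2$, so $\lambda_{n-1}$ implies $\lambda_2$. The slab therefore satisfies $\lambda_2$ while the cylinder does not, and $\lambda_2$ alone separates the two sets. There is no real obstacle here. The only points to check are the two recession-cone computations, which are elementary, and the hypotheses of Theorem~\ref{thm:recession}: closedness, which both sets have, and nonemptiness, which is clear.
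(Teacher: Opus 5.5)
Your proposal is correct and follows essentially the same route as the paper's proof: compute $\rec$ and $\lin$ of both sets (the slab has $\dim\lin=n-1\ge2$, the cylinder has $\dim\lin=1$) and apply Theorem~\ref{thm:recession}(ii) with $k=2$. The paper additionally mentions an alternative separation via compact planar sections (Proposition~\ref{prop:compactsections}), which you do not need.
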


\begin{proof}
A vector $v$ lies in $\rec Z_n$ if and only if $|x'+tv'|\le1$ for all
$t\ge0$ and all $|x'|\le1$, forcing $v'=0$; so
$\rec Z_n=\lin Z_n=\{0\}^{n-1}\times\R$, of dimension $1$. Similarly
$\rec\overline\Sigma_n=\lin\overline\Sigma_n=\{v:v_n=0\}$, of dimension
$n-1\ge2$. Apply Theorem~\ref{thm:recession}(ii) with $k=2$. Alternatively,
by Proposition~\ref{prop:compactsections} the cylinder has compact planar
sections ($m-\ell-e=n-1\ge2$) while the slab has none ($m-\ell-e=1$).
\end{proof}

\begin{proposition}[The open cylinder and the open slab]\label{prop:cyl-slab-open}
Let $n\ge3$, let $B^{n-1}\times\R$ be the open solid cylinder, and let
$\Sigma_n=\{x:0<x_n<1\}$ be the open slab. With $\varphi$ the two-lines
sentence of Proposition~\ref{prop:trichotomy2} and
\[
\chi:\equiv\exists p_1\,\exists p_2\,\exists p_3\,
\bigl(\neg\Coll(p_1,p_2,p_3)\wedge(\neg\varphi)^{\circ}(p_1,p_2,p_3)\bigr),
\]
we have $B^{n-1}\times\R\models\chi$ and $\Sigma_n\models\neg\chi$. In
particular the open cylinder and the open slab are not elementarily
equivalent in $\LB$.
\end{proposition}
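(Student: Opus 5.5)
The plan is to classify the planar sections of each set and then apply the planar analysis behind Proposition~\ref{prop:trichotomy2}. By Proposition~\ref{prop:sections}(iii), $K\models\chi$ if and only if some planar section $K\cap A$, through three non-collinear points of $K$, fails $\varphi$.

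\emph{The slab.} First I would show that every planar section of $\Sigma_n$ satisfies $\varphi$. As in the proof of Proposition~\ref{prop:trichotomyn}, the affine function $x_n$ restricted to a 2-flat $A$ meeting $\Sigma_n$ is either constant or nonconstant. If it is constant, the section is all of $A$, a copy of $\R^2$. If it is nonconstant, the section is an open planar strip, affinely a copy of $\Sigma_2$. Proposition~\ref{prop:trichotomy2} shows that both $\R^2$ and $\Sigma_2$ satisfy $\varphi$. The sentence $\varphi$ is in $\LB$, so it is invariant under the affine identifications of Proposition~\ref{prop:sections}(ii). Hence no section witnesses $(\neg\varphi)^\circ$, and $\Sigma_n\models\neg\chi$.

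\emph{The cylinder.} Next I would exhibit a section of $B^{n-1}\times\R$ that fails $\varphi$. The natural choice is a plane $A$ transverse to the ruling direction, for instance $\{x_n=0,\ x_3=\dots=x_{n-1}=0\}$. Its section is an open round disk $D$ (an open unit disk in the $x_1x_2$-plane). It contains the three non-collinear points $0$, $\tfrac12e_1$, $\tfrac12e_2$. By Proposition~\ref{prop:trichotomy2}, $D\models\neg\varphi$, and relativizing gives $B^{n-1}\times\R\models\chi$.

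This direction uses only $n\ge3$, so that $n-1\ge2$ and the plane fits inside the ball factor. In fact, for $n=3$ the plane $\{x_3=0\}$ already works.

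The step needing the most care is the slab claim that every section is exactly $\R^2$ or an open strip. This requires ruling out other shapes, for example a half-plane arising when $x_n$ is nonconstant. That cannot happen, because both bounding inequalities $0<x_n$ and $x_n<1$ cut $A$ along parallel lines, so a nonconstant $x_n$ always gives a strip. Even if a half-plane did arise, it would be harmless: it is $\LB$-isomorphic to the strip by Proposition~\ref{prop:projident}(b), and therefore also satisfies $\varphi$.

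Both halves use only the relativization scheme, so the argument is uniform in $n\ge3$.
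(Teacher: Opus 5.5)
Your proof is correct and follows the paper's argument: every planar section of the slab is a full plane or an open strip, both of which satisfy $\varphi$, while the cylinder has an open-disk section, which fails $\varphi$. The only difference is that you take the specific plane $x_3=\dots=x_n=0$, which gives a round unit disk directly. The paper instead takes any $2$-flat whose direction plane avoids $e_n$ and gets an elliptical disk from the positive definiteness of $|u'|^2$ on that plane.
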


\begin{proof}
For the slab, the planar sections were computed in the proof of
Proposition~\ref{prop:trichotomyn}: a $2$-flat through three non-collinear
points of $\Sigma_n$ meets it either in a full plane (when $x_n$ is constant
on the flat) or in an open planar strip, affinely a copy of $\Sigma_2$. Both
satisfy $\varphi$ by Proposition~\ref{prop:trichotomy2}, and satisfaction of
the $\LB$-sentence $\varphi$ is invariant under affine isomorphism of the
sections (Theorem~\ref{thm:iso-affine}, easy direction). By
Proposition~\ref{prop:sections}(iii), $\Sigma_n\models\neg\chi$.

For the cylinder, take a $2$-flat $A=x+U$ whose direction plane $U$ does not
contain the axis direction $e_n$, passing through a point of the cylinder.
The restriction to $U$ of the quadratic form $u\mapsto|u'|^2$ is positive
definite --- it vanishes only on multiples of $e_n$ --- so
$A\cap(B^{n-1}\times\R)$, the sublevel set $\{y\in A:|y'|<1\}$, is a nonempty
bounded open convex planar set bounded by a nondegenerate conic: an open
elliptical disk, affinely isomorphic to the open unit disk $D$. Since
$D\models\neg\varphi$ (Proposition~\ref{prop:trichotomy2}), any
non-collinear triple spanning $A$ witnesses $\chi$ through
Proposition~\ref{prop:sections}(iii).
\end{proof}

\begin{remark}[Recession is invisible for open sets]\label{rem:open-recession}
In an open convex set the formula $\eta$ fails identically: any candidate
last point $c$ is interior, so points slightly beyond $c$ on the ray remain
in the set. (In particular $\exists a\,\exists b\,\eta(a,b)$ separates every
closed convex set possessing a bounded chord from every open one, settling
the mixed cylinder/slab pairs.) The sentences of this subsection therefore
trivialize on open sets, and no others can take their place: by
Proposition~\ref{prop:projident}(e) the open solid cone, whose recession
cone is full-dimensional, is isomorphic as an $\LB$-structure to the open
half-cylinder, whose recession cone is a single ray. Elementary detection of
recession structure is thus a genuinely closed-set phenomenon. This is
consistent with Conjecture~\ref{conj:noncompact}: for open, properly convex
sets only the projective class should be elementarily visible, and behavior
at infinity is a chart artifact of that class (Remark~\ref{rem:apex}).
\end{remark}

\begin{conjecture}\label{conj:noncompact}
Conjecture~\ref{conj:cat-projective} extends to open, properly convex
subsets of $\R^n$: two such sets are $\LB$-elementarily equivalent if and
only if they are projectively equivalent.
\end{conjecture}

\section{Nonstandard models and the limits of reconstruction}
\label{sec:nonstandard}

Elementary equivalence among standard bodies is the right question because
categoricity across all models is unattainable, for reasons orthogonal to
geometry.

\begin{proposition}\label{prop:ultrapower}
Let $K\subseteq\R^n$ be a compact convex body ($n\ge1$) and let
$\mathcal U$ be a nonprincipal ultrafilter on $\omega$. The ultrapower
$K^*=K^\omega/\mathcal U$ satisfies $K^*\equiv K$ in both languages, but
$K^*\not\cong K$ in either.
\end{proposition}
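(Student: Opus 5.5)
The first assertion, $K^*\equiv K$ in both languages, will follow from \L{}o\'s's theorem. The ultrapower of an $\mathcal L$-structure is elementarily equivalent to it for any first-order language, including the uncountable $\Laff$, since \L{}o\'s's theorem needs no countability of the signature. So it remains to show $K^*\not\cong K$ in each language.

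For this I will use a cardinality argument. The universe of $K$ has cardinality $2^{\aleph_0}$. The universe of $K^*$ has cardinality $|K|^{\aleph_0}/\!\!\sim$, and for an infinite $K$ and a nonprincipal ultrafilter on $\omega$ this is exactly $(2^{\aleph_0})^{\aleph_0}=2^{\aleph_0}$. So cardinality alone does not separate the two. I need a genuine invariant instead.

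The plan is to exhibit a property, preserved by isomorphism, that $K$ has and $K^*$ lacks. The natural choice is a chord-wise Archimedean or completeness property. For $n\ge1$, take distinct $a,b\in K$ that are interior to a chord (for $n=1$, the segment itself). Let $\mathrm{ch}(a,b)$ be the definable set $\{x:\Coll(a,b,x)\}$ with the linear order induced by $B$, which is definable from $B$ once one endpoint side is fixed. In $K$ this order is a compact interval of $\R$, so it is Dedekind complete: every nonempty subset that is bounded has a supremum in the chord. That property is preserved by $\LB$-isomorphisms, since isomorphisms carry chords to chords and preserve the induced order up to reversal. It is therefore preserved by $\Laff$-isomorphisms as well, because those are in particular $\LB$-isomorphisms.

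The main step is to show that in $K^*$ some chord order fails Dedekind completeness. Take $a,b\in K$ with $[a,b]\subseteq K$ and consider their diagonal images in $K^*$. Let $x_k=C_{1/k}(a,b)$ in $K$, or in $\LB$ the corresponding standard points on $[a,b]$. Let $\varepsilon=[(x_k)_k]$, a point of $K^*$ strictly between $a$ and every standard point $x_m$. Let $S$ be the set of elements of the chord lying between $a$ and $\varepsilon$ that are infinitesimally close to $a$ in the sense of being below every diagonal $x_m$. Then $S$ is nonempty, containing $a$ and $\varepsilon$, and bounded above by each $x_m$. Suppose $s=\sup S$ existed. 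If $s$ were infinitesimal, then a point strictly between $s$ and $x_m$ for all $m$ would exist: choose coordinatewise a point on $[s_k,b]$ at parameter double that of $s_k$ when this is below $x_k$. This gives an infinitesimal larger than $s$, a contradiction. If $s$ were not infinitesimal, then $s\ge x_m$ for some $m$, and $x_{2m}$ would be a smaller upper bound, again a contradiction. I expect the care to lie in doing these coordinatewise constructions uniformly and verifying them with \L{}o\'s. For $\LB$ alone, one works with the actual coordinate points of the chord in each factor, which is legitimate since elements of $K^*$ are sequences in $K$.

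Hence $K$ has Dedekind-complete chords and $K^*$ does not, so $K^*\not\cong K$ in either language. This completes the argument.
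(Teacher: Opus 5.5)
Your argument is the paper's in substance. You use \L{}o\'s for $K^*\equiv K$, reduce to the $\LB$-reduct, and show Dedekind incompleteness of the order on $[a,b]^{K^*}$ using the set of points below every $x_m=C_{1/m}(a,b)$. Your two-case analysis of a putative supremum is also the paper's. The one difference is the case where the supremum lies in $S$. The paper invokes $\aleph_1$-saturation to get a point strictly between $\sup S$ and all $x_m$. You build that point by hand, which is more elementary and works. However, two details are wrong as written and need repair.

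\emph{Definition of $S$.} $S$ must be the set of \emph{all} points of $[a,b]^{K^*}$ below every $x_m$, not those ``lying between $a$ and $\varepsilon$.'' Under your literal definition $S=[a,\varepsilon]$, so $\sup S=\varepsilon$ exists. The larger infinitesimal you then construct is not in $S$, and no contradiction arises.

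\emph{The doubling step.} Write $s_k=a+t_k(b-a)$. Take the doubled point $a+2t_k(b-a)$ whenever $2t_k\le 1$, which holds for $\mathcal U$-almost all $k$. Do not impose ``when this is below $x_k$.'' That condition can fail for every $k$ even though $s$ is infinitesimal, for instance with $t_k=k^{-1/2}$. What you actually need is that $\{k:2t_k<1/m\}=\{k:t_k<1/(2m)\}\in\mathcal U$ for each fixed $m$. You also need $t_k>0$ for $\mathcal U$-almost all $k$, which follows from $s\ge\varepsilon>a$. \L{}o\'s then gives $s<s'<x_m$ for every $m$.
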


\begin{proof}
Elementary equivalence is \L o\'s's theorem \cite{ChangKeisler90}. For
non-isomorphism it suffices to treat $\LB$, since an $\Laff$-isomorphism is
in particular an $\LB$-isomorphism of the reducts (and the reduct of the
ultrapower is the ultrapower of the reduct).

For $a\ne b$ in an $\LB$-structure, define $z\le_{a,b}w$ on the definable
set $[a,b]$ by $B(a,z,w)$. In $K$ this is a dense complete linear order on
each chord (a closed real segment). Any isomorphism carries chords to chords
and preserves the order, so it suffices to exhibit one chord of $K^*$ whose
order is not Dedekind complete.

Fix $a\ne b$ in $K$ (regarded in $K^*$ via constant sequences) and let
$z_k=a+\tfrac1k(b-a)\in K$, again regarded in $K^*$. The reduct
$(K^*;B)$ is an ultrapower, by a countably incomplete ultrafilter, of a
structure in the finite language $\{B\}$, hence is
$\aleph_1$-saturated \cite[Theorem~6.1.1]{ChangKeisler90}. Consider
\[
S=\{x\in[a,b]^{K^*}:\ x<z_k\ \text{for all }k\in\omega\}.
\]
$S$ is nonempty: the diagonal element $[\,(a+\tfrac1k(b-a))_k\,]$ is below
every $z_k$ on a cofinite set of coordinates. $S$ is bounded above by
$z_1$. Suppose $g=\sup S$ existed. If $g\in S$, then $g$ is a maximum of
$S$; but the type $\{g<x\}\cup\{x<z_k:k\in\omega\}$ over countably many
parameters is finitely satisfiable in the dense order $[a,b]^{K^*}$
(density transfers from $K$ by \L o\'s), hence realized by
$\aleph_1$-saturation --- contradicting maximality. If $g\notin S$, then
$g\ge z_m$ for some $m$; but $z_{m+1}<z_m\le g$ is also an upper bound of
$S$ (every element of $S$ is below $z_{m+1}$ by definition), contradicting
leastness. So the chord $[a,b]^{K^*}$ is not Dedekind complete, and
$K^*\not\cong K$.
\end{proof}

\begin{proposition}[Countable models]\label{prop:countable}
Let $K\subseteq\R^n$ be semialgebraic over the real algebraic numbers, with
nonempty interior, and let $F\prec\R$ be a countable real closed subfield.
Then $K(F):=K\cap F^n$, with the induced betweenness relation, is a
countable elementary substructure of $(K;B)$. In particular
$\Th_{\LB}(\Ball)$ has countable models, none of which is a convex body.
\end{proposition}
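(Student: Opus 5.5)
The plan is to interpret $(K;B)$ in the real field by a fixed formula with algebraic parameters. Then I apply elementarity of $F\prec\R$ and relativize to $F$. The universe of the interpreted structure is the set $K$, defined by a quantifier-free formula $\kappa(x)$ with real-algebraic coefficients. Betweenness is given by
\[
\beta(a,x,b):\equiv\exists t\,\bigl(0\le t\le1\wedge x=(1-t)a+tb\bigr),
\]
again a formula of ordered rings. Every real algebraic number lies in $F$, because $F$ is real closed and contains $\Q$. So both formulas have parameters in $F$. By the translation $\varphi\mapsto\varphi^*$ from the proof of Proposition~\ref{prop:decidable}, each $\LB$-formula $\varphi(\bar x)$ becomes a ring formula $\varphi^*(\bar x)$, with $\bar x$ ranging over $n$-tuples, such that $K\models\varphi(\bar a)$ iff $\R\models\varphi^*(\bar a)$ for $\bar a\in K$.

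The key observation is that the same translation, evaluated in $F$, computes the structure $K(F)$. Indeed, $F^n\cap K$ is the set defined by $\kappa$ in $F$, since $\kappa$ is quantifier-free with parameters in $F$. The relation $\beta$ evaluated in $F$ agrees with the induced betweenness on $K(F)$. To check this, suppose $a,x,b\in F^n$ and $x=(1-t)a+tb$ with $t\in\R$. If $a\ne b$, then $t$ is a ratio of coordinate differences, so $t\in F$. If $a=b$, then $t$ may be chosen as $0$. Hence $\beta^F=\beta^{\R}\cap(F^n)^3$, which is also immediate from $F\prec\R$. Induction on $\varphi$ then gives $K(F)\models\varphi(\bar a)$ iff $F\models\varphi^*(\bar a)$: quantifiers over $K(F)$ become quantifiers over $F^n$ relativized to $\kappa$. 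Since $F\prec\R$, we have $F\models\varphi^*(\bar a)$ iff $\R\models\varphi^*(\bar a)$, which holds iff $K\models\varphi(\bar a)$. This is exactly the Tarski--Vaught conclusion that $K(F)\preceq(K;B)$. Countability is immediate because $F$ is countable.

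For the final clause I take $F$ to be the field of real algebraic numbers, which is real closed, countable, and an elementary substructure of $\R$ by model completeness of RCF. The ball is defined over $\Q$, so $\Ball(F)$ is a countable model of $\Th_{\LB}(\Ball)$. It is nonempty, and in fact contains rational interior points.

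None of these models is a convex body. A convex body in $\R^m$ with $m\ge1$ has nonempty interior and hence is uncountable, so no countable structure is isomorphic to one. Note that $m\ge1$ is forced, since $\delta_1$ holds. If "none is a convex body" is meant to include other countable models, the same cardinality argument covers every countable model.

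The only delicate point is the bookkeeping of the relativized translation, in particular checking that the induced relation is definable without parameters outside $F$. That verification is the one carried out above.
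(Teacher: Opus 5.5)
Your proof is correct and is essentially the paper's argument: interpret $(K;B)$ in the real field by the translation $\varphi\mapsto\varphi^*$, note that the same formulas evaluated in $F$ define $K(F)$ with its induced betweenness, and transfer truth through $F\prec\R$. The differences are only in presentation. You use algebraic parameters (which lie in $F$) where the paper appeals to their $\emptyset$-definability. You also spell out several points the paper leaves implicit, all soundly: the agreement $\beta^F=\beta^{\R}\cap(F^n)^3$, the concrete choice $F=\R_{\mathrm{alg}}$, and the cardinality reason no countable model is a convex body.
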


\begin{proof}
The structure $(K;B)$ is uniformly definable, by fixed formulas without
parameters (the defining data being algebraic, hence definable over the
prime model), in the real closed field: the same formulas define
$(K(F);B^{K(F)})$ in $F$. Elementarity of $F\prec\R$ transfers to the
defined structures: for any $\LB$-formula $\varphi$ and tuple from $K(F)$,
$K(F)\models\varphi$ iff $F\models\varphi^*$ iff $\R\models\varphi^*$ iff
$K\models\varphi$, where $\varphi^*$ is the translation of
Proposition~\ref{prop:decidable}.
\end{proof}

\begin{remark}[What reconstruction can mean]\label{rem:reconstruction}
Propositions~\ref{prop:ultrapower} and \ref{prop:countable} show that no
family of sentences --- mesh sentences, cross-ratio cuts, or otherwise --- can
recover a body up to isomorphism from its theory: the theory always has
non-archimedean and countable models in which, e.g., inscribed mesh patterns
are indexed by nonstandard integers. What the theory can aspire to determine
is the body's affine (projective) equivalence class \emph{within the class
of standard bodies}; that is Theorem~\ref{thm:cat-affine} in the affine
language and Conjecture~\ref{conj:cat-projective} in the betweenness language.
The affine aspiration is realized in full by Theorem~\ref{thm:affine-main};
Theorems~\ref{thm:polytope} and~\ref{thm:ball} are special cases.
\end{remark}

\section{Open problems}\label{sec:open}

\begin{enumerate}[label=\textbf{P\arabic*.},leftmargin=2.6em]
\item (Conjecture~\ref{conj:cat-projective}.) The affine case is settled:
$\equiv_{\Laff}$ coincides with affine equivalence for all compact convex
bodies in every dimension (Theorem~\ref{thm:affine-main}), with the polytope
and ellipsoid theorems as special cases. The projective case --- does
$\equiv_{\LB}$ coincide with projective equivalence? --- is open, and
Proposition~\ref{prop:projective-reduction} isolates three obstacles, none with
an affine analogue: (a) interior cross-ratio expressibility, now settled
(Proposition~\ref{prop:crq}); (b) recovery of the projective
coordinates of boundary points from interior data, done in the plane
(Proposition~\ref{prop:polygon-LB}) and open for $n\ge3$ (Problem~P7); (c) a
compact covariant family of gauge frames, which by the criterion
Proposition~\ref{prop:gauge-criterion} exists exactly when
$\mathrm{Aut}_{\LB}(K)$ is compact --- leaving as the only residue in (c) the
$\LB$-\emph{definability} of such a family. In the plane (b) is discharged, and
for real-analytic positively curved non-conic bodies (c) is discharged as
well: the conic-cluster set (Definition~\ref{def:conic-cluster}) is a finite
nonempty covariant set defined by pure incidence, so
Theorem~\ref{thm:planar-sextactic} proves categoricity for this class
outright. The remaining planar cases are polygons (settled
via their finite vertex sets, Proposition~\ref{prop:polygon-LB}), the quadric (needing the positive
Chasles--Steiner treatment, Conjecture~\ref{conj:ball-LB}), lower-regularity
bodies, and bodies with non-compact symmetry. Whether the quadric is the
only compact body with non-compact projective symmetry that resists
reconstruction --- a question in the theory of divisible convex sets --- is
itself open. Non-compactness of
$\mathrm{Aut}_{\LB}$ alone is not the obstruction: the simplex has non-compact
projective symmetry yet is projectively rigid.
\item Extend Theorem~\ref{thm:planar-sextactic} beyond real-analytic
boundaries: for $C^\infty$ (or merely $C^5$) positively curved non-conic
bodies, is the conic-cluster set still finite and nonempty? Finiteness is the
delicate point --- a smooth non-conic curve can hyperosculate its conics on a
large zero set of the curvature derivative --- while nonemptiness at a strict
curvature extremum of finite order follows from the proof of
Lemma~\ref{lem:onesided} unchanged.
\item (Question~\ref{q:B-definable}.) Is the betweenness relation
$\emptyset$-definable from the operations $\{C_\lambda\}_{\lambda\in[0,1]}$
over every compact convex body in $\R^n$? A negative answer would separate
the pure barycentric language from $\Laff$ as used here; a positive answer
would make the inclusion of $B$ in $\Laff$ redundant.
\item (Rigidity.) Call two compact strictly convex planar bodies
\emph{configuration-equivalent} if for every $r$ they realize the same
distributions of projective invariants of $r$-point boundary
configurations. Does configuration equivalence imply projective
equivalence? For real-analytic boundaries the projective curvature is a
limit of configuration invariants, suggesting a positive answer there; the
general case is open, and by Proposition~\ref{prop:cuts} a positive answer
(plus P2) would prove Conjecture~\ref{conj:cat-projective} for strictly
convex planar bodies, whence --- through the section calculus --- leverage in
higher dimensions.
\item Open-body analogues: is every open convex set in $\R^n$
$\Laff$-equivalent to the open ball an open ellipsoid? The boundary-based
sentence $\beta_n$ of Section~\ref{sec:ball} is unavailable; an interior
(limiting) form of the Bertrand--Brunn property is needed.
\item Decidability beyond the semialgebraic: for which convex bodies
(subanalytic? with boundary definable in an o-minimal expansion of the
reals?) is $\Th_{\LB}(K)$ decidable, or at least tame in the sense of
Corollary~\ref{cor:nogrids}? O-minimality of the ambient structure yields
tameness of definable sets, but decidability requires an effective theory.
\item Projective categoricity of polytopes in $\R^n$, $n\ge3$: extend
Proposition~\ref{prop:polygon-LB} beyond the plane. The expected route is a
reconstruction of the projective type of the vertex configuration from
pencil cross-ratios measured on interior transversals within definable
planar sections; what is missing is a clean statement of which sectioned
invariants suffice.
\item Classify the noncompact convex subsets of $\R^n$, $n\ge3$, up to
$\LB$-elementary equivalence: slabs, cylinders, cones, and products of
lower-dimensional geometries (Example~\ref{ex:cylinder},
Remark~\ref{rem:apex}), and the Hilbert geometries of properly convex
domains (Conjecture~\ref{conj:noncompact}). For closed sets the coarse layer
is settled: the dimensions of the recession cone and of the lineality space,
and the one-sidedness of recession, are elementary
(Theorem~\ref{thm:recession}), and the cylinder/slab pair is separated in
both closed and open forms (\S\ref{subsec:recession}). What remains is the
fine structure: products with equal recession data --- e.g.,
$\overline B^{\,2}\times\R$ against $P\times\R$ for a polygon $P$, where
relativized extreme-point counting on compact sections gives traction ---
the properly convex regime, and, for open sets, any elementary surrogate for
recession, which by Remark~\ref{rem:open-recession} cannot be affine data.
\end{enumerate}

\begin{question}\label{q:B-definable}
Problem P3 above, stated for the record as a question: over an arbitrary
compact convex $K\subseteq\R^n$ with nonempty interior, is
$\{(a,x,b)\in K^3:x\in[a,b]\}$ definable without parameters in the pure
barycentric language $\{C_\lambda:\lambda\in[0,1]\}$?
\end{question}


\begin{thebibliography}{99}

\bibitem{Brunn1889}
H.~Brunn,
\emph{\"Uber Kurven ohne Wendepunkte},
Habilitationsschrift, M\"unchen, 1889.

\bibitem{ChangKeisler90}
C.~C. Chang and H.~J. Keisler,
\emph{Model Theory}, 3rd ed.,
Studies in Logic and the Foundations of Mathematics 73,
North-Holland, Amsterdam, 1990.

\bibitem{Coxeter49}
H.~S.~M. Coxeter,
\emph{The Real Projective Plane},
McGraw--Hill, New York, 1949.

\bibitem{Hartshorne67}
R.~Hartshorne,
\emph{Foundations of Projective Geometry},
W.~A. Benjamin, New York, 1967.

\bibitem{MMO19}
H.~Martini, L.~Montejano, and D.~Oliveros,
\emph{Bodies of Constant Width: An Introduction to Convex Geometry with
Applications},
Birkh\"auser, Cham, 2019.

\bibitem{Matousek02}
J.~Matou\v sek,
\emph{Lectures on Discrete Geometry},
Graduate Texts in Mathematics 212, Springer, New York, 2002.

\bibitem{Mukhopadhyaya09}
S.~Mukhopadhyaya,
\emph{New methods in the geometry of a plane arc},
Bull. Calcutta Math. Soc. \textbf{1} (1909), 31--37.

\bibitem{Neumann70}
W.~D. Neumann,
On the quasivariety of convex subsets of affine spaces,
\emph{Arch. Math. (Basel)} \textbf{21} (1970), 11--16.

\bibitem{PeterzilStarchenko98}
Y.~Peterzil and S.~Starchenko,
A trichotomy theorem for o-minimal structures,
\emph{Proc. London Math. Soc. (3)} \textbf{77} (1998), 481--523.

\bibitem{Rockafellar70}
R.~T. Rockafellar,
\emph{Convex Analysis},
Princeton Mathematical Series 28,
Princeton University Press, Princeton, NJ, 1970.

\bibitem{Repo}
D.~V. Feldman,
\emph{Elementary equivalence of convex bodies in affine and projective
languages: paper and Lean 4 formalization},
GitHub repository, \texttt{https://github.com/DavidVFeldman/\allowbreak
convex-elementary-equivalence}; archived at Zenodo,
\texttt{https://doi.org/10.5281/zenodo.21997275}.

\bibitem{RomanowskaSmith02}
A.~B. Romanowska and J.~D.~H. Smith,
\emph{Modes},
World Scientific, River Edge, NJ, 2002.

\bibitem{Schneider14}
R.~Schneider,
\emph{Convex Bodies: The Brunn--Minkowski Theory}, 2nd ed.,
Encyclopedia of Mathematics and its Applications 151,
Cambridge University Press, Cambridge, 2014.

\bibitem{Shiffman95}
B.~Shiffman,
Synthetic projective geometry and Poincar\'e's theorem on automorphisms of
the ball,
\emph{Enseign. Math. (2)} \textbf{41} (1995), 201--215.

\bibitem{Soltan05}
V.~Soltan,
Affine diameters of convex bodies --- a survey,
\emph{Expo. Math.} \textbf{23} (2005), 47--63.

\bibitem{Stone49}
M.~H. Stone,
Postulates for the barycentric calculus,
\emph{Ann. Mat. Pura Appl. (4)} \textbf{29} (1949), 25--30.

\bibitem{Tarski51}
A.~Tarski,
\emph{A Decision Method for Elementary Algebra and Geometry},
2nd ed., University of California Press, Berkeley, 1951.

\bibitem{vdDries98}
L.~van den Dries,
\emph{Tame Topology and O-minimal Structures},
London Mathematical Society Lecture Note Series 248,
Cambridge University Press, Cambridge, 1998.

\end{thebibliography}
\end{document}